\documentclass[12pt,a4paper]{article}
\usepackage[paper=a4paper,left=25mm,right=25mm,top=25mm,bottom=20mm]{geometry}
\usepackage{setspace}
\usepackage[utf8]{inputenc}
\usepackage{graphicx}
\usepackage{hyperref}
\usepackage{booktabs}
\usepackage[english,onelanguage]{algorithm2e}
\usepackage{listings}
\usepackage{physics}
\usepackage{amsmath}
\usepackage{amssymb}
\usepackage{bm}
\usepackage[noadjust]{cite}
\usepackage{mathtools}
\usepackage{afterpage}
\usepackage{amsthm}
\usepackage{mathrsfs}
\usepackage{tikz-cd}
\usepackage[title]{appendix}
\usepackage{dsfont}
\usepackage{oldgerm}
\usepackage{comment}

\usepackage{thm-restate}

\theoremstyle{plain}
\newtheorem{theorem}{Theorem}[subsection]
\newtheorem{corollary}[theorem]{Corollary}
\newtheorem{lemma}[theorem]{Lemma}
\newtheorem{proposition}[theorem]{Proposition}

\theoremstyle{definition}
\newtheorem{definition}[theorem]{Definition}
\newtheorem{example}[theorem]{Example}
\newtheorem{remark}[theorem]{Remark}
\newtheorem{remarks}[theorem]{Remarks}

\newtheorem{convention}[theorem]{Convention}
\newtheorem*{theorem*}{Theorem}

\newenvironment{tothereader}
{\par\bigskip
\begin{center}
\bfseries To the reader
\end{center}
\begin{quote}}
{\end{quote}\par\bigskip}

\renewenvironment{tothereader}
  {\small\noindent\textbf{To the reader.}\ }
  {}

\newcommand{\tn}{\textnormal}

\newcommand{\R}{\mathbb{R}}
\newcommand{\N}{\mathbb{N}}
\newcommand{\K}{\mathbb{K}}
\newcommand{\nnorm}[2]{\norm{#1}_{#2}}
\newcommand{\tens}{\otimes}
\newcommand{\del}{\partial}
\newcommand{\scal}[2]{\left\langle #1,#2\right\rangle}

\newcommand{\om}{\omega}
\newcommand{\VFs}{\mathfrak{X}(M)}
\newcommand{\Forms}[1]{\Omega^{#1}(M)}
\newcommand{\ClForms}[1]{\Forms{#1}_{\mathrm{cl}}}
\newcommand{\ExForms}[1]{\Forms{#1}_{\mathrm{ex}}}
\newcommand{\LieD}{\mathcal{L}}
\newcommand{\Diff}{\mathrm{Diff}(M)}
\newcommand{\SDiff}{\mathrm{Diff}(M)_{[H]}}
\newcommand{\Auteq}{\mathrm{Auteq}(M,H)}
\newcommand{\auteq}{\mathfrak{auteq}(M,H)}
\newcommand{\eh}[1]{\mathrm{exp}^h(#1)}
\newcommand{\exh}{\mathrm{exp}^h}
\newcommand{\Id}{\mathrm{Id}}

\newcommand{\Severa}{\v{S}evera}
\newcommand{\GenTM}{\mathbb TM}
\newcommand{\Ev}{\mathrm{Ev}}

\newcommand{\OP}{\mathrm{OP}}

\newcommand{\Hcal}{\mathcal{H}}
\newcommand{\Mcal}{\mathcal{M}}
\newcommand{\Ncal}{\mathcal{N}}
\newcommand{\Scal}{\mathcal{S}}
\newcommand{\Gcal}{\mathcal{G}}
\newcommand{\TMcal}{\mathcal{TM}}
\newcommand{\Ucal}{\mathcal{U}}
\newcommand{\Vcal}{\mathcal{V}}
\newcommand{\Pcal}{\mathcal{P}}
\newcommand{\gfrak}{\mathfrak{g}}
\newcommand{\tensf}[2]{\mathcal{T}^{#1}_{#2}(M)}
\newcommand{\Jcal}{\mathcal{J}}

\newcommand{\RNum}[1]{\uppercase\expandafter{\romannumeral #1\relax}}

\renewcommand{\d}{\mathrm{d}}

\renewenvironment{abstract}
  {\small\noindent\textbf{Abstract.}\ }
  {}

\title{The group of autoequivalences of an exact Courant algebroid as a tame Fréchet Lie group}
\author{Jan Niklas Heck}
\date{\today}

\usepackage{graphics}

\hypersetup{
  pdftitle={AutoEqTameFrechet2026},
  pdfauthor={Jan Niklas Heck}
}

\begin{document}

\maketitle

\begin{abstract}
The aim of this manuscript is to show that the group of autoequivalences of an exact Courant algebroid over a compact base manifold is a tame Fréchet Lie group.
Moreover, we compute its Lie algebra.
Furthermore, we show that the space of generalized almost complex structures is a tame Fréchet manifold and that the canonical action of the group of autoequivalences on the space of generalized almost complex structures is smooth tame.
\end{abstract}
\newline

\begin{tothereader}
The results of this manuscript were developed as part of my Master's thesis \cite{MyMastThesis} under the supervision of Prof.~Dr.~Cortés at the University of Hamburg in 2025.
Some minor corrections and adjustments have been added since its completion. 
\end{tothereader}

\tableofcontents

\section{Introduction and outline}

The field of generalized complex geometry was introduced by Hitchin in \cite{Hitchin_2003} and developed further by Gualtieri in \cite{gualtieri} and \cite{gualtieriAnn}. Instead of studying (almost) complex structures on the tangent bundle, one considers generalized (almost) complex structures. These are almost complex structures on the generalized tangent bundle $\GenTM\coloneqq TM\oplus T^*M$. The vector bundle $\GenTM$ is equipped with a bracket on its sections determined by a closed three-form $H$, a standard scalar product, and the canonical projection $\pi:\GenTM\to TM$ as an anchor map. Together, these structures make $\GenTM$ an exact Courant algebroid. Generalized almost complex structures are required to be compatible with the scalar product, while generalized complex structures additionally satisfy an integrability condition involving the bracket.
In particular, the fields of complex and symplectic geometry are encompassed by generalized complex geometry.

In \cite{gualtieri} Gualtieri studied deformations of generalized complex structures of exact Courant algebroids over compact manifolds. He shows that the moduli space of generalized almost complex structures is locally controlled by the cohomology of a certain differential graded Lie algebra. Furthermore, he showed the existence of an analytic obstruction map determining the integrable deformed structures. His proof is based on earlier work by Kuranishi, who showed an analogous result for the deformations of compact complex manifolds in \cite{Kuranishi}. Kuranishi's work itself was based on earlier work by Kodaira and Spencer of which a comprehensive discussion can be found in \cite{kodaira1986complex}.
Both Kuranishi and Gualtieri work on the Sobolev scale and use the implicit function theorem on Banach spaces to prove their results. However, certain details about its applicability are omitted.
In light of further research aiming to generalize Gualtieri's results it will be helpful to revisit his proof in Hamilton's Nash-Moser framework developed in \cite{hamilton}. In his paper Hamilton develops the Nash-Moser implicit function theorem which seems to be the correct tool for possibly generalizing Gualtieri's results. Indeed, Cortés and Pilatus proved Kuranishi's deformation theorem for complex structures using Hamilton's Nash-Moser framework in \cite{cortesPilatusCxStr}.

While Kuranishi deems complex structures equivalent if they can be related by a small diffeomorphism, Gualtieri considers two generalized complex structures equivalent if they can be related by an autoequivalence of the exact Courant algebroid, that is, a structure preserving vector bundle automorphism of the exact Courant algebroid that covers a diffeomorphism of the base manifold to itself. 
It is known that the diffeomorphism group of a compact manifold is a tame Fréchet Lie group, that is, a Lie group in Hamilton's Nash-Moser framework; see \cite{hamilton}. This fact is used by Cortés and Pilatus in \cite{cortesPilatusCxStr} to prove Kuranishi's deformation theorem for complex structures. This naturally raises the question whether the group of autoequivalences of an exact Courant algebroid over a compact base manifold is also a tame Fréchet Lie group. Establishing this will be the main goal of this thesis.

We note that it was shown by Rubio and Tipler in \cite{RubioTipler} that the autoequivalences of an exact Courant algebroid over a compact base manifold form a so-called ILH Lie group.

Fréchet spaces arise naturally when studying geometry as they model spaces of smooth maps.  Hamilton discovered in \cite{hamilton} that, although in general there is neither an inverse nor an implicit function theorem on Fréchet spaces, the Nash-Moser framework of tame Fréchet spaces and smooth tame maps provides enough structure for the Nash-Moser inverse and implicit function theorem. In his paper Hamilton also showed that many mapping spaces that arise naturally in geometry, such as spaces of sections of fiber bundles over a compact base or the diffeomorphism group of a compact manifold, are tame Fréchet manifolds. The existence of an inverse function theorem and an implicit function theorem in the Nash-Moser category, hence, makes thinking of such mapping spaces as tame Fréchet manifolds a useful tool in geometry. 
More broadly, there exists a theory of locally convex manifolds, and Lie groups and their associated Lie algebras, a comprehensive overview of which can be found in \cite{InfDimLieneeb} and \cite{schmeding_2022}.

Many analytic tools when working with spaces of sections of vector bundles are obtained by employing the theory of Sobolev spaces, an introduction to which can be found for example in \cite{Wells07}. This provides a particularly elegant framework for the Hodge theory of elliptic complexes and gives rise to a rich theory of analytic tools that will prove very useful throughout the thesis. We will show that the necessary analytic tools and operators translate naturally to the Nash-Moser framework.

The main result of this thesis is that the group of autoequivalences of an exact Courant algebroid with a compact base manifold is a tame Fréchet Lie group. We note that compactness of the base manifold is necessary in order to work in the Nash-Moser framework and apply Hodge theory. We will denote the autoequivalences of an exact Courant algebroid over a compact base manifold $M$ determined by a closed three-form $H\in \ClForms{3}$ by $\Auteq$ and prove the following theorem.

\begin{theorem}
\label{MainThm:AuteqMfdAndLieGr}
Let $M$ be a compact manifold and let $H\in \ClForms{3}$. Then the group $\Auteq$ can be equipped with the structure of a tame Fréchet manifold turning it into a tame Fréchet Lie group.
The tame Fréchet Lie group structure depends only on the de Rham class of $H$ up to isomorphism of tame Fréchet Lie groups.
\end{theorem}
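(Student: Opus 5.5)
We use the standard description of autoequivalences of the exact Courant algebroid $(\GenTM, H)$: every element of $\Auteq$ can be written uniquely as $(\varphi, B)$, with $\varphi\in\Diff$ and $B\in\Forms{2}$. It acts by $X+\xi\mapsto \varphi_*X + (\varphi^{-1})^*(\xi + \iota_X B)$ (with the appropriate sign convention), subject to the constraint $\varphi^*H - H = \d B$. The group law is then a semidirect-type product: $(\varphi,B)\cdot(\psi,C) = (\varphi\circ\psi,\, C + \psi^*B)$. Hence $\Auteq$ is identified with the subset
\[
\mathcal A \coloneqq \{(\varphi,B)\in \SDiff\times\Forms{2} : \varphi^*H - H = \d B\}.
\]
Here $\SDiff$ denotes the diffeomorphisms preserving the class $[H]$. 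For $\varphi$ isotopic to the identity the condition $\varphi^*[H]=[H]$ is automatic. We take as known that $\Diff$ is a tame Fréchet Lie group (Hamilton), that $\Forms{2}$ is a tame Fréchet space, and that pullback $(\varphi,B)\mapsto\varphi^*B$ and $\varphi\mapsto\varphi^*H$ are smooth tame.

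The strategy is to exhibit $\mathcal A$ as a tame Fréchet submanifold by explicitly parametrizing the fibres. We fix a Riemannian metric and use Hodge theory: $\d^*G$ is a tame linear map, with $G$ the Green operator. For $\varphi\in\SDiff$ the form $\varphi^*H-H$ is exact, so
\[
\beta(\varphi)\coloneqq \d^* G(\varphi^*H - H)
\]
satisfies $\d\beta(\varphi)=\varphi^*H-H$. This holds because the harmonic and $\d^*$-exact parts of an exact form vanish. The map $\beta$ is smooth tame, being a composition of smooth tame maps. Every solution $B$ then has the form $B=\beta(\varphi)+C$ with $C\in\ClForms{2}$, and $\ClForms{2}$ is a tame direct summand of $\Forms{2}$, with complement given by Hodge projections. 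This gives a bijection
\[
\SDiff\times\ClForms{2}\to\Auteq,\qquad (\varphi,C)\mapsto(\varphi,\beta(\varphi)+C).
\]
We use it to transport the manifold structure. For this we must also check that $\SDiff$ is an open and closed subgroup of $\Diff$, hence a tame Lie group. It is a union of path components, since $\varphi^*$ acts on de Rham cohomology through the isotopy class, and $\Diff$ is locally path connected via Hamilton's charts.

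Next we verify that multiplication and inversion are smooth tame in these coordinates. Writing the product gives $(\varphi\psi,\, C' )$ with
\[
C' = \beta(\psi)+C_2+\psi^*\beta(\varphi)+\psi^*C_1-\beta(\varphi\psi).
\]
This is closed because $\d$ of it vanishes by the cocycle identity $(\varphi\psi)^*H-H=\psi^*(\varphi^*H-H)+(\psi^*H-H)$. The map is built from composition in $\Diff$, pullbacks, and $\beta$, all smooth tame, so it is smooth tame. Inversion is handled the same way: $(\varphi,B)^{-1}=(\varphi^{-1},-(\varphi^{-1})^*B)$. The main technical obstacle is precisely the tameness of the pullback map $\Diff\times\Forms{k}\to\Forms{k}$ in both variables, together with tameness of $G$ and $\d^*$. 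For these I would invoke Hamilton's results: smooth tame composition of maps, and elliptic operators on compact manifolds having tame inverses. Tameness of the inclusion of $\ClForms{2}$ as a tame direct summand also follows from the tame Hodge projections.

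Finally, independence of the representative of $[H]$ up to isomorphism goes as follows. If $H'=H+\d b$, then the $B$-field transform $e^{b}$ is an isomorphism of Courant algebroids from $H$ to $H'$. Conjugation by it gives a group isomorphism $\Auteq\to\mathrm{Auteq}(M,H')$, namely $(\varphi,B)\mapsto(\varphi,\, B+\varphi^*b-b)$, with the sign fixed by the convention. In the parametrizations above this is $(\varphi,C)\mapsto(\varphi,\ \beta(\varphi)+C+\varphi^*b-b-\beta'(\varphi))$, which is smooth tame with smooth tame inverse. It is therefore an isomorphism of tame Fréchet Lie groups.
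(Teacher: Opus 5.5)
Your proposal is correct and is essentially the paper's proof. It uses the same canonical global gauge $\mathrm{d}^*G(\varphi^*H-H)$ identifying $\mathrm{Auteq}(M,H)$ with $\mathrm{Diff}(M)_{[H]}\times\Omega^2_{\mathrm{cl}}(M)$, the same product and inversion formulas in this trivialization, and the same isomorphism $(\varphi,B)\mapsto(\varphi,B+\varphi^*b-b)$, which in coordinates reads $(\varphi,C)\mapsto(\varphi,C+(\mathrm{Id}-\mathrm{d}^*G\,\mathrm{d})(\varphi^*b-b))$; obtaining it as conjugation by $e^{b}$ makes the homomorphism property automatic, where the paper checks it by direct computation. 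The one input you take as known is the joint smooth tameness of $(\varphi,B)\mapsto\varphi^*B$, which the paper proves separately (its theorem on pulling back tensor fields), since the pullback is not a differential operator jointly in both arguments, so your appeal to Hamilton's composition theorem stands in for a genuine, if provable, lemma.
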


Having shown that the group of autoequivalences is a tame Fréchet Lie group it is natural to compute its Lie algebra. Denoting the Lie derivative by $\LieD$ and the space of closed two-forms by $\ClForms{2}$ we prove the following result.

\begin{theorem}
\label{MainThm:LieAlg}
The Lie algebra of $\Auteq$ expressed in a standard chart centered at $\mathrm{Id}_{\GenTM}\in \Auteq$ is
$(\VFs\times \ClForms{2},[-,-])$ with the Lie bracket
\begin{align*}
    [(X,B),(Y,C)]=(\LieD_YX,\LieD_YB-\LieD_XC-(\LieD_XQ\LieD_Y-\LieD_YQ\LieD_X)H+Q(\LieD_{\LieD_XY}H))
\end{align*}
for $X,Y\in \VFs$ and $B,C\in \ClForms{2}$. 
\end{theorem}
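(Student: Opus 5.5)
The plan is to compute the bracket through the second-order Taylor expansion of the group multiplication, written in the standard chart centered at $\mathrm{Id}_{\GenTM}$ that is constructed in the proof of Theorem~\ref{MainThm:AuteqMfdAndLieGr}. In a chart $\Phi$ with $\Phi(0)=e$, set $\mu(u,v)=\Phi^{-1}(\Phi(u)\Phi(v))$. Its expansion has the form
\[
\mu(u,v)=u+v+\beta(u,v)+(\text{terms quadratic in } u \text{ alone or in } v \text{ alone})+O(3).
\]
The Lie bracket is then $\beta(\xi,\eta)-\beta(\eta,\xi)$, up to the sign convention fixed by left or right invariance. I will use the convention that already gives Hamilton's bracket $\LieD_YX$ on the diffeomorphism factor. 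One point needs care, because we are in the tame Fréchet setting. The bracket is defined via $\mathrm{Ad}$, that is, as the derivative of $\eta\mapsto \d(c_{\exp? }\,\cdot)$, so I will instead compute it as the mixed derivative $\partial_s\partial_t|_{0}\,\Phi^{-1}\bigl(\Phi(t\xi)\Phi(s\eta)\Phi(t\xi)^{-1}\bigr)$. This only requires smoothness of the multiplication and the chart, which Theorem~\ref{MainThm:AuteqMfdAndLieGr} provides.

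\textbf{Step 1: recall the chart and the group law.} An autoequivalence is a pair $(\varphi,B)$ with $\varphi\in\Diff$ and $B\in\Forms{2}$ satisfying $\varphi^*H-H=\d B$. It acts by $e^{B}$ composed with the lift of $\varphi$. Composition becomes $(\varphi,B)\cdot(\psi,C)=(\varphi\circ\psi,\ \psi^*B+C)$, with the order dictated by the paper's conventions. The chart sends $(X,B)\in\VFs\times\ClForms{2}$ to
\[
\bigl(\exh(X),\ B+Q(\exh(X)^*H-H)\bigr),
\]
where $Q$ is the Hodge-theoretic right inverse of $\d$ on exact forms, $Q=\d^*G$. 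Because $\d Q\eta=\eta$ for exact $\eta$, this chart lands in $\Auteq$, and the second component of its inverse is $B\mapsto B-Q(\varphi^*H-H)$.

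\textbf{Step 2: expand the conjugation.} Take $\varphi_t=\exh(tX)$ and $\psi_s=\exh(sY)$. I use only the first-order facts $\partial_t\varphi_t^*\omega|_0=\LieD_X\omega$ and $\partial_t\varphi_t|_0=X$. From Hamilton's computation, the diffeomorphism part of the mixed derivative gives $\LieD_YX$.

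For the form part I write out the conjugate
\[
(\varphi_t,B_t)(\psi_s,C_s)(\varphi_t,B_t)^{-1},
\qquad B_t=tB+Q(\varphi_t^*H-H),\quad C_s=sC+Q(\psi_s^*H-H).
\]
Next I subtract $Q\bigl((\varphi_t\psi_s\varphi_t^{-1})^*H-H\bigr)$ and take $\partial_s\partial_t$ at $0$. Three kinds of terms survive:
\begin{itemize}
\item the pullback terms, which give $\LieD_YB-\LieD_XC$;
\item the pieces $\LieD_YQ\LieD_XH$ and $\LieD_XQ\LieD_YH$ coming from pulling back the $Q(\cdot)$ corrections;
\item the mixed second derivative of $(\varphi_t\psi_s\varphi_t^{-1})^*H$, which equals $\LieD_{[X,Y]}H$ up to sign. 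Together with the outer $Q$, this produces the term $Q(\LieD_{\LieD_XY}H)$.
\end{itemize}
Second derivatives of $\exh$ in a single variable never enter a mixed $\partial_s\partial_t$, so the choice of Riemannian exponential does not matter.

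\textbf{Step 3: sanity checks.} I check that the resulting form component is closed, as it must be since the bracket lands in $\VFs\times\ClForms{2}$. Applying $\d$ and using $\d Q\LieD_XH=\LieD_XH$, which holds because $\LieD_XH=\d\iota_XH$ is exact, the $Q$-terms give
\[
\LieD_X\LieD_YH-\LieD_Y\LieD_XH+\LieD_{\LieD_XY}H=0.
\]
This check also pins down the relative signs.

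\textbf{Main obstacle.} I expect the hardest part to be the bookkeeping of signs and orderings: the order of the group law, left versus right pullbacks, and the identification of the mixed derivative of the conjugated pullback with $\LieD_{\LieD_XY}H$. A secondary point is justifying that the derivatives commute with $Q$ and with pullback. This follows because $Q$ is a continuous linear (tame) operator and because $(\varphi,\omega)\mapsto\varphi^*\omega$ is smooth tame; both facts are taken from the earlier construction of the chart.
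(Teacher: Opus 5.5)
Your proposal is correct and follows essentially the paper's route: the paper also computes the bracket as $\mathrm{ad}=\partial_t|_{0}\mathrm{Ad}$, i.e.\ as the mixed derivative of conjugation in the chart $\Phi\circ(\exh\times\mathrm{Id})$ with gauge $Q(f^*H-H)$, and your three groups of terms are exactly what it obtains: the pullback terms, the $Q$-corrections coming from $B_t$ and $C_s$, and $-Q$ applied to the mixed derivative of $(\varphi_t\psi_s\varphi_t^{-1})^*H$, which equals $-Q(-\LieD_X\LieD_YH+\LieD_Y\LieD_XH)=Q(\LieD_{\LieD_XY}H)$. One slip is in your closedness check: after applying $\d$, the $Q$-terms give $-(\LieD_X\LieD_Y-\LieD_Y\LieD_X)H$, so the identity should read $-\LieD_X\LieD_YH+\LieD_Y\LieD_XH+\LieD_{\LieD_XY}H=0$ (as you wrote it, the left side equals $2\LieD_{\LieD_XY}H$), and you should therefore fix the sign of the last term by the direct computation above rather than by that check.
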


Lastly, the group of autoequivalences of a Courant algebroid naturally acts on the space of its generalized almost complex structures. This action comes up in the deformation theory of generalized almost complex structures and so becomes worthwhile studying in the Nash-Moser context. Denoting the space of fiberwise linear generalized almost complex structures on $\GenTM$ over a compact manifold $M$ equipped with the standard scalar product by $\Jcal_M$ we prove the following additional result.

\begin{theorem}
\label{MainThm:Jcal_M}
Let $M$ be a compact manifold and $H\in \ClForms{3}$. The space $\Jcal_M$ is a fiber bundle and the space $\Gamma(\Jcal_M)$ of generalized almost complex structures on $\GenTM$ equipped with the standard scalar product is a tame Fréchet manifold.
Furthermore, the canonical left group action 
\begin{align*}
    \Auteq\times \Gamma(\Jcal_M)&\longrightarrow \Gamma(\Jcal_M)\\
    (F,J)&\longmapsto F\circ J\circ F^{-1}
\end{align*}
is a smooth tame map.
\end{theorem}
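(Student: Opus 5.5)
The plan has two parts. First I show that $\Jcal_M$ is a smooth fiber bundle over $M$ with typical fiber a homogeneous manifold. Second I obtain the tame Fréchet structure on $\Gamma(\Jcal_M)$ from Hamilton's result that the space of sections of a fiber bundle over a compact base is a tame Fréchet manifold.

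\textbf{Fiber bundle structure.} At each $p\in M$ the fiber $(\Jcal_M)_p$ consists of the linear maps $J\colon \GenTM_p\to\GenTM_p$ with $J^2=-\Id$ that are orthogonal for the standard scalar product, which has signature $(n,n)$. This set is the homogeneous space $O(n,n)/U(k,l)$ with $k+l=n$. Equivalently, it is a union of connected components of a closed submanifold of $\mathrm{End}(\R^{2n})$ cut out by two polynomial conditions. To check that these conditions define a submanifold, I will verify that the differential of $J\mapsto (J^2+\Id,\,J^*+J)$ has constant rank along the solution set; the $O(n,n)$-equivariance makes this automatic. A local trivialization of $\GenTM$ by orthonormal frames, obtained from frames of $TM$ together with their duals, is compatible with the scalar product. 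Such a trivialization therefore gives local trivializations of $\Jcal_M\subset \mathrm{End}(\GenTM)$ with this fixed fiber. Hence $\Jcal_M$ is a smooth fiber bundle. Since $\Jcal_M$ is a submanifold of the vector bundle $\mathrm{End}(\GenTM)$, Hamilton's theorem gives that $\Gamma(\Jcal_M)$ is a tame Fréchet manifold, with charts modeled on sections of the vertical tangent bundle pulled back along a section.

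\textbf{Smoothness of the action.} I write $F\in\Auteq$ in the standard form $F=\varphi_*\circ e^{B}$, or a variant of it. Here $\varphi\in\mathrm{Diff}(M)$ satisfies $\varphi^*H-H=\d B$ for a two-form $B$, $\varphi_*$ acts on $\GenTM$ by $(\d\varphi,(\varphi^{-1})^*)$, and $e^B$ is the $B$-field transform. This is the decomposition underlying the chart structure of Theorem~\ref{MainThm:AuteqMfdAndLieGr}. In these terms the action factors as
\[
F\circ J\circ F^{-1}=\varphi_*\bigl(e^{B}Je^{-B}\bigr)\varphi_*^{-1}.
\]
The map $(B,J)\mapsto e^BJe^{-B}$ is a fiberwise polynomial, hence a nonlinear differential operator of order zero between bundles, and so it is smooth tame by Hamilton. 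The second step, $(\varphi,K)\mapsto \varphi_*K\varphi_*^{-1}$, is the natural action of $\mathrm{Diff}(M)$ on sections of the natural bundle $\mathrm{End}(\GenTM)$. It is the composition
\[
K\circ\varphi^{-1},
\]
conjugated fiberwise by the first jet of $\varphi$. Inversion in $\mathrm{Diff}(M)$, composition $(\varphi,K)\mapsto K\circ\varphi$, and the first-derivative map $\varphi\mapsto\d\varphi$ are all smooth tame by Hamilton's results. The action restricts from $\mathrm{End}(\GenTM)$ to the submanifold $\Gamma(\Jcal_M)$, because $\Jcal_M$ is invariant under orthogonal bundle maps.

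\textbf{Main obstacle.} The hard part is working in the actual charts of $\Auteq$ rather than in the abstract decomposition. Near an arbitrary point $F_0$ I must express $F$ smooth-tamely as $(\varphi,B)$, with $B$ depending smooth-tamely on $\varphi$. This likely uses the Hodge-theoretic choice $B=Q(\varphi^*H-H)$ built into the standard charts. I would use left translation by $F_0$, which is smooth tame since $\Auteq$ is a tame Lie group, to reduce to charts centered at $\Id_{\GenTM}$. There the coordinates are $(X,B)$ with $\varphi=\exp(X)$, and smooth tameness follows by composing the maps above.
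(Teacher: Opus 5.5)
Your architecture matches the paper's. Orthonormal frames of $\GenTM$ identify $\Jcal_M$ fiberwise with a fixed $O(n,n)$-orbit in $\mathrm{End}(\R^{2n})$, and Hamilton's theorem on sections gives the manifold structure. In the global trivialization $\Phi$ the action becomes $c_D(f,c_\Omega(b+\phi(f),J))$, with $c_\Omega(B,K)=e^BKe^{-B}$ a vector bundle operator and $c_D(f,K)=f_*Kf_*^{-1}$.

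The genuinely different step is $c_D$. The paper splits $\mathrm{End}(\GenTM)$ into four tensor blocks, on which $c_D$ acts by pushforward, and then invokes Theorem~\ref{PullbackSmoothTame}. It builds that theorem by hand (vector fields via $\mathrm{Ad}$ in $\Diff$, $1$-forms via a metric, general tensors via bump functions), because $(f,\tau)\mapsto f^*\tau$ is not a differential operator jointly in $(f,\tau)$. You handle that nonlocality directly with Hamilton's composition theorem (Proposition~\ref{Prop:CompSmoothTame}): $\varphi_*K\varphi_*^{-1}=\rho\circ(j^1\varphi,K)\circ\varphi^{-1}$ for a fixed smooth map $\rho$ that conjugates by the linear part of the jet. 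This is shorter and works for any natural bundle, but the bookkeeping is not automatic:
\begin{itemize}
\item $\rho$ maps an open subset of the total space of $J^1(M\times M\to M)\times_M\mathrm{End}(\GenTM)$ to $\mathrm{End}(\GenTM)$ without covering the identity.
\item After composing with $\varphi^{-1}$ you land in $C^\infty(M,\mathrm{End}(\GenTM))$. You still need that a smooth tame map there with values in $\Gamma(\mathrm{End}(\GenTM))$ is smooth tame into the latter, e.g.\ via a parallel-transport retraction, which is a bundle operator.
\end{itemize}
Your ``main obstacle'' is not one. The manifold structure of $\Auteq$ is \emph{defined} by $\Phi(f,b)=f_*e^{b+\phi(f)}$, with $\phi(f)=Q(f^*H-H)$ smooth tame. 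So $(f,b)$ are global coordinates and no left translation is needed.

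Two points need repair.

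First, the fiber. An orthogonal $J$ with $J^2=-\Id$ makes the scalar product the real part of a Hermitian form of signature $(k,l)$ with $(2k,2l)=(n,n)$. So only $U(\tfrac{n}{2},\tfrac{n}{2})$ occurs, and the fiber is empty for odd $\dim M$. Also, constant rank of $F(J)=(J^2+\Id,J^*+J)$ \emph{along the solution set} is not a submanifold criterion: $(x^2-y^2)^2$ has rank $0$ on its singular zero set. You can fix this in either of two ways.
\begin{itemize}
\item Show $\ker DF_J=T_J(O(n,n)\cdot J)$ and use a slice. This holds: if $X^*=-X$ and $XJ=-JX$, then $X=[Y,J]$ with $Y=-\tfrac{1}{2}XJ\in\mathfrak{o}(n,n)$.
\item Argue as in Lemma~\ref{Lemma:LinGACSSubmanifold}: transitivity via a basis $\{e_i,Je_i\}$, together with closedness of the fiber, makes the orbit map an embedding.
\end{itemize}

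Second, invariance of $\Jcal_M$ only makes the restricted action well defined. To conclude it is smooth tame into $\Gamma(\Jcal_M)$, you need $\Gamma(\Jcal_M)$ to be an embedded (zero-)tame submanifold of $\Gamma(\mathrm{End}(\GenTM))$. The paper gets this from Wang's lemma on sections of subbundles (Proposition~\ref{Proposition:GACSZeroTameSubmfd}). Alternatively, set up both conjugations as bundle operators with values directly in $\Jcal_M$.
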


The rest of the thesis will be structured as follows.

In Section \ref{Section:ExactCourantAlgebroids} we review some of the basic theory of Courant algebroids, in particular, exact Courant algebroids. We recall the notion of autoequivalences of Courant algebroids and their explicit description in the case of exact Courant algebroids.

In Section \ref{Section:Nash-Moser Category} we review some of the theory of locally convex vector spaces and the methods of differential calculus on them. We introduce and focus on tame Fréchet spaces and smooth tame maps which together make up the Nash-Moser category.
A special focus is put on the tame Fréchet space of sections of a finite-dimensional smooth vector bundle over a compact manifold. We review the relevant theory of Sobolev spaces of smooth sections of vector bundles over compact manifolds. In particular, we transport the tools developed in Hodge theory to the context of tame Fréchet spaces. 

In Section \ref{Section:TameFrechetManifolds} we review the relevant theory of infinite-dimensional manifolds with a particular focus on infinite-dimensional Lie groups and their Lie algebras. Despite the focus of the thesis being on tame Fréchet Lie groups we formulate most of the results more generally for various types of infinite-dimensional manifolds.
We particularly focus on manifolds consisting of smooth maps between finite-dimensional manifolds. In particular we study the diffeomorphism group of a compact manifold, families of smooth maps, and show that the pullback of tensor fields along diffeomorphisms is a smooth tame operation.

In Section \ref{Section:TheGroupOfautoequivalencesOfAnExCAAsATameLieGroup} we equip the group of autoequivalences with the structure of a tame Fréchet manifold and show that, as such, it is actually a tame Fréchet Lie group.
We compute the Lie algebra of the group of autoequivalences of an exact Courant algebroid over a compact manifold.
Lastly, we introduce the notion of generalized almost complex structures and equip them with the structure of a tame Fréchet manifold in the case of a compact base manifold. Moreover, we show that in the case of exact Courant algebroids over compact base manifolds the canonical left action of the group of autoequivalences on the space of generalized almost complex structures is smooth tame.

\section*{Acknowledgements}
I want to thank Professor Cortés for supervising my Master's thesis. Moreover, I want to thank Professor Cortés and the Geomatikum team for providing an office that I could share with other students for a semester, leading to many interesting discussions.
I thank Doctor Röser for being the second examiner of this thesis and for helpful conversations.
I thank Paula Pilatus for her guidance and many very helpful discussions.

\section*{Preliminaries}
\begin{convention}
Throughout this thesis, when referring to manifolds, fiber bundles, and vector bundles, we generally mean smooth finite-dimensional manifolds and vector bundles, respectively, unless we explicitly use qualifiers such as "Banach manifold" or "tame Fréchet vector bundle". Similarly, when we call a map $B\to M$ a surjective submersion we imply that $B$ and $M$ are finite-dimensional smooth manifolds and that the map $B\to M$ is a smooth surjective submersion.
\end{convention}

\begin{convention}
Let $M$ be a compact manifold and $p:B\to M$ a surjective submersion, e.g.~a fiber bundle or a vector bundle. Then, we will denote by $\Gamma(B)$ the space of smooth sections $\{s\in C^\infty(M,B)|~p\circ s=\Id\}$.
\end{convention}

\begin{convention}
    For any manifold $M$ we denote by $\Diff$ the group of diffeomorphisms of $M$.
\end{convention}

\begin{convention}
Throughout this thesis we will denote the natural numbers including $0$ by $\N=\{0,1,2,\ldots \}$.
\end{convention}

\begin{convention}
Let $X$ be a topological space and $A\subseteq X$ a subset. Then by $\Bar{A}$ we denote the closure of $A$ in $X$.
\end{convention}

\begin{convention}
Let $M$ be a manifold, and $\tau\in \Gamma((TM)^n\tens (T^*M)^m)$ be a smooth $(n,m)$-tensor field with $n,m\in \N$. Let $X\in \VFs$ be a smooth vector field on $M$ then we denote by $\LieD_X\tau$ the Lie derivative of $\tau$ in the direction of $X$. Furthermore, if $m>0$ we denote by $\iota_X\tau$ the interior product of $\tau$ and $X$.
\end{convention}

\begin{convention}
    Let $M$ be a manifold. We fix the notation $\Forms{k}\coloneqq \Gamma(\bigwedge^k T^*M)$ as well as $\ClForms{k}$ for closed differential $k$-forms and $\ExForms{k}$ for exact differential $k$-forms.
\end{convention}

\begin{convention}
    Throughout the thesis we will sometimes perform estimates involving positive constants. These constants are not necessarily fixed and may change from one occurrence to the next. So, in an estimate of the form $d\leq C e\leq C f$ the two occurrences of $C$ may refer to two different positive constants.
\end{convention}

\section{Exact Courant algebroids}
\label{Section:ExactCourantAlgebroids}

We want to describe the group of autoequivalences of an exact Courant algebroid. In this section we briefly review some concepts relating to (exact) Courant algebroids. These form the basis of generalized (complex) geometry. We recall the group of autoequivalences of a Courant algebroid. Furthermore, we reproduce the definition of the (twisted) generalized tangent bundle, its relation to exact Courant algebroids, and discuss how exact Courant algebroids can be classified up to isomorphism by their \Severa\ class. Lastly, we discuss how one can explicitly describe the group of autoequivalences of a twisted generalized tangent bundle by diffeomorphisms and differential $2$-forms related by a certain compatibility condition.

\begin{definition}
Let $M$ be a manifold, $E\to M$ be a vector bundle, $\scal{-}{-}\in \mathrm{Sym}^2(E^*)$ be a fiberwise nondegenerate symmetric bilinear form, $[-,-]:\Gamma(E)\times \Gamma(E)\to \Gamma(E)$ be a bilinear map, and $\pi:E\to TM$ be a smooth vector bundle homomorphism for which the following equations hold:
\begin{alignat*}{3}
&\text{1.)} \quad && [u,[v,w]]=[[u,v],w]+[v,[u,w]] \quad && \text{(Jacobi identity),} \\
&\text{2.)} \quad && \pi(u)(\scal{v}{w})=\scal{[u,v]}{w}+\scal{v}{[u,w]} \text{,} \quad && \\
&\text{3.)} \quad && \scal{[u,v]+[v,u]}{w}=\pi(w)(\scal{u}{v}) \text{,} \quad &&
\end{alignat*}
for all $u,v,w\in \Gamma(E)$. Then we call $(E,[-,-],\scal{-}{-},\pi)$ a \textit{Courant algebroid}. We call $\scal{-}{-}$ the \textit{scalar product}, $[-,-]$ is called the \textit{Dorfman bracket}, and we call $\pi$ the \textit{anchor map}.
\end{definition}

\begin{definition}
Let $(E,[-,-],\scal{-}{-},\pi)$ and $(E',[-,-]',\scal{-}{-}',\pi')$ be Courant algebroids. A homomorphism of vector bundles $\phi:E\to E'$ covering $\Id$ is called a \textit{homomorphism of Courant algebroids} if $\pi'\circ \phi =\pi$, $\phi^*\scal{-}{-}'=\scal{-}{-}$ and $[\phi u,\phi v]'=\phi [u,v]$ for every $u,v\in \Gamma(\mathbb TM)$. It is called an \textit{isomorphism of Courant algebroids} if $\phi:E\to E'$ is additionally an isomorphism of vector bundles. In that case we call $E$ and $E'$ \textit{isomorphic as Courant algebroids}.
\end{definition}

\begin{definition}
Let $(E,[-,-],\scal{-}{-},\pi)$ be a Courant algebroid then it is called
\begin{enumerate}
    \item \textit{transitive} if $\pi$ has constant rank,
    \item \textit{regular} if $\pi$ is surjective, and
    \item \textit{exact} if the sequence $$0\to T^*M\xrightarrow{\pi^*}E\xrightarrow{\pi} TM\to 0$$ is exact.
\end{enumerate}
Here $\pi^*:T^*M\to E$ denotes the dual of $\pi$ as a map $T^*M\to E^*$ composed with the natural identification $E^*\cong E$ via the scalar product $\scal{-}{-}$.
\end{definition}

\begin{definition}
\label{Def:GenTanBund}
Let $M$ be a manifold and define $\mathbb{T}M\coloneqq TM\oplus T^*M$. Then, $\mathbb{T}M$ is called the \textit{generalized tangent bundle}. 

On $\mathbb{T}M$ the \textit{canonical scalar product} $\scal{-}{-}_{\mathrm{can}}\in \Gamma(\mathrm{Sym}^2(\mathbb{T}M)^*)$ is defined by
\begin{align*}
    \scal{X+\xi}{Y+\eta}_{\mathrm{can}}\coloneqq \frac{1}{2}(\xi(Y)+\eta(X))
\end{align*}
for $X,Y\in T_pM$ and $\xi,\eta\in T_p^*M$, $p\in M$.

The \textit{canonical anchor} $\pi_{\mathrm{can}}:\mathbb TM= TM\oplus T^*M\to TM$ is defined by the canonical projection.

Lastly, for every $H\in \Forms{3}$ a bracket $[-,-]_H:\Gamma(\mathbb TM)\times\Gamma(\mathbb TM)\to \Gamma(\mathbb TM)$ is defined by
\begin{align*}
    [X+\xi,Y+\eta]_H\coloneqq \LieD_X(Y+\eta)-\iota_Y d\xi + \iota_X\iota_YH
\end{align*}
where $X,Y\in \VFs$, $\xi,\eta \in \Forms{1}$. This is called the \textit{(canonical) $H$-twisted Dorfman bracket}. In the case that $H=0$ we also call it the \textit{(canonical) untwisted Dorfman bracket}.
\end{definition}

\begin{remark}
\label{Rem:DifferentH-twisted}
Note that there are varying definitions of the $H$-twisted Dorfman bracket for $H\in \Forms{3}$ in literature. The contesting definition is 
\begin{align*}
    [X+\xi,Y+\eta]'_H\coloneqq \LieD_X(Y+\eta)-\iota_Y d\xi + H(X,Y,-).
\end{align*}
It can be found, for example, in \cite{severa2001poissongeometry3formbackground}. Note that $[-,-]'_H=[-,-]_{(-H)}$. We stick with $[-,-]_H$, which is the definition that is used, for example, in \cite{gualtieriAnn} and in \cite{RubioTipler}.
\end{remark}

\begin{convention}
\label{Conv:ExCAshorthand}
We denote $(\mathbb TM,[-,-]_H,\scal{-}{-}_\mathrm{can},\pi_\mathrm{can})$ by $(\mathbb TM,H)$.
\end{convention}

\begin{theorem}[{\cite[p.~35]{gualtieri}}]
Let $M$ be a manifold and $H\in \Forms{3}$. Then $(\mathbb TM,H)$ is a Courant algebroid if and only if $H$ is closed.
\end{theorem}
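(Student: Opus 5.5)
We verify the three axioms of a Courant algebroid for $(\GenTM,[-,-]_H,\scal{-}{-}_{\mathrm{can}},\pi_{\mathrm{can}})$ directly from the formula for $[-,-]_H$. The idea is to split each expression into its untwisted part and its $H$-dependent part. Write $u=X+\xi$, $v=Y+\eta$, $w=Z+\zeta$. Axioms 2.) and 3.) hold for every $H\in\Forms{3}$ and do not see closedness. The Jacobi identity holds exactly when $dH=0$.

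\textbf{Axioms 2.) and 3.)} For the symmetrization, Cartan's formula $\LieD_X\eta=\iota_Xd\eta+d\iota_X\eta$ gives
\begin{align*}
[u,v]_H+[v,u]_H=d(\iota_X\eta+\iota_Y\xi)+\iota_X\iota_YH+\iota_Y\iota_XH=2\,d\scal{u}{v}_{\mathrm{can}},
\end{align*}
since the $H$-terms cancel by antisymmetry. Pairing with $w$ yields $\scal{2\,d\scal{u}{v}}{w}=\iota_Z d\scal{u}{v}=\pi(w)\scal{u}{v}$, which is 3.). For 2.), the vector part of $[u,v]_H$ is $\LieD_XY$ and its form part is $\LieD_X\eta-\iota_Yd\xi+\iota_X\iota_YH$. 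Expand $\scal{[u,v]}{w}+\scal{v}{[u,w]}$. The $H$-contributions are $\tfrac12(H(Y,X,Z)+H(Z,X,Y))$ with suitable ordering, and they vanish by antisymmetry. The terms $-\iota_Z\iota_Yd\xi-\iota_Y\iota_Zd\xi$ cancel for the same reason. What remains is the Leibniz rule $\LieD_X(\eta(Z)+\zeta(Y))$ for the Lie derivative, which equals $2X\scal{v}{w}$.

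\textbf{Jacobi identity.} Write $[-,-]_H=[-,-]_0+\iota_X\iota_YH$. I would take the untwisted Jacobi identity as the classical case; it can be checked, for example, by viewing $[u,\cdot]_0$ as the derivation $\LieD_X-\iota_{(\cdot)}d\xi$. The $H$-linear part of the Jacobiator $[u,[v,w]]-[[u,v],w]-[v,[u,w]]$ then reduces, after using $[\LieD_X,\iota_Y]=\iota_{[X,Y]}$, to
\begin{align*}
\iota_X\iota_{[Y,Z]}H+\LieD_X(\iota_Y\iota_ZH)-\iota_{[X,Y]}\iota_ZH-\iota_Y\iota_{[X,Z]}H-\LieD_Y(\iota_X\iota_ZH)+\iota_Zd(\iota_X\iota_YH)\;-\;(\text{similar}),
\end{align*}
and a standard computation identifies this with $\iota_Z\iota_Y\iota_XdH$ up to sign. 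One applies the invariant formula for $dH$, or equivalently Cartan calculus: $\iota_X\iota_Y\iota_ZdH=\iota_X\iota_Y\LieD_ZH-\dots$. There are no terms quadratic in $H$, because $[u,v]_H$ has the same vector part $\LieD_XY$ as $[u,v]_0$ and the $H$-term only sees vector parts. Hence the Jacobiator equals $\pm\,\iota_X\iota_Y\iota_Z\,dH$.

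\textbf{Both directions and the main obstacle.} If $dH=0$, the Jacobiator vanishes and we have a Courant algebroid. Conversely, if $dH\neq0$ at some point $p$, choose vector fields $X,Y,Z$ with $(dH)_p(X,Y,Z,\cdot)\neq0$; taking $u=X$, $v=Y$, $w=Z$ then violates the Jacobi identity. The main obstacle is the careful bookkeeping in the Jacobiator, in particular the signs and the precise identification with $dH$. I would organize it via the derived bracket formalism, $[u,v]_H\leftrightarrow[[\iota_u,d_H],\iota_v]$ with $d_H=d-H\wedge$ acting on forms. In that formalism the Jacobi identity follows from $d_H^2=-dH\wedge$, which makes the if and only if transparent.
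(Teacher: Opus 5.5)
Your proposal is correct. The paper gives no argument of its own here; it only cites Gualtieri's thesis. Your direct Cartan-calculus verification is therefore a self-contained route, and its payoff is the explicit Jacobiator formula, which is exactly what gives the ``only if'' direction.

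Your checks of 2.) and 3.) are right, including the factor $\tfrac12$ in $\scal{-}{-}_{\mathrm{can}}$. The six terms you display are already the complete $H$-linear part of the Jacobiator, so drop the trailing ``$-(\text{similar})$''. The computation you defer does close up.

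The quickest way to see this is to apply your own derivation idea to the twisted bracket and combine it with the computation in the Remark following Proposition~\ref{B-fieldTrafoProp}. Write $[u,\cdot]_H=\LieD_X+\delta_{B_u}$, where $\delta_B(Y+\eta)\coloneqq\iota_YB$ and $B_u\coloneqq-(d\xi+\iota_XH)$. The Jacobiator is the defect $D[v,w]_H-[Dv,w]_H-[v,Dw]_H$ of $D=[u,\cdot]_H$, and this defect is additive in $D$.

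For $D=\LieD_X$: the untwisted part drops out by naturality, using $[\LieD_X,\LieD_Y]=\LieD_{[X,Y]}$, $[\LieD_X,\iota_Y]=\iota_{[X,Y]}$ and $[\LieD_X,d]=0$. The defect is then $\LieD_X\iota_Y\iota_ZH-\iota_{[X,Y]}\iota_ZH-\iota_Y\iota_{[X,Z]}H=\iota_Y\iota_Z\LieD_XH$.

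For $D=\delta_B$: the paper's Remark gives defect $\iota_Y\iota_Z\,dB$. Its computation uses neither $dB=0$ nor $dH=0$, and one reads it with $e^B=\mathrm{Id}+\delta_B$ and $[\delta_Bu,\delta_Bv]_H=0$.

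Adding the two,
\[
[u,[v,w]_H]_H-[[u,v]_H,w]_H-[v,[u,w]_H]_H=\iota_Y\iota_Z\bigl(\LieD_XH-d\iota_XH\bigr)=\iota_Y\iota_Z\iota_X\,dH .
\]
This agrees with your $\pm\iota_X\iota_Y\iota_Z\,dH$ and settles both directions at once. At $H=0$ it also proves the untwisted identity you wanted to quote as classical.

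If you take the derived-bracket route instead, watch the sign. With Clifford action $\iota_{X+\xi}=\iota_X+\xi\wedge$, the paper's $[-,-]_H$ is the derived bracket of $d+H\wedge$. The operator $d-H\wedge$ produces $[-,-]_{-H}$ (cf.~Remark~\ref{Rem:DifferentH-twisted}). Since $(d\pm H\wedge)^2=\pm\, dH\wedge$, the conclusion is unaffected either way.
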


\begin{definition}
An \textit{automorphism} of a Courant algebroid is simply an isomorphism of the Courant algebroid to itself.

More generally an \textit{autoequivalence} of a Courant algebroid $(E\to M,[-,-],\scal{-}{-},\pi)$ is a vector bundle isomorphism $F:E\to E$ covering a diffeomorphism $f\in \Diff$, such that $\pi \circ F = \d f \circ \pi$, $\scal{F u}{F v}=\scal{u}{v}$ for any $u,v \in E_p$, $p \in M$, and $[F s,F s']=F [s,s']$ for any $s,s'\in \Gamma(E)$. Here we define $F s\coloneqq F \circ s \circ f^{-1}$, $s\in \Gamma(E)$, in order to obtain a section of $E$.
\end{definition}

\begin{proposition}[{\cite[p.~82]{gualtieriAnn}}]
\label{B-fieldTrafoProp}
Let $M$ be a manifold, $B\in \Forms{2}$, and $H\in \ClForms{3}$ then $e^B:\GenTM\to \GenTM$ defined by $e^B(X+\xi)\coloneqq X+\xi+\iota_XB$ for $X\in \VFs$ and $\xi\in \Forms{1}$ is an isomorphism  $e^B:(\mathbb TM,H)\stackrel{\cong}{\to} (\mathbb TM,H+\d B)$.

In particular $e^B$ is an automorphism of $(\mathbb TM, H)$ if and only if $B\in \ClForms{2}$.
Moreover, every automorphism of $(\mathbb TM, H)$ is of the form $e^B$ for some $B\in \ClForms{2}$.
\end{proposition}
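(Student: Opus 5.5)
We prove the three assertions of Proposition \ref{B-fieldTrafoProp} in turn; everything reduces to Cartan calculus applied to the formula for $[-,-]_H$.

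\textbf{Step 1: $e^B$ is an isomorphism $(\GenTM,H)\to(\GenTM,H+\d B)$.}
It is clearly a vector bundle isomorphism covering $\Id$, with inverse $e^{-B}$. It preserves the anchor, since it does not change the vector part. It preserves the scalar product, because
\begin{align*}
\scal{e^B(X+\xi)}{e^B(Y+\eta)}_{\mathrm{can}}-\scal{X+\xi}{Y+\eta}_{\mathrm{can}}=\tfrac12\big(B(X,Y)+B(Y,X)\big)=0
\end{align*}
by skew-symmetry of $B$. For the bracket we expand
\begin{align*}
[X+\xi+\iota_XB,\;Y+\eta+\iota_YB]_{H+\d B}.
\end{align*}
The extra terms compared with $[X+\xi,Y+\eta]_H$ are
\begin{align*}
\LieD_X\iota_YB-\iota_Y\d\iota_XB+\iota_X\iota_Y\d B.
\end{align*}
Using Cartan's formula $\LieD_X=\d\iota_X+\iota_X\d$, the commutator identity $\iota_{[X,Y]}=[\LieD_X,\iota_Y]$, and the fact that $\iota_X$ and $\iota_Y$ anticommute, we write
\begin{align*}
\LieD_X\iota_YB=\iota_{[X,Y]}B+\iota_Y\LieD_XB=\iota_{[X,Y]}B+\iota_Y\d\iota_XB+\iota_Y\iota_X\d B.
\end{align*}
Substituting, the $\iota_Y\d\iota_XB$ terms cancel, and the $\d B$ terms cancel because $\iota_Y\iota_X+\iota_X\iota_Y=0$. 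The extra terms therefore reduce to $\iota_{[X,Y]}B$. This is exactly what is required, since $e^B[X+\xi,Y+\eta]_H=[X+\xi,Y+\eta]_H+\iota_{[X,Y]}B$, the vector part of the bracket being $\LieD_XY=[X,Y]$.

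\textbf{Step 2: $e^B$ is an automorphism of $(\GenTM,H)$ iff $B$ is closed.}
If $\d B=0$, Step 1 gives an automorphism directly. Conversely, suppose $e^B$ preserves $[-,-]_H$. By Step 1, $e^B$ also intertwines $[-,-]_H$ with $[-,-]_{H+\d B}$. Hence $[u,v]_H=[u,v]_{H+\d B}$ for all sections $u,v$, which gives $\iota_X\iota_Y\d B=0$ for all vector fields $X,Y$, and so $\d B=0$.

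\textbf{Step 3: every automorphism is of the form $e^B$.}
Let $F$ be an automorphism covering $\Id$. First, anchor compatibility forces $F$ to preserve $\ker\pi=T^*M$. For $\xi\in T^*M$ and any $Y$ we have
\begin{align*}
\scal{F\xi}{Y}=\scal{F\xi}{FF^{-1}Y}=\scal{\xi}{F^{-1}Y}=\tfrac12\,\xi(Y),
\end{align*}
where the last equality uses $\pi(F^{-1}Y)=Y$. Writing $F\xi=Z+\zeta$ with $Z=\pi(F\xi)=0$, this says $\zeta(Y)=\xi(Y)$ for all $Y$, so $F\xi=\xi$. Next, for vectors we may write $F(X)=X+\beta(X)$ with $\beta:TM\to T^*M$. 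Isotropy of $TM$ gives $\scal{FX}{FY}=\scal{X}{Y}=0$, i.e. $\beta(X)(Y)+\beta(Y)(X)=0$, so $\beta=\iota_{(-)}B$ for a $2$-form $B$. Thus $F=e^B$, and Step 2 then shows $B\in\ClForms{2}$.

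The main obstacle is only the sign bookkeeping in Step 1. Everything else is linear algebra.
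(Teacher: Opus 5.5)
Your proof is correct, and Step 1 is essentially the paper's own argument. In the remark following the proposition, the paper computes the defect $[e^Bu,e^Bv]_H-e^B[u,v]_H=-\iota_X\iota_Y\d B$ using $\LieD_X\iota_Y-\iota_Y\LieD_X=\iota_{[X,Y]}$ and Cartan's formula, and observes that the extra twist term of $[-,-]_{H+\d B}$ cancels it; you instead expand the $(H+\d B)$-bracket directly, but it is the same calculation. The paper checks only the bracket part and cites Gualtieri for the anchor/pairing compatibility, the closedness criterion and the classification of automorphisms, so your Steps 2 and 3 correctly supply what the paper leaves to the reference (Step 2 via surjectivity of $e^B$ on sections; Step 3 via pointwise linear algebra forcing $F|_{T^*M}=\Id$ and $\beta$ to be skew).
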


\begin{remark}
Throughout the literature some of the signs in the previous proposition might vary as pointed out in Remark \ref{Rem:DifferentH-twisted}. To avoid confusion, we give a more explicit proof of the first part of the previous proposition using the convention for the $H$-twisted Dorfman bracket from Definition \ref{Def:GenTanBund} which is used throughout this thesis. We find that
\begin{align*}
    &[e^B(X+\xi),e^B(Y+\eta)]_H-e^B([X+\xi,Y+\eta]_H)\\
    &=[\iota_XB,Y]+[X,\iota_YB]-\iota_{[X,Y]}B\\
    &=-\iota_Y\d\iota_XB+\LieD_X\iota_YB-\iota_{[X,Y]}B\\
    &=(\LieD_X\iota_Y-\iota_Y\LieD_X)B+\iota_Y\iota_X\d B-\iota_{[X,Y]}B\\
    &=\iota_Y\iota_X\d B=-\iota_X\iota_Y\d B
\end{align*}
where we have used the standard formula $\LieD_X\iota_Y-\iota_Y\LieD_X=\iota_{[X,Y]}$.
So we indeed see that
\begin{align*}
    &[e^B(X+\xi),e^B(Y+\eta)]_{H+\d B}-e^B([X+\xi,Y+\eta]_H)\\
    &=[e^B(X+\xi),e^B(Y+\eta)]_H+\iota_X\iota_Y\d B-e^B([X+\xi,Y+\eta]_H)=0.
\end{align*}
So, $e^B$ is indeed an isomorphism from $(\mathbb TM,H)$ to $(\mathbb TM,H+\d B)$. Hitchin used a similar proof in \cite[Proposition 1]{hitchin2010lecturesgeneralizedgeometry}.
\end{remark}

\begin{proposition}[{\cite[p.~5]{severa2001poissongeometry3formbackground}, \cite[p.~39]{gualtieri}}]
\label{Prop:SeveraClass}
Any exact Courant algebroid
$(E\to M,[-,-],\scal{-}{-},\pi)$ is isomorphic as a Courant algebroid to $(\mathbb TM, H)$ for some $H\in \ClForms{3}$. 
Let $H,H'\in \ClForms{3}$ then $(\mathbb TM,H)\cong (\mathbb TM,H')$ as Courant algebroids if and only if $[H]=[H']\in H_{\mathrm{dR}}^3(M)$ (cf.~Proposition \ref{B-fieldTrafoProp}), where $H_{\mathrm{dR}}^k(M)$ denotes the $k$-th de Rham cohomology of $M$ for $k\in \N$. 
Hence, to any exact Courant algebroid $(E\to M,[-,-],\scal{-}{-},\pi)$ we can assign a unique class $[H]\in H_\mathrm{dR}^3(M)$, the so called \textit{\Severa\ class}, such that $(E\to M,[-,-],\scal{-}{-},\pi)\cong (\mathbb TM, H)$ as Courant algebroids. This induces a bijection between the set of isomorphism classes of exact Courant algebroids and $H_\mathrm{dR}^3(M)$.
\end{proposition}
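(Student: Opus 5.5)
We need to prove the \Severa\ classification as stated in Proposition \ref{Prop:SeveraClass}. There are three parts: every exact Courant algebroid is isomorphic to some $(\GenTM,H)$; $(\GenTM,H)\cong(\GenTM,H')$ if and only if $[H]=[H']$; and the resulting bijection.

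\emph{Existence.} Let $(E,[-,-],\scal{-}{-},\pi)$ be exact.
\begin{enumerate}
\item Choose an isotropic splitting $s:TM\to E$ of $0\to T^*M\to E\to TM\to 0$. Start from any splitting $s_0$, which exists by a partition of unity. Then set $s=s_0-\tfrac12\pi^*(s_0^*\scal{-}{-})$, with the appropriate normalisation of the factor $\tfrac12$ in $\scal{-}{-}_{\mathrm{can}}$.
\item This gives an isomorphism of vector bundles $\Phi:\GenTM\to E$, $X+\xi\mapsto s(X)+\pi^*\xi$ (suitably scaled). It respects the anchors and carries $\scal{-}{-}_{\mathrm{can}}$ to $\scal{-}{-}$. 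The scaling works because $\pi\circ\pi^*=0$ and $\scal{\pi^*\xi}{e}$ is proportional to $\xi(\pi e)$.
\item Transport the bracket, $[u,v]^\Phi:=\Phi^{-1}[\Phi u,\Phi v]$. Then define $H(X,Y,Z):=2\scal{[s X,s Y]}{s Z}$, or the appropriate sign matching Definition \ref{Def:GenTanBund}.
\item Show that $H$ is a tensor and is totally skew. This uses axioms 2.)\ and 3.)\ together with isotropy of $s(TM)$. Skewness in $(X,Y)$ comes from 3.), since $\scal{sX}{sY}=0$. Skewness in $(Y,Z)$ comes from 2.). $C^\infty$-linearity follows from the Leibniz rules derived from the axioms, namely $[u,fv]=f[u,v]+\pi(u)(f)v$ and the corresponding anomaly in the first slot, which lies in $\pi^*\d f$ and pairs to zero against the isotropic image.
\item Check, again from the axioms, that the brackets involving $\pi^*\xi$ agree with those of $[-,-]_H$, namely $[\pi^*\xi,v]=-\pi^*\iota_{\pi v}\d\xi$ and $[v,\pi^*\xi]=\pi^*\LieD_{\pi v}\xi$. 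Also check that the projection of $[sX,sY]$ to $TM$ is $[X,Y]$, since the anchor is a bracket homomorphism; this follows from Jacobi plus 2.)\ and 3.).
\item Conclude that $\Phi$ is an isomorphism $(\GenTM,H)\cong E$. Since $E$ is Courant, the cited theorem of Gualtieri gives $\d H=0$.
\end{enumerate}

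\emph{Uniqueness up to cohomology.} If $H'=H+\d B$, then Proposition \ref{B-fieldTrafoProp} gives the isomorphism $e^B$. Conversely, let $\phi:(\GenTM,H)\to(\GenTM,H')$ be an isomorphism covering $\Id$.
\begin{enumerate}
\item Since $\pi'\circ\phi=\pi$ and $\phi$ is orthogonal, $\phi$ preserves $T^*M$ and acts on it as the identity. Indeed, $T^*M=(\ker \pi)$ is fixed, and orthogonality forces the induced map on $T^*M\cong (E/T^*M)^*$ to be dual to the identity on $TM$.
\item Hence $\phi(X+\xi)=X+\xi+\iota_XB$ for some $B\in\Gamma(T^*M\otimes T^*M)$.
\item Isotropy of $TM$ under $\phi$ forces $B$ to be skew, so $\phi=e^B$.
\item By Proposition \ref{B-fieldTrafoProp}, $e^B$ is an isomorphism $(\GenTM,H)\to(\GenTM,H+\d B)$. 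Comparing with the brackets $[-,-]_{H'}$ and $[-,-]_{H+\d B}$ on $\phi(\Gamma(\GenTM))=\Gamma(\GenTM)$ gives $\iota_X\iota_Y(H'-H-\d B)=0$ for all $X,Y$. Therefore $H'=H+\d B$.
\end{enumerate}

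\emph{The \Severa\ class and the bijection.} The class $[H]$ is well defined by the two parts above. Composing isomorphisms shows that it depends only on the isomorphism class of $E$, so the assignment is injective on isomorphism classes. Surjectivity holds because every closed $H$ yields the Courant algebroid $(\GenTM,H)$.

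The main obstacle is step 4 of the existence part, the tensoriality and total skewness of $H$. This requires first extracting the Leibniz rules and the identity $\pi[u,v]=[\pi u,\pi v]$ from the given axioms, which do not list them explicitly. I would derive them first: from 3.)\ and 2.)\ one gets $[u,fv]=f[u,v]+\pi(u)(f)v$. From Jacobi applied with $fv$, one gets that the anchor is a homomorphism, as in the standard argument of Uchino/Kosmann-Schwarzbach.
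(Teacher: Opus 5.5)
Your proposal is correct. The paper gives no proof of its own beyond the $e^B$ computation in Proposition~\ref{B-fieldTrafoProp} and cites \v{S}evera and Gualtieri, and yours is the standard argument from those sources: choose an isotropic splitting, read off the curvature $3$-form from $\scal{[sX,sY]}{sZ}$, and identify isomorphisms $(\GenTM,H)\to(\GenTM,H')$ covering $\Id$ with $B$-transforms satisfying $H'=H+\d B$, which is Gualtieri's change-of-splitting computation in slightly different form.

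There are two harmless slips. First, injectivity on isomorphism classes comes from the ``if'' direction (via $e^B$), not from $[H]$ being an isomorphism invariant; that invariance only makes the map well defined. Second, the first-slot anomaly is $-\pi(v)(f)u+\scal{u}{v}\pi^*\d f$, and it vanishes against $s(TM)$ because $s(TM)$ is isotropic, not because $\pi^*\d f$ pairs trivially with $s(TM)$; in any case, tensoriality in $X$ already follows from total skewness.
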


\begin{theorem}[{\cite[Proposition 2.2]{gualtieriAnn}}]
\label{AuteqCharacterizationTheorem}
Let $M$ be a smooth manifold and $H\in \ClForms{3}$. Then, the autoequivalences of the exact Courant algebroid $(\mathbb{T}M, H)$ are of the form $f_* e^B:\mathbb{T}M\to \mathbb{T}M$ with $f\in \Diff$ and $B\in \Forms{2}$, such that the equation
\begin{align*}
    f^*H-H=\d B
\end{align*}
is fulfilled. 
Here $f_*$ is the usual map
\begin{alignat*}{3}
    f_*(X+\xi)=\d f_p(X)+\xi\circ (\d f_p)^{-1}\quad && \text{and} &&\quad e^B(X+\xi)=X+\xi+\iota_XB
\end{alignat*}
with $X\in T_pM$ and $\xi\in T_p^*M$, $p\in M$. 
\end{theorem}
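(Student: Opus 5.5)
The plan is to check directly that every map of the form $f_*e^B$ with $f^*H-H=\d B$ is an autoequivalence. Conversely, given an arbitrary autoequivalence $F$ covering $f$, I will factor off $f_*$ and reduce to Proposition \ref{B-fieldTrafoProp}.

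\textbf{Step 1: $f_*$ is an isomorphism $(\GenTM,H)\to(\GenTM,(f^{-1})^*H)$.} On sections, $f_*(X+\xi)=f_*X+(f^{-1})^*\xi$. This map clearly intertwines the anchors, $\pi\circ f_*=\d f\circ\pi$, and it preserves $\scal{-}{-}_{\mathrm{can}}$ pointwise. For the bracket, I use naturality of the Lie derivative, the interior product and $\d$ under diffeomorphisms:
\begin{align*}
f_*[X+\xi,Y+\eta]_H=[f_*(X+\xi),f_*(Y+\eta)]_{(f^{-1})^*H}.
\end{align*}
Only the twisting term changes, since $(f^{-1})^*(\iota_X\iota_YH)=\iota_{f_*X}\iota_{f_*Y}(f^{-1})^*H$.

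\textbf{Step 2: sufficiency.} By Proposition \ref{B-fieldTrafoProp}, $e^B$ is an isomorphism $(\GenTM,H)\to(\GenTM,H+\d B)$. By Step 1, $f_*$ then maps this to $(\GenTM,(f^{-1})^*(H+\d B))$. The condition $f^*H-H=\d B$ gives $H+\d B=f^*H$, hence $(f^{-1})^*(H+\d B)=H$. Therefore $f_*e^B$ preserves the $H$-twisted bracket. Preservation of the pairing and of the anchor (with $\d f$) is immediate, because $e^B$ preserves both.

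\textbf{Step 3: necessity.} Let $F$ be an autoequivalence covering $f$. Set $G\coloneqq (f_*)^{-1}\circ F$. Then $G$ covers $\Id$, preserves the anchor and the pairing, and by Step 1 it is an isomorphism $(\GenTM,H)\to(\GenTM,f^*H)$.

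\emph{$G$ fixes $T^*M$.} Since $\pi\circ G=\pi$, $G$ maps $T^*M=\ker\pi$ into itself. For $\xi\in T^*M$ and any $v$ we have $\pi\circ G=\pi$, so
\begin{align*}
\scal{G\xi}{Gv}=\scal{\xi}{v}=\tfrac12\xi(\pi v)=\tfrac12\xi(\pi Gv).
\end{align*}
Since $G$ is surjective, $\scal{G\xi}{w}=\tfrac12\xi(\pi w)$ for all $w$, which forces $G\xi=\xi$.

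\emph{$G$ has the form $e^B$.} Because $\pi\circ G=\pi$, we can write $G(X)=X+\beta(X)$ for a bundle map $\beta:TM\to T^*M$. Isotropy of $TM$ together with orthogonality of $G$ gives $\beta(X)(Y)+\beta(Y)(X)=0$. So $\beta=\iota_{(\cdot)}B$ for a unique $B\in\Forms{2}$, and $G=e^B$.

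\emph{The compatibility condition.} Now $e^B$ is an isomorphism $(\GenTM,H)\to(\GenTM,f^*H)$, and by Proposition \ref{B-fieldTrafoProp} also $(\GenTM,H)\to(\GenTM,H+\d B)$. Compare the brackets of $e^BX$ and $e^BY$ in the two targets. The twisted brackets of the same pair differ by $\iota_X\iota_Y$ of the difference of the $3$-forms, so
\begin{align*}
\iota_X\iota_Y(f^*H-H-\d B)=0\quad\text{for all }X,Y\in\VFs.
\end{align*}
Hence $f^*H-H=\d B$, and $F=f_*e^B$ as claimed.

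The main obstacle is the sign bookkeeping in Step 1: checking that $f_*$ transports the $H$-twist to $(f^{-1})^*H$ rather than $f^*H$, consistent with the convention $Fs=F\circ s\circ f^{-1}$. This needs a careful naturality computation. The linear-algebra part of Step 3 is routine.
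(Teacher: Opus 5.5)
Your proof is correct. The paper gives no proof of its own here and simply cites \cite[Proposition 2.2]{gualtieriAnn}. Your argument is essentially that standard proof: pass to $G=(f_*)^{-1}\circ F$, identify $G$ as $e^B$ from anchor preservation and orthogonality, then compare $[-,-]_{f^*H}$ with $[-,-]_{H+\d B}$ via Proposition~\ref{B-fieldTrafoProp} to read off $f^*H-H=\d B$.

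Your sign check in Step~1 is also right for the paper's convention: $f_*$ intertwines $[-,-]_H$ with $[-,-]_{(f^{-1})^*H}$. You were also right to re-derive the $e^B$ form by hand. The last clause of Proposition~\ref{B-fieldTrafoProp} only covers automorphisms of a single $(\GenTM,H)$, not maps $(\GenTM,H)\to(\GenTM,f^*H)$.
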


\section{The Nash-Moser Category}
\label{Section:Nash-Moser Category}

It is our goal to describe the group of autoequivalences of an exact Courant algebroid over a compact base manifold as a tame Fréchet Lie group. The model spaces of such Lie groups are tame Fréchet spaces and the chart transition maps as well as the group multiplication and inversion map are smooth tame maps. In this section, we review some of the theory regarding tame Fréchet spaces and smooth tame maps which together form the Nash-Moser Category. This makes these types of maps particularly useful and allows for instance the formulation of an implicit function theorem, see \cite{hamilton}.
We first review tame Fréchet spaces, and more generally, some basic results regarding locally convex topological vector spaces. Then, we introduce smooth tame maps. As the prime example of a tame Fréchet space, we discuss the space of smooth sections of a vector bundle over a compact base manifold. These types of spaces form the model space for all tame Fréchet manifolds we shall consider in this thesis. Moreover, we discuss the most important classes of smooth tame maps between such spaces of smooth sections, namely nonlinear vector bundle operators and more generally nonlinear partial differential operators. These are fundamental ingredients of most of the smooth tame maps we will consider in this thesis. Finally, we discuss how the theory of Sobolev spaces of smooth sections of vector bundles over compact base manifolds translates to the setting of tame Fréchet spaces putting a particular focus on Hodge theory. We will show that the space of closed differential forms on a compact manifold is a tame Fréchet space and that certain operators from Hodge theory are linear (smooth) tame maps. This will allow us to find a useful trivialization of the underlying manifold of the group of autoequivalences of exact Courant algebroids over a compact base manifold in Section \ref{Section:TheGroupOfautoequivalencesOfAnExCAAsATameLieGroup}.

\subsection{Tame Fréchet spaces}

Certain tame Fréchet spaces will be the model spaces of the group of autoequivalences of an exact Courant algebroid over a compact base manifold understood as a tame Fréchet Lie group. So we need to first discuss the relevant theory of tame Fréchet spaces. In this section, we introduce the notion of locally convex spaces and Fréchet spaces. We discuss some topological results for these types of spaces, and present the Riemann integral for locally convex spaces. Then we introduce the notion of graded and, finally, tame Fréchet spaces.

\begin{definition}
\label{Def:LocConvSpace}
Let $\K=\R,\mathbb{C}$.
\begin{enumerate}
\item We say that a vector space $F$ over $\K$ with a topology is a topological vector space if addition and scalar multiplication are continuous.

\item Let $F$ be a vector space over $\K$ and let $(\nnorm{-}{\alpha}:F\to \R_{\geq 0})_{\alpha\in A}$ be a family of seminorms where $A$ is some index set.
For $\epsilon>0$ we denote by 
$\epsilon B_\alpha \coloneqq \epsilon B_{\nnorm{-}{\alpha}} \coloneqq \left\{x\in F|~\nnorm{x}{\alpha}<\epsilon \right\}$ the ball of radius $\epsilon$ with respect to the seminorm $\nnorm{-}{\alpha}$.
The topology $\tau$ induced by the family $(\nnorm{-}{\alpha})_{\alpha\in A}$ is defined such that the sets $x+\epsilon B_\alpha$, with $\alpha \in A$, $\epsilon \in (0,\infty)$ form a neighbourhood subbase for every $x\in F$.
\end{enumerate}
\end{definition}

\begin{remark}
It is easy to see from the triangle inequality that $\tau$ from the previous definition is indeed a well defined topology and that it turns $F$ into a topological vector space. If additionally $\tau$ is Hausdorff, then we call $(F,\tau)$ a \textit{locally convex} (topological vector) space.
The topology being Hausdorff is equivalent to the condition that $\nnorm{x}{\alpha}=0$ for all $\alpha\in A$ implies that $x=0$ (cf.~\cite[Proposition 4.3.3]{TVSc}). We call such a family of seminorms \textit{separating}.
\end{remark}

\begin{definition}
\label{Def:Completeness}
\begin{enumerate}
\item We say that a filter $\mathcal F$ on a topological space $X$ converges to a point $x\in X$ if it is finer than the neighbourhood filter of $x$, that is, if every neighbourhood $U$ of $x$ is an element of $\mathcal F$.

\item We call a filter $\mathcal F$ on a topological vector space $F$ a Cauchy filter if for every neighbourhood $U$ of $0\in F$ there is some $V\in \mathcal F$ such that $V-V\coloneqq \{x-y|~x,y\in V\}$ is a subset of $U$.

\item A locally convex topological vector space is called \textit{complete} if every Cauchy filter converges. 

\item A Cauchy sequence on a topological vector space $F$ is a sequence $(x_i)_{i\in \N}$ on $F$ such that for every neighbourhood $U\subseteq F$ of $0\in F$ there exists an $N\in \N$ such that $x_i-x_j\in U$ for every $i,j>N$.

\item A topological vector space is called sequentially complete if every Cauchy sequence converges.
\end{enumerate}
\end{definition}

\begin{definition}
A locally convex topological vector space is called \textit{metrizable}, if the family of seminorms $(\nnorm{-}{\alpha})_{\alpha\in A}$, for some index set $A$, defining its topology is countable. We may then simply assume that $A=\N$.
\end{definition}

\begin{remark}
It is easy to see that if $F$ is a vector space, then the topology induced by some family of seminorms $(\nnorm{-}{\alpha}:F\to \R_{\geq 0})_{\alpha\in A}$, where $A$ is some index set, is the initial topology with respect to the maps $F\ni x\mapsto \nnorm{v-x}{\alpha}\in [0,\infty)$, $\alpha \in A$, $v\in F$ or equivalently the coarsest topology on $F$ such that addition $+:F\times F\to F$ is continuous and each $\nnorm{-}{\alpha}$, $\alpha\in A$, is continuous.
\end{remark}

\begin{remark}
If a vector space $F$ equipped with some family of seminorms $(\nnorm{-}{k})_{k\in \N}$ is a metrizable locally convex space then we can define the translation invariant metric 
\begin{align*}
    d(x,y)\coloneqq \sum_{k=0}^\infty \frac{1}{2^k}\frac{\nnorm{x-y}{k}}{1+\nnorm{x-y}{k}}
\end{align*}
for $x,y\in F$. It is easy to see that the topology induced by $d$ and the topology induced by $(\nnorm{-}{k})_{k\in \N}$ is the same. The idea is that while $d$ does depend on all seminorms $\nnorm{-}{k}$ we can find for every $\epsilon>0$ an $N\in \N$ such that $\sum_{k=N}^\infty \frac{1}{2^k}\frac{\nnorm{x-y}{k}}{1+\nnorm{x-y}{k}}<\epsilon$ for all $x,y\in F$.

As the topologies induced by $(\nnorm{-}{k})_{k\in \N}$ and $d$ agree, the notion of a Cauchy sequence presented in Definition \ref{Def:Completeness} is equivalent to the notion of a Cauchy sequence with respect to the metric $d$. 

Another equivalent way to define a Cauchy sequence on $F$ is as a sequence $(x_i\in F)_{i\in \N}$ such that for each $k\in \N$ and each $\epsilon>0$ there exists an $N\in \N$ such that $\nnorm{x_i-x_j}{k}<\epsilon$ for all $i,j>N$. The equivalence of these notions follows directly from the way we defined the topology on a vector space from a family of seminorms.
\end{remark}

\begin{remark}
\label{Rem:CompImpliesSeqComp}
We note that to any sequence $(x_n)_{n\in \N}$ on a topological space $X$ we can associate the filter $\mathcal F_S\coloneqq \{A\subseteq X|~\abs{S\setminus A}<\infty \}$, where $S=\{x_0,x_1,...\}$. It is easy to see that $(x_n)_{n\in \N}$ converges to some $x\in X$ precisely if $\mathcal F_S$ converges to $x$. 
If, moreover, $X$ is a topological vector space and $(x_n)_{n\in \N}$ a Cauchy sequence, then it is easy to see that $\mathcal F_S$ is a Cauchy filter. So we see that any complete topological vector space is, in particular, sequentially complete. In the following lemma we show the standard result that the inverse is also true if $X$ is a topological vector space whose topology can be induced by a translation invariant metric.
\end{remark}

\begin{lemma}
\label{Lemma:MetrizableSeqCompImpliesComp}
Let $F$ be a topological vector space and $d:F\times F\to \R$ a translation-invariant metric such that the topology of $F$ is induced by $d$. Then, if $F$ is sequentially complete it is also complete.
\end{lemma}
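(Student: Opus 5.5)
Let $\mathcal F$ be a Cauchy filter on $F$. The plan is to extract a Cauchy sequence from $\mathcal F$, use sequential completeness to get a limit $x$, and then show that $\mathcal F$ itself converges to $x$. Throughout, the only input is translation invariance of $d$. It gives that the balls $B_r(0)=\{y\in F\mid d(y,0)<r\}$, $r>0$, form a neighbourhood base of $0$. It also gives $d(a,b)=d(a-b,0)$, so that $a-b\in B_r(0)$ is the same as $d(a,b)<r$.

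\textbf{Step 1: sets of small diameter.} For each $n\in\N$, apply the Cauchy property to the neighbourhood $U=B_{2^{-n}}(0)$. This yields $V_n\in\mathcal F$ with $V_n-V_n\subseteq B_{2^{-n}}(0)$, i.e.\ $d(a,b)<2^{-n}$ for all $a,b\in V_n$. Replace $V_n$ by $W_n\coloneqq V_0\cap\dots\cap V_n$. Since filters are closed under finite intersections, $W_n\in\mathcal F$. The sets are nested, $W_{n+1}\subseteq W_n$, each still has diameter at most $2^{-n}$, and each is nonempty because a filter does not contain $\emptyset$.

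\textbf{Step 2: the sequence and its limit.} Choose $x_n\in W_n$. For $m\ge n$ we have $x_m\in W_m\subseteq W_n$, hence $d(x_m,x_n)<2^{-n}$. By translation invariance this means $x_m-x_n\in B_{2^{-n}}(0)$. Since these balls form a base at $0$, $(x_n)_{n\in\N}$ is a Cauchy sequence in the sense of Definition~\ref{Def:Completeness}. Sequential completeness then gives a limit $x\in F$ with $d(x_n,x)\to 0$.

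\textbf{Step 3: convergence of the filter.} Let $U$ be a neighbourhood of $x$. Pick $\epsilon>0$ such that the ball $\{y\mid d(y,x)<\epsilon\}$ is contained in $U$. Next choose $n$ with $2^{-n}<\epsilon/2$ and $d(x_n,x)<\epsilon/2$. For any $y\in W_n$, both $y$ and $x_n$ lie in $W_n$, so
\[
d(y,x)\le d(y,x_n)+d(x_n,x)<\epsilon.
\]
Hence $W_n\subseteq U$. Filters are closed under supersets, so $U\in\mathcal F$. Thus $\mathcal F$ is finer than the neighbourhood filter of $x$, i.e.\ it converges to $x$, and $F$ is complete.

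The only point needing care is checking that the filter-theoretic notion of a Cauchy sequence from Definition~\ref{Def:Completeness} agrees with the metric one. Translation invariance gives exactly this, as noted at the start.
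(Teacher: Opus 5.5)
Your proof is correct and follows the same route as the paper: nested sets $W_n\in\mathcal F$ with $W_n-W_n$ contained in shrinking balls, a Cauchy sequence of chosen points $x_n\in W_n$, its limit $x$ via sequential completeness, and finally $W_n$ lying inside any given neighbourhood of $x$. Your last step, bounding $d(y,x)\le d(y,x_n)+d(x_n,x)$ directly for $y\in W_n$, is a slightly cleaner version of the paper's argument, which instead varies the choice of sequence to conclude $U_n\subseteq x+B_n$.
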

\begin{proof}
Although this is a standard result we include a short proof for the reader's convenience.
The neighbourhood filter of $0\in F$ has a countable basis which we will denote by $(B_n)_{n\in \N}$ such that $B_n=\{x\in F|~d(0,x)<1/n\}$ for $n\in \N_{>0}$ and $B_0=F$. Let $\mathcal F$ be some Cauchy filter on $F$ then by Definition \ref{Def:Completeness} for any $n\in \N$ we can find some $\Tilde U_n\in \mathcal F$ such that $\Tilde{U}_n-\Tilde{U}_n=\{x-y|~x,y\in \Tilde{U}_n\}\subseteq B_n$ we can now iteratively define sets $U_n\coloneqq \cap_{k\leq n} \Tilde{U}_k\in \mathcal F$. These satisfy again $U_n-U_n\subseteq B_n$ but also $U_{n+1}\subseteq U_n$ for any $n\in \N$. We now choose some sequence $(x_n)_{n\in \N}$ such that $x_n\in U_n$. Then by definition we find that $x_i-x_j\in (U_i-U_j)\subseteq (U_n-U_n)\subseteq  B_n$ for $i,j\geq n$. So $(x_n)_{n\in \N}$ is a Cauchy sequence and converges to some $x\in F$ by sequential completeness. Moreover, by the triangle inequality and convergence of $(x_n)_{n\in \N}$ we find that for any $n<i$, with $i,n\in \N$, that $x_i-x\in B_n$. Now, let $(y_n\in U_n)_{n\in \N}$ be another such choice of sequence. Then $(y_n)_{n\in \N}$ also converges to some $y\in F$ and we claim that $y=x$. This is because $x_i-y_i\in U_i-U_i\subseteq B_i$ for every $i\in \N$. So, in particular, $y_i-x=y_i-x_i+x_i-x$ converges to $0\in F$. Since the choice of $x_n\in U_n$ was arbitrary we have $x_n\in x+B_n$ for every choice of point $x_n\in U_n$ and so $U_n\subseteq x+B_n$ which implies that $\mathcal F$ is finer than the neighborhood filter of $x\in F$, so $\mathcal F$ converges to $x$.    
\end{proof}

\begin{definition}
We call a sequentially complete metrizable locally convex topological vector space a \textit{Fréchet space}.

We may equivalently define a Fréchet space as a complete metrizable locally convex topological vector space (cf.~Remark \ref{Rem:CompImpliesSeqComp} and Lemma \ref{Lemma:MetrizableSeqCompImpliesComp}).

We will frequently denote a Fréchet space $F$ with the topology induced by a family of seminorms $(\nnorm{-}{k})_{k\in \N}$ by $(F,(\nnorm{-}{k})_{k\in \N})$.
\end{definition}

\begin{example}
Clearly, Banach spaces are Fréchet spaces.
\end{example}

Of course there are Fréchet spaces that are not Banach spaces.
\begin{example}
\label{Ex:SmoothSectionsFrechetSpace}
Let $M$ be a compact manifold and $V\to M$ be a vector bundle. Then by choosing a connection on $T^*M$ and a connection on $V$ we get an induced connection on $(T^*M)^{\tens k}\tens V$ for any $k\in \N$. This allows us to make sense of the $k$-th covariant derivative $\nabla^k\sigma\in \Gamma((T^*M)^{\tens k}\tens V)$ for any smooth section $\sigma\in \Gamma(V)$, and any $k\in \N$. Similarly, by choosing a positive definite bundle metric on $T^*M$ and $V$, respectively, we obtain a positive definite bundle metric on $(T^*M)^{\tens k}\tens V$ for any $k\in \N$. For any $k\in \N$, we denote the induced fiberwise norm by $|-|$.
Then, the vector space of smooth sections $\Gamma(V)$ equipped with the topology induced by the family of norms
\begin{align*}
    \left( \nnorm{\sigma}{n}\coloneqq \sum_{k=0}^n \sup_{x\in M}|\nabla^k\sigma|\right)_{n\in \N}
\end{align*}
is a Fréchet space, see \cite[Part I, Example 1.1.5]{hamilton}. We will in fact later see that this is an example of a tame Fréchet space.

In particular, the smooth functions $C^\infty(M,\R)$ form a Fréchet space by identifying them with the smooth sections of the trivial vector bundle $M\times \R\to M$.
\end{example}

\begin{remark}
\label{Rem:ClosedSubspAndQuotientFrechet}
Let $(F,(\nnorm{-}{n})_{n\in \N})$ be a Fréchet space. Since clearly any linear subspace of a metrizable locally convex topological vector space is again a metrizable locally convex topological vector space, and since closed subsets of complete metric spaces are complete, we find that any closed linear subspace of a Fréchet space with the subspace topology is a Fréchet space. 
Also, every quotient of $F$ by a closed subspace $G\subseteq F$ is a Fréchet space, see \cite[Chapter I, 2.3 and Chapter II, §4, p.~49]{SchaeferTVS}.
\end{remark}

\begin{theorem}[Open mapping theorem, {\cite[Corollaries 2.12]{rudin1991functional}}]
\label{Thm:OpenMapping}
Let $F$ and $G$ be Fréchet spaces and $L:F\to G$ a continuous linear surjection. Then $L:F\to G$ is an open map.
\end{theorem}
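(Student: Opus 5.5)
This is the classical open mapping theorem for Fréchet spaces, cited from Rudin, so I would prove it by the standard Baire category argument, using completeness and the translation-invariant metric available for metrizable locally convex spaces.

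\textbf{Step 1 (Baire).} Write $d_F,d_G$ for translation-invariant metrics inducing the topologies, as in the remark preceding Lemma \ref{Lemma:MetrizableSeqCompImpliesComp}, and let $B_r=\{x\in F\mid d_F(0,x)<r\}$. Fix a neighbourhood $V$ of $0$ in $F$ and choose $r>0$ with $B_{2r}\subseteq V$. Since $F=\bigcup_{n\ge1} nB_r$ (balls in a topological vector space are absorbing) and $L$ is surjective, we get $G=\bigcup_n nL(B_r)$. The space $G$ is a complete metric space, so by the Baire category theorem some $\overline{nL(B_r)}$ has nonempty interior. Since multiplication by $n$ is a homeomorphism, $\overline{L(B_r)}$ has interior. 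Using $B_r-B_r\subseteq B_{2r}$ and continuity of subtraction, I conclude that $\overline{L(B_{2r})}$ contains a neighbourhood of $0$. Hence for every $r>0$ the set $\overline{L(B_r)}$ is a neighbourhood of $0$ in $G$.

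\textbf{Step 2 (removing the closure).} This is the main obstacle, and completeness of $F$ is essential here. Set $r_k=r2^{-k}$. By Step 1, for each $k$ there is a neighbourhood $W_k$ of $0$ in $G$ with $W_k\subseteq\overline{L(B_{r_k})}$, and I may shrink the $W_k$ so that they form a neighbourhood basis of $0$. Given $y\in W_1$, I inductively choose $x_k\in B_{r_k}$ with
\[
y-L(x_1+\dots+x_k)\in W_{k+1}.
\]
This is possible because $y-L(x_1+\dots+x_{k-1})\in W_k\subseteq\overline{L(B_{r_k})}$, and $\overline{L(B_{r_k})}$ meets every neighbourhood of that point.

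\textbf{Step 3 (convergence).} By the triangle inequality and translation invariance of $d_F$, the partial sums of $\sum_k x_k$ form a Cauchy sequence, since $\sum_k r_k<\infty$. Sequential completeness of $F$ gives a limit $x$ with $d_F(0,x)<2r$, so $x\in B_{2r}\subseteq V$. Continuity of $L$, together with the fact that the $W_k$ form a neighbourhood basis, gives $Lx=y$. Therefore $W_1\subseteq L(V)$.

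\textbf{Step 4 (conclusion).} For an open set $U\subseteq F$ and a point $x\in U$, apply the above with $V=U-x$. By linearity $L(U)\supseteq Lx+W_1$, which is a neighbourhood of $Lx$. Hence $L(U)$ is open, and $L$ is an open map.
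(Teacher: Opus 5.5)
Your proof is correct. The paper gives no proof of its own but quotes Rudin (Corollary 2.12, resting on Theorem 2.11), and your argument is exactly that standard proof: Baire category makes $\overline{L(B_r)}$ a neighbourhood of $0$, and an iterative approximation using completeness of $F$ removes the closure.

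One detail should be made explicit: choose the shrunken $W_k$ decreasing, e.g.\ $W_k\subseteq\{z\in G\mid d_G(0,z)<1/k\}$. A neighbourhood basis that is not nested would not by itself force $L(x_1+\dots+x_k)\to y$ from $y-L(x_1+\dots+x_k)\in W_{k+1}$.
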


\begin{remark}
\label{Rem:FreFinerTop}
If a vector space $F$ can be equipped with two topologies $\tau_1$ and $\tau_2$ such that $(F,\tau_1)$ and $(F,\tau_2)$ are Fréchet spaces and $\tau_1$ is finer than $\tau_2$ then $(F,\tau_1)\xrightarrow{\mathrm{Id_F}}(F,\tau_2)$ is a continuous map and a vector space isomorphism and hence by the open mapping theorem $\tau_1=\tau_2$.

One might be tempted to define a common locally convex Hausdorff and metrizable refinement of the topologies $\tau_1$ and $\tau_2$. However, that refinement might not be complete so we cannot infer that all topologies on $F$ that make it into a Fréchet space are equivalent.
\end{remark}

\begin{theorem}[Hahn-Banach Theorem, {\cite[Theorem 5.1.1]{TVSc}}]
\label{Thm:GenHahnBanach}
Let $F$ be a vector space over $\K=\R,\mathbb C$, and let $\nnorm{-}{}:F\to \R_{\geq 0}$ be a seminorm. Let $G\subseteq F$ be a linear subspace and $l:G \to \K$ be a linear functional such that 
$|l(x)|\leq \nnorm{x}{}$
holds for all $x\in G$. 
Then, we can extend $l$ to a linear functional $\Tilde{l}:F\to \K$ such that
$|\Tilde l(x)|\leq \nnorm{x}{}$
holds for all $x\in F$.
\end{theorem}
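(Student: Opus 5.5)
The plan is the classical Zorn's lemma argument, in three steps: extend $l$ by one dimension at a time while keeping the bound $|l(x)|\leq \nnorm{x}{}$, pass to a maximal extension via Zorn's lemma, and treat the real case first before deducing the complex case from it.

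\textbf{Real case, one-dimensional extension.} Let $\K=\R$, and suppose $l$ is defined on $G\subsetneq F$ with $l(x)\leq \nnorm{x}{}$ on $G$. Pick $v\in F\setminus G$. We want a value $c\in\R$ such that $l(x)+tc\leq \nnorm{x+tv}{}$ for all $x\in G$ and $t\in\R$. Dividing by $|t|$ and using homogeneity of the seminorm, this reduces to the requirement
\begin{align*}
\sup_{y\in G}\bigl(l(y)-\nnorm{y-v}{}\bigr)\leq c\leq \inf_{x\in G}\bigl(\nnorm{x+v}{}-l(x)\bigr).
\end{align*}
Such a $c$ exists because, for all $x,y\in G$, linearity and the triangle inequality give
\begin{align*}
l(x)+l(y)=l(x+y)\leq \nnorm{x+y}{}\leq \nnorm{x+v}{}+\nnorm{y-v}{}.
\end{align*}
Applying the one-sided bound $l\leq\nnorm{-}{}$ to $-x$ gives $-l(x)\leq \nnorm{x}{}$, so $|l|\leq \nnorm{-}{}$ holds on the enlarged space as well.

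\textbf{Maximal extension via Zorn's lemma.} Consider the set of pairs $(G',l')$ where $G'\supseteq G$ is a linear subspace and $l'$ is a linear extension of $l$ to $G'$ with $|l'|\leq\nnorm{-}{}$. Order these pairs by extension. Every chain has an upper bound, namely its union, which is again a linear subspace carrying a well-defined linear functional satisfying the bound. Zorn's lemma therefore produces a maximal element. By the one-dimensional extension step, a maximal element must have domain $F$.

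\textbf{Complex case.} Set $u\coloneqq\mathrm{Re}\, l$, which is a real-linear functional on $G$ with $|u|\leq\nnorm{-}{}$. Extend $u$ real-linearly to $\tilde u$ on $F$ by the real case. Then define
\begin{align*}
\tilde l(x)\coloneqq \tilde u(x)-i\,\tilde u(ix).
\end{align*}
Check that $\tilde l$ is complex-linear and that it restricts to $l$ on $G$; the latter uses $l(x)=u(x)-iu(ix)$. For the bound, given $x$, choose $\theta$ with $e^{i\theta}\tilde l(x)\geq 0$. Then
\begin{align*}
|\tilde l(x)|=\tilde l(e^{i\theta}x)=\tilde u(e^{i\theta}x)\leq \nnorm{e^{i\theta}x}{}=\nnorm{x}{}.
\end{align*}

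\textbf{Expected difficulty.} The only real obstacle is choosing the constant $c$ in the one-step extension. It is resolved by the supremum/infimum inequality above; everything else is routine bookkeeping.
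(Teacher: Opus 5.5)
Your proof is correct: it is the classical argument (one-dimensional extension with the sup/inf choice of $c$, Zorn's lemma, then the Bohnenblust--Sobczyk reduction $\tilde l(x)=\tilde u(x)-i\,\tilde u(ix)$ of the complex case to the real one). The paper gives no proof of its own for this statement and simply cites it from the literature, where it is established by essentially this same argument.
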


\begin{corollary}
\label{Cor:HahnBanachForLocConv}
Let $A$ be some index set and $(F,(\nnorm{-}{\alpha})_{\alpha\in A})$ be a locally convex space over the field $\K=\R,\mathbb C$. Let $G\subseteq F$ be a closed linear subspace. Then, any continuous linear functional $l:G\to \K$ can be extended to a continuous linear functional $\Tilde{l}:F\to \K$.
\end{corollary}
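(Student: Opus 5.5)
The plan is to reduce to the Hahn--Banach Theorem \ref{Thm:GenHahnBanach} by first dominating $l$ by a single continuous seminorm on $F$. Closedness of $G$ is not needed for this argument; the subspace topology on $G$ is all that matters.

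\textbf{Step 1: find a dominating seminorm.} Since $l:G\to\K$ is continuous at $0$ for the subspace topology, the preimage $l^{-1}(\{z\in\K : |z|<1\})$ is a neighbourhood of $0$ in $G$. By Definition \ref{Def:LocConvSpace}, the sets $\epsilon B_\alpha$ form a neighbourhood subbase at $0$ in $F$. Hence there are finitely many indices $\alpha_1,\dots,\alpha_n\in A$ and some $\epsilon>0$ with
\begin{align*}
G\cap \bigcap_{i=1}^n \epsilon B_{\alpha_i}\subseteq l^{-1}(\{|z|<1\}).
\end{align*}
Set $p\coloneqq \sum_{i=1}^n\nnorm{-}{\alpha_i}$. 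This is a seminorm on $F$, and it is continuous because each summand is. Moreover $p(x)<\epsilon$ forces $\nnorm{x}{\alpha_i}<\epsilon$ for every $i$. So for $x\in G$, the condition $p(x)<\epsilon$ implies $|l(x)|<1$.

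\textbf{Step 2: turn this into the inequality $|l(x)|\le \epsilon^{-1}p(x)$ on $G$, by homogeneity.} First suppose $p(x)>0$. For any $0<t<\epsilon/p(x)$ we have $p(tx)<\epsilon$, so $t|l(x)|<1$. Letting $t\to\epsilon/p(x)$ gives $|l(x)|\le p(x)/\epsilon$. Now suppose $p(x)=0$. Then $p(tx)=0$ for every $t>0$, so $t|l(x)|<1$ for all $t$, which forces $l(x)=0$.

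This homogeneity step, especially the degenerate case $p(x)=0$, is the only point needing care. It is exactly why we pass through a seminorm rather than working directly with the neighbourhood.

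\textbf{Step 3: extend.} Apply Theorem \ref{Thm:GenHahnBanach} to the seminorm $\epsilon^{-1}p$ on $F$. This yields a linear $\Tilde l:F\to\K$ extending $l$ with $|\Tilde l(x)|\le \epsilon^{-1}p(x)$ for all $x\in F$.

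\textbf{Step 4: continuity of the extension.} For $x,y\in F$ we have $|\Tilde l(x)-\Tilde l(y)|\le \epsilon^{-1}\sum_i\nnorm{x-y}{\alpha_i}$. Given $\delta>0$, choose the neighbourhood $y+\bigcap_i(\delta\epsilon/n)B_{\alpha_i}$ of $y$. On it $\Tilde l$ is within $\delta$ of $\Tilde l(y)$, so $\Tilde l$ is continuous.
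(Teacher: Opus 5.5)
Your proposal is correct and follows the paper's route: dominate $l$ on $G$ by a finite sum of the defining seminorms, extend with Theorem \ref{Thm:GenHahnBanach}, and read off continuity of $\Tilde{l}$ from the resulting bound. You also spell out the homogeneity argument, including the degenerate case $p(x)=0$, which the paper calls ``clear'' and covers by citing \cite[Proposition 4.6.1]{TVSc}, and you rightly note that closedness of $G$ is never used.
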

\begin{proof}
Let $l:G\to \K$ be continuous. Then it is clear by the linearity and continuity of $l$, and by $G$ being equipped with the subspace topology, that there is some continuous seminorm $\nnorm{-}{}$ on $F$ which may be written as a sum of finitely many seminorms of the form $\nnorm{-}{\alpha}$, $\alpha\in A$, such that $\abs{l(x)}\leq \nnorm{x}{}$ for all $x\in G$. This holds in much greater generality, see \cite[Proposition 4.6.1]{TVSc}.
So we may extend $l$ to a linear functional $\Tilde{l}:F\to \K$ such that $|\Tilde{l}(x)|\leq \nnorm{x}{}$ for all $x\in F$. Hence, $\Tilde{l}:F\to \K$ is continuous.
\end{proof} 

\begin{lemma}[{\cite[Theorem 3.1.1]{TVSc}}]
\label{Lemma:FinDimHausdTVS}
Let $F$ be a Hausdorff topological vector space of dimension $n<\infty$ over $\K=\R,\mathbb C$. Then $F$ and $\K^n$ are isomorphic as topological vector spaces. $\K^n$ is of course equipped with the usual Euclidean topology.
\end{lemma}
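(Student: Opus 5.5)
We prove the statement by induction on $n=\dim F$, with the Hausdorff property used in the inductive step. Choose a basis $e_1,\dots,e_n$ of $F$ and consider the linear bijection
\begin{align*}
    \Phi:\K^n\to F,\qquad (a_1,\dots,a_n)\mapsto \sum_{i=1}^n a_ie_i .
\end{align*}
Continuity of $\Phi$ is immediate. Scalar multiplication $\K\times F\to F$ and addition $F\times F\to F$ are continuous, so each map $a\mapsto a e_i$ is continuous, and $\Phi$ is a finite sum of such maps. The real content is that $\Phi^{-1}$ is continuous. Since $\Phi$ is linear, it suffices to show that each coordinate functional $\lambda_i:F\to\K$, with $\lambda_i(e_j)=\delta_{ij}$, is continuous.

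The key lemma is the following: a linear functional $l:F\to\K$ on a topological vector space is continuous whenever its kernel is closed. To see this, assume $l\neq 0$ and pick $v$ with $l(v)=1$. The set $F\setminus(v+\ker l)$ is then an open neighbourhood of $0$. Using continuity of scalar multiplication at $(0,0)$, we find a balanced neighbourhood $W\subseteq F\setminus(v+\ker l)$ of $0$. On $W$ we must have $|l|<1$: if some $w\in W$ had $|l(w)|\geq 1$, then $w/l(w)\in W$ by balancedness, but $l(w/l(w))=1$, so $w/l(w)\in v+\ker l$, a contradiction. Hence $l$ is bounded on a neighbourhood of $0$, and therefore continuous.

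This reduces the problem to showing that $\ker\lambda_i$ is closed. Here we use induction on $n$. The case $n=0$ is trivial. For $n\geq 1$, the subspace $\ker\lambda_i$ has dimension $n-1$ and is Hausdorff in the subspace topology. By the inductive hypothesis it is isomorphic to $\K^{n-1}$, which is complete. A complete subspace of a Hausdorff topological vector space is closed: a net in the subspace converging in $F$ is Cauchy, so it converges inside the subspace, and the two limits agree by uniqueness of limits in a Hausdorff space. Completeness of $\K^{n-1}$ transfers along the topological isomorphism because uniform structures are preserved by linear homeomorphisms.

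The main obstacle is this last step. One has to be careful to phrase completeness via Cauchy filters or nets, in the sense of Definition~\ref{Def:Completeness}, rather than via sequences, because $F$ is not yet known to be metrizable. Alternatively, one could cite the general fact that finite-dimensional subspaces of Hausdorff topological vector spaces are closed. Once the kernels are known to be closed, each $\lambda_i$ is continuous, so $\Phi^{-1}=(\lambda_1,\dots,\lambda_n)$ is continuous. Therefore $\Phi$ is an isomorphism of topological vector spaces.
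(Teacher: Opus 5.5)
Your proof is correct. The paper gives no argument of its own for this lemma; it only cites \cite[Theorem 3.1.1]{TVSc}, so there is no internal proof to compare against.

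What you have written is the classical inductive proof, as found in Schaefer or Bourbaki: continuity of $\Phi$ is trivial, a linear functional with closed kernel is continuous, and each kernel $\ker\lambda_i$ is complete by the inductive hypothesis and therefore closed in the Hausdorff space $F$. Every step checks out, including the one you flag as delicate. Closedness has to be shown with nets or filters, because $F$ is not yet known to be first countable. Completeness of $\K^{n-1}$ in the Cauchy-filter sense of Definition~\ref{Def:Completeness} follows from Lemma~\ref{Lemma:MetrizableSeqCompImpliesComp} applied to the Euclidean metric.

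The other textbook route (e.g.\ Rudin's) avoids both induction and completeness. Let $S\subset\K^n$ be the Euclidean unit sphere. Then $\Phi(S)$ is compact, hence closed in the Hausdorff space $F$, and it does not contain $0$. Take a balanced neighbourhood $W$ of $0$ in its complement. Its preimage $\Phi^{-1}(W)$ is balanced and disjoint from $S$, so it lies in the open unit ball. Hence $|\lambda_i|<1$ on $W$, and each $\lambda_i$ is continuous by the same boundedness argument that ends your key lemma.

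That route is shorter, but it relies on compactness of $S$, i.e.\ on local compactness of $\K$. Yours uses only completeness of $\K$ and the existence of scalars of arbitrarily small nonzero absolute value. It therefore proves the statement verbatim over any complete non-discretely valued field. Your closed-kernel criterion also directly gives the one-dimensional case used in Corollary~\ref{Cor:l(v)=l(w)Thenv=w}.
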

\begin{corollary}
\label{Cor:l(v)=l(w)Thenv=w}
Let $F$ be a locally convex space over the field $\K=\R,\mathbb C$, and let $v,w\in F$. If for every continuous linear functional $l:F\to \K$ we have $l(v)=l(w)$, then $v=w$.
\end{corollary}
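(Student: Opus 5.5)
We argue by contraposition via the Hahn--Banach theorem in the form of Corollary \ref{Cor:HahnBanachForLocConv}. Suppose $v\neq w$ and set $u\coloneqq v-w\neq 0$. The aim is to construct a continuous linear functional $l:F\to\K$ with $l(u)\neq 0$. Then $l(v)\neq l(w)$ by linearity, contradicting the hypothesis.

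First, consider the one-dimensional subspace $G\coloneqq \K u\subseteq F$ with the subspace topology. Since $F$ is locally convex, it is Hausdorff, and so is $G$. By Lemma \ref{Lemma:FinDimHausdTVS}, the map $\K\to G$, $\lambda\mapsto\lambda u$, is an isomorphism of topological vector spaces. Therefore the functional $l_0:G\to\K$, $l_0(\lambda u)\coloneqq\lambda$, is continuous and satisfies $l_0(u)=1$.

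Next, we check that $G$ is closed in $F$. A finite-dimensional subspace of a Hausdorff topological vector space is complete, being isomorphic to $\K^n$ with its uniform structure, and hence closed. Alternatively, one can argue directly with seminorms. Since the family $(\nnorm{-}{\alpha})_{\alpha\in A}$ is separating, there is some $\alpha$ with $\nnorm{u}{\alpha}>0$. Then $\nnorm{\lambda u}{\alpha}=|\lambda|\nnorm{u}{\alpha}$, and this immediately gives the continuous bound $|l_0(x)|\le \nnorm{x}{\alpha}/\nnorm{u}{\alpha}$ on $G$.

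This direct bound actually makes the closedness question unnecessary. We may apply Theorem \ref{Thm:GenHahnBanach} directly to the seminorm $\nnorm{-}{}\coloneqq\nnorm{-}{\alpha}/\nnorm{u}{\alpha}$ on $F$. This yields a linear extension $l:F\to\K$ with $|l(x)|\le\nnorm{x}{\alpha}/\nnorm{u}{\alpha}$ for all $x\in F$. Such an $l$ is continuous because $\nnorm{-}{\alpha}$ is continuous, and it satisfies $l(u)=1\neq0$. This is the cleanest route, and it avoids Lemma \ref{Lemma:FinDimHausdTVS} entirely.

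The only point needing care is the existence of a seminorm $\nnorm{-}{\alpha}$ with $\nnorm{u}{\alpha}\neq0$. This follows from the Hausdorff property being equivalent to the family of seminorms being separating, as noted in the remark after Definition \ref{Def:LocConvSpace}. The rest is routine.
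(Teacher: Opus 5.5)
Your argument is correct and is essentially the paper's proof: both extend the functional $\lambda(v-w)\mapsto\lambda$ from the span of $v-w$ to all of $F$ by Hahn--Banach. The only difference is that you get the dominating seminorm directly from the separating family and apply Theorem~\ref{Thm:GenHahnBanach}, whereas the paper obtains continuity on the span from Lemma~\ref{Lemma:FinDimHausdTVS} and then invokes Corollary~\ref{Cor:HahnBanachForLocConv}. That corollary has a closedness hypothesis which the paper does not explicitly verify, and your direct route avoids it.
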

\begin{proof}
Suppose $v\neq w\in F$ and $l(v-w)=0$ for every continuous linear functional $l:F\to \K$ then $\mathrm{span}_\K(v-w)\subseteq F$ is isomorphic to $\K$ (cf.~Lemma \ref{Lemma:FinDimHausdTVS}). Thus, the linear functional $l_{v-w}:\mathrm{span}_\K(v-w)\to \K$ mapping $v-w\mapsto 1$ is continuous and can be extended to a continuous linear functional $\Tilde{l}_{v-w}:F\to \K$ (cf.~Corollary \ref{Cor:HahnBanachForLocConv}). By construction we have $\Tilde{l}_{v-w}(v-w)=1$ contradicting the assumption.
\end{proof}

\begin{lemma}[{\cite[\S4, p.~209]{TVS1Book}} and {\cite[Theorem 2.5.14]{TVSc}}]
\label{Lemma:ExtOfMapsIntoCompHausdTVS}
Let $F$ and $G$ be Hausdorff topological vector spaces over the field $\K=\R,\mathbb C$, $A\subseteq F$ be a dense linear subspace, and let $G$ be complete. Then
\begin{enumerate}
    \item any continuous seminorm $\nnorm{-}{}:A \to \R_{\geq 0}$ can be extended uniquely to a continuous seminorm $\nnorm{-}{}:F\to \R_{\geq 0}$, and
    \item any continuous linear map $L:A\to G$ can be extended uniquely to a continuous linear map $L:F\to G$. 
\end{enumerate}
\end{lemma}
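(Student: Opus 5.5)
The plan is the standard extension-by-continuity argument, carried out first for linear maps (part 2) and then deduced for seminorms (part 1). Since the paper only assumes Hausdorff topological vector spaces, I will use uniform structures and nets (or Cauchy filters) throughout, rather than sequences.

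\textbf{Extending $L$.} For $x\in F$, density of $A$ gives a net $(a_i)_{i\in I}$ in $A$ with $a_i\to x$, and I define $L(x)\coloneqq\lim_i L(a_i)$. This needs three checks.
\begin{enumerate}
    \item \emph{$(L(a_i))$ is Cauchy in $G$.} Take a neighbourhood $W$ of $0\in G$. Continuity of $L$ at $0$ gives a neighbourhood $U$ of $0$ in $F$ with $L(U\cap A)\subseteq W$. Since $(a_i)$ converges it is Cauchy, so $a_i-a_j\in U\cap A$ for large $i,j$, and then $L(a_i)-L(a_j)\in W$.
    \item \emph{The limit exists.} Completeness of $G$ gives convergence: the elementary filter of a Cauchy net is a Cauchy filter, as in Remark~\ref{Rem:CompImpliesSeqComp}.
    \item \emph{The limit is independent of the net.} Given two nets converging to $x$, interleave them into a single net on the product index set that still converges to $x$. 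Its image under $L$ has a unique limit because $G$ is Hausdorff.
\end{enumerate}
Constant nets show that the extension agrees with $L$ on $A$.

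\textbf{Properties of the extension.}
\begin{itemize}
    \item \emph{Linearity.} If $a_i\to x$ and $b_i\to y$ on a common directed set, then $\lambda a_i+b_i\to\lambda x+y$. Linearity passes to the limit by continuity of the operations in $G$.
    \item \emph{Continuity at $0$ (hence everywhere, by linearity).} Fix $W$ and pick a closed neighbourhood $W'\subseteq W$ of $0$; this is possible since topological vector spaces are regular. Choose $U$ open with $L(U\cap A)\subseteq W'$. For $x\in U$, every approximating net eventually lies in $U\cap A$, so $L(x)\in\overline{W'}=W'\subseteq W$.
    \item \emph{Uniqueness.} Two continuous extensions agree on the dense set $A$. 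The set where they agree is closed because $G$ is Hausdorff, so they agree on all of $F$.
\end{itemize}

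\textbf{Seminorms.} A seminorm has values in $\R_{\geq 0}$, which is not a vector space, so part 2 cannot be applied verbatim. I would use that a continuous seminorm $p$ satisfies $|p(a)-p(b)|\leq p(a-b)$, which makes it uniformly continuous. The same net-limit construction then gives $p(x)\coloneqq\lim p(a_i)\in[0,\infty)$: the limit exists by completeness of $\R$ and is independent of the net. The seminorm properties pass to the limit, namely $p(\lambda x)=|\lambda|p(x)$, subadditivity, and nonnegativity. Continuity follows from $p(x)\leq 1$ on the closure of a neighbourhood on which $p<1$. Uniqueness again follows by density and Hausdorffness of $\R$.

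\textbf{Main obstacle.} The only delicate point is working with nets and filters instead of sequences, since $F$ is not assumed metrizable. In particular, the interleaving step for independence of the net and the closed-neighbourhood step for continuity must be done with filters. The rest is routine.
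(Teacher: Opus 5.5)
Your argument is correct. It is the standard extension-by-uniform-continuity proof, with Cauchy nets in place of the Cauchy filters used in the textbook results the paper cites; the paper gives no proof of its own beyond those citations. Handling the seminorm case separately via $|p(a)-p(b)|\leq p(a-b)$, because $\R_{\geq 0}$ is not a vector space, is exactly the right adjustment.
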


\begin{lemma}[{\cite[Theorem 2.5.15]{TVSc}} and {\cite [\S4, p.~208f]{TVS1Book}}]
\label{Lemma:CompletionAndExtension}
Let $F$ be a Hausdorff topological vector space. Then there exists a complete Hausdorff topological vector space $\hat{F}$ and a continuous linear embedding $i:F\to \hat{F}$ such that $i(F)$ is a dense subset of $\hat{F}$. The space $\hat{F}$ is unique up to isomorphism of topological vector spaces.
In particular, if $F$ is complete then $\hat{F}\cong F$.

If $F$ is locally convex with a topology induced by seminorms $(\nnorm{-}{\alpha})_{\alpha\in A}$, for some index set $A$, then $\hat{F}$ is also a locally convex topological vector space and its topology is induced by the unique continuous seminorms obtained as extensions of the seminorms $(\nnorm{-}{\alpha})_{\alpha\in A}$ (cf.~Lemma \ref{Lemma:ExtOfMapsIntoCompHausdTVS}). 
\end{lemma}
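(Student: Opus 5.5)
The completion lemma is a cited standard result, so the natural plan is to construct $\hat F$ as a quotient of Cauchy filters (or, for metrizable $F$, of Cauchy sequences), prove uniqueness from Lemma~\ref{Lemma:ExtOfMapsIntoCompHausdTVS}, and handle the locally convex statement by extending seminorms.

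\textbf{Existence.} Let $\mathcal C$ be the set of Cauchy filters on $F$. Call $\mathcal F\sim\mathcal G$ if the filter generated by the sets $V\cup W$ with $V\in\mathcal F$, $W\in\mathcal G$ is again Cauchy; equivalently, $\mathcal F-\mathcal G$ converges to $0$. Put $\hat F\coloneqq \mathcal C/\sim$ and define the vector operations through the filters $\mathcal F+\mathcal G$ and $\lambda\mathcal F$; continuity of addition and scalar multiplication shows these are again Cauchy. Topologize $\hat F$ by taking, for each $0$-neighbourhood $U$ of $F$, the set $\hat U$ of classes having a representative with an element contained in $U$. Let $i(x)$ be the class of the neighbourhood filter of $x$.

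Then I would check four things. First, $i$ is linear and injective; injectivity uses that $F$ is Hausdorff. Second, $i$ is an embedding with dense image, since every $\mathcal F$ is the limit of $i(x)$ for $x$ ranging over elements of $\mathcal F$. Third, $\hat F$ is Hausdorff. Fourth, $\hat F$ is complete, via a diagonal argument: a Cauchy filter on $\hat F$ is traced back to $F$ using density. The completeness check is the main technical obstacle of the existence part. To keep it manageable I would reduce to closed symmetric $0$-neighbourhoods, using the fact that the closures $\overline{\hat U}$ form a neighbourhood basis of $0$ in $\hat F$.

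\textbf{Uniqueness.} Suppose $(\hat F_1,i_1)$ and $(\hat F_2,i_2)$ are two completions. Apply Lemma~\ref{Lemma:ExtOfMapsIntoCompHausdTVS}(2) to $i_2\circ i_1^{-1}\colon i_1(F)\to\hat F_2$ and to its inverse. The two extensions compose to maps that are the identity on dense subspaces, so by the uniqueness part of that lemma they are mutually inverse isomorphisms. If $F$ is already complete, then $(F,\Id)$ is itself a completion, so $\hat F\cong F$.

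\textbf{Locally convex case.} Each $\nnorm{-}{\alpha}$, viewed on $i(F)$, extends uniquely to a continuous seminorm on $\hat F$ by Lemma~\ref{Lemma:ExtOfMapsIntoCompHausdTVS}(1). I would show that the closures $\overline{i(\epsilon B_\alpha)}$ (with finite intersections) form a $0$-neighbourhood basis of $\hat F$. By continuity, $\overline{i(\epsilon B_\alpha)}$ lies between $\{\nnorm{-}{\alpha}\le\epsilon\}$ and $\{\nnorm{-}{\alpha}<2\epsilon\}$ for the extended seminorm. Hence the topology of $\hat F$ is the one induced by the extended seminorms, and $\hat F$ is locally convex. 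Hausdorffness of $\hat F$ then gives that the extended family is separating.
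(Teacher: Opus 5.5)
The paper gives no proof of this lemma and simply defers to the cited textbooks. Your outline is the standard argument found there (completion by classes of Cauchy filters, uniqueness via Lemma~\ref{Lemma:ExtOfMapsIntoCompHausdTVS}(2), seminorms extended via part (1)), and it is correct, with two small points to fill in. First, your four checks omit verifying that the sets $\hat U$ form a $0$-neighbourhood base of a vector topology on $\hat F$, and that the operations are well defined on classes; this is needed because the lemma asserts $\hat F$ is a topological vector space. Second, in the locally convex step the inclusion $\{\nnorm{-}{\alpha}<\epsilon\}\subseteq\overline{i(\epsilon B_\alpha)}$ for the extended seminorm does not follow from continuity alone; it comes from density of $i(F)$ together with openness of the ball.
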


\begin{definition}
Let $F$ be a Hausdorff topological vector space. Then we call the unique (up to isomorphism of topological vector spaces) complete Hausdorff topological vector space  $\hat{F}$ as in the previous lemma the \textit{completion of $F$}.
\end{definition}

\begin{remark}
\label{Rem:TopOnContPaths}
Let $F$ be a locally convex space, and let the topology on $F$ be induced by a family of seminorms $(\nnorm{-}{\alpha})_{\alpha\in A}$ for some index set $A$.
We denote by $C([a,b],F)$ the continuous paths in $F$ over the interval $[a,b]$. It immediately follows from the fact that $(\nnorm{-}{\alpha})_{\alpha\in A}$ is a separating family of seminorms, that if we equip $C([a,b],F)$ with the topology induced by the family of seminorms 
\begin{align}
\label{Eq:SupNormOnC([a,b],F)}
    \left(\nnorm{f}{\alpha,\mathrm{sup}}\coloneqq \sup_{t\in [a,b]}\nnorm{f(t)}{\alpha}\right)_{\alpha\in A}
\end{align}
it is Hausdorff and so $C([a,b],F)$ has the structure of a locally convex space.
\end{remark}

On the level of Fréchet spaces the following proposition is well-known (cf.~\cite[Part I, Theorem 2.1.1]{hamilton}). In the general form as we formulate it, it has already been used by Neeb in \cite{InfDimLieneeb} and by Glöckner in \cite{GlockInfDimLieGr}. However, we could not find a proof in this generality, and so found it worthwhile to do some simple generalizations of the proof for Fréchet spaces provided by Hamilton in \cite[Part I, Theorem 2.1.1]{hamilton}.

\begin{proposition}
\label{Prop:RiemInt}
Let $F$ be a locally convex topological vector space over the field $\K=\R,\mathbb C$, let $\hat{F}$ be its completion, and $f:[a,b]\to F$, $a,b\in \R$ and $a<b$, be a continuous path. Then there is an element $\int_a^bf(t)dt\in \hat{F}$ such that for any continuous linear functional $l:\hat F\to \K$ the equation $$l\left(\int_a^bf(t)dt\right)=\int_a^b(l\circ f)(t)dt$$ holds. It is unique by Corollary \ref{Cor:l(v)=l(w)Thenv=w} and has the following additional properties:
\begin{enumerate}
    \item \label{Item:IntSeminorm} For every continuous seminorm $\nnorm{-}{}:\hat F\to \R_{\geq0}$ the inequality
    \begin{align*}
        \nnorm{\int_a^bf(t)dt}{}\leq \int_a^b\nnorm{f(t)}{}dt
    \end{align*}
    holds.
    \item \label{Item:intac+intcb=intab} The equation
    \begin{align*}
        \int_a^cf(t)dt+\int_c^bf(t)dt=\int_a^bf(t)dt
    \end{align*}
    holds for $c\in [a,b]$.
    \item \label{Item:IntContLin} The induced map $\int_a^b:C([a,b],F)\to \hat F$ is continuous and linear with respect to the natural topology on $C([a,b],F)$ discussed in Remark \ref{Rem:TopOnContPaths}. 
\end{enumerate}
\end{proposition}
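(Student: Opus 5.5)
We follow Hamilton's proof for Fréchet spaces and use Riemann sums, with completeness of $\hat F$ replacing sequential completeness. For a tagged partition $P=(a=t_0<t_1<\dots<t_n=b,\ \tau_i\in[t_{i-1},t_i])$ set $S(f,P)\coloneqq\sum_{i=1}^n f(\tau_i)(t_i-t_{i-1})\in F\subseteq\hat F$. The first step is to show that $S(f,P)$ forms a Cauchy net as the mesh of $P$ tends to $0$. Fix a continuous seminorm $\nnorm{-}{}$ on $F$; it is one of the defining seminorms or a finite sum of them. Since $[a,b]$ is compact and $f$ is continuous, $f$ is uniformly continuous with respect to $\nnorm{-}{}$. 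That is, for every $\epsilon>0$ there is $\delta>0$ with $\nnorm{f(s)-f(t)}{}<\epsilon$ whenever $|s-t|<\delta$. This follows from the standard compactness argument applied to $t\mapsto f(t)$ together with the triangle inequality. Comparing two partitions of mesh $<\delta$ through their common refinement gives $\nnorm{S(f,P)-S(f,P')}{}\leq 2\epsilon(b-a)$.

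To work only with sequences and filters already treated, we take the sequence of uniform partitions $P_n$ of mesh $(b-a)/n$ with left endpoints as tags. By the estimate above, $(S(f,P_n))_n$ is Cauchy in the sense of Definition \ref{Def:Completeness}, since every zero-neighbourhood contains a finite intersection of seminorm balls. Its associated filter is therefore Cauchy (Remark \ref{Rem:CompImpliesSeqComp}), so it converges in the complete space $\hat F$ to an element we call $\int_a^b f(t)dt$. Here the seminorms of $\hat F$ are the continuous extensions from Lemma \ref{Lemma:CompletionAndExtension}. The same estimate shows that any sequence of tagged partitions with mesh tending to $0$ yields the same limit.

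For a continuous linear functional $l$ on $\hat F$, continuity and linearity give $l(S(f,P_n))=S(l\circ f,P_n)\to\int_a^b(l\circ f)dt$. This proves the defining identity, and uniqueness follows from Corollary \ref{Cor:l(v)=l(w)Thenv=w}. For property \ref{Item:IntSeminorm}, a continuous seminorm on $\hat F$ satisfies $\nnorm{S(f,P_n)}{}\leq S(\nnorm{f}{},P_n)$, and both sides converge by continuity of the seminorm and by the scalar Riemann integral. For property \ref{Item:intac+intcb=intab}, either take partitions containing $c$ and use the independence of the limit from the choice of partitions, or apply functionals and Corollary \ref{Cor:l(v)=l(w)Thenv=w}. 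For property \ref{Item:IntContLin}, linearity follows from the functional identity. Continuity follows from \ref{Item:IntSeminorm}: for a defining seminorm, extended to $\hat F$, we get $\nnorm{\int_a^b f}{\alpha}\leq(b-a)\nnorm{f}{\alpha,\sup}$, and these seminorms define the topology of $\hat F$.

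The main obstacle is step one: uniform continuity of $f$ with respect to an arbitrary continuous seminorm in a space that need not be metrizable, and the resulting Cauchy estimate. Both are handled seminorm by seminorm through compactness of $[a,b]$, without needing a metric on $F$.
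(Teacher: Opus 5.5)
Your proof is correct, but it takes a different route from the paper.

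\textbf{The paper's route.} The paper, generalizing Hamilton's argument, never uses Riemann sums. It defines the integral only on the dense subspace $\mathrm{PL}([a,b],F)$ of piecewise linear paths, via the trapezoidal rule, and notes that this map is continuous and linear. It then extends it to a continuous linear map $C([a,b],F)\to\hat F$ by Lemma \ref{Lemma:ExtOfMapsIntoCompHausdTVS}. The remaining claims follow from this construction:
\begin{itemize}
\item The functional identity comes from uniqueness of continuous extensions, since $l\circ\int_a^b$ and $\int_a^b\circ\, l$ are both continuous and agree on piecewise linear paths.
\item Additivity comes from Corollary \ref{Cor:l(v)=l(w)Thenv=w}.
\item The seminorm inequality needs a separate $\epsilon/2$-approximation of $f$ by piecewise linear paths.
\end{itemize}

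\textbf{Your route.} You construct $\int_a^b f(t)dt$ path by path, as the limit of a single Cauchy sequence of Riemann sums. This uses uniform continuity of $f$ with respect to each seminorm and completeness of $\hat F$.

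\textbf{What each buys.} The paper gets linearity and continuity of $\int_a^b$ as an operator for free. It pushes the uniform-continuity argument into the density of piecewise linear paths, which it only sketches. Your argument is more elementary and self-contained. Property \ref{Item:IntSeminorm}, and hence property \ref{Item:IntContLin}, is immediate by passing to the limit in $\nnorm{S(f,P_n)}{}\leq S(\nnorm{f}{},P_n)$. You also get the useful extra fact that every sequence of Riemann sums with mesh tending to zero converges to the integral.

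\textbf{One caveat.} When you invoke Remark \ref{Rem:CompImpliesSeqComp}, use the elementary filter of the sequence, namely the sets containing all but finitely many terms $x_n$. The filter built there from the value set $S$ degenerates (it contains $\emptyset$) when the sequence takes only finitely many values, which already happens for constant $f$. With the elementary filter, convergence of Cauchy sequences in the complete space $\hat F$ is standard.
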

\begin{proof}
Throughout this proof, we view $F$ as a subspace of its completion $\hat F$.

We consider piecewise linear continuous paths in $F$ over the interval $[a,b]$, that is, paths $f\in C([a,b],F)$ such that there is a partition $a=t_0\leq t_1\leq ... \leq t_n=b$, $n\in \N$ of the interval $[a,b]$ such that $\left.f\right|_{[t_{i},t_{i+1}]}(t)=f_i+tf'_i$ with $f_i,f_i'\in F$, $i=0,\ldots,n-1$, and $t\in [t_i,t_{i+1}]$. We denote the linear subspace of piecewise linear paths in $F$ over the interval $[a,b]$ by $\mathrm{PL}([a,b],F)\subseteq C([a,b],F)$. As a subspace of a locally convex space, it is also a locally convex space, and it is easy to see, e.g.~by a Lebesgue number argument, that $\mathrm{PL}([a,b],F)$ is a dense subspace of $C([a,b],F)$. 

We define the Riemann integral on $\mathrm{PL}([a,b],F)$ as the linear map 
$\int_a^b: \mathrm{PL}([a,b],F)\to F$ that maps a path that is piecewise linear with respect to a partition $a=t_0\leq t_1\leq \ldots \leq t_n=b$, $n\in \N$ as above to 
\begin{align}
\label{Eq:IntOnPL}
    \int_a^bf(t)dt\coloneqq \sum_{i=0}^{n-1}\frac{1}{2}(f(t_{i+1})+f(t_i))(t_{i+1}-t_i).
\end{align}
It is easily observed that it is well defined, and continuous.
We note that we can, of course, view this map as a map with codomain $\hat F$. So, by Lemma \ref{Lemma:ExtOfMapsIntoCompHausdTVS} we may extend $\int_a^b:\mathrm{PL}([a,b],F)\to \hat F$ uniquely to a continuous linear map, which we will denote again by $\int_a^b:C([a,b],F)\to \hat F$. Hence, in particular, property \ref{Item:IntContLin} is satisfied.

We denote by $\int_{\K,a}^b:C([a,b],\K)\to \K$ the continuous linear map with $\int_{\K,a}^bf=\int_a^bf(t)dt$ for $f\in C([a,b],\K)$. It is clear that if $f\in \mathrm{PL}([a,b],F)$ and $l:\hat F\to \K$ is a continuous linear functional then, by applying $l$ on both sides to equation \eqref{Eq:IntOnPL} we obtain
$l\left(\int_a^bf(t)dt\right)=\int_a^bl(f(t))dt$.
We have already seen that $\int_a^b$ can be uniquely continuously extended to paths in $C([a,b],F)$ so $l\circ \int_a^b:C([a,b],F)\to \K$ is the unique continuous linear extension of $l\circ\int_a^b=\int_{\K,a}^b\circ l:\mathrm{PL}([a,b],F)\to \K$. On the other hand
$\int_{\K,a}^b\circ l:C([a,b],F)\to \K$ is a well defined continuous linear map. 
So we find, that
$l\circ \int_a^b=\int_a^b\circ l:C([a,b],F)\to \K$. Moreover, $\int_a^bg(t)dt$ is unique for every $g\in C([a,b],F)$ by Corollary \ref{Cor:l(v)=l(w)Thenv=w}.

Since the completion of a locally convex space is again locally convex (cf.~Lemma \ref{Lemma:CompletionAndExtension}) we get by Corollary \ref{Cor:l(v)=l(w)Thenv=w} and the previous property that the properties of the usual Riemann integral with $\K$-valued paths translate to the Riemann integral on locally convex spaces. In particular property \ref{Item:intac+intcb=intab} is satisfied.

Lastly, we need to show that property \ref{Item:IntSeminorm} is satisfied. Again, it is clear that if $f\in \mathrm{PL}([a,b],F)$ the property is satisfied by applying a continuous seminorm $\nnorm{-}{}:\hat F\to \R$ to equation \eqref{Eq:IntOnPL}. We denote the supremum norm on $C([a,b],F)$ induced by the restriction of $\nnorm{-}{}$ to $F$ by $\nnorm{-}{\mathrm{sup}}$ (cf.~equation \eqref{Eq:SupNormOnC([a,b],F)}). Now, by using the linearity of the integral, the triangle inequality, and that property \ref{Item:IntSeminorm} is satisfied on $\mathrm{PL}([a,b],F)$ we see that for $f\in C([a,b],F)$ and $g\in \mathrm{PL}([a,b],F)$ the following inequalities hold: 
\begin{align*}
    \nnorm{\int_a^bf(t)dt}{}&\leq \int_a^b\nnorm{g(t)}{}dt+\nnorm{\int_a^b(f(t)-g(t))dt}{} \\ 
    &\leq \int_a^b\nnorm{f(t)}{}dt+\int_a^b\nnorm{g(t)-f(t)}{}dt+\nnorm{\int_a^b(f(t)-g(t))dt}{} \\
    &\leq \int_a^b\nnorm{f(t)}{}dt+(b-a)\nnorm{g-f}{\mathrm{sup}}+\nnorm{\int_a^b(f(t)-g(t))dt}{}.
\end{align*}
Now, by the density of $\mathrm{PL}([a,b],F)\subset C([a,b],F)$, and the linearity and continuity of the integral $\int_a^b:C([a,b],F)\to \hat F$ we can find a $g_\epsilon \in \mathrm{PL}([a,b],F)$ for any $\epsilon \in \R_{>0}$ such that $(b-a)\nnorm{g_\epsilon-f}{\mathrm{sup}}\leq \epsilon/2$ and $\nnorm{\int_a^b(f(t)-g_\epsilon(t))dt}{}\leq \epsilon /2$. Hence, $\nnorm{\int_a^bf(t)dt}{}\leq \int_a^b\nnorm{f(t)}{}dt+\epsilon$ for every $\epsilon\in \R_{>0}$. It follows that property \ref{Item:IntSeminorm} is satisfied for all $f\in C([a,b],F)$.
\end{proof}

\begin{definition}
For $f\in C([a,b],F)$ we call $\int_a^bf(t)dt\in \hat F$ as defined in the previous proposition the \textit{Riemann integral of $f$}.
\end{definition}

\begin{remark}
If in the previous proposition $F$ is assumed to be a complete locally convex vector space, e.g.~a Fréchet space, then we can identify $F$ with $\hat F$ and hence do not need to consider completions for the Riemann integral.
\end{remark}

\begin{remark}
\label{Rem:IntegralCommWithLinMap}
Let $F$ and $G$ be locally convex vector spaces over $\K=\R,\mathbb C$, $\hat F$ and $\hat G$ be their respective completions, $f\in C([a,b],F)$, $a\leq b\in \R$, and let $L: F\to G$ be a continuous linear map. Then by Lemma \ref{Lemma:ExtOfMapsIntoCompHausdTVS} the map $F\xrightarrow{L} G\hookrightarrow \hat G$ has a unique extension $\hat L:\hat F\to \hat G$. Viewing $F$ and $G$ again as subspaces of $\hat F$ and $\hat G$ respectively, we obtain the following familiar equation,
\begin{align*}
    \int_a^b (L\circ f)(t)dt=\int_a^b (\hat L\circ f)(t)dt=\hat L\left(\int_a^bf(t)dt\right).
\end{align*}
This follows immediately from the previous proposition since if $l:\hat G\to \K$ is a linear continuous functional, then so is $l\circ \hat L:\hat F\to \K$. Hence, we find that
\begin{align*}
    l\left(\int_a^b(L\circ f)(t)dt\right)
    =\int_a^b (l\circ L)(f(t))dt=\int_a^b (l\circ \hat L)(f(t))dt
    =l\left(\hat L\left(\int_a^bf(t)dt\right)\right)
\end{align*}
which uniquely determines the Integral by the Hahn-Banach theorem (cf.~Corollary \ref{Cor:l(v)=l(w)Thenv=w}).

In particular, if $F$ and $G$ are complete and $F\subseteq G$ is a closed linear subspace then the Riemann integral on $F$ and the Riemann integral on $G$ agree on continuous paths with values in $F$.
\end{remark}

\begin{lemma}
\label{Lemma:ProductOfLCAndFreSpaces}
Let $I$ be some index set and let $(F_i,(\nnorm{-}{i,\alpha})_{\alpha \in A_i})$ for all $i\in I$ be locally convex spaces, the $A_i$ again being index sets. Then the product $\Pi_{i\in I}F_i$ with the product topology and the natural vector space structure forms a locally convex vector space. Moreover, if $I$ is countable and $F_i$ is a Fréchet space for every $i\in I$,  then $\Pi_{i\in I}F_i$ is a Fréchet space.
\end{lemma}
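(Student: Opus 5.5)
The plan is to describe the product topology on $\Pi_{i\in I}F_i$ by an explicit family of seminorms, read off local convexity from the remark following Definition \ref{Def:LocConvSpace}, and then, for countable $I$, check metrizability and sequential completeness separately.

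\emph{Seminorms for the product topology.} For $i\in I$ and $\alpha\in A_i$ let $p_i:\Pi_{j\in I}F_j\to F_i$ be the projection and set
\begin{align*}
\nnorm{x}{i,\alpha}^\Pi\coloneqq \nnorm{p_i(x)}{i,\alpha}.
\end{align*}
Each of these is a seminorm, since $p_i$ is linear. I then show that the topology induced by this family, in the sense of Definition \ref{Def:LocConvSpace}, equals the product topology.

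For the inclusion in one direction, the sets $x+\epsilon B^\Pi_{i,\alpha}$ equal $p_i^{-1}(p_i(x)+\epsilon B_{i,\alpha})$. These are preimages of open sets under projections, so they are open in the product topology.

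For the other direction, a subbasic open set $p_i^{-1}(U)$ with $U\subseteq F_i$ open contains, around each of its points $x$, a finite intersection $\bigcap_k p_i^{-1}(p_i(x)+\epsilon_k B_{i,\alpha_k})$. Each set in this intersection is a subbasic neighbourhood for the seminorm family, so $p_i^{-1}(U)$ is open in the seminorm topology.

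\emph{Separation and local convexity.} The family is separating: if $\nnorm{x}{i,\alpha}^\Pi=0$ for all $i$ and $\alpha$, then $p_i(x)=0$ for every $i$, because each family $(\nnorm{-}{i,\alpha})_{\alpha\in A_i}$ is separating. Hence $x=0$, the topology is Hausdorff, and the product is locally convex.

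\emph{Metrizability for countable $I$.} Now assume $I$ is countable and each $F_i$ is Fréchet. Each $A_i$ may be taken countable, so the index set $\bigcup_i\{i\}\times A_i$ is countable. The product is therefore metrizable.

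\emph{Sequential completeness.} Let $(x_n)$ be a Cauchy sequence in the product. Using the seminorm characterisation of Cauchy sequences from the earlier remark, each sequence $(p_i(x_n))_n$ is Cauchy in $F_i$, so it converges to some $y_i\in F_i$. Put $y=(y_i)_{i\in I}$. Then $\nnorm{x_n-y}{i,\alpha}^\Pi=\nnorm{p_i(x_n)-y_i}{i,\alpha}\to0$ for every $(i,\alpha)$. Convergence in every defining seminorm is convergence in the induced topology, so $x_n\to y$ and the product is Fréchet.

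\emph{Main obstacle.} The only point that needs care is the identification of the product topology with the seminorm topology. Once the subbase of Definition \ref{Def:LocConvSpace} is matched with cylinder sets as above, the rest is routine.
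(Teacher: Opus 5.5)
Your proposal is correct and follows the paper's proof essentially step for step. You identify the product topology with the topology of the seminorms $\nnorm{p_i(-)}{i,\alpha}$ and use separation of this family for local convexity. For countable $I$ (after taking each $A_i$ countable) you get metrizability, and componentwise limits of Cauchy sequences give sequential completeness; the only addition is that you spell out the two inclusions of topologies that the paper dismisses as easy to see.
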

\begin{proof}
Since the proof of this standard result is quite short we present it here.

It is easy to see that the product topology on $\Pi_{i\in I}F_i$ is precisely the topology induced by seminorms of the form $\nnorm{(x_i)_{i\in I}}{\alpha_{k}\in A_{k}}=\nnorm{x_{k}}{k,\alpha_{k}}$, with $k\in I$, and $(x_i)_{i\in I}\in \Pi_{i\in I}F_i$. This is a separating family of seminorms and hence proves the first part of the lemma.

From this we deduce that if $I$ is countable and the vector spaces $F_i$, $i\in I$, are Fréchet spaces, then the topology on $\Pi_{i\in I}F_i$ is induced by a countable separating family of seminorms. It is clear that if $(x_{i,j})_{i\in I, j\in \N}$ is a Cauchy sequence in $\Pi_{i\in I}F_i$ then $(x_{i,j})_{j\in \N}$ is a Cauchy sequence in $F_i$ for every $i\in I$. Since it converges with respect to all seminorms $\nnorm{-}{i,k}$ with $i\in I$, and $k\in A_i=\N$ it converges in $\Pi_{i\in I}F_i$.
\end{proof}

\begin{definition}[{\cite[Part II, Definition 1.1.1]{hamilton}}]
\label{Def:GradFreSpac}
Let $(F,(\nnorm{-}{n})_{n\in \N})$ be a Fréchet space. Then we call it \textit{graded} if the seminorms $(\nnorm{-}{n})_{n\in \N}$ are increasing in strength, i.e.
\begin{align*}
    \nnorm{-}{0}\leq \nnorm{-}{1}\leq \nnorm{-}{2}\leq \ldots
\end{align*}
holds, and we call $(\nnorm{-}{n})_{n\in \N}$ a \textit{grading} on $F$.
\end{definition}

The following convention for the grading on a product of graded Fréchet spaces was introduced in \cite[p.~134]{hamilton}
\begin{remark}
\label{Rem:ProdOfFreSpa}
We have seen in the previous lemma that the countable product of Fréchet spaces is again a Fréchet space. Let now $(F,(\nnorm{-}{F,i})_{i\in \N})$ and $(G,(\nnorm{-}{G,j})_{j\in \N})$ be graded Fréchet spaces. Then $(F\oplus G, (\nnorm{-}{n,n}^\oplus)_{n\in \N})$, with $\nnorm{(f,g)}{n,n}^\oplus=\nnorm{f}{F,n}+\nnorm{g}{G,n}$ for every $n\in \N$, $f\in F$, and $g\in G$, is linearly isomorphic to $F\times G$ and the topologies agree. Hence $(\nnorm{-}{n,n}^\oplus)_{n\in \N}$ is a grading on $F\times G$.
One could also easily show that $(F\oplus G, (\nnorm{-}{n,n}^\oplus)_{n\in \N})$ is a Fréchet space and employ Remark \ref{Rem:FreFinerTop}.
\end{remark}

\begin{convention}
\label{Conv:ProdOfFreSpa}
Throughout this thesis when talking about products of finitely many graded Fréchet spaces we mean the product equipped with the natural grading introduced in Remark \ref{Rem:ProdOfFreSpa}.
\end{convention}

\begin{example}
\label{Example:Sigma(B)}
Let $(B,\nnorm{-}{B})$ be a Banach space. By $\Sigma(B)$ we denote the space of sequences $x=(x_i\in B)_{i\in \N}$ such that for every $n\in \N$ 
\begin{align*}
    \nnorm{x}{n}\coloneqq \sum_{k=0}^\infty e^{kn}\nnorm{x_k}{B}<\infty.
\end{align*}
Then, the space $\Sigma(B)$ with the topology induced by $(\nnorm{-}{n})_{n\in \N}$ is a graded Fréchet space, see \cite[Part II, Examples 1.1.2 (2)]{hamilton}.
\end{example}

\begin{definition}
\label{Def:TameLinMap}
Let $(F,(\nnorm{-}{k}^F)_{k\in\N})$ and $(G,(\nnorm{-}{k}^G)_{k\in\N})$ be graded Fréchet spaces. A linear map $L:F\to G$ is called tame linear of degree $r$ and base $b$ if there are constants $C_n>0$ such that 
\begin{align*}
    \nnorm{Lx}{n}^G\leq C_n \nnorm{x}{n+r}^F
\end{align*}
for all $x\in F$ and $n\geq b$. Naturally, every tame linear map is also continuous.

We say that a linear map between graded Fréchet spaces is a tame isomorphism if it is a vector space isomorphism, tame linear, and its inverse is tame linear as well.
\end{definition}

\begin{definition}[{\cite[Part II, Definition 1.1.3]{hamilton}}]
Let $F$ be a Fréchet space, and let $(\nnorm{-}{i})_{i\in \N}$ and $(\nnorm{-}{j}')_{j\in \N}$ be two gradings of $F$. We say that these gradings are \textit{tamely equivalent of degree $r$ and base $b$} if there are positive constants $C_n,C_n'>0$ for every $n\in \N$ such that
\begin{alignat*}{1}
    \nnorm{x}{n}\leq C_n \nnorm{x}{n+r}'\quad \textnormal{and} \quad \nnorm{x}{n}'\leq C_n' \nnorm{x}{n+r}
\end{alignat*}
for each $x\in F$ and $n\geq b$, or equivalently if the map $(F,(\nnorm{-}{i})_{i\in \N})\xrightarrow{\mathrm{Id}} (F,(\nnorm{-}{j}')_{j\in \N})$ and its inverse are tame linear of degree $r$ and base $b$.
\end{definition}

\begin{definition}[{\cite[Part II, Definition 1.3.1]{hamilton}}]
\label{Def:TameDirSummand}
Let $F$ and $G$ be graded Fréchet spaces. We call $F$ a \textit{tame direct summand} of $G$ if there is a graded Fréchet space $H$ and a tame isomorphism $G\cong F\times H$ (cf.~Remark \ref{Rem:ProdOfFreSpa} for the grading on $F\times H$).    
\end{definition}

\begin{proposition}
\label{Prop:TameDirSumBySequence}
Let $F$ and $G$ be graded Fréchet spaces. Then, $F$ is a tame direct summand of $G$ if and only if there are tame linear maps $L:F\to G$ and $M:G\to F$ such that $M\circ L=\mathrm{Id}_F:F\to F$. 
\end{proposition}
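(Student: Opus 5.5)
The plan is to prove the two directions separately, in both cases working directly with Definition \ref{Def:TameDirSummand}, Definition \ref{Def:TameLinMap} and the product grading of Remark \ref{Rem:ProdOfFreSpa}.

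\emph{Direct summand $\Rightarrow$ maps.} Suppose $\Phi:G\to F\times H$ is a tame isomorphism. Let $\iota:F\to F\times H$, $x\mapsto (x,0)$, be the inclusion and $p:F\times H\to F$, $(x,y)\mapsto x$, the projection. With the grading $\nnorm{(x,y)}{n,n}^\oplus=\nnorm{x}{F,n}+\nnorm{y}{H,n}$ we have $\nnorm{\iota x}{n,n}^\oplus=\nnorm{x}{F,n}$ and $\nnorm{p(x,y)}{F,n}\le \nnorm{(x,y)}{n,n}^\oplus$. Hence both maps are tame linear of degree $0$ and base $0$. Set $L\coloneqq \Phi^{-1}\circ\iota$ and $M\coloneqq p\circ\Phi$. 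Then $M\circ L=p\circ\iota=\mathrm{Id}_F$. Tameness of $L$ and $M$ follows because a composition of tame linear maps is tame linear: if $A$ has degree $r$ and base $b$, and $B$ has degree $s$ and base $c$, then
\[
\nnorm{BAx}{n}\le C_n\nnorm{Ax}{n+s}\le C_nC_{n+s}'\nnorm{x}{n+s+r}\quad\text{for } n\ge\max(c,b).
\]
So $B\circ A$ has degree $r+s$ and base $\max(b,c)$, since $n+s\ge b$ whenever $n\ge b$ and $s\ge 0$. I will record this composition fact as a short preliminary remark, assuming degrees are nonnegative; otherwise the base is adjusted to $\max(c,b-s)$.

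\emph{Maps $\Rightarrow$ direct summand.} Given $L$ and $M$ with $ML=\mathrm{Id}_F$, the composition $P\coloneqq LM:G\to G$ is a tame linear projection, since $P^2=L(ML)M=P$. Take $H\coloneqq \ker M=\operatorname{im}(\mathrm{Id}_G-P)$. It is a closed subspace because $M$ is continuous, so by Remark \ref{Rem:ClosedSubspAndQuotientFrechet} it is a Fréchet space, and the restricted seminorms of $G$ give it a grading. Define
\[
\Phi:G\to F\times H,\quad g\mapsto (Mg,\,g-LMg),\qquad \Psi:F\times H\to G,\quad (x,h)\mapsto Lx+h.
\]
Then $M(g-LMg)=Mg-Mg=0$, so $\Phi$ lands in $F\times H$. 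Moreover $\Psi\Phi g=LMg+g-LMg=g$ and $\Phi\Psi(x,h)=(x+Mh,\,Lx+h-L(x+Mh))=(x,h)$, using $Mh=0$. Tameness again uses the composition estimate. First, $\nnorm{Mg}{n}+\nnorm{g-LMg}{n}\le C_n\nnorm{g}{n+r}$, where $r$ is the maximum of the degrees of $M$ and $LM$ (and $0$), for $n$ beyond the larger base. Second, $\nnorm{Lx+h}{n}\le C_n\nnorm{x}{n+r_L}+\nnorm{h}{n}\le C_n'\nnorm{(x,h)}{n+r_L,n+r_L}^\oplus$, using monotonicity of the gradings (Definition \ref{Def:GradFreSpac}).

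The main obstacle is mostly bookkeeping. Different degrees in different components have to be absorbed into one common degree, which requires the gradings to be increasing so that $\nnorm{-}{n}\le\nnorm{-}{n+k}$. The base also has to be handled carefully when composing. No deep analysis is needed.
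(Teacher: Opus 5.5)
Your proposal is correct and follows the paper's proof essentially step for step. In one direction you use the same maps $L=\Phi^{-1}\circ\iota$ and $M=p\circ\Phi$; in the other you use the same isomorphism $F\times\ker M\to G$, $(x,h)\mapsto Lx+h$, with inverse $g\mapsto (Mg,\,g-LMg)$, and your explicit degree/base bookkeeping under composition just spells out what the paper summarizes as ``tame by the triangle inequality''.
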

\begin{proof}
Let $F$ be a tame direct summand of $G$ then there is a graded Fréchet space $H$ and a tame isomorphism $J:G\xrightarrow{\cong} F\times H$. Let $i:F\to F\times H$ be the inclusion map $i(x)=(x,0)$ for $x\in F$, and $\pi:F\times H\to F$ be the obvious projection. These maps are clearly tame linear. Hence, the maps $L\coloneqq J^{-1}\circ i$ and $M\coloneqq\pi\circ J$ are tame linear and clearly $M\circ L=\mathrm{Id}_F$. 

On the other hand if $L:F\to G$ and $M:G\to F$ are tame linear maps and satisfy $M\circ L=\mathrm{Id}_F$ then $L$ is injective and $M$ is surjective. The map $M$ is continuous, linear and $\{0\}\subseteq F$ is a closed subset since $F$ is Hausdorff. Hence, $\mathrm{ker}(M)$ is a closed subspace of $G$ and thus a graded Fréchet (sub-)space. We can now construct a tame isomorphism $N:F\times \operatorname{ker}(M)\to G$ given by $N(f,k)=Lf+k$ for $f\in F$ and $k\in \mathrm{ker}(M)$. It is tame by the triangle inequality and $L$ being tame. It is also obviously linear and has the inverse $N^{-1}g=(Mg,(\mathrm{Id_G}-LM)g)$, $g\in G$, which is again tame linear by the triangle inequality and $M$ and $L$ being tame. Thus $N$ is a tame linear isomorphism.    
\end{proof}

\begin{definition}
Let $F$ be a graded Fréchet space. We call $F$ a tame Fréchet space or simply \textit{tame space} if it is a tame direct summand of $(\Sigma(B),(\nnorm{-}{n})_{n\in \N})$ for some Banach space $B$ (cf.~Example \ref{Example:Sigma(B)}).
\end{definition}

\begin{example}
\label{Ex:BanachSpacesAreTameFre}
A trivial example of a tame Fréchet space is again a Banach space $B$. It is clear that the map $B\ni x\mapsto (x,0,\ldots)\in \Sigma(B)$ and $\Sigma(B)\ni (x_i)_{i\in \N}\mapsto x_0\in B$ are tame linear maps that compose to $\mathrm{Id}_B$. 
\end{example}

\begin{remark}
\label{Rem:DirSumm&ProdAreTame}
Naturally, any tame direct summand of a tame Fréchet space is again a tame Fréchet space. Note that this is a stronger condition than simply being a closed subspace.

Also clearly the product of two tame Fréchet spaces is again a tame Fréchet space as $\Sigma(B_1\times B_2)\cong \Sigma(B_1)\times \Sigma(B_2)$ for two Banach spaces $B_1$ and $B_2$. In fact we find that 
\begin{align*}
    \nnorm{(x_k,y_k)_{k\in\N}}{\Sigma(B_1\times B_2),n}&=\sum_{k=0}^\infty e^{kn}\nnorm{(x_k,y_k)}{B_1\times B_2}=\sum_{k=0}^\infty e^{kn}(\nnorm{x_k}{B_1}+\nnorm{y_k}{B_2})\\
    &=\nnorm{(x_k)_{k\in \N}}{\Sigma(B_1),n}+\nnorm{(y_k)_{k\in \N}}{\Sigma(B_2),n}
\end{align*}
holds for every $(x_k,y_k)_{k\in \N}\in \Sigma(B_1\times B_2)$.

If there is a graded Fréchet space $F$, a tame Fréchet space $G$ and vector space isomorphism $J:F\to G$ such that $J$ and its inverse are tame linear maps, then $F$ is a tame Fréchet space. Indeed, by the sequence 
\begin{align*}
    F\xrightarrow{J}G\xrightarrow{J^{-1}}F
\end{align*}
$F$ is a tame direct summand of $G$. In particular if a graded Fréchet space is a tame Fréchet space then it is a tame Fréchet space with respect to every equivalent grading.
\end{remark}

\subsection{Smooth tame maps}

We want to show that the autoequivalences of an exact Courant algebroid with a compact base manifold are a tame Fréchet Lie group. To be able to formulate the theory of such Lie groups we need not only understand tame Fréchet spaces but also smooth and in particular smooth tame maps of such spaces.
The advantage of tame Fréchet spaces and smooth tame maps is the existence Hamilton's Nash-Moser implicit function theorem (\cite{hamilton}). 
Here, we introduce the notion of smooth tame maps. To do so, we first introduce the notion of smooth maps of locally convex spaces and discuss some fundamental results from calculus on locally convex spaces. We then introduce the notion of smooth tame maps and zero-tame maps. Furthermore we discuss some direct results for maps of Banach spaces.

\begin{definition}
\label{Def:GateauxDifferentiability}
Let $F$ and $G$ be locally convex topological vector spaces and $U\subseteq F$ open. Let $P:U\to G$ be a continuous map. We define the \textit{derivative of $P$ at the point $f\in U$ in the direction $h\in F$} as
\begin{align*}
    DP(f)h\coloneqq \lim_{t\to 0}\frac{P(f+th)-P(f)}{t}.
\end{align*}
We call $P$ \textit{differentiable at $f$ in the direction $h$} if this limit exists. We call $P$ \textit{continuously differentiable} or $C^1$ if it is differentiable at every $f\in U$ and in every direction $h\in F$ and the resulting map $DP:U\times F\to G$ is continuous.
\end{definition}

\begin{remark}
We may sometimes specifically refer to differentiability in the sense of Definition \ref{Def:GateauxDifferentiability} as (continuous) differentiability with respect to the Gâteaux differential when discussing Banach spaces or manifolds modeled on Banach spaces. 
\end{remark}

\begin{lemma}[{\cite[Lemma II.2.3]{InfDimLieneeb}}]
\label{Lemma:DP(f)Lin}
Let $F$ and $G$ be locally convex topological vector spaces and $U\subseteq F$ open. Let $P:U\to G$ be a $C^1$-map. Then $DP(f)h$ is $\R$-linear in $h\in F$ for every $f\in U$.
\end{lemma}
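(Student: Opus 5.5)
The statement to prove is Lemma \ref{Lemma:DP(f)Lin}: for a $C^1$-map $P:U\to G$, the derivative $DP(f)h$ is $\R$-linear in $h$. I would prove homogeneity first, then additivity, with additivity using the mean value theorem.

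\textbf{Homogeneity.} For $\lambda\neq 0$, substitute $s=\lambda t$:
\begin{align*}
\frac{P(f+t\lambda h)-P(f)}{t}=\lambda\,\frac{P(f+s h)-P(f)}{s},
\end{align*}
and letting $t\to 0$ gives $DP(f)(\lambda h)=\lambda DP(f)h$. The case $\lambda=0$ is trivial because the difference quotient vanishes identically.

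\textbf{Mean value identity.} I need a fundamental theorem of calculus. Let $x,y$ be such that the segment $f+s(x-f)$, $s\in[0,1]$, lies in $U$. Then
\begin{align*}
P(x)-P(f)=\int_0^1 DP(f+s(x-f))(x-f)\,ds,
\end{align*}
where the integral is taken in the completion $\hat G$ from Proposition \ref{Prop:RiemInt}. The integrand is continuous in $s$ because $DP$ is continuous. To verify the identity, apply a continuous linear functional $l:\hat G\to\R$. The scalar function $\varphi(s)=l(P(f+s(x-f)))$ is differentiable with derivative $l(DP(f+s(x-f))(x-f))$, and this derivative is continuous in $s$. The classical fundamental theorem of calculus then gives the identity after applying $l$. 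Corollary \ref{Cor:l(v)=l(w)Thenv=w}, applied in the locally convex space $\hat G$, removes $l$.

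\textbf{Additivity.} Fix $h,k\in F$ and take $t$ small enough that all the points below lie in a convex neighbourhood of $f$ inside $U$. Split
\begin{align*}
P(f+th+tk)-P(f)=\bigl(P(f+th+tk)-P(f+th)\bigr)+\bigl(P(f+th)-P(f)\bigr).
\end{align*}
Divide by $t$ and apply the integral formula to the first bracket. It becomes
\begin{align*}
\int_0^1 DP(f+th+stk)k\,ds.
\end{align*}
This is the key step, and the main obstacle is justifying the passage to the limit under the integral. I would use continuity of $DP$ at the point $(f,k)$. Take any continuous seminorm $q$ on $\hat G$ and any $\epsilon>0$. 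There is a $\delta>0$ such that $q\bigl(DP(f+th+stk)k-DP(f)k\bigr)<\epsilon$ for all $|t|<\delta$ and all $s\in[0,1]$; here I use that the set $\{th+stk\}$ shrinks uniformly to $0$ as $t\to0$. Property \ref{Item:IntSeminorm} of Proposition \ref{Prop:RiemInt} then shows that the integral is within $\epsilon$ of $DP(f)k$ in $q$.

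Hence the first quotient tends to $DP(f)k$. The second quotient tends to $DP(f)h$ by definition. Therefore $DP(f)(h+k)$ exists and equals $DP(f)h+DP(f)k$. The limits here are taken in $\hat G$, but all terms lie in $G$, which carries the subspace topology of $\hat G$, so this identity holds in $G$.
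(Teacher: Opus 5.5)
Your proof is correct and follows the standard argument of the source the paper cites for this lemma without proving it (\cite[Lemma II.2.3]{InfDimLieneeb}): homogeneity by rescaling, the fundamental theorem of calculus in $\hat G$ via continuous functionals, and additivity by splitting the difference quotient and passing to the limit under the integral, using continuity of $DP$ together with the seminorm estimate of Proposition~\ref{Prop:RiemInt}. The one step you leave implicit is rewriting $DP(f+th+stk)(tk)=t\,DP(f+th+stk)k$ with your homogeneity result before dividing by $t$; this is why proving homogeneity first is needed.
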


\begin{definition}
Let $F_1,\ldots,F_k$ and $G$ be locally convex spaces, $U\subseteq \prod_{i=1}^k F_i$ open and let $P:U \to G$ be a continuous map. Then for $(f_1,\ldots,f_k)\in U$ and $h_i\in F_i$ with $i\in \{1,\ldots,k\}$ we define
\begin{equation*}
    D_iP(f_1,\ldots,f_i,\ldots,f_k)h_i=\lim_{t\to 0}\frac{P(f_1,\ldots,f_i+th_i,\ldots,f_k)-P(f_1,\ldots,f_i,\ldots,f_k)}{t}.
\end{equation*}
We call these the \textit{partial derivatives with respect to $f_i$} or \textit{$i$-th partial derivative}. We think of $D_iP$ again as a map $D_iP:U\times F_i \to G$. We say P is $C^1$ in $f_i$ if $D_iP$ exists and is continuous. We will sometimes also denote $D_iP$ by $D_{f_i}P$ for easier readability. 
\end{definition}

\begin{theorem}[{\cite[Proposition II.2.6]{InfDimLieneeb}}]
\label{Thm:PartialDer}
Let $F_1,\ldots,F_k$ and $G$ be locally convex spaces, $U\subseteq \prod_{i=1}^k F_i$ open, and let $P:U \to G$ be a continuous map.
Then $D_iP$ exists and is continuous for all $i\in\{1,\ldots,k\}$ if and only if $P$ is continuously differentiable. Additionally 
\begin{equation*}
    DP(f_1,\ldots,f_k)(h_1,\ldots,h_k)=\sum_{i=1}^k D_iP(f_1,\ldots,f_k)h_i
\end{equation*}
with $(f_1,\ldots,f_k)\in U$ and $(h_1,\ldots,h_k)\in \prod_{i=1}^k F_i$
\end{theorem}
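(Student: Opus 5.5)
The plan is to prove the two directions separately, working throughout with the definition of continuous differentiability from Definition \ref{Def:GateauxDifferentiability}; the partial derivatives $D_iP$ are then just directional derivatives of $P$ along vectors supported in the $i$-th factor.

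\emph{Continuously differentiable implies continuous partials.} Suppose $P$ is $C^1$. For $h_i\in F_i$ put $\tilde h_i\coloneqq(0,\ldots,h_i,\ldots,0)$. Comparing difference quotients gives $D_iP(f)h_i=DP(f)\tilde h_i$. The map $U\times F_i\to U\times\prod_j F_j$, $(f,h_i)\mapsto (f,\tilde h_i)$, is continuous, so $D_iP$ is the composition of $DP$ with a continuous map and hence continuous. The sum formula then follows from linearity of $DP(f)$ in the direction (Lemma \ref{Lemma:DP(f)Lin}), since $h=\sum_i\tilde h_i$.

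\emph{Continuous partials imply continuously differentiable.} This is the substantive direction, and I will handle it by induction on $k$, reducing to the case $k=2$. Write $f=(f_1,f_2)$ and $h=(h_1,h_2)$, and split the difference quotient as
\[
\tfrac1t\bigl(P(f_1+th_1,f_2+th_2)-P(f_1,f_2+th_2)\bigr)+\tfrac1t\bigl(P(f_1,f_2+th_2)-P(f_1,f_2)\bigr).
\]
The second term converges to $D_2P(f)h_2$ by definition of the partial derivative. For the first term, let $\hat G$ be the completion of $G$. The path $s\mapsto P(f_1+sth_1,f_2+th_2)$ has continuous derivative $s\mapsto tD_1P(f_1+sth_1,f_2+th_2)h_1$. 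The fundamental theorem of calculus for locally convex spaces lets me rewrite the first term as
\[
\int_0^1 D_1P(f_1+sth_1,f_2+th_2)h_1\,ds,
\]
an element of $\hat G$. To justify this step I will use Proposition \ref{Prop:RiemInt}: test both sides against every continuous linear functional $l$, apply the scalar fundamental theorem of calculus to $l\circ P$ along the segment, and conclude with Corollary \ref{Cor:l(v)=l(w)Thenv=w}. Here $t$ must be small enough that the whole segment lies in $U$, which is possible because $U$ is open and scalar multiplication is continuous.

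The main obstacle is showing that this integral tends to $\int_0^1D_1P(f)h_1\,ds=D_1P(f)h_1$ as $t\to0$. The plan is to use continuity of $D_1P$ together with compactness of $[0,1]$, via a tube-lemma argument. The map
\[
(s,t)\mapsto D_1P(f_1+sth_1,f_2+th_2)h_1
\]
is continuous on $[0,1]\times(-\delta,\delta)$. Fix a continuous seminorm $q$ and $\epsilon>0$. By the tube lemma, the set where $q$ of the difference from the value at $t=0$ is less than $\epsilon$ contains $[0,1]\times(-\delta',\delta')$ for some $\delta'>0$. Property \ref{Item:IntSeminorm} of Proposition \ref{Prop:RiemInt} then bounds $q$ of the integral difference by $\epsilon$, which gives the convergence.

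This shows that $DP(f)h$ exists and equals $D_1P(f)h_1+D_2P(f)h_2$. The limit a priori lies in $\hat G$, but it equals this sum of elements of $G$, so it lies in $G$. Continuity of $DP$ is immediate from this formula, since each $D_iP$ is continuous. Finally, the inductive step from $k-1$ to $k$ follows by regarding $\prod_{i=1}^kF_i$ as $\bigl(\prod_{i<k}F_i\bigr)\times F_k$ and applying the case $k=2$.
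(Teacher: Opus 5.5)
Your proof is correct and follows the standard argument behind the cited result; the paper gives no proof of its own and only cites Neeb's Proposition II.2.6. You split the difference quotient, use the weak fundamental theorem of calculus in $\hat G$ to rewrite the first piece as $\int_0^1 D_1P(f_1+sth_1,f_2+th_2)h_1\,ds$, and pass to the limit using continuity of $D_1P$ and compactness of $[0,1]$; in the induction step, state explicitly that the partial derivative in the block $\prod_{i<k}F_i$ is jointly continuous (including in $f_k$), which follows from the sum formula given by the induction hypothesis.
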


\begin{remark}
Let $F$, $G_1,\ldots,G_l$, and $V$ be locally convex vector spaces, $U\subseteq F$ open and let $L$ be a map $L:U\times \prod_{i=1}^l G_i\to V$. Let $L(f,g_1,\ldots,g_l)$ be continuous, $C^n$ in $f$ for $n\in \N \cup \{\infty\}$ and linear in each $g_i$, $i\in \{1,\ldots,l\}$, then it is $C^n$. In particular its first derivative is
\begin{equation*}
    DL(f,g_1,\ldots,g_l)(h,k_1,\ldots,k_l)=D_fL(f,g_1,\ldots,g_l)h+\sum_{i=1}^nL(f,g_1,\ldots,g_{i-1},k_i,g_{i+1},\ldots,g_l)
\end{equation*}
since $D_{g_i}L(f,g_1,\ldots,g_l)k_i=L(f,g_1,\ldots,g_{i-1},k_i,g_{i+1},\ldots,g_l)$.
\end{remark}

\begin{convention}
\label{Conv:DiffOfFamOfLins}
Such a map $L(f,g_1,\ldots,g_l)$ as in the previous remark we denote by $L(f)\{g_1,\ldots,g_l\}$ and call it a $C^n$ family of multilinear maps.

If $L(f)\{g_1,\ldots,g_l\}$ is $C^1$ then we know $D_fL(f,g_1,\ldots,g_l)h$ is linear in $h$ and each $g_i$, $i\in \{1,\ldots,l\}$. This yields the following notation
\begin{equation*}
    DL(f)\{g_1,\ldots,g_l,h\}\coloneqq D_fL(f,g_1,\ldots,g_l)h
\end{equation*}
Note that this is not the total derivative but rather 
\begin{equation*}
    DL(f)\{g_1,\ldots,g_l,h\}= DL(f,g_1,\ldots,g_l)(h,0,\ldots,0).
\end{equation*}
Nevertheless for such families of (multi-) linear maps it is reasonable to call the derivative with respect to the family parameter $f$ simply the derivative of the family of (multi-) linear maps. 
\end{convention}

An easy argument shows that a linear map $L:X\to Y$ of topological vector spaces $X$ and $Y$ is continuous if and only if $L$ is continuous in $0\in X$. In the next lemma we wish to generalize this statement. This is somewhat unrelated to present discussions but we will need it later on.

\begin{lemma}
\label{Lemma:ContFamOfMultLin}
Let $X$, $V_1,\ldots,V_n$ and $Y$ be topological vector spaces, $U\subseteq X$ open and $L:U\times \prod_{i=1}^nV_i\to Y$ be a family of multilinear maps, i.e.~$L(x)\{v_1,\ldots,v_n\}$ be linear in $v_1,\ldots,v_n$ separately with $x\in U$, $v_i\in V_i$, $i=1,\ldots,n$.
We define the map $\iota_{v_I}$ as the map that inserts $v_i\in V_i$ into $L$ for every $i\in I\subseteq \{1,\ldots,n\}$, e.g.~for $n\geq 3$ we have $\iota_{v_{\{2,3\}}}L=L(-)\{-,v_2,v_3,-,\ldots,-\}$.  
The map $L$ is continuous if and only if for every subset $I\subseteq \{1,\ldots,n\}$ the map $\iota_{v_I}L$ is continuous in $x$ for all $x\in U$, and $0\in V_i$ for $i\in \{1,\ldots,n\}\setminus I$.
\end{lemma}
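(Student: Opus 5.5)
The forward direction is immediate: if $L$ is continuous on $U\times\prod_{i=1}^nV_i$, then for every $I\subseteq\{1,\ldots,n\}$ and every fixed choice of $v_I$, the map $\iota_{v_I}L$ is the restriction of $L$ to a subset of the form $U\times\prod_{i\notin I}V_i$, with the entries $v_i$, $i\in I$, frozen. It is therefore continuous everywhere, and in particular at the points $(x,0,\ldots,0)$.

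For the converse I would argue directly with neighbourhoods rather than sequences, since $X$, $V_i$ and $Y$ are only topological vector spaces. Fix a point $(x,v_1,\ldots,v_n)\in U\times\prod_iV_i$ and a neighbourhood $W$ of $0\in Y$. By continuity of addition in $Y$, iterated finitely often, choose a neighbourhood $W'$ of $0$ such that any sum of $2^n$ elements of $W'$ lies in $W$.

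For a nearby point $(x',v_1+h_1,\ldots,v_n+h_n)$, multilinearity gives the finite expansion
\begin{align*}
L(x')\{v_1+h_1,\ldots,v_n+h_n\}=\sum_{I\subseteq\{1,\ldots,n\}}(\iota_{v_I}L)(x')\{h_{I^c}\},
\end{align*}
where $I^c=\{1,\ldots,n\}\setminus I$ and $h_{I^c}$ is inserted into the remaining slots. We subtract $L(x)\{v_1,\ldots,v_n\}$ from this and treat the terms separately.

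The term $I=\{1,\ldots,n\}$ gives $(\iota_{v_{\{1,\ldots,n\}}}L)(x')-(\iota_{v_{\{1,\ldots,n\}}}L)(x)$. By hypothesis $x'\mapsto L(x')\{v_1,\ldots,v_n\}$ is continuous at $x$, so this difference lies in $W'$ for $x'$ in some neighbourhood of $x$.

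For every $I$ with $I^c\neq\emptyset$, the value $(\iota_{v_I}L)(x)\{0\}$ is $0$ by multilinearity. The hypothesis says $\iota_{v_I}L$ is continuous at the point $(x,0,\ldots,0)$ of $U\times\prod_{i\in I^c}V_i$. Hence there are neighbourhoods $U_I\ni x$ and $N_{I,i}\ni 0$ for $i\in I^c$ such that $(\iota_{v_I}L)(x')\{h_{I^c}\}\in W'$ whenever $x'\in U_I$ and $h_i\in N_{I,i}$.

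Intersecting the finitely many neighbourhoods obtained (over all $I$) gives a neighbourhood of $x$ in $U$ and a neighbourhood $N_i$ of $0$ in each $V_i$. For $(x',v+h)$ in the corresponding product neighbourhood, $L(x')\{v+h\}-L(x)\{v\}$ is a sum of at most $2^n$ elements of $W'$, hence lies in $W$. This proves continuity at the arbitrary point, so $L$ is continuous.

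The only delicate point is bookkeeping. One must note that the hypothesis is needed exactly at points where the unfrozen slots are zero, so that the limiting value of each mixed term is $0$ by multilinearity. One must also use only finitely many terms, so that the $2^n$-fold sum argument in $Y$ applies without any local convexity or metrizability.
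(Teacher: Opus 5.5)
Your proof is correct and takes the same route as the paper's. Both expand $L(x')\{v_1+h_1,\ldots,v_n+h_n\}$ by multilinearity into the $2^n$ terms $(\iota_{v_I}L)(x')\{h_{I^c}\}$, make each term small using the hypothesis at $(x,0,\ldots,0)$ (or continuity in $x$ when $I=\{1,\ldots,n\}$), and conclude with continuity of finitely iterated addition in $Y$. You also supply the easy forward direction, which the paper leaves implicit, and your explicit use of $2^n$ summands makes precise the paper's step of choosing $t$ ``large enough''.
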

\begin{proof}
Let $W\subseteq Y$ be a neighbourhood of $y\coloneqq L(x)\{v_1,\ldots,v_n\}$. We define 
$N_n\coloneqq \{1,\ldots,n\}$. 
Let now $B$ be an open neighbourhood of $x$ and $U_i\subseteq V_i$ be open neighbourhoods of $0\in V_i$, $i=1,\ldots,n$. Then, we find that
\begin{align}
\label{eq:L(B)(v_1+U_1,...)}
\begin{split}
    &L(B)\{v_1+U_1,\ldots,v_n+U_n\}-L(x)\{v_1,\ldots,v_n\}\\
    &\subseteq (L(B)\{v_1,\ldots,v_n\}-L(x)\{v_1,\ldots,v_n\})
    +\sum_{ I\subsetneq N_n}(\iota_{v_I}L)(B)\{U_{j_1},\ldots,U_{j_{n-\abs{I}}}\}
\end{split}
\end{align}
with $j_1<j_2<\ldots<j_{n-\abs{I}}$ and $\{j_1,\ldots,j_{n-\abs{I}}\}=N_n\setminus I$. Because addition on $Y$ is continuous there is an open neighbourhood $V_t\subseteq Y$ of $0\in Y$ for every $t\in \N$ such that $\sum_{l=1}^t V_t\subset W$. By the conditions placed on $L$ we can now choose for any $t$ the open sets $B$ and $U_1,\ldots,U_n$ so small that the individual terms in equation
\eqref{eq:L(B)(v_1+U_1,...)}
lie in $V_t$ and by choosing $t$ large enough we ensure that $L(B)\{v_1+U_1,\ldots,v_n+U_n\}\subseteq W$. This proves continuity in an arbitrary point $(x,v_1,\ldots,v_n)$ and hence continuity of $L$.
\end{proof}

\begin{definition}
\label{Def:HigherDiffs}
Let $F$ and $G$ be locally convex spaces, $U\subseteq F$ open and $P:U\to G$ continuous. We define higher differentials recursively. Let $D^kP:U\times F^k\to G$ denote the $k$-th differential of $P$ for some $k\in \N$ then we define the differential of order $k+1$ as $D_1(D^kP):U\times F^{k+1}\to G$. Note that we already know $D^0P=P$ and $D^1P$. If $D^nP:U\times F^n\to G$, $n\in \N$, exists and is continuous then $P$ is called $C^n$. If $P$ is $C^n$ for every $n\in \N$ then we call $P$ \textit{smooth} or $C^\infty$.
\end{definition}

\begin{remark}
In the context of Definition \ref{Def:HigherDiffs} we define $D^{k+1}P\coloneqq D_1(D^kP)$. Note that we already know that $DP(f):F\to G$ is linear for each $f\in U$ (cf.~Lemma \ref{Lemma:DP(f)Lin}) and that we are using $D_1$ here in accordance with Convention \ref{Conv:DiffOfFamOfLins}.
\end{remark}

We will now collect some of the properties of the directional derivative and Riemann integral some of which we have already established.
\begin{proposition}[{\cite[Lemma II.2.3, Proposition II.2.5]{InfDimLieneeb}}]
\label{Prop:PropsOfDiff}
Let $F$ and $G$ be locally convex topological vector spaces, e.g.~Fréchet spaces, $U\subseteq F$ open and $P:U\to G$ a $C^1$-map. Then, if we consider $G$ as a subspace of its completion the following properties are fulfilled:
\begin{enumerate}
    \item \label{Item:DiffLin} For every $f\in U$ the map $DP(f):F\to G$ is $\R$-linear.
    \item $P$ is continuous.
\end{enumerate}
If $P:U\to G$ is additionally $C^n$ for $n\geq 1$ then the following hold:
\begin{enumerate}
\setcounter{enumi}{3}
    \item \label{Item:HigherDiffMultLin} For all $f\in U$ the map $D^nP(f):F^n\to G$ is fully symmetric and $n$-linear in $F^n$ over $\R$.
    \item If $f+[0,1]h\subseteq U$ for $f\in U$ and $h\in F$ then 
    \begin{align*}
        P(f+h)=&P(f)+DP(f)h+\ldots+\frac{1}{(n-1)!}D^{n-1}P(f)\{h,\ldots,h\}+\\
        &\frac{1}{(n-1)!}\int_0^1(1-t)^{n-1}D^nP(f+th)\{h,\ldots,h\}.
    \end{align*}
\end{enumerate}
If $P:U\to G$ is $C^n$, $n\in \N\cup \{\infty\}$, $K$ is a locally convex space, $V\subseteq G$ is open, $P(U)\subseteq V$ and $Q:V\to K$ is $C^n$, then we have the usual chain rule:
\begin{enumerate}
\setcounter{enumi}{6}    
    \item The composition $Q\circ P$ is $C^n$ and 
    \begin{align*}
        D[Q\circ P](f)h=DQ(P(f))DP(f)h.
    \end{align*}
\end{enumerate}
\end{proposition}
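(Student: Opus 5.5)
The strategy is to reduce every item to scalar-valued statements using continuous linear functionals. For $\K$-valued functions these facts are classical, and Corollary \ref{Cor:l(v)=l(w)Thenv=w} then identifies elements of the completion $\hat G$, which is locally convex by Lemma \ref{Lemma:CompletionAndExtension}. The Riemann integral of Proposition \ref{Prop:RiemInt} supplies the vector-valued integrals in $\hat G$, and Remark \ref{Rem:IntegralCommWithLinMap} allows us to commute them with continuous linear maps.

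\emph{Basic properties.} Item \ref{Item:DiffLin} is exactly Lemma \ref{Lemma:DP(f)Lin}. For continuity, note that $P$ is continuous by the standing hypotheses of Definition \ref{Def:GateauxDifferentiability}. One can also recover it: apply the fundamental theorem of calculus to $t\mapsto l(P(f+th))$ and pass from functionals to $\hat G$ to obtain
\[
P(f+h)-P(f)=\int_0^1 DP(f+th)h\,dt .
\]
Continuity of $DP$ together with the seminorm estimate \ref{Item:IntSeminorm} of Proposition \ref{Prop:RiemInt} then makes the right-hand side small uniformly for $h$ in a small neighbourhood of $0$, where we use compactness of $[0,1]$ and a tube-lemma argument. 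This mean value formula is the basic tool for everything that follows.

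\emph{Higher derivatives.} For item \ref{Item:HigherDiffMultLin}, linearity in the last slot of $D^nP(f)$ comes from Lemma \ref{Lemma:DP(f)Lin} applied to $D^{n-1}P$, regarded as a $C^1$ map on $U\times F^{n-1}$ with derivative taken only in the first variable. Symmetry is the main obstacle. I would prove the case $n=2$ first: for each continuous functional $l$, the function $(s,t)\mapsto l(P(f+sh+tk))$ is $C^2$ on a small square, because its partial derivatives are $l(DP(\cdot)h)$ and their derivatives $l(D^2P(\cdot)\{h,k\})$ are continuous. The classical Schwarz theorem then gives $l(D^2P(f)\{h,k\})=l(D^2P(f)\{k,h\})$, and Corollary \ref{Cor:l(v)=l(w)Thenv=w} yields symmetry. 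Induction on $n$, applying this to $D^{n-2}P$ as a map in $f$ with the other arguments fixed, gives full symmetry, since adjacent transpositions together with symmetry in the earlier slots generate $S_n$. Multilinearity then follows from linearity in the last slot combined with symmetry.

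\emph{Taylor formula and chain rule.} For the Taylor formula, set $g(t)=P(f+th)$ for $t\in[0,1]$. By the chain rule for the affine map $t\mapsto f+th$, we have $g^{(k)}(t)=D^kP(f+th)\{h,\ldots,h\}$. Applying $l$ and using the scalar Taylor formula with integral remainder, then Remark \ref{Rem:IntegralCommWithLinMap} and Corollary \ref{Cor:l(v)=l(w)Thenv=w}, gives the stated identity in $\hat G$.

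For the chain rule, write
\[
\frac{Q(P(f+th))-Q(P(f))}{t}=\int_0^1 DQ\bigl(P(f)+s(P(f+th)-P(f))\bigr)\,\frac{P(f+th)-P(f)}{t}\,ds .
\]
Here I use the mean value formula for $Q$ together with linearity of $DQ$ in its direction argument to pull the difference quotient inside. As $t\to 0$ the difference quotient tends to $DP(f)h$, and joint continuity of $DQ$ on $V\times G$ gives uniform convergence of the integrand on $[0,1]$. The integral is continuous on $C([0,1],\cdot)$ by item \ref{Item:IntContLin}, so the limit is $DQ(P(f))DP(f)h$. This expression is continuous in $(f,h)$, so $Q\circ P$ is $C^1$. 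Induction on $n$, applying the $C^1$ case to $(f,h)\mapsto DQ(P(f))DP(f)h$ as a composition of $C^{n-1}$ maps, gives $C^n$. The subtle points are justifying uniform convergence via compactness, and making sure the integrals land in $\hat K$ while the limit actually lies in $K$.
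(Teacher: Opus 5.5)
The paper gives no argument of its own for this proposition; it cites Neeb (Lemma II.2.3 and Proposition II.2.5). Lemma \ref{Lemma:DP(f)Lin}, which you quote for item \ref{Item:DiffLin}, has that same citation. Your proposal is a correct reconstruction of the standard argument from that literature:
\begin{itemize}
\item scalarize with continuous functionals on $\hat G$;
\item get $P(f+h)-P(f)=\int_0^1 DP(f+th)h\,dt$ from the scalar fundamental theorem of calculus;
\item reduce symmetry of $D^2P(f)$ to the classical Schwarz lemma for $(s,t)\mapsto l(P(f+sh+tk))$;
\item read off Taylor from the scalar formula with integral remainder;
\item obtain the chain rule from the parameter-dependent mean value integral and continuity of $\int_0^1$ on $C([0,1],K)$ (item \ref{Item:IntContLin} of Proposition \ref{Prop:RiemInt}).
\end{itemize}
Compared with the citation, your version runs entirely on tools the paper has already set up (Proposition \ref{Prop:RiemInt}, Remark \ref{Rem:IntegralCommWithLinMap}, Corollary \ref{Cor:l(v)=l(w)Thenv=w}, Theorem \ref{Thm:PartialDer}). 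You could also make item \ref{Item:DiffLin} self-contained: additivity of $DP(f)$ follows from your mean value formula by splitting $P(f+t(h+k))-P(f)$ at $f+th$ and using continuity of $DP$.

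A few points should be made explicit when you write it out.
\begin{itemize}
\item Lemma \ref{Lemma:DP(f)Lin} is a statement about $C^1$ maps. For linearity in the last slot, fix $h_1,\dots,h_{n-1}$ and apply it to $f\mapsto D^{n-1}P(f)\{h_1,\dots,h_{n-1}\}$, whose derivative is $D^nP(f)\{h_1,\dots,h_{n-1},\cdot\}$.
\item In the chain rule you need the segment $P(f)+[0,1]\bigl(P(f+th)-P(f)\bigr)$ to lie in $V$ for small $t$. This follows from continuity of $P$ and a convex zero-neighbourhood.
\item More substantively, the induction from $C^1$ to $C^n$ in the chain rule uses an unstated lemma: a map is $C^n$ if and only if it is $C^1$ and its derivative, viewed as a map on the open set $U\times F$ (resp.\ $V\times G$), is $C^{n-1}$. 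Since the paper defines $D^{k+1}P=D_1(D^kP)$, differentiating only in the base point, this equivalence is not automatic. It follows by induction from item \ref{Item:HigherDiffMultLin} and Theorem \ref{Thm:PartialDer}, which give $D(DP)(f,h)(f',h')=D^2P(f)\{h,f'\}+DP(f)h'$ and analogous formulas at higher order.
\end{itemize}
With that lemma stated, your induction goes through as written.
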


\begin{definition}
Let $F$ and $G$ be locally convex spaces and $U\subseteq F$ open, and let $P:U\to G$ be a map. Then we define $TP:U\times F\to G$ by $TP(u,f)=(P(u),DP(u)f)$ for $u\in U$ and $f\in F$. We call $TP$ the \textit{tangent map} of $P$.
We define the higher tangent maps $T^nP$ simply by $T^nP=T(T^{n-1}P)$ for $n>1$.
\end{definition}

\begin{remark}
Clearly a map $P$ as in the previous definition is $C^k$, $k\in \N$, if and only if $T^kP$ exists and is continuous, and $P$ is $C^\infty$ if and only if $T^nP$ exists and is continuous for every $n\in \N$. Furthermore, the chain rule simply becomes $T(Q\circ P)=TQ\circ TP$ for appropriate maps $Q$ and $P$.
\end{remark}

\begin{remark}
\label{Rem:ConvenientCalculus}
The setting of convenient calculus was developed by Kriegl and Michor in \cite{ConvSetting}. We take a quick look at this setting in the context of Fréchet spaces.
It turns out that if $F$ and $G$ are Fréchet spaces, $U\in F$ is open, and $P:U\to G$ is a map, then $P$ is smooth if and only if for every $c\in C^\infty(\R,U)$ we have $P\circ c\in C^\infty(\R,G)$. In other words for $P$ to be smooth it suffices that it maps smooth curves to smooth curves. Such a map we will call $c^\infty$-smooth. 
Since we could only find remarks on this precise statement but no explicit proofs we give a short outline of a proof and will refer to \cite{ConvSetting} for the technical details.
Let $F$ be a locally convex vector space then we denote by $c^\infty F$ the underlying vector space of $F$ equipped with the final topology with respect to all smooth curves $C^\infty(\R,F)$. Since Fréchet spaces are bornological spaces we see that if $F$ is a Fréchet space then $c^\infty F=F$ by \cite[Theorem 4.11 (1)]{ConvSetting}. Now, let $F$ and $G$ be Fréchet spaces, $U\subseteq F$ be open, and $P:U\to G$ a $c^\infty$-smooth map. Then, $P$ is continuous since for any open subset $V\subseteq G$ we find that $(P\circ c)^{-1}(V)=c^{-1}(P^{-1}(V))$ is open for every $c\in C^\infty(\R,U)$ and hence open in $c^\infty F=F$. Note that it suffices to check this on smooth curves $c\in C^\infty(\R,U)$ since $U$ is open in $c^\infty F=F$, for more details see \cite[Lemma 24.6 (1)]{ConvSetting}.
Now by \cite[Theorem 3.12, Definition 3.17, and Theorem 3.18]{ConvSetting} we see that $DP:U\times F\to G$ exists and is $c^\infty$-smooth and hence continuous. By iteratively applying this argument we find that $P$ is smooth. The reverse statement that $P$ maps smooth curves to smooth curves if $P$ is smooth is obvious. 

More generally, one can similarly show that for any Fréchet manifolds (cf.~Section \ref{Section:TameFrechetManifolds}) $\Mcal$ and $\Ncal$ a map $P:\Mcal\to \Ncal$ is smooth if any only if for any $c\in C^\infty(\R,\Mcal)$ we have $P\circ c\in C^\infty(\R,\Ncal)$.
\end{remark}

\begin{definition}[{\cite[Part II, Definition 2.2.1]{hamilton}}]
Let $F$ and $G$ be graded Fréchet spaces, and $U\subseteq F$ be open. We say a map $P:U\to G$ satisfies a \textit{tame estimate of degree $k$ and base $b$} if there are constants $C_n>0$ such that
\begin{align*}
    \nnorm{P(x)}{n}\leq C_n (1+\nnorm{x}{n+k})
\end{align*}
for all $x\in U$, and all $n\geq b$.
\end{definition}

\begin{definition}[{\cite[Part II, Definition 2.1.1, and p.~143]{hamilton}, \cite[Definition 2.2.11]{RWang}}]
Let $F$ and $G$ be graded Fréchet spaces and $U\subseteq F$ be open. We call a map $P:U\to G$ 
\begin{enumerate}
    \item \textit{tame} if it satisfies a tame estimate on a neighbourhood of each $x\in U$, where the degree and base may vary from neighbourhood to neighbourhood,
    \item \textit{smooth tame} if it is smooth and $D^nP$ is tame for every $n\in \N$, and
    \item \textit{zero-tame} if it is smooth and for each $x\in U\times F^n$ there is a neighbourhood on which $D^nP$ satisfies a tame estimate of degree $0$ and base $0$ for each $n\in \N$.
\end{enumerate}
Here we use the convention $D^0P\coloneqq P$.
\end{definition}

\begin{remarks}
Clearly, compositions of tame, smooth tame or zero-tame maps are again tame, smooth tame or zero-tame, respectively. Zero-tame maps are, by definition, smooth tame. They are particularly useful as the composition of a tame map with a zero-tame map does not change the degree or base of the tame estimates.

It is easy to see that the notion of tame linear maps introduced in Definition \ref{Def:TameLinMap} agrees with the notion of being a tame map which is also linear (cf.~\cite[Part II, Theorem 2.1.5]{hamilton}). In particular, tame linear maps are smooth tame.
\end{remarks}

\begin{proposition}[{\cite[Part I, Theorem 5.1.3]{hamilton}}]
\label{Prop:LipschitzCont}
Let $(F,(\nnorm{-}{F,k})_{k\in \N})$ be a graded Fréchet space, let $G$ be a Fréchet space, $U\subseteq F$ be open, and $P:U\to G$ be $C^1$. Then for every continuous seminorm $\nnorm{-}{G}$ on $G$ and every $f_0\in U$ there exists a $k\in \N$ and an $\epsilon>0$ such that if $\nnorm{f_i-f_0}{F,k}<\epsilon$, $i=1,2$, then
\begin{align*}
    \nnorm{P(f_1)-P(f_2)}{G}\leq C \nnorm{f_1-f_2}{F,k}
\end{align*}
for some constant $C>0$.
\end{proposition}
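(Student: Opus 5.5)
The statement is a local Lipschitz estimate for a $C^1$ map, and I would prove it by the mean value theorem in integral form together with a continuity argument for $DP$ at the point $(f_0,0)$. Fix a continuous seminorm $\nnorm{-}{G}$ on $G$ and $f_0\in U$. The map $DP:U\times F\to G$ is continuous (Definition \ref{Def:GateauxDifferentiability}) and linear in the second argument (Proposition \ref{Prop:PropsOfDiff}), and $DP(f_0)0=0$. Hence $\{(f,h):\nnorm{DP(f)h}{G}<1\}$ is an open neighbourhood of $(f_0,0)$ in $U\times F$.

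Since $F$ is graded, the seminorms $\nnorm{-}{F,k}$ are increasing, so the sets $\{\nnorm{x}{F,k}<\delta\}$, $k\in\N$, $\delta>0$, form a neighbourhood base of $0$. No finite maximum over seminorms is needed; a single seminorm suffices. I would therefore choose one $k$ and one $\delta>0$ such that the following holds. If $\nnorm{f-f_0}{F,k}<\delta$ then $f\in U$. If in addition $\nnorm{h}{F,k}<\delta$, then $\nnorm{DP(f)h}{G}<1$. Using the product neighbourhood with the larger of the two indices and the smaller radius gives this.

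Linearity in $h$ then gives the key bound
\begin{align*}
\nnorm{DP(f)h}{G}\leq \tfrac{1}{\delta}\nnorm{h}{F,k}
\end{align*}
for all $h\in F$ and all $f$ with $\nnorm{f-f_0}{F,k}<\delta$. This is the standard scaling argument. If $\nnorm{h}{F,k}>0$, rescale $h$ to have $\nnorm{h}{F,k}$ slightly less than $\delta$. If $\nnorm{h}{F,k}=0$, then $th$ lies in the neighbourhood for every $t>0$, so $\nnorm{DP(f)h}{G}<1/t$ for all $t$, hence $\nnorm{DP(f)h}{G}=0$. Handling this degenerate case cleanly is the only mildly delicate point.

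Now set $\epsilon=\delta$, and take $f_1,f_2$ with $\nnorm{f_i-f_0}{F,k}<\epsilon$. The ball $\{\nnorm{f-f_0}{F,k}<\epsilon\}$ is convex, so the segment $f_2+t(f_1-f_2)$, $t\in[0,1]$, lies in it and hence in $U$. The $n=1$ case of the Taylor formula in Proposition \ref{Prop:PropsOfDiff} gives
\begin{align*}
P(f_1)-P(f_2)=\int_0^1 DP(f_2+t(f_1-f_2))(f_1-f_2)\,dt.
\end{align*}
This integral is taken in the completion of $G$, but $G$ is Fréchet and hence complete. Applying property \ref{Item:IntSeminorm} of Proposition \ref{Prop:RiemInt} and the key bound yields
\begin{align*}
\nnorm{P(f_1)-P(f_2)}{G}\leq \tfrac{1}{\delta}\nnorm{f_1-f_2}{F,k},
\end{align*}
so the claim holds with $C=1/\delta$.
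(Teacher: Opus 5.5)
Your argument is correct and is essentially the proof the paper defers to (it cites Hamilton, Part~I, Theorem~5.1.3 and states that his proof carries over, without reproducing it). Continuity of $DP$ at $(f_0,0)$ together with linearity in $h$ gives a uniform bound $\nnorm{DP(f)h}{G}\leq C\nnorm{h}{F,k}$ on a $\nnorm{-}{F,k}$-ball around $f_0$, and integrating along the segment from $f_2$ to $f_1$ then yields the Lipschitz estimate; your explicit handling of the grading (a single seminorm suffices), the convexity of the ball, and the degenerate case $\nnorm{h}{F,k}=0$ supplies details left implicit there.
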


\begin{remark}
We note for Proposition \ref{Prop:LipschitzCont} that we have made minor adjustments to Hamilton's result \cite[Part I, Theorem 5.1.3]{hamilton}. The proof, however, stays practically the same which is why we will not repeat it here.
\end{remark}

\begin{corollary}[of Proposition \ref{Prop:LipschitzCont}]
\label{Cor:SmToBanSmTame}
Let $(B,\nnorm{-}{B})$ be a Banach space, $F$ be a graded Fréchet space, and $U\subseteq F$ be open. Then any smooth map $P:U\to B$ is smooth tame.
\end{corollary}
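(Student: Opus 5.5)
The plan is to use that $B$ carries a single norm and that $D^nP$ is continuous. Tameness of $D^nP$ then reduces to a local bound by a constant, which trivially gives a tame estimate of degree $0$ for every grading index. Recall that the grading on $U\times F^n$ is the product grading of Convention~\ref{Conv:ProdOfFreSpa}, and that we must show each $D^nP:U\times F^n\to B$, $n\in\N$, is tame. Smoothness of $P$ is assumed, so each $D^nP$ exists and is continuous.

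Fix $n\in\N$ and a point $(f_0,h_1^0,\ldots,h_n^0)\in U\times F^n$. The map $D^nP$ is continuous into the normed space $B$. Hence the preimage of the open ball of radius $1$ around $D^nP(f_0)\{h^0_1,\ldots,h^0_n\}$ is an open neighbourhood of this point.

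Since the topology on $U\times F^n$ is generated by the increasing seminorms $\nnorm{-}{k}^\oplus$, this neighbourhood contains a set of the form $W=\{x:\nnorm{x-x_0}{k}^\oplus<\epsilon\}$ for some $k\in\N$ and $\epsilon>0$. Here we use the grading property: finitely many seminorms are dominated by the largest one, so basic neighbourhoods are single-seminorm balls. On $W$ we have
\begin{align*}
    \nnorm{D^nP(x)}{B}\leq \nnorm{D^nP(x_0)}{B}+1=:C .
\end{align*}
Since $B$ carries only one norm, we may take the grading on $B$ to be constant, $\nnorm{-}{m}=\nnorm{-}{B}$ for all $m\in\N$. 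This gives $\nnorm{D^nP(x)}{m}\leq C(1+\nnorm{x}{m})$ for all $x\in W$ and all $m\geq 0$. This is a tame estimate of degree $0$ and base $0$ on a neighbourhood of every point, so $D^nP$ is tame. (In fact $P$ is even zero-tame.)

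This argument only needs continuity of $D^nP$; Proposition~\ref{Prop:LipschitzCont} is not really needed, though it could be invoked to get the neighbourhood as a $\nnorm{-}{k}$-ball directly. The only point requiring care is the convention that a Banach space is graded by constant copies of its norm, as in Example~\ref{Ex:BanachSpacesAreTameFre}. With that convention the estimate holds for every index $m$ at once. I expect no real obstacle.
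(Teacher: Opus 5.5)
Your proof is correct, but it uses a different key input from the paper's.

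The paper invokes Proposition~\ref{Prop:LipschitzCont}. This gives a local Lipschitz bound $\nnorm{P(f)-P(f_0)}{B}\leq C\nnorm{f-f_0}{n}$ on a $\nnorm{-}{n}$-ball around $f_0$. From it the paper writes $\nnorm{P(f)}{B}\leq C(1+\nnorm{f}{n})$, a tame estimate of degree $n$ and base $0$. Like you, it implicitly grades $B$ by constant copies of $\nnorm{-}{B}$. The paper spells this out only for $P$ itself; the derivatives $D^jP$ are left as the same argument applied to the smooth maps $D^jP$ on $U\times F^j$.

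You use only continuity of each $D^jP$ into the normed space $B$. Local boundedness on some neighbourhood then gives $\nnorm{D^jP(x)}{m}\leq C\leq C(1+\nnorm{x}{m})$ for every $m$. This has degree $0$ and base $0$, and you treat all $j$ explicitly.

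What your route buys:
\begin{itemize}
\item It gives a stronger conclusion: $P$ is in fact zero-tame. With a constant grading on the domain, this also contains Corollary~\ref{Cor:MapsOfBanachSpacesZeroTame}.
\item It needs only continuity of the $D^jP$, not the $C^1$ hypothesis behind the Lipschitz proposition.
\end{itemize}

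The Lipschitz route carries extra quantitative information that tameness does not use. On the $\epsilon$-ball the Lipschitz bound already yields $\nnorm{P(f)-P(f_0)}{B}\leq C\epsilon$, so the degree $n$ in the paper's estimate is just an artifact of how it is written.

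Your step reducing the neighbourhood to a single-seminorm ball is true but unnecessary, since the definition of a tame map allows an arbitrary neighbourhood.
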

\begin{proof}
Let $f_0\in U$ then by Proposition \ref{Prop:LipschitzCont} we can find an $n\in \N$ and an $\epsilon>0$ such that if $f\in U$ and $\nnorm{f-f_0}{n}<\epsilon$ then there are constants $C>0$ such that
\begin{align*}
    \nnorm{P(f)}{B}&\leq \nnorm{P(f)-P(f_0)}{B}+\nnorm{P(f_0)}{B}\\
    &\leq C(\nnorm{f}{n}+\nnorm{f_0}{n})+\nnorm{P(f_0)}{B}\leq C(1+\nnorm{f}{n})
\end{align*}
where we additionally used the triangle inequality and absorbed the terms $\nnorm{f_0}{n}$ and $\nnorm{P(f_0)}{B}$ into the constant $C$.
\end{proof}

\begin{corollary}
\label{Cor:MapsOfBanachSpacesZeroTame}
Any smooth map of Banach spaces (in the sense of Gâteaux differentiability) is zero-tame.
\end{corollary}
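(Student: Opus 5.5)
The plan is to treat a Banach space $B$ as a graded Fréchet space whose grading is constant: $\nnorm{-}{n}=\nnorm{-}{B}$ for every $n\in\N$, as in Example \ref{Ex:BanachSpacesAreTameFre}. Let $B_1,B_2$ be Banach spaces, $U\subseteq B_1$ open, and $P:U\to B_2$ smooth in the sense of Definition \ref{Def:HigherDiffs}. By definition, $P$ is zero-tame if for each $n\in\N$ and each point $(x,h_1,\ldots,h_n)\in U\times B_1^n$ there is a neighbourhood on which
\begin{align*}
    \nnorm{D^nP(y)\{k_1,\ldots,k_n\}}{B_2}\leq C\bigl(1+\nnorm{(y,k_1,\ldots,k_n)}{B_1\times B_1^n}\bigr)
\end{align*}
holds, with the same constant for all grading indices. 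Because every seminorm of the grading is the single norm, an estimate of degree $0$ and base $0$ is the same as one such inequality, and it does not depend on $n$.

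I will obtain this from Corollary \ref{Cor:SmToBanSmTame}, or more directly from Proposition \ref{Prop:LipschitzCont}. Fix $n$. The map $D^nP:U\times B_1^n\to B_2$ is continuous by smoothness. It is also $C^1$ as a map on the open set $U\times B_1^n$ of the Banach space $B_1\times B_1^n$: its partial derivatives in the $h_i$ come from multilinearity (Proposition \ref{Prop:PropsOfDiff}), and its derivative in $x$ is $D^{n+1}P$. Theorem \ref{Thm:PartialDer} then gives continuous differentiability.

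Apply Proposition \ref{Prop:LipschitzCont} at the point $z_0=(x,h_1,\ldots,h_n)$, taking the seminorm $\nnorm{-}{B_2}$ on the target. This yields $\epsilon>0$ and $C>0$ such that for $z$ in the ball of radius $\epsilon$ around $z_0$,
\begin{align*}
    \nnorm{D^nP(z)}{B_2}\leq C\nnorm{z-z_0}{}+\nnorm{D^nP(z_0)}{B_2}\leq C'(1+\nnorm{z}{}).
\end{align*}
Here the index $k$ from the proposition is irrelevant, since every seminorm is the Banach norm, and the ball is a genuine neighbourhood of $z_0$. This is exactly the estimate of degree $0$ and base $0$, so $P$ is zero-tame. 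An even quicker route is that continuity of $D^nP$ at $z_0$ already bounds it by a constant near $z_0$, which is also a degree-$0$ estimate.

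The only point I expect to need care is the bookkeeping: checking that $D^nP$ is indeed $C^1$ on the product space so the proposition applies, and that the product grading from Convention \ref{Conv:ProdOfFreSpa} on $U\times B_1^n$ is again constant in $n$. Neither step is a real obstacle.
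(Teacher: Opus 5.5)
Your proposal is correct and follows essentially the paper's route: the paper gives no separate proof and treats the statement as an immediate consequence of Corollary \ref{Cor:SmToBanSmTame} (i.e.\ Proposition \ref{Prop:LipschitzCont}) applied to each $D^nP$ on $U\times B_1^n$, where the constant grading on Banach spaces makes the resulting local estimates have degree and base $0$. Your alternative remark is also valid and slightly simpler: continuity of $D^nP$ at a point already bounds it on a norm ball, so the Lipschitz estimate is not actually needed.
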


\subsection{Sections of Vector Bundles and Differential Operators}
\label{Section:SectionsOfVBsAndDiffOps}

Let $V\to M$ be a vector bundle over a compact manifold $M$. Its smooth sections $\Gamma(V)$ are fundamental objects in differential geometry. It turns out that this space can be equipped with a topology making it a tame Fréchet space. These types of spaces will be the model spaces for all tame Fréchet Lie groups discussed in this thesis, in particular, the group of autoequivalences for an exact Courant algebroid over a compact base manifold. We introduce the tame Fréchet space structure of $\Gamma(V)$ and show some fundamental results.
Moreover, we discuss nonlinear partial differential operators and show that they are smooth tame maps as maps of spaces of smooth section. Most of the smooth tame maps considered in this thesis can either fully or partially be built from such operators.

\begin{theorem}[{\cite[Part II, Corollary 1.3.9]{hamilton} and Appendix \ref{Appendix:EquivNormsOnGamma(V)}, Proposition \ref{Prop:equiv of norms on Gamma(V)}}]
\label{VBsectTameFre}
Let $M$ be a compact manifold and $V\to M$ a vector bundle. Choose covariant derivatives and positive definite bundle metrics on $V$ and $T^*M$. Let $\nabla^j\sigma$ denote the induced $j$-th covariant derivative of $\sigma\in \Gamma(V)$, and let $|-|$ denote the induced fiberwise norms on $(T^*M)^{\tens k}\tens V$, $k\in \N$, as in Example \ref{Ex:SmoothSectionsFrechetSpace}.

Then there is the grading on $\Gamma(V)$ obtained by the norms
\begin{align}
\label{VBsectGlobNorm}
    \nnorm{\sigma}{n}\coloneqq \sup_{x\in M}\sum_{j=0}^n|\nabla^j\sigma|
\end{align}
for $n\in \N$.

Let $\{\Tilde{U}_k\}_{k=1,\ldots,N}$, $N\in \N$, be a finite open cover of $M$ by coordinate charts $(\Tilde{U}_k,\varphi_k)$, let $(U_k)_{k=1,\ldots,N}$ be a refinement such that $\Bar{U}_k\subset \Tilde{U}_k$, and let $(\theta_k)_{k=1,\ldots,N}$ be a partition of unity subordinate to $(U_k)_{k=1,\ldots,N}$ on $M$. Over each of the $\Tilde{U}_k$, $k=1,\ldots,N$, we assume that a local frame $(e_{k,i})_{i=1,\ldots,\mathrm{rk}(V)}$ of $V$ exists. We define a grading on $\Gamma(V)$ by 
\begin{equation}
\label{VBsectLocNorm}
    \nnorm{\sigma}{n}'=\sum_{k=1}^N\sum_{i=1}^r\sup_{|\alpha|\leq n}\sup_{x\in M}|D^{\alpha}(\theta_k\sigma^i)|,
\end{equation}
where we abbreviate $\theta_k\sigma^i\coloneqq (\theta_k\sigma^{k,i})\circ\varphi_k^{-1}$ where $\sigma^{k,i}$ are the coefficient functions of $\sigma$ with respect to $e_{k,i}$.
Then the norms $\nnorm{-}{n}$ and $\nnorm{-}{n}'$ as defined above are equivalent for each $n\in \N$. It follows that $\Gamma(V)$ equipped with either grading is a tame Fréchet space and the gradings are equivalent of degree $0$ and base $0$.
\end{theorem}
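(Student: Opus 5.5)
The proof splits into three steps: (i) show that for fixed $n$ the norms $\nnorm{-}{n}$ from \eqref{VBsectGlobNorm} and $\nnorm{-}{n}'$ from \eqref{VBsectLocNorm} satisfy $\nnorm{\sigma}{n}\leq C_n\nnorm{\sigma}{n}'$ and $\nnorm{\sigma}{n}'\leq C_n'\nnorm{\sigma}{n}$; (ii) take from Hamilton \cite[Part II, Corollary 1.3.9]{hamilton} that $\Gamma(V)$ with one of these gradings is tame; (iii) transfer tameness to the other grading via Remark \ref{Rem:DirSumm&ProdAreTame}. The identity map is then a tame isomorphism of degree $0$ and base $0$, so tameness of one grading gives tameness of the other. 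Both are gradings, since the sums and suprema in \eqref{VBsectGlobNorm} and \eqref{VBsectLocNorm} are increasing in $n$.

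For $\nnorm{\sigma}{n}'\leq C_n\nnorm{\sigma}{n}$, I work on one chart $\Tilde U_k$. There I write $\nabla = \partial + \Gamma$, where the Christoffel-type symbols of the chosen connections on $V$ and $T^*M$ are smooth on $\Tilde U_k$. Inductively in $j$, the $j$-th covariant derivative equals $\partial^j\sigma^{k,i}$ plus a linear combination of lower coordinate derivatives with smooth coefficients, and conversely $\partial^\alpha\sigma^{k,i}$ is a linear combination of components of $\nabla^l\sigma$, $l\leq|\alpha|$, with smooth coefficients. The coefficients are bounded on the compact set $\operatorname{supp}\theta_k\subset \Bar U_k$. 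Also on this set the fiber metrics are uniformly equivalent to the Euclidean norms in the frame $e_{k,i}$ and the coordinate coframe. Applying Leibniz to $\theta_k\sigma^{k,i}$, whose derivatives of $\theta_k$ are bounded, gives $\sup|D^\alpha(\theta_k\sigma^i)|\leq C\sup_{\Bar U_k}\sum_{l\leq n}|\nabla^l\sigma|$. Summing over the finitely many $k,i$ gives the bound.

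For the converse, I use $\sigma=\sum_k\theta_k\sigma$, so $\nabla^j\sigma=\sum_k\nabla^j(\theta_k\sigma)$. Each $\theta_k\sigma$ is supported in $\Bar U_k$ and is expressed in the frame $e_{k,i}$ with coefficient functions exactly $\theta_k\sigma^{k,i}$. Expanding $\nabla^j$ in coordinates on $\Tilde U_k$ as above bounds $|\nabla^j(\theta_k\sigma)|$ pointwise by $C\sum_{|\alpha|\leq j}\sum_i|D^\alpha(\theta_k\sigma^i)|$. The constants are uniform because $\Bar U_k$ is compact in $\Tilde U_k$; this is where the refinement $\Bar U_k\subset\Tilde U_k$ is needed. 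Summing gives $\nnorm{\sigma}{n}\leq C_n\nnorm{\sigma}{n}'$.

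I expect the main obstacle to be the bookkeeping of the inductive coordinate expansion of $\nabla^j$ on tensor powers $(T^*M)^{\otimes j}\otimes V$. The cleanest route is to prove the lemma ``$\nabla^j s-\partial^j s$ is a smooth-coefficient linear differential operator of order $\leq j-1$ in $s$'' by induction, using the induced connection formula. Completeness and the Fréchet structure need no separate argument: once the norms are equivalent, the topologies coincide, and Example \ref{Ex:SmoothSectionsFrechetSpace} already gives the Fréchet structure.
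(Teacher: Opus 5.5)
Your proposal is correct and follows essentially the same route as the paper's appendix proof. The bound $\nnorm{\sigma}{n}\leq C\nnorm{\sigma}{n}'$ comes from $\sigma=\sum_k\theta_k\sigma$ and the coordinate expansion of $\nabla^j$ with coefficients bounded on the relatively compact $U_k$. The reverse bound uses that $\nabla^j$ equals $\partial^j$ up to lower-order terms, and tameness is then imported from Hamilton and transferred via the degree-$0$, base-$0$ equivalence. The only cosmetic difference is that the paper proves $\nnorm{\sigma}{n}'\leq C\nnorm{\sigma}{n}$ by induction on $n$, using the covariant Leibniz rule on $\nabla^j(\theta_k\sigma)$, whereas you invert the triangular relation between $\nabla^l$ and $\partial^\alpha$ pointwise and apply Leibniz in coordinates.
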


\begin{remark}
\label{Rem:NormsOnSectOfVBIndepOfChoices}
Note that the equivalence of the norms $\nnorm{-}{n}$ (cf.~equation \eqref{VBsectGlobNorm}) and $\nnorm{-}{n}'$ (cf.~equation \eqref{VBsectLocNorm}) for each $n\in \N$ shows that both norms are independent of the choices that were made in their definitions up to the equivalence of norms. In particular, the induced topology and grading making $\Gamma(V)$ into a tame Fréchet space is independent of these choices up to the equivalence of gradings of degree $0$ and base $0$.
\end{remark}

\begin{lemma}
\label{Lemma:Gamma(UsubE)Open}
Let $E\xrightarrow{\pi} M$ be a vector bundle, $M$ be a compact manifold, and $U\subseteq E$ be open. Then $\Gamma(U\subseteq E)\coloneqq \{s\in \Gamma(E)|~s(M)\subset U\}$ is an open subset of $\Gamma(E)$.
\end{lemma}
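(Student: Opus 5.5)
We show that $\Gamma(U\subseteq E)$ is open by finding, around each $s_0$ in it, a ball for the $C^0$-type norm $\nnorm{-}{0}$ that stays inside it. Since the topology of $\Gamma(E)$ is induced by the grading of Theorem \ref{VBsectTameFre}, such a ball is open in $\Gamma(E)$. Choose a positive definite bundle metric on $E$ and let $|-|$ be the induced fiberwise norm, so that $\nnorm{\sigma}{0}=\sup_{x\in M}|\sigma(x)|$.

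Fix $s_0\in\Gamma(U\subseteq E)$. The first step is to find $\epsilon>0$ such that every $e\in E_x$ with $|e-s_0(x)|<\epsilon$, for any $x\in M$, lies in $U$. Consider the continuous function
\[
    \delta:M\to(0,\infty],\qquad \delta(x)\coloneqq\inf\{|e-s_0(x)|:~e\in E_x\setminus U\},
\]
with the convention $\inf\emptyset=\infty$.

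\begin{itemize}
\item Positivity of $\delta$ pointwise follows because $s_0(x)\in U$, $U$ is open, and $U\cap E_x$ is open in $E_x$.
\item Pointwise positivity does not yield a uniform bound directly, since the fibers vary. To get one, work with the set $\{(x,e)\in E:~|e-s_0(\pi(e))|<r\}$ and trivialise locally.
\item Concretely, around each $x_0\in M$ take a local trivialisation $E|_W\cong W\times\R^r$. In these coordinates the fiber norm is equivalent, with uniform constants, to the Euclidean norm on a compact neighbourhood $K\subseteq W$.
\item The map $(x,v)\mapsto s_0(x)+v$ is continuous and sends $K\times\{0\}$ into the open set $U$. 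By compactness of $K\times\{0\}$ (a tube-lemma argument), there is $r_{x_0}>0$ with $s_0(x)+v\in U$ whenever $x\in K$ and $|v|<r_{x_0}$.
\item Cover $M$ by finitely many interiors of such $K$ and let $\epsilon$ be the minimum of the corresponding $r_{x_0}$.
\end{itemize}

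The main obstacle is exactly this uniform tube around $s_0(M)$, and compactness of $M$ handles it.

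Given $\epsilon$, any $s\in\Gamma(E)$ with $\nnorm{s-s_0}{0}<\epsilon$ satisfies $|s(x)-s_0(x)|<\epsilon$ for all $x$, hence $s(x)\in U$ for all $x$. Thus the $\nnorm{-}{0}$-ball of radius $\epsilon$ around $s_0$ lies in $\Gamma(U\subseteq E)$. That ball is open in the Fréchet topology, which proves the lemma.
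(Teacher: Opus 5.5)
Your proof is correct and follows the same route as the paper, which simply asserts that compactness of $M$ yields an $\epsilon>0$ with $s+\epsilon B_{\nnorm{-}{0}}\subseteq\Gamma(U\subseteq E)$; your local-trivialisation and tube-lemma argument supplies the details of that step. One harmless inaccuracy: the auxiliary function $\delta$ is in general only lower semicontinuous, not continuous (e.g.\ for $E=M\times\R$, $s_0=0$ and $U=E\setminus\{(x_1,1)\}$ it equals $1$ at $x_1$ and $\infty$ elsewhere), but since you never use it you can simply drop it.
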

\begin{proof}
It can happen that $\Gamma(U\subseteq E)=\emptyset$ and thus it is trivially open.
So, let us assume that $\Gamma(U\subseteq E)\neq\emptyset$. Then by the compactness of $M$ we can choose $\epsilon >0$ small enough such that $s+\epsilon B_{\nnorm{-}{0}}\subseteq \Gamma(U\subseteq E)$ where $B_{\nnorm{-}{0}}$ denotes the open Ball centered at $0$ of radius $1$ with respect to $\nnorm{-}{0}$ (cf.~equation \eqref{VBsectGlobNorm}).
\end{proof}

\begin{proposition}
\label{Prop:EplusFsect}
Let $E\to M$ and $F\to M$ be vector bundles and $M$ a compact manifold. Then $\Gamma(E\oplus F)\cong\Gamma(E)\times\Gamma(F)$ is a zero-tame isomorphism.
\end{proposition}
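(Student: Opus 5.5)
The natural map is
\[
\Phi:\Gamma(E\oplus F)\to\Gamma(E)\times\Gamma(F),\qquad \sigma\mapsto(\pi_E\circ\sigma,\pi_F\circ\sigma),
\]
with inverse $(s,t)\mapsto s\oplus t$, where $\pi_E,\pi_F$ are the fiberwise projections. Both maps are clearly linear and mutually inverse. By the Remarks following the definition of smooth tame maps, linear maps that are tame are smooth. Moreover, a linear map satisfying $\nnorm{Lx}{n}\leq C_n\nnorm{x}{n}$ for all $n$ has $D^kL$ given by $L$ itself (for $k=1$) or zero (for $k\geq 2$). Hence it suffices to show that $\Phi$ and $\Phi^{-1}$ are tame linear of degree $0$ and base $0$; this makes both zero-tame.

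To obtain these estimates, I would exploit the freedom in Theorem \ref{VBsectTameFre} and Remark \ref{Rem:NormsOnSectOfVBIndepOfChoices}. Up to equivalence of degree $0$ and base $0$, the gradings are independent of all choices, so I may pick them compatibly. Choose connections $\nabla^E,\nabla^F$ and metrics on $E$ and $F$, and a connection and metric on $T^*M$. On $E\oplus F$ take the direct sum connection $\nabla^E\oplus\nabla^F$ and the orthogonal direct sum metric. The induced connection on $(T^*M)^{\tens j}\tens(E\oplus F)\cong\big((T^*M)^{\tens j}\tens E\big)\oplus\big((T^*M)^{\tens j}\tens F\big)$ is again the direct sum, so
\[
\nabla^j(s\oplus t)=\nabla^js\oplus\nabla^jt,
\]
and the induced metric makes this splitting orthogonal.

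Pointwise, $\max(|\nabla^js|,|\nabla^jt|)\leq|\nabla^j(s\oplus t)|\leq|\nabla^js|+|\nabla^jt|$. Summing over $j\leq n$ and taking suprema gives
\[
\max\big(\nnorm{s}{n},\nnorm{t}{n}\big)\leq\nnorm{s\oplus t}{n}\leq\nnorm{s}{n}+\nnorm{t}{n},
\]
and therefore
\[
\tfrac12\big(\nnorm{s}{n}+\nnorm{t}{n}\big)\leq\nnorm{s\oplus t}{n}\leq\nnorm{s}{n}+\nnorm{t}{n}.
\]
Since $\nnorm{s}{n}+\nnorm{t}{n}$ is exactly the product grading of Convention \ref{Conv:ProdOfFreSpa}, these are tame estimates of degree $0$ and base $0$ for $\Phi$ and $\Phi^{-1}$. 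Transferring back to arbitrary choices of gradings only costs constants $C_n$, by Remark \ref{Rem:NormsOnSectOfVBIndepOfChoices}.

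The only point needing some care is checking that the induced connection and metric on the tensor product respect the direct sum splitting. This is a routine verification from the Leibniz rule defining the tensor product connection. Alternatively, one can use the local norms \eqref{VBsectLocNorm} with a local frame of $E\oplus F$ formed by concatenating frames of $E$ and $F$; then $\nnorm{s\oplus t}{n}'=\nnorm{s}{n}'+\nnorm{t}{n}'$ holds exactly, which sidesteps the issue entirely.
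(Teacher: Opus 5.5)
Your proposal is correct and follows the paper's proof essentially verbatim: you use the direct-sum connection $\nabla^E\oplus\nabla^F$ and the orthogonal direct-sum metric on $E\oplus F$, and read off the degree-$0$, base-$0$ estimates $\nnorm{s\oplus t}{n}\leq\nnorm{s}{n}+\nnorm{t}{n}$ and $\nnorm{s}{n},\nnorm{t}{n}\leq\nnorm{s\oplus t}{n}$. Your extra remarks make explicit what the paper leaves implicit: why a degree-$0$ tame linear isomorphism is zero-tame, and the alternative via exact additivity of the local norms with concatenated frames.
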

\begin{proof}
We equip $E$ and $F$ with connections $\nabla^E$ and $\nabla^F$ respectively. We also choose bundle metrics $\scal{-}{-}_E$ and $\scal{-}{-}_F$ on $E$ and $F$, respectively. We call the induced norms on $(\nnorm{-}{n}^E)_{n\in \N}$ on $\Gamma(E)$, and $(\nnorm{-}{n}^F)_{n\in \N}$ on $\Gamma(F)$ (cf.~equation \eqref{VBsectGlobNorm}). Similarly, we denote by $(\nnorm{-}{n}^{E\oplus F})_{n\in \N}$ the norms induced by $\nabla^E\oplus \nabla^F$ and $\scal{-}{-}_E\oplus\scal{-}{-}_F$ (cf.~equation \eqref{VBsectGlobNorm}).
Let $\alpha\in \Gamma(E)$, and $\beta\in\Gamma(F)$ then a simple calculation shows that
\begin{align*}
    \nnorm{\alpha\oplus\beta}{n}^{E\oplus F}\leq \nnorm{\alpha}{n}^E+\nnorm{\beta}{n}^F\tn{,}\\
    \nnorm{\alpha}{n}^E\leq\nnorm{\alpha\oplus\beta}{n}^{E\oplus F}\tn{, and}\\
    \nnorm{\beta}{n}^F\leq \nnorm{\alpha\oplus\beta}{n}^{E\oplus F}.
\end{align*}
This shows that the map $\Gamma(E\oplus F)\ni\alpha\oplus \beta\mapsto (\alpha,\beta)\in \Gamma(E)\times\Gamma(F)$ is a linear zero-tame isomorphism.
\end{proof}

\begin{corollary}
\label{Cor:FsubbundleE->FtameDirectSummand}
Let $M$ be a compact manifold, $V\to M$ be a vector bundle and $F\to M$ be a vector subbundle. Then $\Gamma(F)$ is a tame direct summand of $\Gamma(E)$.
\end{corollary}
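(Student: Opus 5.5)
The plan is to realize $\Gamma(F)$ as a tame direct summand through a complementary subbundle and Proposition \ref{Prop:EplusFsect}. (The statement writes $V$ for the ambient bundle but means $E$; I write $E$ throughout.) Choose a positive definite bundle metric $\scal{-}{-}_E$ on $E$, and let $F^\perp\subseteq E$ be the orthogonal complement of $F$ with respect to it. Since $F$ is a smooth subbundle, $F^\perp$ is again a smooth vector subbundle: locally one extends a frame of $F$ to a frame of $E$ and applies Gram--Schmidt.

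The fiberwise decomposition $E_x=F_x\oplus F^\perp_x$ gives a vector bundle isomorphism $\Phi:F\oplus F^\perp\to E$, $(u,w)\mapsto u+w$, covering $\Id_M$. Its inverse is $e\mapsto(p(e),e-p(e))$, where $p:E\to F$ is the smooth orthogonal projection. The first main step is to show that a vector bundle isomorphism covering the identity induces a tame linear isomorphism $\Gamma(F\oplus F^\perp)\cong\Gamma(E)$ of degree $0$ and base $0$.

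To see this, use Theorem \ref{VBsectTameFre} together with Remark \ref{Rem:NormsOnSectOfVBIndepOfChoices}: the grading on $\Gamma(E)$ may be computed with any connection and metric, up to equivalence of degree $0$ and base $0$. Transport the connection $\nabla^F\oplus\nabla^{F^\perp}$ and the metric $\scal{-}{-}_F\oplus\scal{-}{-}_{F^\perp}$ to $E$ via $\Phi$. The induced norms then satisfy $\nnorm{\Phi\circ s}{n}=\nnorm{s}{n}$ exactly, so $\sigma\mapsto\Phi\circ\sigma$ is a tame isomorphism. Alternatively one can use the local norms \eqref{VBsectLocNorm}, since $\Phi$ has smooth matrix coefficients in local frames and the Leibniz rule bounds derivatives up to order $n$.

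Composing with Proposition \ref{Prop:EplusFsect} gives a tame isomorphism $\Gamma(E)\cong\Gamma(F)\times\Gamma(F^\perp)$. By Definition \ref{Def:TameDirSummand}, with $H=\Gamma(F^\perp)$, $\Gamma(F)$ is a tame direct summand of $\Gamma(E)$. Equivalently, by Proposition \ref{Prop:TameDirSumBySequence}, one can take the inclusion $\Gamma(F)\to\Gamma(E)$ and the projection $\sigma\mapsto p\circ\sigma$, which compose to the identity.

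The only delicate point is the change-of-grading argument: one must check that the norms on $\Gamma(E)$ transported via $\Phi$ are among those allowed in Theorem \ref{VBsectTameFre}. They are, since they come from a legitimate connection and metric on $E$, so independence of choices applies.
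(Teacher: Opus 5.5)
Your proposal is correct and follows the paper's proof essentially verbatim. Both arguments split $E\cong F\oplus F^\perp$ via a bundle metric, transport the grading data along the bundle isomorphism to get a tame (indeed zero-tame) isomorphism $\Gamma(E)\cong\Gamma(F\oplus F^\perp)$, and conclude with Proposition \ref{Prop:EplusFsect}; your alternative via Proposition \ref{Prop:TameDirSumBySequence}, using the inclusion and the orthogonal projection, is a valid equivalent way to state the conclusion.
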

\begin{proof}
Using a bundle metric on $V$ we have the isomorphism of vector bundles  
$V\cong F\oplus F^\bot$. By Proposition \ref{Prop:EplusFsect} we find that $\Gamma(V)\cong\Gamma(F)\times \Gamma(F^\bot)$ is a zero-tame isomorphism. Note that clearly $\Gamma(V)\cong \Gamma(F\oplus F^\perp)$. More generally, if $W\to M$ is a vector bundle, and $V\cong W$ as vector bundles, then there is a zero-tame isomorphism $\Gamma(V)\cong \Gamma(W)$. This can be achieved by simply transporting the defining data on $V$ of the grading on $\Gamma(V)$ to $W$ and use the induced grading on $\Gamma(W)$ (cf.~Theorem \ref{VBsectTameFre}).
\end{proof}

\begin{definition}[{\cite[Part I, Example 3.1.7]{hamilton}}]
\label{Def:VBop}
Let $M$ be a compact manifold and let $E\to M$ and $F\to M$ be vector bundles. We call an operator $P:\Gamma(U\subseteq E)\to \Gamma(F)$ a \textit{nonlinear vector bundle operator} if there is a smooth fiber-preserving map
$p:U\to F$ such that $P$ is the induced map $P(s)=p\circ s$ for all $s\in \Gamma(U\subseteq E)$. 
\end{definition}

\begin{lemma}
\label{VBopSmoothTame}
Nonlinear vector bundle operators $P:\Gamma(U\subseteq E)\to \Gamma(F)$ as defined above are smooth zero-tame maps.
\end{lemma}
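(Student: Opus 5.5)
The plan is to reduce everything to local coordinates via the grading $\nnorm{-}{n}'$ of Theorem \ref{VBsectTameFre}, and then invoke the classical estimate for superposition (composition) operators. That estimate is derived from the chain rule together with the Gagliardo--Nirenberg/Hadamard interpolation inequality $\nnorm{u}{j}\leq C\nnorm{u}{0}^{1-j/n}\nnorm{u}{n}^{j/n}$ on compact sets.

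\textbf{Smoothness and the form of the derivatives.} Fix $s_0\in\Gamma(U\subseteq E)$. By compactness of $M$, choose a relatively compact open $U'\subseteq U$ containing $s_0(M)$ and $\epsilon>0$ such that the $\nnorm{-}{0}$-ball of radius $\epsilon$ around $s_0$ consists of sections with image in $U'$ (as in Lemma \ref{Lemma:Gamma(UsubE)Open}).

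We first show directly that $DP(s)h=(\partial_v p)(s)\{h\}$, the fiber derivative of $p$ evaluated along $s$. Indeed, by Taylor's formula in the fiber direction,
\begin{align*}
P(s+th)-P(s)-t\,\partial_vp(s)h=t^2\int_0^1(1-\tau)\partial_v^2p(s+\tau th)\{h,h\}\,d\tau,
\end{align*}
and the $\nnorm{-}{n}$-norm of the right-hand side is $O(t^2)$, by the estimate below applied to the operator induced by $\partial_v^2p$. Iterating, $D^kP(s)\{h_1,\ldots,h_k\}=(\partial_v^kp)(s)\{h_1,\ldots,h_k\}$. This is again a nonlinear vector bundle operator on $U\oplus E^{\oplus k}\subseteq E^{\oplus(k+1)}$, induced by the fiber-preserving smooth map $(v,w_1,\ldots,w_k)\mapsto\partial_v^kp(v)\{w_1,\ldots,w_k\}$.

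By Proposition \ref{Prop:EplusFsect}, it therefore suffices to prove that every nonlinear vector bundle operator is continuous and satisfies locally a tame estimate of degree $0$ and base $0$. Smoothness then follows inductively, since each $D^kP$ exists and is continuous. Zero-tameness of all $D^kP$ follows from the same statement applied to the induced operators.

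\textbf{The key estimate.} Using the charts, frames and partition of unity from Theorem \ref{VBsectTameFre}, we localize. After shrinking (covering $\bar U'\cap E|_{\bar U_k}$ by finitely many trivializations), $p$ becomes a smooth map $q(x,y)$ on a compact set $K\times\bar B\subseteq\R^m\times\R^r$. Its derivatives of all orders are bounded there.

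For $\abs{\alpha}=n\geq1$, the chain rule gives $D^\alpha[q(x,\sigma(x))]$ as a finite sum of terms of the form
\begin{align*}
(\partial_x^\beta\partial_y^l q)(x,\sigma)\,D^{\gamma_1}\sigma\cdots D^{\gamma_l}\sigma,\qquad \abs{\beta}+\sum_i\abs{\gamma_i}=n,\ \abs{\gamma_i}\geq1.
\end{align*}
The coefficient is bounded by a constant depending only on $q$ and $K\times\bar B$. Interpolation gives
\begin{align*}
\prod_i\nnorm{\sigma}{\abs{\gamma_i}}\leq C\nnorm{\sigma}{0}^{l-\sum_i\abs{\gamma_i}/n}\nnorm{\sigma}{n}^{\sum_i\abs{\gamma_i}/n}.
\end{align*}
Since $\nnorm{\sigma}{0}$ is bounded on the neighbourhood, this yields $\leq C(1+\nnorm{\sigma}{n})$. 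Summing over the charts gives $\nnorm{P(s)}{n}\leq C_n(1+\nnorm{s}{n})$ for all $n\geq0$ on the $\nnorm{-}{0}$-neighbourhood of $s_0$.

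\textbf{Continuity.} Continuity of $P$ follows similarly: $P(s)-P(s')$ is the operator induced by $\int_0^1\partial_vp(s'+\tau(s-s'))\,d\tau$ applied to $s-s'$. The same chain-rule expansion, with multilinear insertion of $s-s'$, gives $\nnorm{P(s)-P(s')}{n}\leq C_n\nnorm{s-s'}{n}(1+\nnorm{s}{n}+\nnorm{s'}{n})$.

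\textbf{Main obstacle.} The main technical point is the interpolation inequality on the localized pieces, since the cut-off functions $\theta_k$ interfere. I would handle this by proving it for functions supported in a fixed compact subset of $\R^m$ (standard; Hamilton, Part II, Section 2.2), applying it to $\theta_k\sigma^i$, and absorbing the derivatives of $\theta_k$ via the equivalence of gradings in Theorem \ref{VBsectTameFre}.
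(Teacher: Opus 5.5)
Your proposal is correct and follows the same route as the paper. The paper cites Hamilton for each ingredient you reprove: $C^1$-ness (Part I, Ex.~3.3.3); the fact that $TP$ (equivalently your $D^kP$) is again a nonlinear vector bundle operator induced by the vertical derivative, up to the zero-tame identification of Proposition~\ref{Prop:EplusFsect} (Part I, Ex.~3.6.5); and the local degree-$0$, base-$0$ tame estimate (Part II, Thm.~2.2.6), which is proved by essentially your chain-rule and interpolation argument. The only detail to tidy is the cutoff bookkeeping. Establish interpolation for the grading $\nnorm{-}{n}'$ itself, using the compactly supported functions $\theta_k\sigma^i$, and bound $\sup_{\operatorname{supp}\theta_k}|D^\gamma\sigma^{k,i}|$ by $C\nnorm{s}{|\gamma|}'$; you cannot read off $D^\gamma\sigma$ on $\operatorname{supp}\theta_k$ directly from $\theta_k\sigma^i$.
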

\begin{proof}
By \cite[Part I, Example 3.3.3]{hamilton} $P$ is $C^1$.
Hamilton uses this result in \cite[Part I, Example 3.6.5]{hamilton} to show inductively that $P$ is smooth. We note that in his proof Hamilton shows that up to the isomorphism $\Gamma(E\oplus E)\cong \Gamma(E)\times\Gamma(E)$, which we had shown in Proposition \ref{Prop:EplusFsect} is a zero-tame isomorphism, $TP$ is again a vector bundle operator and hence $C^1$.
Lastly, by \cite[Part II, Theorem 2.2.6]{hamilton} we see that $P$ is tame and locally fulfills tame estimates of degree and base $0$. Since $TP$ is a nonlinear vector bundle operator up to a zero-tame isomorphism, we see that the same holds for $TP$. Iteratively, we find that $P$ is zero-tame.
\end{proof}

\begin{remark}
In \cite[Part I, Example 3.6.5]{hamilton} Hamilton additionally shows that the tangent map $TP$ of a nonlinear vector bundle operator $P:\Gamma(U\subseteq E)\to \Gamma(F)$ defined by a smooth fiber-preserving map $p:(U\subseteq E)\to F$ is the nonlinear vector bundle operator defined by the vertical tangent map $T_vp:U\oplus E\to F\oplus F$ after the zero-tame identification of $\Gamma((U\oplus E)\subseteq (E\oplus E))\cong \Gamma(U\subseteq E)\times \Gamma(E)$ (cf.~Proposition \ref{Prop:EplusFsect}), where $U\oplus E\coloneqq \{v\oplus w\in E\oplus E|~v\in U,~w\in E\}$. If $p:U\to F$ is locally given by $(x,v)\mapsto (x,f(x,v))$ then $T_vp$ is locally given by $(x,v,w)\mapsto (x,f(x,v),D_2f(x,v)w)$.
\end{remark}

\begin{lemma}
Let $E\to M$ be a vector bundle over a compact manifold $M$. 
For every $k\in\N$ the $k$-jet extension map $j^k:\Gamma(E)\to \Gamma(J^kE)$ is a tame linear operator of order $k$ and base $0$. 
Here, $J^kE$ denotes the $k$-jet vector bundle with respect to sections of $E$.
\end{lemma}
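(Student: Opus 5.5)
We prove the estimate $\nnorm{j^k\sigma}{n}\le C_n\nnorm{\sigma}{n+k}$ for all $n\ge 0$. Linearity of $j^k$ is immediate. Because any two gradings on a space of sections built from different choices are equivalent of degree $0$ and base $0$ (Theorem \ref{VBsectTameFre} and Remark \ref{Rem:NormsOnSectOfVBIndepOfChoices}), we may pick convenient data on both sides. On $\Gamma(E)$ we use the local grading $\nnorm{-}{n}'$ from \eqref{VBsectLocNorm}, built from a finite atlas $(\Tilde U_l,\varphi_l)$, a refinement $(U_l)$, a partition of unity $(\theta_l)$ and local frames $(e_{l,i})$ of $E$.

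Over each chart, the frame $(e_{l,i})$ induces a local frame of $J^kE$: the coefficient functions of $j^k\sigma$ in this frame are exactly the partial derivatives $D^\beta\sigma^{l,i}$ with $|\beta|\le k$, read in the coordinates $\varphi_l$. We take these induced frames, together with the same atlas and partition of unity, to define the local grading $\nnorm{-}{n}'$ on $\Gamma(J^kE)$. The $n$-th norm of $j^k\sigma$ is then a finite sum of terms
\begin{align*}
\sup_{|\alpha|\le n}\sup_x\big|D^\alpha(\theta_l\, D^\beta\sigma^{l,i})\big|.
\end{align*}

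The main obstacle is that the cutoff sits outside $D^\beta$, whereas $\nnorm{\sigma}{n+k}'$ only controls $D^\gamma(\theta_l\sigma^{l,i})$, with the cutoff inside. To get around this, we choose auxiliary functions $\psi_l$ that are smooth, compactly supported in $\Tilde U_l$, and equal to $1$ on a neighbourhood of $\mathrm{supp}\,\theta_l$. Then $\theta_l D^\beta\sigma=\theta_l D^\beta(\psi_l\sigma)$. By the Leibniz rule and the boundedness of all derivatives of $\theta_l$ on the compact set $\bar U_l$,
\begin{align*}
|D^\alpha(\theta_l D^\beta\sigma^{l,i})|\le C\sum_{|\gamma|\le |\alpha|+|\beta|}\sup|D^\gamma(\psi_l\sigma^{l,i})|.
\end{align*}

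It remains to bound $\sup|D^\gamma(\psi_l\sigma^{l,i})|$ by $C\nnorm{\sigma}{|\gamma|}'$. Since $\sum_m\theta_m=1$, we can write $\psi_l\sigma=\sum_m\psi_l\theta_m\sigma$. On each overlap, the coefficients of $\theta_m\sigma$ in the frame $e_{l,\cdot}$ arise from those in $e_{m,\cdot}$ through the smooth transition matrix, and the coordinates change by $\varphi_l\circ\varphi_m^{-1}$. Both of these, together with $\psi_l$, have bounded derivatives of all orders on $\mathrm{supp}(\psi_l\theta_m)$, which is a compact subset of $\Tilde U_l\cap\Tilde U_m$. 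Applying the chain rule and the Leibniz rule therefore gives $\sup|D^\gamma(\psi_l\theta_m\sigma^{l,\cdot})|\le C\sum_{|\delta|\le|\gamma|}\sup|D^\delta(\theta_m\sigma^{m,\cdot})|\le C\nnorm{\sigma}{|\gamma|}'$.

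Putting the two bounds together yields $\nnorm{j^k\sigma}{n}'\le C_n\nnorm{\sigma}{n+k}'$ for every $n\ge0$, so $j^k$ is tame linear of degree $k$ and base $0$.
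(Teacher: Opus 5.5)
Your proof is correct and follows the same route as the paper: you use local gradings built from a common atlas and partition of unity, with the induced trivializations of $J^kE$ in which the components of $j^k\sigma$ are the partial derivatives $D^\beta\sigma^{l,i}$, $|\beta|\le k$. The paper simply calls the resulting estimate trivial, whereas you also handle the mismatch between $\theta_l D^\beta\sigma$ and $D^\gamma(\theta_l\sigma)$ using the auxiliary cutoffs $\psi_l$ and the chart-change bound, which fills in a step the paper glosses over.
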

\begin{proof}
We define the norms $(\nnorm{-}{E,n}')_{n\in \N}$ on $E$ as in equation \eqref{VBsectLocNorm} with respect to charts, a partition of unity on $M$, and local trivializations of $E$. We use the same charts and partition of unity on $M$, and induced local trivializations on $J^kE$ to define $\nnorm{-}{J^k,n}'$ as in equation \eqref{VBsectLocNorm} for every $n\in \N$. Hence, we trivially see that $\nnorm{j^ks}{J^kE,n}'\leq C \nnorm{s}{E,n+k}'$ as the components of $j^ks$ are precisely the partial derivatives of the components of $s$ of order up to $k$.   
\end{proof}

The following definition aims to make Hamilton's definition given in \cite[p.~146]{hamilton} more precise.
\begin{definition}
\label{Def:NonlinPartDiffOp}
Let $E\to M$ and $F\to M$ be vector bundles over a compact manifold $M$. Let $U\subseteq \Gamma(E)$ be open. We call a map $P:U\to \Gamma(F)$ a \textit{nonlinear partial differential operator of order $k\in \N$} if for every $s\in U$ there is an open neighbourhood $s\in U_s\subseteq U$, an open subset $\Tilde{U}_s\subseteq J^kE$ with $j^k(U_s)\subseteq \Gamma(\Tilde{U}_s\subseteq J^kE)$, and a nonlinear vector bundle operator $\Tilde{P}_s:\Gamma(\Tilde{U}_s\subseteq J^kE)\to \Gamma(F)$ such that $\left.P\right|_{U_s}=\Tilde{P}_s\circ \left.j^k\right|_{U_s}$, and $k$ is the smallest natural number for which this holds. 
\end{definition}

\begin{proposition}[{\cite[Part II, Corollary 2.2.7]{hamilton}}]
\label{Prop:NonlinPartDiffOpsAreSmoothTame}
A nonlinear partial differential operator $P:(U\subseteq \Gamma(E))\to \Gamma(F)$ is a smooth tame map. In particular if $P$ is of order $k\in \N$ then for every $n\in \N$ the maps $D^nP$ and $T^nP$ locally fulfill tame estimates of base $0$ and degree $k$.     
\end{proposition}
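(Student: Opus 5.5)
The plan is to reduce everything to the composition structure of Definition \ref{Def:NonlinPartDiffOp} and then invoke Lemma \ref{VBopSmoothTame} together with the tame linearity of the jet map. Fix $s\in U$ and take the neighbourhood $U_s$, the open set $\Tilde{U}_s\subseteq J^kE$ and the nonlinear vector bundle operator $\Tilde{P}_s$ with $\left.P\right|_{U_s}=\Tilde{P}_s\circ \left.j^k\right|_{U_s}$.

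\textbf{Smoothness.} The map $j^k:\Gamma(E)\to\Gamma(J^kE)$ is continuous and linear, hence smooth with $Dj^k(s)h=j^kh$. By Lemma \ref{VBopSmoothTame}, $\Tilde{P}_s$ is smooth (indeed zero-tame). The chain rule (Proposition \ref{Prop:PropsOfDiff}) then shows that $P$ is smooth on $U_s$, and since smoothness is local, $P$ is smooth on $U$.

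\textbf{Derivatives.} Because $j^k$ is linear, an induction on $n$ gives on $U_s$
\begin{align*}
    D^nP(f)\{h_1,\ldots,h_n\}=D^n\Tilde{P}_s(j^kf)\{j^kh_1,\ldots,j^kh_n\}.
\end{align*}
Similarly, $T^nP=T^n\Tilde{P}_s\circ T^nj^k$, where $T^nj^k$ acts componentwise as $j^k$ on products of copies of $\Gamma(E)$. Under the zero-tame identification of Proposition \ref{Prop:EplusFsect}, $T^nj^k$ is therefore a product of jet maps.

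\textbf{Tame estimates.} By Lemma \ref{VBopSmoothTame}, each $D^n\Tilde{P}_s$ satisfies, on a neighbourhood of $(j^kf_0,j^kh^0_1,\ldots)$, a tame estimate of degree $0$ and base $0$:
\begin{align*}
    \nnorm{D^n\Tilde{P}_s(g)\{k_1,\ldots,k_n\}}{m}\leq C_m\Big(1+\nnorm{g}{m}+\sum_i\nnorm{k_i}{m}\Big).
\end{align*}
The jet lemma gives $\nnorm{j^kh}{m}\leq C\nnorm{h}{m+k}$. Since $j^k$ is continuous, the preimage under $f\mapsto j^kf$ (applied componentwise) of that neighbourhood is a neighbourhood of $(f_0,h^0_1,\ldots)$. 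On this neighbourhood we obtain
\begin{align*}
    \nnorm{D^nP(f)\{h_1,\ldots,h_n\}}{m}\leq C_m\Big(1+\nnorm{f}{m+k}+\sum_i\nnorm{h_i}{m+k}\Big),
\end{align*}
which is a tame estimate of degree $k$ and base $0$. The same argument applies to $T^nP$.

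\textbf{Main obstacle.} The delicate point is the inductive identification of $D^n\Tilde{P}_s$, or equivalently $T^n\Tilde{P}_s$, as again a nonlinear vector bundle operator up to zero-tame isomorphisms of section spaces. This is needed so that Hamilton's estimate (Part II, Theorem 2.2.6) applies with degree and base $0$ at every order. The proof of Lemma \ref{VBopSmoothTame} has already carried out this identification via the vertical tangent map $T_vp$, so I would rely on it. Apart from that, I would only check that the neighbourhoods in the product spaces pull back correctly through the linear continuous maps $j^k$.
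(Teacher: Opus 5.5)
Your proposal is correct and follows the same route as the paper. Like the paper, you write $P=\Tilde{P}_s\circ j^k$ locally, apply the chain rule, and combine the degree-$k$, base-$0$ tame linearity of $j^k$ with the zero-tameness of nonlinear vector bundle operators from Lemma \ref{VBopSmoothTame}; you additionally spell out the explicit formula for $D^nP$ and the pulling back of neighbourhoods through $j^k$, which the paper leaves implicit.
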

\begin{proof}
In the previous lemma we have seen that the jet extension map $j^k$ is indeed a tame linear map of base $0$ and degree $k$. In Lemma \ref{VBopSmoothTame} we have seen that (nonlinear) vector bundle operators are zero-tame maps. Hence, by definition nonlinear partial differential operators are smooth tame as they can be locally written as compositions of smooth tame maps. The fact that $D^nP$ and $T^nP$ locally fulfill tame estimates of base $0$ and degree $k$ for every $n\in \N$ follows immediately from the chain rule, tame linearity of $j^k$ of base $0$ and degree $k$, and the zero-tameness of nonlinear vector bundle operators.     
\end{proof}

\subsection{The space of smooth sections of vector bundles as the inverse limit of Sobolev spaces}

In this section $V\to M$ and $E\to M$ denote vector bundles over a compact manifold $M$. Here $V$ and $E$ may be complex or real vector bundles and linear maps of their sections are understood to be real or complex linear in the respective cases. If $F\to M$ is a real vector bundle we denote by $F^{\mathbb C}$ its complexification with fibers $F^{\mathbb C}_p=F_p\tens_\R\mathbb C$, $p\in M$.

There is a well-understood theory of the Sobolev spaces of sections of $V$. We want to compare that description to the description of $\Gamma(V)$ as a tame Fréchet space.

This chapter is mostly based on \cite[Chapter IV]{Wells07}. Wells develops the theory of Sobolev spaces of smooth sections of smooth complex vector bundles over real compact manifolds. However, the statements generally also hold for real vector bundles. This is usually quite clear. For the sake of completeness we will prove or sketch how one can deduce the respective statements for real vector bundles from their complex counterpart.
After recalling the relevant theory of Sobolev spaces we show that the Fréchet space $\Gamma(V)$ is the inverse limit of the associated Sobolev spaces and that certain useful linear operators of Sobolev spaces correspond to tame linear operators of the corresponding tame Fréchet spaces.

\begin{definition}[{\cite[p.~110f]{Wells07}}]
Let $M$ be a compact $n$-dimensional manifold, and let $\pi:V\to M$ be a real or complex vector bundle of rank $\mathrm{rk}(V)=r$. Let $(U_\alpha,\phi_\alpha)_{\alpha =1,\ldots,l}$, $l\in \N$, be a finite cover of $M$ by trivializations $\phi_\alpha:\pi^{-1}(U_\alpha)\to \Tilde{U}_\alpha \times \K^r$, with $\K=\R,\mathbb C$,  and $r=\mathrm{rk}(V)$ such that $\phi_\alpha$ covers a chart $\varphi_\alpha:U_\alpha\to\Tilde{U}_\alpha\subseteq \R^n$, with $n=\mathrm{dim}(M)$. Finally, let $\theta_\alpha$ be a partition of unity subordinate to the open cover $(U_\alpha)_{\alpha=1,\ldots,l}$ of $M$. Then, we define the $k$-th Sobolev norm as
\begin{equation}
\label{SobolevNorm}
    \nnorm{\sigma}{k}^S=\sum_\alpha \nnorm{(\phi_\alpha)_*(\theta_\alpha \sigma)}{k,\R^n,\K^r}^S
\end{equation}
for $\sigma\in\Gamma(V)$ where 
\begin{equation*}
    \nnorm{f}{k,\R^n,\K^r}^S=\int_{\R^n}\abs{\hat f(y)}^2(1+\abs{y}^2)^kdy
\end{equation*}
for $f:\R^n\to \K^r$ a compactly supported smooth function. Here,  
\begin{equation*}
    \hat f(y)=\frac{1}{(2\pi)^n}\int_{\R^n}e^{ixy}f(x)\d x
\end{equation*}
is the component-wise Fourier transform.    
\end{definition}

\begin{remark}
\label{Rem:SobNormsCompSupp}
Note that the Sobolev norms are well defined since $\theta_\alpha$ and so $(\phi_\alpha)_*(\theta_\alpha\sigma)$ have compact support, as $\mathrm{supp}(\theta_\alpha)$ is closed and $M$ is compact.
\end{remark}

\begin{definition}
For any $k\in \mathbb Z$ we denote by $W^k(V)$ the completion of $\Gamma(V)$ with respect to the $k$-th Sobolev norm and call it the \textit{$k$-th Sobolev space of (smooth sections of) $V$}. 
\end{definition}

\begin{convention}
When writing $\Gamma(V)\subset W^k(V)$ for any $k\in \mathbb Z$ we explicitly mean the vector space $\Gamma(V)$ equipped with the topology induced by $\nnorm{-}{k}^S$.
\end{convention}

\begin{proposition}[{\cite[§4, p.~110 and §4 Proposition 1.2]{Wells07}}]
\label{Prop:Rellich}
The Sobolev spaces $W^k(V)$, $k\in \mathbb Z$, are Hilbert spaces and the natural vector space inclusions
\begin{align*}
    \ldots\supset W^{-1}(V)\supset W^0(V) \supset W^1 (V)\supset\ldots
\end{align*}
are continuous and, in fact, compact as maps of the Banach spaces $W^k(V)\hookrightarrow W^{k-1}(V)$, $k\in\mathbb Z$. 
\end{proposition}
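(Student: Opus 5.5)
The plan is to reduce everything to the flat case $\R^n$ with values in $\K^r$, using the trivializations $\phi_\alpha$ and the partition of unity $(\theta_\alpha)$. The reduction goes through the map
\begin{align*}
    \sigma\longmapsto \bigl((\phi_\alpha)_*(\theta_\alpha\sigma)\bigr)_{\alpha=1,\ldots,l},
\end{align*}
which sends $\Gamma(V)$ into $\prod_\alpha C_c^\infty(\Tilde U_\alpha,\K^r)$. By definition \eqref{SobolevNorm}, the norm $\nnorm{-}{k}^S$ is the sum of the flat norms $\nnorm{-}{k,\R^n,\K^r}^S$ of the components. I would first check the Hilbert space claim. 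The quantity $\nnorm{f}{k,\R^n,\K^r}^S$ is the square of a weighted $L^2$-norm of $\hat f$; strictly speaking one should take its square root, or note that the topology is unaffected. Each flat norm therefore comes from an inner product. A finite sum of norms coming from inner products is not literally Hilbertian, but it is equivalent to the $\ell^2$-sum $\bigl(\sum_\alpha (\nnorm{\cdot}{k,\alpha}^S)^2\bigr)^{1/2}$, which is. Hence the completion $W^k(V)$ is a Hilbertable space, and I would identify it with the closure of the image of $\Gamma(V)$ inside $\prod_\alpha H^k(\R^n,\K^r)$.

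Next comes continuity of the inclusions. Since $(1+|y|^2)^{k-1}\leq (1+|y|^2)^k$, we get $\nnorm{\sigma}{k-1}^S\leq\nnorm{\sigma}{k}^S$ on $\Gamma(V)$. By Lemma \ref{Lemma:ExtOfMapsIntoCompHausdTVS}, the identity on $\Gamma(V)$ then extends uniquely to a continuous linear map $W^k(V)\to W^{k-1}(V)$. This map is injective, so it really is an inclusion. To prove injectivity I would pass to the local pieces, where the corresponding map $H^k(\R^n)\to H^{k-1}(\R^n)$ is injective because both spaces sit inside tempered distributions (equivalently, inside weighted $L^2$ spaces of $\hat f$, where the inclusion is obviously injective). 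A Cauchy sequence in $\Gamma(V)$ for $\nnorm{-}{k}^S$ whose limit vanishes in $W^{k-1}$ therefore has all of its local components tending to zero in $H^k$, and so its limit vanishes in $W^k$.

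Compactness (Rellich) is the main step. I expect the key obstacle to be keeping the supports uniformly compact. For each $\alpha$ I would fix a compact set $K_\alpha\subset\Tilde U_\alpha$ containing $\varphi_\alpha(\mathrm{supp}\,\theta_\alpha)$, so that every local component of every element of $W^k(V)$ lies in $H^k_{K_\alpha}$, the closed subspace of $H^k$ functions supported in $K_\alpha$. It then suffices to show the flat Rellich lemma: the inclusion $H^k_K\hookrightarrow H^{k-1}_K$ is compact. I would prove it on the Fourier side. Choose $\chi\in C_c^\infty$ equal to $1$ on $K$. Then $\hat f=\hat\chi*\hat f$, which gives pointwise bounds on $\hat f$ and $\nabla\hat f$ on balls, uniformly over bounded sets in $H^k_K$. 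By Arzelà--Ascoli we get local uniform convergence of a subsequence of $\hat f_j$ on $|y|\leq R$. The tail $|y|>R$ contributes at most $(1+R^2)^{-1}\nnorm{f_j}{k}^S$ to the $H^{k-1}$-norm and is uniformly small. A diagonal argument in $R$ then yields a Cauchy subsequence in $H^{k-1}$.

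Finally, a bounded sequence in $W^k(V)$ has bounded local components. Extracting subsequences successively for $\alpha=1,\ldots,l$ gives convergence of every component in $H^{k-1}$. Since the image of $W^{k-1}(V)$ in the product is closed (it is a completion), the subsequence converges in $W^{k-1}(V)$. This is the same reasoning as in \cite[Chapter IV]{Wells07}. For real bundles I would complexify, $V\subset V^{\mathbb C}$, observe that real sections form a closed real subspace whose Sobolev norms are the restrictions, and deduce each property from the complex case.
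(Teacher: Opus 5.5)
Your proof is correct, but it follows a different route from the paper. The paper does not prove the statement itself. It cites Wells for the complex case (Hilbert structure and Rellich's lemma). Its only argument is the reduction of real bundles to complex ones: the Sobolev norms on $\Gamma(V)$ are the restrictions of those on $\Gamma(V^{\mathbb C})$. Hence $W^k(V)$ is the closure of $\Gamma(V)$ in $W^k(V^{\mathbb C})$, and $W^k(V)\hookrightarrow W^{k-1}(V)$ is the restriction of a compact operator to a closed subspace.

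You instead give a self-contained proof. You embed $W^k(V)$ isometrically as a closed subspace of $\prod_\alpha H^k_{K_\alpha}$ via $\sigma\mapsto((\phi_\alpha)_*(\theta_\alpha\sigma))_\alpha$. You verify that the extended maps $W^k(V)\to W^{k-1}(V)$ are injective, which the paper tacitly assumes when it calls them natural inclusions. You then prove the flat Rellich lemma by the Fourier-side Arzel\`a--Ascoli argument with a tail estimate. This buys independence from the literature and from the real/complex distinction. Since your flat argument never uses $\mathbb{K}=\mathbb{C}$, your final complexification step is actually redundant, whereas in the paper it is the entire content.

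Two small points. First, the paper's $\|\sigma\|^S_k$ is literally $\sum_\alpha\|(\phi_\alpha)_*(\theta_\alpha\sigma)\|^2_{H^k}$. Its square root is therefore already a Hilbert norm, so your equivalence-of-norms detour is not needed. Second, for $k<0$ you should take $\chi=1$ on a neighbourhood of $K$. Alternatively, check compactness only on the dense subspace $\Gamma(V)$; this suffices because the unit ball of $W^k(V)$ lies in the closure of the unit ball of $\Gamma(V)$. In that case the locally uniform bounds on $\hat f$ and $\nabla\hat f$ come from the Schwartz decay of $\hat\chi$ via Peetre's inequality.
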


\begin{remark}
Wells proves these results for complex vector bundles in \cite[§4, Proposition 1.2]{Wells07}. However, one can easily see that using the complexification $V^{\mathbb C}$ of $V$ we find that $W^k(V)\cong \overline{\Gamma(V)}\subset W^k(V^\mathbb C)$. Then, the vector space inclusion as a map of Banach spaces $W^k(V)\hookrightarrow W^{k-1}(V)$ is a compact operator since it is simply the restriction of the inclusion map $W^k(V^\mathbb{C})\to W^{k-1}(V^\mathbb C)$ to $\overline{\Gamma(V)}\subset W^k(V^{\mathbb C})$. 
\end{remark}

\begin{definition}
We denote by $\Gamma_k(V)$ for $k\in \N$ the Banach space of $k$-times continuously differentiable sections of $V$ equipped with the topology induced by the norm $\nnorm{-}{k}$ (cf.~equation \eqref{VBsectGlobNorm}) or equivalently the norm $\nnorm{-}{k}'$ (cf.~equation \eqref{VBsectLocNorm}). 
\end{definition}

\begin{proposition}[{\cite[§4, Proposition 1.1]{Wells07} and \cite[Chapter X, §4, Theorem 4]{IndexTheorem}}]
\label{Prop:SobolevEmbThm}
For $s>n/2+k+1$ we find that on the level of vector spaces $W^s(V)\subset \Gamma_k(V)$, that is, every Cauchy sequence in $\Gamma(V)$ with respect to $\nnorm{-}{s}^S$ converges in $\Gamma_k(V)$. Additionally, the inclusion map $W^s(V)\hookrightarrow \Gamma_k(V)$ is continuous.
\end{proposition}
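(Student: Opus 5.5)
This is the Sobolev embedding theorem. Since it is cited from Wells and Palais, the plan is to reduce it to the classical Euclidean statement, localizing with the same data used in the definition of $\nnorm{-}{s}^S$.

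\emph{Step 1 (local estimate).} Let $f\colon\R^n\to\K^r$ be smooth with compact support and let $|\alpha|\leq k$. Fourier inversion gives $D^\alpha f(x)=\int_{\R^n} (iy)^\alpha \hat f(y)e^{-ixy}\,dy$, up to the normalising constant fixed by the paper's convention for $\hat f$. Hence, by Cauchy--Schwarz,
\begin{align*}
\sup_x|D^\alpha f(x)|\leq C\int|\hat f(y)|(1+|y|^2)^{k/2}\,dy\leq C\Big(\int|\hat f(y)|^2(1+|y|^2)^{s}\,dy\Big)^{1/2}\Big(\int(1+|y|^2)^{k-s}\,dy\Big)^{1/2}.
\end{align*}
The last integral is finite exactly when $2(s-k)>n$. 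The hypothesis $s>n/2+k+1$ is stronger, so it certainly holds. This yields $\sup|D^\alpha f|\leq C\,(\nnorm{f}{s,\R^n,\K^r}^S)^{1/2}$. The square root is harmless because the paper's $\nnorm{-}{}^S$ is really the square of the norm.

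\emph{Step 2 (globalisation).} Take $\sigma\in\Gamma(V)$ and apply Step 1 to each $(\phi_\alpha)_*(\theta_\alpha\sigma)$, which has compact support by Remark \ref{Rem:SobNormsCompSupp}. Summing over $\alpha$ bounds a $C^k$-norm of the localised pieces. To compare this with $\nnorm{\sigma}{k}'$ from \eqref{VBsectLocNorm}, I use Theorem \ref{VBsectTameFre} and Remark \ref{Rem:NormsOnSectOfVBIndepOfChoices}: these let me choose the charts, frames and partition of unity in $\nnorm{-}{k}'$ to be the same as those used for the Sobolev norm. The result is
\begin{align*}
\nnorm{\sigma}{k}'\leq C\,(\nnorm{\sigma}{s}^S)^{1/2}\quad\text{for all }\sigma\in\Gamma(V).
\end{align*}
Alternatively, the change of trivialisation is smooth and its derivatives are bounded on the supports involved.

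\emph{Step 3 (extension to the completion).} By the estimate of Step 2, a Cauchy sequence in $\Gamma(V)$ for $\nnorm{-}{s}^S$ is Cauchy in the Banach space $\Gamma_k(V)$, so it converges there. Therefore the inclusion $\Gamma(V)\hookrightarrow\Gamma_k(V)$ is continuous for the $W^s$-topology, and by Lemma \ref{Lemma:ExtOfMapsIntoCompHausdTVS} it extends uniquely to a continuous linear map $W^s(V)\to\Gamma_k(V)$.

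\emph{Main obstacle.} The genuine difficulty is showing that this extension is injective, which is what justifies writing $W^s(V)\subset\Gamma_k(V)$. Both $W^s(V)$ and $\Gamma_k(V)$ embed continuously into $W^0(V)$, or into $L^2$ after pairing with a smooth density. The extension commutes with these two embeddings, because they agree on the dense subspace $\Gamma(V)$. The embedding of $W^s(V)$ into $W^0(V)$ is injective by Proposition \ref{Prop:Rellich}. So if an element of $W^s(V)$ is sent to $0$ in $\Gamma_k(V)$, its image in $W^0(V)$ is $0$, hence the element itself is $0$. For real bundles I would reduce to the complex case via $V^{\mathbb C}$, as in the remark following Proposition \ref{Prop:Rellich}.
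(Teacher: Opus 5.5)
Your proof is correct. The paper gives no argument of its own for this proposition: it cites Wells and Palais for complex bundles and handles real bundles with the one-line complexification remark $W^s(V)\subset W^s(V^{\mathbb C})\hookrightarrow \Gamma_k(V^{\mathbb C})$. What you write down is the standard Fourier-inversion/Cauchy--Schwarz argument behind those citations, carried out in the paper's own notation.

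The citation route is short. It also inherits from the references a concrete realisation of $W^s(V)$, inside $L^2$ or distributions, in which ``$W^s(V)\subset\Gamma_k(V)$'' is automatically an inclusion. Your self-contained route buys three things:
\begin{itemize}
\item \emph{Normalisation.} As the paper defines it, $\nnorm{-}{s}^S$ is the \emph{square} of a norm, so the homogeneous estimate is $\nnorm{\sigma}{k}'\leq C(\nnorm{\sigma}{s}^S)^{1/2}$. You track this correctly. The inequalities in Lemma \ref{Lemma:SobolevVsC^k} omit this square root and, read literally, fail under the scaling $\sigma\mapsto t\sigma$.
\item \emph{Sharper hypothesis and the real case.} Step 1 only uses $2(s-k)>n$, so it shows the hypothesis $s>n/2+k+1$ is more than needed. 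It also works verbatim for $\K=\R$, so complexification is needed only to quote injectivity of $W^s(V)\to W^0(V)$ from Proposition \ref{Prop:Rellich}.
\item \emph{Injectivity.} You justify explicitly that the extended map $W^s(V)\to\Gamma_k(V)$ is injective, which the paper's notation takes for granted.
\end{itemize}

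One step is left implicit: the existence of a continuous linear map $\Gamma_k(V)\to W^0(V)$ that is the identity on $\Gamma(V)$. It follows from $\nnorm{\sigma}{0}^S\leq C(\nnorm{\sigma}{0}')^2$ (Plancherel plus the compact supports of the localised pieces) together with density of $\Gamma(V)$ in $\Gamma_0(V)$. Your argument needs this map only to be linear and continuous, not injective.
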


\begin{remark}
Although the statements in \cite[§4, Proposition 1.1]{Wells07} and \cite[Chapter X, §4, Theorem 4]{IndexTheorem} are formulated for complex vector bundles the real case follows immediately since
$W^s(V)\subset W^s(V^{\mathbb C})\hookrightarrow \Gamma_k(V^{\mathbb C})$ has image in $\Gamma_k(V)$.
\end{remark}

\begin{lemma}
\label{Lemma:SobolevVsC^k}
Let $\nnorm{-}{k}'$ be as in equation \eqref{VBsectLocNorm}, and let $\nnorm{-}{k}^S$ be the $k$-th Sobolev norm (cf.~equation \eqref{SobolevNorm}), $k\in \N$. Let $n=\mathrm{dim}(M)$. Then for every $k\in \N$ and $d>n$ we find a constant $C_k>0$ such that 
\begin{align}
\label{C^k<Sob}
    \nnorm{\sigma}{k}^S\leq C_k\nnorm{\sigma}{k+d}' 
\end{align}
and
\begin{align}
\label{Sob<C^k}
    \nnorm{\sigma}{k}' \leq C_k\nnorm{\sigma}{k+s}^S
\end{align}
with $s>[n/2]+1$, and $\sigma \in \Gamma(V)$.
\end{lemma}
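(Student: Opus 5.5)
Both estimates reduce, via the partition of unity and the trivializations, to estimates for compactly supported smooth functions $f:\R^n\to\K^r$ on $\R^n$. The first step is therefore to arrange that the norms $\nnorm{-}{k}'$ and $\nnorm{-}{k}^S$ are built from the same charts $\varphi_\alpha$, local frames and partition of unity $(\theta_\alpha)$. This is harmless: different choices give equivalent norms for $\nnorm{-}{k}'$ (Remark \ref{Rem:NormsOnSectOfVBIndepOfChoices}), and the same is true for the Sobolev norms. Then each $f_\alpha\coloneqq(\phi_\alpha)_*(\theta_\alpha\sigma)$ is supported in the fixed compact set $K_\alpha=\varphi_\alpha(\mathrm{supp}\,\theta_\alpha)$. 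Summing over the finitely many $\alpha$ and components $i$, it suffices to prove both inequalities for a single scalar function $f\in C_c^\infty(\R^n)$ with $\mathrm{supp} f\subseteq K$, where $K$ is fixed.

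For \eqref{C^k<Sob}, I use Plancherel in the form
\begin{align*}
\int\abs{\hat f(y)}^2(1+\abs{y}^2)^k\,dy\leq C\sum_{\abs{\beta}\leq k}\int_K\abs{D^\beta f}^2\,dx\leq C\,\mathrm{vol}(K)\sup_{\abs{\beta}\leq k}\sup_x\abs{D^\beta f}^2 .
\end{align*}
The left-hand side is the Sobolev quantity as defined, without a square root, so it is quadratic in $f$. To compare it linearly with $\nnorm{\sigma}{k+d}'$ I would instead bound $\abs{\hat f(y)}(1+\abs{y}^2)^{(k+d)/2}$ pointwise by $C\sup_{\abs{\beta}\leq k+d}\norm{D^\beta f}_{L^1}$, integrating by parts $k+d$ times. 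Then I would integrate the square against $(1+\abs{y}^2)^{-d}$, which is integrable because $d>n/2$, and in particular because $d>n$. The $L^1$ norms on $K$ are controlled by $\mathrm{vol}(K)$ times sup norms. Finally, $D^\beta(\theta_\alpha\sigma^i)$ is exactly the quantity that appears in $\nnorm{\sigma}{k+d}'$, which gives \eqref{C^k<Sob} up to the square-root normalisation. I would keep track of whether the paper's $\nnorm{-}{}^S$ is meant to be a norm, that is, the square root, and state the estimate accordingly.

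For \eqref{Sob<C^k}, I use Fourier inversion:
\begin{align*}
\abs{D^\beta f(x)}\leq C\int\abs{y}^{\abs{\beta}}\abs{\hat f(y)}\,dy\leq C\Big(\int\abs{\hat f}^2(1+\abs{y}^2)^{k+s}dy\Big)^{1/2}\Big(\int(1+\abs{y}^2)^{\abs{\beta}-k-s}dy\Big)^{1/2},
\end{align*}
by Cauchy--Schwarz. The last integral is finite for $\abs{\beta}\leq k$ as soon as $2s>n$, which holds for $s>[n/2]+1$. This is essentially the argument behind Proposition \ref{Prop:SobolevEmbThm}, and I could alternatively simply invoke that proposition together with the closed graph/continuity of the inclusion $W^{k+s}\hookrightarrow\Gamma_k$, provided $k+s>n/2+k+1$. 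That requires $s>n/2+1$, which the given bound $s>[n/2]+1$ may not meet for odd $n$, so I prefer the direct estimate. The resulting bound is on $f_\alpha=\theta_\alpha\sigma$ in chart $\alpha$, which is exactly what appears in $\nnorm{\sigma}{k}'$, so summing over $\alpha$ finishes the proof.

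The main obstacle is bookkeeping rather than analysis. I need to make sure the localisations used by the two norms coincide, or are uniformly comparable, and to handle the squared versus unsquared Sobolev expression consistently, since otherwise the linear inequality fails by scaling. The analytic core is the two Fourier estimates above.
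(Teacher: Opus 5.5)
Your proof is correct. For the first inequality it follows the paper's route. For the second it takes a different, self-contained route.

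\textbf{First inequality.} The paper's argument is the same as your pointwise one. It expands $(1+\abs{y}^2)^{k+d}$ into monomials, replaces $y^\alpha\hat f$ by $\widehat{D^\alpha f}$, bounds the latter by $\mathrm{vol}(K)\sup\abs{D^\alpha f}$, and integrates $(1+\abs{y}^2)^{-d}$.

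Your worry about normalisation is justified. With the Sobolev norm defined without a square root, both inequalities fail under $\sigma\mapsto\lambda\sigma$. The paper's chain silently drops the square in the step right after i). The correct statements are the ones you prove, with $\big(\nnorm{\sigma}{k}^S\big)^{1/2}$ in place of $\nnorm{\sigma}{k}^S$.

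One criticism: your stated reason for leaving the Plancherel estimate is a non sequitur. The pointwise route is just as quadratic before taking roots. Plancherel is in fact sharper: it gives $\big(\nnorm{\sigma}{k}^S\big)^{1/2}\le C\nnorm{\sigma}{k}'$ with no derivative loss. That already implies the stated bound, since the grading $\nnorm{-}{n}'$ is increasing.

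\textbf{Second inequality.} Here the paper just cites the Sobolev embedding theorem (Proposition \ref{Prop:SobolevEmbThm}). You reprove the needed embedding by Fourier inversion and Cauchy--Schwarz. This makes the lemma self-contained and needs only $s>n/2$.

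Your reason for avoiding the citation is mistaken, though. Here $s$ is an integer, since it indexes $W^{k+s}$ and also enters $\nnorm{-}{k+l}'$ in Theorem \ref{Theorem:C^k-Norm=Sob-Norm}. So $s>[n/2]+1$ means $s\ge [n/2]+2>n/2+1$ for both parities of $n$, and the citation works as the paper uses it.

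\textbf{Localisation.} You only need the freedom to choose the data for $\nnorm{-}{k}'$ (Remark \ref{Rem:NormsOnSectOfVBIndepOfChoices}), taking it to be the Sobolev charts, frames and partition of unity. You do not need independence of the Sobolev norms from choices, which the paper never proves.
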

\begin{proof}
We write $\K=\R,\mathbb C$ depending on whether $V\to M$ is a real or complex vector bundle.
We first show the inequality \eqref{C^k<Sob}. Let $f$ be a function $f:\R^n\to \K^r$, for $r=\mathrm{rk}(V)$, with support in some fixed compact subset $K\subset \R^n$. Note that by Remark \ref{Rem:SobNormsCompSupp} this is a valid assumption when considering sections of vector bundles. We have
\begin{align*}
    \nnorm{f}{k,\R^n,\K^r}^S&=\int_{\R^n}\abs{\hat f(y)}^2\frac{(1+\abs{y}^2)^{k+d}}{(1+\abs{y}^2)^d}dy\stackrel{i)}{\leq} C\sum_{\abs{\alpha}\leq (k+d)}\int_{\R^n}\abs{\widehat{D^\alpha f}(y)}^2\frac{1}{(1+\abs{y}^2)^d}dy\\
    &\leq C\sum_{\abs{\alpha}\leq (k+d)}\int_{\R^n}\abs{\int_{\R^n}e^{ixy}D^\alpha f(x)\d x}\frac{1}{(1+\abs{y}^2)^d}dy\\
    &\leq C\sum_{\abs{\alpha}\leq (k+d)}\int_{\R^n}\left(\int_{\R^n}\abs{e^{ixy}}\abs{D^\alpha f(x)}\d x\right)\frac{1}{(1+\abs{y}^2)^d}dy\\
    &\stackrel{ii)}{\leq} C\sum_{\abs{\alpha}\leq (k+d)}\sup_{x\in \R^n}\abs{D^\alpha f(x)}\int_{\R^n}\frac{1}{(1+\abs{y}^2)^d}dy \stackrel{iii)}{\leq} C\sup_{\abs{\alpha}\leq (k+d)}\sup_{x\in \R^n}\abs{D^\alpha f(x)}\\
    &\leq C \sum_{i=1}^r\sup_{\abs{\alpha}\leq (k+d)}\sup_{x\in \R^n}\abs{D^\alpha f_i(x)}.
\end{align*}
Here $f=(f_1,\ldots,f_r)$, and $\alpha=\alpha_1\ldots \alpha_n$ denotes a multi-index. For $i)$ we have used that 
\begin{equation*}
    (1+\abs{y})^l\leq \left(1+\sum_{j=1}^n \abs{y_j}\right)^l\leq C\sum_{\abs{\alpha}\leq l}|y_1|^{\alpha_1}\ldots|y_d|^{\alpha_n}=C\sum_{\abs{\alpha}\leq l}|y^\alpha|,
\end{equation*}
and that since $f$ has compact support the Fourier transform takes derivatives of the form $D^\alpha f_j$, $j=1,\ldots,r$, to terms of the form $i^{\abs{\alpha}}y^\alpha f_j$. By $y^\alpha$ we, of course, mean $y_1^{\alpha_1}\ldots y_n^{\alpha_n}$.
For $ii)$ we have used that $f$ has support $K\subset \R^n$ which is compact and hence has finite volume. 
For $iii)$ we have used that the integral over $(1+\abs{y})^{-d}$ is finite for $d>n$ as can easily be checked in spherical coordinates.
The general case for sections $\sigma \in \Gamma(V)$ now follows as a direct consequence.

The inequality \eqref{Sob<C^k} follows from the Sobolev embedding Theorem (Proposition \ref{Prop:SobolevEmbThm}).
\end{proof}

\begin{theorem}
\label{Theorem:C^k-Norm=Sob-Norm}
The space $(\Gamma(V),(\nnorm{-}{k}^S)_{k\in \mathbb{Z}})$ is a tame Fréchet space and the grading is tamely equivalent to the grading $(\nnorm{-}{k}')_{k\in \N}$ (cf.~equation \eqref{VBsectLocNorm}) and the grading $(\nnorm{-}{k})_{k\in \N}$ (cf.~equation \eqref{VBsectGlobNorm}).   
\end{theorem}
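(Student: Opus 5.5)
The plan is to derive the tame equivalence directly from Lemma \ref{Lemma:SobolevVsC^k} and then transport the tame Fréchet structure from Theorem \ref{VBsectTameFre} via Remark \ref{Rem:DirSumm&ProdAreTame}. Since the grading $(\nnorm{-}{k}^S)$ is indexed by $\mathbb Z$ while the other gradings are indexed by $\N$, I would read the statement as concerning the restriction to $k\in\N$; the negative Sobolev norms are dominated by $\nnorm{-}{0}^S$ and so add nothing to the topology or the tameness. I would also note that the Sobolev norms are increasing in $k$, because $(1+\abs{y}^2)^k$ is increasing in $k$. Hence $(\nnorm{-}{k}^S)_{k\in\N}$ is a candidate grading in the sense of Definition \ref{Def:GradFreSpac}. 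One small point to record here: as written, $\nnorm{-}{k,\R^n,\K^r}^S$ is the square of a norm. Either one takes square roots, or one observes that tame estimates for squares and for square roots differ only by harmless changes of the constants.

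Next I would fix $n=\dim M$ and set $r\coloneqq \max(n+1,[n/2]+2)=n+1$. By inequality \eqref{C^k<Sob} with $d=n+1$, and inequality \eqref{Sob<C^k} with $s=[n/2]+2$ (which is at most $n+1$, and the norms increase in $k$), we get
\begin{align*}
\nnorm{\sigma}{k}^S\leq C_k\nnorm{\sigma}{k+r}' \quad\text{and}\quad \nnorm{\sigma}{k}'\leq C_k\nnorm{\sigma}{k+r}^S
\end{align*}
for all $k\in\N$ and $\sigma\in\Gamma(V)$. By definition, this says that the two gradings are tamely equivalent of degree $r$ and base $0$. Composing with the degree-$0$, base-$0$ equivalence between $(\nnorm{-}{k}')$ and $(\nnorm{-}{k})$ from Theorem \ref{VBsectTameFre} gives the equivalence with the grading \eqref{VBsectGlobNorm}.

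From these two-sided estimates it follows that the two families of seminorms induce the same topology. So $(\Gamma(V),(\nnorm{-}{k}^S))$ is a Fréchet space: it is metrizable and Hausdorff, and Cauchy sequences coincide, so it is complete because $(\Gamma(V),(\nnorm{-}{k}'))$ is complete. The identity map is then a vector space isomorphism which is tame linear in both directions. Since $(\Gamma(V),(\nnorm{-}{k}'))$ is tame by Theorem \ref{VBsectTameFre}, Remark \ref{Rem:DirSumm&ProdAreTame} shows that the Sobolev grading also makes $\Gamma(V)$ a tame Fréchet space.

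The main obstacle is not the estimates, which are already supplied by Lemma \ref{Lemma:SobolevVsC^k}. It is making sure that the constants in that lemma are independent of $\sigma$, and that the chosen trivializations and partitions of unity are compatible between \eqref{SobolevNorm} and \eqref{VBsectLocNorm}. I would handle this by choosing the same cover, partition of unity and frames for both norms. Remark \ref{Rem:NormsOnSectOfVBIndepOfChoices} then takes care of the $C^k$ side. On the Sobolev side, it suffices that a fixed choice has been made, since the statement concerns that grading.
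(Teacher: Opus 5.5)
Your proposal is correct and follows essentially the paper's own proof: the paper also takes $l=\max\{d,s\}$ in Lemma~\ref{Lemma:SobolevVsC^k} to get a tame equivalence of degree $l$ and base $0$ with $(\nnorm{-}{k}')_{k\in\N}$, observes that the norms with $k\leq 0$ do not change the topology, and then composes with the degree-$0$ equivalence from Theorem~\ref{VBsectTameFre}. Your extra remarks refine the argument rather than leave gaps, with two small corrections: only the square-root reading of $\nnorm{-}{k}^S$ makes the lemma's inequalities homogeneous, so drop the ``change of constants'' alternative, and for $n=0$ your $r$ is $2$ rather than $n+1$, which is harmless since you defined $r$ as the maximum.
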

\begin{proof}
This is a direct consequence of Lemma \ref{Lemma:SobolevVsC^k}. In the notation of that lemma we saw that for $l\coloneqq \mathrm{max}\{d,s\}$ we have $\nnorm{\sigma}{k}^S\leq C_k\nnorm{\sigma}{k+l}'$ and $\nnorm{\sigma}{k}'\leq C_k\nnorm{\sigma}{k+l}'$ for all $\sigma \in \Gamma(V)$, with some constants $C_k>0$. This shows that $(\Gamma(V),(\nnorm{-}{k}^S)_{k\in \mathbb{N}})$ is a graded Fréchet space and, in fact, that it is tame. Now adding the seminorms $\nnorm{-}{k}^S$ for $k\leq 0$ does not change the induced topology. In fact, the space $(\Gamma(V),(\nnorm{-}{k}^S)_{k\in \mathbb{Z}})$ is still a tame Fréchet space with a grading tamely equivalent of base $0$ and degree $l$ to $(\nnorm{-}{k}')_{k\in \N}$.

We already know that $\nnorm{-}{n}'$ and $\nnorm{-}{n}$ (cf.~equation \eqref{VBsectGlobNorm}) are equivalent for every $n\in \N$ (cf. Theorem \ref{VBsectTameFre}, or Appendix \ref{Appendix:EquivNormsOnGamma(V)} Proposition \ref{Prop:equiv of norms on Gamma(V)} for a detailed proof). Hence, the grading $(\nnorm{-}{k}^S)_{k\in \mathbb{Z}}$ is also tamely equivalent, of base $0$ and degree $l$, to $(\nnorm{-}{k})_{k\in \N}$.
\end{proof}

\begin{remark}
\label{Rem:InvLim}
Recall that if $(I,<)$ is some directed partially ordered set, $(X_i)_{i\in I}$ a family of topological spaces, and $f_{ij}:X_j\to X_i$ for $i\leq j\in I$ are continuous maps such that $f_{ii}=\mathrm{Id}_{X_i}$ and $f_{ik}=f_{ij}\circ f_{jk}$ for $i\leq j\leq k\in I$ then we call $((X_i)_{i\in I},(f_{jk})_{j\leq k\in I})$ an inverse system of topological spaces.

Recall that the inverse limit of an inverse system $((X_i)_{i\in I},(f_{jk})_{j\leq k\in I})$ of topological spaces is defined as the topological space
\begin{align*}
    \varprojlim_{i\in I}X_i\coloneqq \left\{\left.(x_i)_{i\in I}\in\prod_{i\in I}X_i\right|~ x_i=f_{ij}(x_j)~~\forall i\leq j\in I\right\}
\end{align*}
equipped with the subspace topology relative to $\prod_{i\in I}X_i$. It is easy to see that if $X_i$ is Hausdorff for each $i\in I$, then $\varprojlim_{i\in I}X_i\subseteq \prod_{i\in I}X_i$ is a closed subspace.

If $X_i$ is a topological vector space for each $i\in I$ and the maps $f_{ij}$ are continuous linear maps for all $i\leq j\in I$, then we call $((X_i)_{i\in I},(f_{jk})_{j\leq k\in I})$ an inverse system of topological vector spaces.
For such an inverse system of topological vector spaces $\varprojlim_{i\in I}X_i$ is a linear subspace of $\prod_{i\in I}X_i$. Hence, it is a topological vector space.
If additionally all $X_i$, $i\in I$, are locally convex vector spaces, then $\varprojlim_{i\in I}X_i$ is a locally convex vector space (cf.~Lemma \ref{Lemma:ProductOfLCAndFreSpaces}).
Furthermore, if additionally $I$ is countable and $X_i$ is a Fréchet space for each $i\in I$, then $\varprojlim_{i\in I}X_i$ is a Fréchet space. Indeed, we know that by Hausdorffness $\varprojlim_{i\in I}X_i\subseteq \prod_{i\in I}X_i$ is a closed subspace. Furthermore, the countable product of Fréchet spaces is a Fréchet space (cf.~Lemma \ref{Lemma:ProductOfLCAndFreSpaces}) and closed subspaces of Fréchet spaces are Fréchet spaces (cf. Remark \ref{Rem:ClosedSubspAndQuotientFrechet}). 
\end{remark}

In the following theorem the inverse system of sobolev spaces is given by $(W^i(V))_{i\in \mathbb Z}$ with the maps $f_{ij}=\iota_{ij}:W^j(V)\hookrightarrow W^i(V)$ for $i\leq j$ being the inclusion maps.
\begin{theorem}
\label{VBsectInvLimW^k}
The tame Fréchet space $(\Gamma(V),(\nnorm{-}{k}^S)_{k\in \mathbb{Z}})$ is isomorphic to the (inverse) limit $\varprojlim_{k\in \mathbb Z} W^k(V)$, $k\in \mathbb Z$, in the category of Fréchet spaces.   
\end{theorem}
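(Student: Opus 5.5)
We construct the natural map $\Phi:\Gamma(V)\to\varprojlim_{k\in\mathbb Z}W^k(V)$, $\Phi(\sigma)=(\sigma)_{k\in\mathbb Z}$, where $\sigma$ is viewed in each completion $W^k(V)$, and show that it is a linear homeomorphism. It lands in the inverse limit because the inclusions $\iota_{ij}$ restrict to the identity on $\Gamma(V)$. Linearity is clear. Continuity follows since the topology on $\varprojlim W^k(V)$ is the subspace topology of the product (Remark \ref{Rem:InvLim}), so it is generated by the seminorms $(x_k)_k\mapsto\nnorm{x_j}{j}^S$, $j\in\mathbb Z$. Each of these seminorms pulls back under $\Phi$ to $\nnorm{-}{j}^S$ on $\Gamma(V)$. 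Hence $\Phi$ is continuous, and it is a topological embedding onto its image. For injectivity, the map $\Gamma(V)\to W^0(V)$ is injective because $\nnorm{-}{0}^S$ is a norm.

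The main step is surjectivity. Let $(x_k)_{k}$ be an element of the inverse limit. Then $x_k=\iota_{jk}(x_k)$ for $j\le k$, so as elements of $W^j(V)$ the $x_k$ all determine one element $x$ lying in $\bigcap_k W^k(V)$. We must show that $x$ is a smooth section. Fix $m\in\N$ and take $s>n/2+m+1$. By the Sobolev embedding (Proposition \ref{Prop:SobolevEmbThm}), $x_s\in W^s(V)$ corresponds to a $C^m$ section $y_m\in\Gamma_m(V)$. Concretely, choose $\sigma_i\in\Gamma(V)$ with $\sigma_i\to x_s$ in $\nnorm{-}{s}^S$. Then $(\sigma_i)$ is Cauchy in every $\nnorm{-}{j}^S$ with $j\le s$, and it is also Cauchy in $\Gamma_m(V)$.

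We then need consistency of these limits. The $C^m$ limits $y_m$ for different $m$ must agree as sections. To see this, compare everything in $W^0(V)$, where $\nnorm{-}{0}^S$ is an $L^2$-type norm dominated by $\nnorm{-}{0}'$. This domination follows from the estimate \eqref{C^k<Sob} in the form of an $L^2$ bound by the sup norm. A sequence converging in $C^m$ therefore converges in $W^0$ to the same element. For the inclusions $W^{s'}(V)\hookrightarrow W^s(V)\hookrightarrow W^0(V)$ to be consistent, we also need to check that these inclusions are injective. This follows from Proposition \ref{Prop:Rellich}, which treats them as inclusions of subspaces. Consequently the $y_m$ all coincide with a single continuous section $y$ that is $C^m$ for every $m$, so $y\in\Gamma(V)$. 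Since $\Phi(y)_k=x_k$ in each $W^k(V)$, we have $\Phi(y)=(x_k)_k$. This consistency argument is the main obstacle: it is where one must make sure that the abstract completions genuinely embed into continuous sections and into one another. The issue with negative $k$ is harmless, because every $x_k$ is determined by $x_0$ via $\iota_{k0}$.

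Finally, $\Phi$ is a continuous linear bijection between Fréchet spaces. The source is Fréchet by Theorem \ref{Theorem:C^k-Norm=Sob-Norm}, and the target is Fréchet by Remark \ref{Rem:InvLim}. By the open mapping theorem (Theorem \ref{Thm:OpenMapping}), or directly from the seminorm identification above, $\Phi$ is an isomorphism of Fréchet spaces.
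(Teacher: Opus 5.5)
Your proposal is correct and follows essentially the same route as the paper: the natural map $\Gamma(V)\to\varprojlim_{k}W^k(V)$, continuity via the product seminorms, bijectivity via the Sobolev embedding (Proposition \ref{Prop:SobolevEmbThm}), and the open mapping theorem. Your consistency argument spells out what the paper leaves implicit, and, as you note, the exact pullback of seminorms already makes the inverse continuous. One small slip: \eqref{C^k<Sob} only gives $\nnorm{-}{0}^S\leq C\nnorm{-}{d}'$ with $d>n$, not a bound by the sup norm (that needs Plancherel). This is harmless, since you may take $m\geq d$, or compare the two approximating sequences directly in $W^{s}(V)$ via \eqref{Sob<C^k}.
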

\begin{proof}
The inclusion maps $\iota_{ij}:W^j(V)\hookrightarrow W^i(V)$ for $i\leq j\in \mathbb Z$ are continuous and even compact (cf.~Proposition \ref{Prop:Rellich}). Also, clearly the vector space inclusion maps $\pi_i:(\Gamma(V),(\nnorm{-}{k}^S)_{k\in \mathbb{Z}})\hookrightarrow W^i(V)$ are continuous for every $i\in \mathbb Z$, and $\pi_i=\iota_{ij}\circ \pi_j$ for $i\leq j\in \mathbb Z$.

For each $s\in \Gamma(V)$ it is clear that $(\pi_i(s))_{i\in \mathbb Z}\in \varprojlim_{k\in \mathbb Z} W^k(V)\subseteq \prod_{k\in \mathbb Z}W^k(V)$.
Furthermore, if $(s_i)_{i\in \mathbb Z}\in \varprojlim_{k\in \mathbb Z} W^k(V)\subseteq \prod_{k\in \mathbb Z}W^k(V)$, then we have $s_i=\iota_{ij}(s_j)$ for $i\leq j\in \mathbb Z$. Hence, by the Sobolev embedding theorem (Proposition \ref{Prop:SobolevEmbThm}) we see that there is some section $s\in\Gamma(V)$ such that $s_i=\pi_i(s)$ for every $i\in \mathbb Z$. So, the map 
$\pi:\Gamma(V)\to \varprojlim_{k\in \mathbb Z} W^k(V)$ defined by $\pi(s)=(\pi_i(s))_{i\in \mathbb Z}$ for $s\in \Gamma(V)$ is a vector space isomorphism.
By the universal property of the cartesian product the map
$\pi:(\Gamma(V),(\nnorm{-}{k}^S)_{k\in \mathbb{Z}})\to \varprojlim_{k\in \mathbb Z} W^k(V)$ is continuous and by the open mapping theorem (cf.~Theorem \ref{Thm:OpenMapping}) it is a linear homeomorphism. Note, that as explained in Remark \ref{Rem:InvLim} the space $\varprojlim_{k\in \mathbb Z} W^k(V)$ is a Fréchet space as the inverse limit of Fréchet (more specifically Hilbert) spaces.
\end{proof}

In the following definition, the inclusion of a lower bound $b$ is usually unnecessary for the purposes of this thesis. However, this definition more closely reflects the definition of tame linear operators, as we will see in Theorem \ref{Theorem:OP_k=Gr_k}.
\begin{definition}[{\cite[p.~114]{Wells07}}]
By $\OP_k(V,E)$, $k\in \mathbb Z$, we denote the space of linear maps $L:\Gamma(V)\to \Gamma(E)$ such that there is a continuous linear extension
\begin{align*}
    \hat{L}_n:W^n(V)\to W^{n-k}(E)
\end{align*}
for every $n\geq b$ for some $b\in \mathbb Z$.
\end{definition}

\begin{theorem}
\label{Theorem:OP_k=Gr_k}
The operators in $\OP_k(V,E)$ are precisely the tame linear operators $\Gamma(V)\to \Gamma(E)$ of degree $k\in \mathbb Z$ and arbitrary base with respect to the Sobolev gradings on $\Gamma(V)$ and $\Gamma(E)$.
Moreover, they are tame linear as maps of Fréchet spaces $L:\Gamma(V)\to \Gamma(E)$, where $\Gamma(V)$ and $\Gamma(E)$ are equipped with either of the gradings $(\nnorm{-}{n})_{n\in \N}$ (cf.~equation \eqref{VBsectGlobNorm}), $(\nnorm{-}{n}')_{n\in \N}$ (cf.~equation \eqref{VBsectLocNorm}) or $(\nnorm{-}{n}^S)_{n\in \mathbb Z}$ (cf.~equation \eqref{SobolevNorm}).
\end{theorem}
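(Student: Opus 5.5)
The argument has two parts: first the equivalence with tame linearity for the Sobolev gradings, then the transfer to the other two gradings.

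\emph{First part.} This is mostly a matter of unwinding definitions, using completions and density. Suppose $L\in \OP_k(V,E)$ with base $b$. For each $n\geq b$ the extension $\hat L_n:W^n(V)\to W^{n-k}(E)$ is a continuous linear map of Hilbert spaces, so it is bounded: there is $C_n>0$ with $\nnorm{\hat L_n x}{n-k}^S\leq C_n\nnorm{x}{n}^S$. Restricting to $\Gamma(V)\subset W^n(V)$ gives
\begin{align*}
\nnorm{L\sigma}{m}^S\leq C_{m+k}\nnorm{\sigma}{m+k}^S
\end{align*}
for all $m\geq b-k$. Hence $L$ is tame linear of degree $k$ and base $b-k$. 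Conversely, suppose $L$ is tame linear of degree $k$ and base $b$, i.e.\ $\nnorm{L\sigma}{m}^S\leq C_m\nnorm{\sigma}{m+k}^S$ for $m\geq b$. Then for $n\geq b+k$ the map $L:(\Gamma(V),\nnorm{-}{n}^S)\to (\Gamma(E),\nnorm{-}{n-k}^S)\subset W^{n-k}(E)$ is continuous. Since $\Gamma(V)$ is dense in its completion $W^n(V)$ and $W^{n-k}(E)$ is complete, Lemma \ref{Lemma:ExtOfMapsIntoCompHausdTVS} gives a unique continuous linear extension $\hat L_n$. So $L\in \OP_k(V,E)$. This needs no obstacle beyond noting that the definition allows an arbitrary base, which matches "arbitrary base" in the statement. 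Negative indices cause no issue since the Sobolev grading is indexed by $\mathbb Z$.

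\emph{Second part.} I invoke Theorem \ref{Theorem:C^k-Norm=Sob-Norm}. The identity maps between $(\Gamma(V),(\nnorm{-}{n}^S))$ and $(\Gamma(V),(\nnorm{-}{n}'))$, and likewise for $\nnorm{-}{n}$, are tame linear with degree $l$ in each direction (base $0$). The same holds for $E$. Writing $L$ with respect to the other gradings as the composition $\Id\circ L\circ \Id$, it is a composition of tame linear maps and therefore tame linear. Explicitly,
\begin{align*}
\nnorm{L\sigma}{m}'\leq C\nnorm{L\sigma}{m+l}^S\leq C\nnorm{\sigma}{m+l+k}^S\leq C\nnorm{\sigma}{m+2l+k}'
\end{align*}
for $m$ large enough, and similarly for $\nnorm{-}{}$ via the degree $0$ equivalence of Theorem \ref{VBsectTameFre}.

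\emph{Main obstacle.} The only subtle point is that the degree is no longer $k$ in the $C^k$-type gradings but $k+2l$. This is why the statement only claims "tame linear" there, without specifying the degree. I would also carefully check the indexing of the base: the Sobolev grading runs over $\mathbb Z$ while the other two run over $\N$, so the base has to be shifted to be nonnegative, which is harmless.
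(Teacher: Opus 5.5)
Your proposal is correct and follows the paper's proof essentially verbatim. Boundedness of the extensions $\hat L_n$ gives the Sobolev tame estimates, the converse uses density of $\Gamma(V)$ in $W^n(V)$ together with Lemma \ref{Lemma:ExtOfMapsIntoCompHausdTVS}, and Theorem \ref{Theorem:C^k-Norm=Sob-Norm} transfers the result to the other gradings. Your bookkeeping is slightly more careful than the paper's: you note that the base shifts to $b-k$ and that the degree in the $C^k$-type gradings becomes $k+2l$. Since the statement allows an arbitrary base and leaves the degree unspecified there, this changes nothing.
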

\begin{proof}
Let $L\in \OP_k(V,E)$. This means precisely that $L$ extends to bounded operators of Banach spaces
$\hat L_n:W^n(V)\to W^{n-k}(E)$ for $n\geq b$ with some $b\in \mathbb Z$. So in particular $L:(\Gamma(V),\nnorm{-}{n}^S)\to (\Gamma(E),\nnorm{-}{n-k}^S)$ is bounded for every $n\geq b$ which is precisely the condition for tame linearity of degree $k$ and base $b$.

On the other hand if $L:(\Gamma(V),\nnorm{-}{n}^S)\to (\Gamma(E),\nnorm{-}{n-k}^S)$ is bounded for every $n\geq b$ for some $b\in \mathbb Z$ we get that $\pi_{n-k}\circ L:(\Gamma(V),\nnorm{-}{n}^S)\to W^{n-k}(E)$ is continuous. Here,
$\pi_i:(\Gamma(E),(\nnorm{-}{j}^S)_{j\in \mathbb{Z}})\to W^i(E)$, $i\in \mathbb Z$, again denotes the continuous inclusion of the vector space. 
Since $W^k(V)$ and $W^{n-k}(E)$ are Banach spaces and $\Gamma(V)\subset W^k(V)$ is dense $\pi_{n-k}\circ L$ extends to a unique bounded linear map $\hat L_n:W^n(V)\to W^{n-k}(E)$ for every $n\in \mathbb Z$ by Lemma \ref{Lemma:ExtOfMapsIntoCompHausdTVS}.
Hence, $L\in \OP_k(V,E)$.

By Theorem \ref{Theorem:C^k-Norm=Sob-Norm} we get that they are also tame linear with respect to either of the other two gradings. In fact $\Gamma(V)$ and $\Gamma(E)$ do not even need to be equipped with the same type of grading.
\end{proof}

\begin{remark}
\label{Rem:RealOpsComplexifications}
Let $V\to M$ and $E\to M$ be real vector bundles. Then there is an isomorphism $\{L\in \mathrm{OP}_k(V^{\mathbb C},E^{\mathbb C})|~L(\Gamma(V))\subseteq \Gamma(E)\}\cong \mathrm{OP}_k(V,E)$ for every $k\in \N$. This is an immediate consequence of Theorem \ref{Theorem:OP_k=Gr_k}. One can also easily prove this directly. 
\end{remark}

\begin{definition}
\label{Def:L2InnerProd}
Let $d\mu$ be some density on $M$. Let $\scal{-}{-}_E$ be a Riemannian or Hermitian bundle metric on the real or complex vector bundle $E\to M$ respectively. Then we can induce an inner product $(-,-)_E:\Gamma(E)\times \Gamma(E)\to \K$, with $\K=\R,\mathbb C$ depending on whether $E$ is real or complex, by defining
\begin{align*}
    (e,s)_E=\int_M \scal{e}{s}_Ed\mu
\end{align*}    
for $e,s\in \Gamma(E)$.
Note that linearity in the first argument, (conjugate) symmetry, and positive definiteness are immediate consequences of the properties of $\scal{-}{-}_E$ and the integral. 
\end{definition}

\begin{definition}
\label{Def:AdjointOp}
Let $V\to M$ and $E\to M$ be $\K=\R,\mathbb C$ vector bundles. Let $L:\Gamma(V)\to \Gamma(E)$ be a $\K$-linear map. We call a $\K$-linear map $S:\Gamma(E)\to \Gamma(V)$ a \textit{formal adjoint} of $L$ if $(Lv,e)_E=(v,Se)_V$ for every $v\in \Gamma(V)$ and $e\in \Gamma(E)$.
\end{definition}

\subsection{Hodge Theory}

Throughout this section let $M$ be a compact manifold, and $E\to M$ and $F\to M$ be real or complex vector bundles.

We have seen that the autoequivalences of an exact Courant algebroid can be modeled by diffeomorphisms and closed differential $2$-forms (cf.~Theorem \ref{AuteqCharacterizationTheorem}). In this section our goal is to show, using methods from Hodge theory, that the space of closed differential ($2$-)forms on a compact manifold is a tame Fréchet space. Moreover, we know that the compatibility condition between a diffeomorphism and the differential $2$-form defining an autoequivalence involves the differential of that $2$-form. So, it would be very useful to have a smooth tame inverse of the de Rham differential, in a suitable sense. Indeed, we will show that the Greens operator from Hodge theory provides the desired map.

We start by briefly reviewing elliptic complexes of differential operators. In this context we discuss the de Rham complex of $M$. We introduce the usual operators from Hodge theory and show that they are tame linear. Furthermore, we show that the Hodge decomposition is an isomorphism of tame Fréchet spaces and, in particular, we show that the closed differential forms on $M$ are a tame direct summand of the tame Fréchet space of differential forms on $M$ for any degree.

\begin{definition}
A \textit{linear differential operator $P:\Gamma(E)\to \Gamma(F)$ of order $k\in \N$} is a partial differential operator of order $k$ (cf.~Definition \ref{Def:NonlinPartDiffOp}) such that it can be written as a composition $L\circ j^k$ with $L\in \Gamma(\mathrm{Hom}(J^kE,F))$, and $j^k:\Gamma(E)\to \Gamma(J^kE)$ being the $k$-jet extension map.

In other words, a \textit{linear differential operator of order $k\in \N$} is any linear operator $P:\Gamma(E)\to \Gamma(F)$ such that for any local frames $(e_i)_{i=1,\ldots,\mathrm{rk(E)}}$ of $E$ and $(s_i)_{i=1,\ldots,\mathrm{rk}(F)}$ of $F$ over some open subset $U\subseteq M$ and summing over repeating indices we have that 
\begin{align*}
    P(\sigma^ie_i)=\sum_{|\alpha|\leq k} a_{\alpha,j}^{l}D^\alpha \sigma^js_l
\end{align*}
where $a_{\alpha,j}^l\in C^\infty(U)$ for every $l=1,\ldots,\mathrm{rk}(F)$, $j=1,\ldots,\mathrm{rk}(E)$ and $\alpha=\alpha_1\ldots \alpha_n$ being a multi-indices for $n\leq k$ and $\alpha_i=1,...,\dim(M)$, and such that at least one component $a^l_{\alpha,j}\neq 0$ for $\abs{\alpha}=k$.
\end{definition}

\begin{lemma}[{\cite[§4, Proposition 2.1 and Proposition 2.2 and ]{Wells07}}]
\label{Lemma:DiffOpsInOP}
Let $\Gamma(E)$ and $\Gamma(V)$ be equipped with $L^2$-scalar products as in Definition \ref{Def:L2InnerProd}.
Any linear differential operator $L:\Gamma(E)\to \Gamma(F)$ of order $k\in \N$ is an operator in $\mathrm{OP}_k(E,F)$ and has a unique formal $L^2$-adjoint $L^*\in \OP_k(V,E)$.
\end{lemma}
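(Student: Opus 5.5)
The lemma has two parts: any linear differential operator $L$ of order $k$ lies in $\OP_k(E,F)$, and $L$ has a unique formal adjoint $L^*$, which also lies in $\OP_k$. I would treat the complex case first and obtain the real case by complexification, using Remark \ref{Rem:RealOpsComplexifications}.

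\textbf{Step 1: $L\in\OP_k(E,F)$.} By Theorem \ref{Theorem:OP_k=Gr_k} it suffices to show that $L$ is tame linear of degree $k$ for the Sobolev gradings, i.e. $\nnorm{L\sigma}{n-k}^S\le C_n\nnorm{\sigma}{n}^S$ for $n\ge b$. I would use the same finite trivializing cover and partition of unity $\theta_\alpha$ that define the Sobolev norms \eqref{SobolevNorm}, and choose cutoff functions $\chi_\alpha$ with $\chi_\alpha\equiv1$ on $\mathrm{supp}\,\theta_\alpha$. Then
\begin{align*}
\theta_\alpha L\sigma=\theta_\alpha L(\chi_\alpha\sigma),
\end{align*}
because $L$ is local. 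In the chart this is a sum of terms $\theta_\alpha a^l_{\beta,j}D^\beta(\chi_\alpha\sigma^j)$ with $|\beta|\le k$ and compactly supported smooth coefficients. On $\R^n$ two facts are standard through the Fourier transform. First, $D^\beta$ maps $W^s$ to $W^{s-|\beta|}$, since it multiplies $\hat f$ by $y^\beta$ and $|y^\beta|\le(1+|y|^2)^{|\beta|/2}$. Second, multiplication by a compactly supported smooth function is bounded on every $W^s$ with $s\in\mathbb Z$; for $s\ge0$ this follows from the Leibniz rule, and for $s<0$ by duality. To return from $\chi_\alpha\sigma$ to the norm of $\sigma$, I would write $\chi_\alpha\sigma=\sum_\gamma\chi_\alpha\theta_\gamma\sigma$ and use that the transition maps of the trivializations act boundedly on compactly supported $W^s$ functions, which is again multiplication by smooth matrices together with diffeomorphism invariance. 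Summing over $\alpha$ gives the estimate for every $n\in\mathbb Z$, with base $b=-\infty$.

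\textbf{Step 2: existence of $L^*$.} Locally I would define $L^*$ by integration by parts. In the frames, with the density $d\mu=\rho\,dx$ and the metric matrices $h^E,h^F$,
\begin{align*}
(L\sigma,\tau)_F=\int \sum_{\beta} a^l_{\beta,j}\,D^\beta\sigma^j\,\overline{\tau^m}\,h^F_{lm}\,\rho\,dx .
\end{align*}
Moving the derivatives onto the other factor yields a differential operator of order $k$ in $\tau$, multiplied by the inverse of $h^E\rho$. On overlaps the local adjoints agree, because each one satisfies the adjoint identity for sections supported in the overlap. I would glue them using the partition of unity, setting
\begin{align*}
L^*\tau=\sum_\alpha (L\theta_\alpha)^*_{\mathrm{loc}}\tau ,
\end{align*}
where $L\theta_\alpha$ denotes $\sigma\mapsto L(\theta_\alpha\sigma)$. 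The result is a global linear differential operator of order at most $k$, so Step 1 applies and $L^*\in\OP_k$.

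\textbf{Step 3: uniqueness.} If $S$ and $S'$ are both formal adjoints of $L$, then $(v,(S-S')e)_E=0$ for every $v$. Taking $v=(S-S')e$ gives $(S-S')e=0$ by positive definiteness.

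I expect the main technical obstacle to be the multiplication estimate for negative Sobolev indices, since the Sobolev norms in \eqref{SobolevNorm} are defined for all $k\in\mathbb Z$. I would handle it by the duality pairing $W^{-s}\times W^s$ on $\R^n$ via Plancherel, or else cite \cite[§4]{Wells07} directly for this point.
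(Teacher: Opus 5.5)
Your proposal is correct. It differs from the paper mainly in that the paper gives no argument of its own: its proof cites \cite[§4, Proposition 2.2]{Wells07} for complex bundles and passes to real bundles via Remark \ref{Rem:RealOpsComplexifications}. What you write is essentially the proof behind that citation:
\begin{itemize}
\item Step 1 uses localization with cutoffs, together with Fourier-side bounds for $D^\beta$, for multiplication and for changes of trivialization, to get $\nnorm{L\sigma}{n-k}^S\le C\nnorm{\sigma}{n}^S$.
\item Step 2 uses integration by parts in charts, glued by the partition of unity, to exhibit $L^*$ as a differential operator of order at most $k$, to which Step 1 then applies.
\item Step 3 uses positive definiteness of the $L^2$ product for uniqueness.
\end{itemize}
The citation buys brevity and your version buys self-containedness. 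Since nothing in your Steps 1--3 uses complex scalars, you could also drop the complexification detour and argue directly for real bundles.

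Two small remarks. First, the obstacle you anticipate with negative Sobolev indices is avoidable. The paper's definition of $\OP_k$ only asks for extensions $W^n\to W^{n-k}$ for $n\ge b$ with some base $b$, so taking $b=k$ keeps every index nonnegative. There, boundedness of multiplication and of changes of chart follows from the Leibniz and chain rules via the derivative characterization of the Sobolev norm. Duality is only needed for Wells' stronger statement valid for all $n\in\mathbb Z$, which your argument also yields. Second, both the division by $\rho$ in your existence step and your uniqueness step require the density $d\mu$ of Definition \ref{Def:L2InnerProd} to be positive, which the paper leaves implicit. You also correctly read the $V$ in the statement as $F$, so that $L^*\colon\Gamma(F)\to\Gamma(E)$.
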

\begin{proof}
In the case of complex vector bundles this follows immediately from \cite[§4, Proposition 2.2]{Wells07}. The case of real vector bundles follows then by Remark \ref{Rem:RealOpsComplexifications}.
\end{proof}

\begin{definition}
Let $M_0\subset T^*M$ denote the image of the zero section, and let $\pi:T^*M\setminus M_0\to M$ be the restriction of the usual bundle projection. Let $E\to M$ and $F\to M$ be complex vector bundles. Let $P:\Gamma(E)\to \Gamma(F)$ be a linear differential operator of order $k$. Then, we define the \textit{principal symbol} of $P$ as $\sigma(P)\in \mathrm{Hom}(\pi^*E,\pi^*F)$ given at $v\in T^*_xM\setminus \{0\}$, $x\in M$, and $e\in E_x\cong (\pi^*E)_v$ by 
\begin{align*}
    \sigma(P)(v)e\coloneqq P\left(\frac{i^k}{k!}(g-g(x))^kf\right)(x)
\end{align*}
with $g\in C^\infty(M)$ such that $dg_x=v$, and $f\in \Gamma(E)$ such that $f(x)=e$.
\end{definition}

For a proof that the principal symbol $\sigma(P)$ is well defined and indeed in $\mathrm{Hom}(\pi^*E,\pi^*F)$, see \cite[Chapter IV, Proposition 2.3]{Wells07}.

\begin{definition}
We say that a linear differential operator $P:\Gamma(E)\to \Gamma(F)$ of degree $k\in \N$ is elliptic if for every $x\in M$ and every $v\in T_x^*M\setminus\{0\}$ the map $\sigma(P)(v):E_x\to F_x$ is an isomorphism.
\end{definition}

\begin{definition}[{\cite[§4, p.~144-146]{Wells07}}]
Let $E_i\to M$, $i\in \N$, be vector bundles.
We say that a complex of differential operators 
\begin{align*}
    \Gamma(E_0)\xrightarrow{P_0}\Gamma(E_1)\xrightarrow{P_1}\Gamma(E_2)\xrightarrow{P_2}\ldots
\end{align*}
of fixed order $n\in \N$ is an elliptic complex if the sequence of principal symbols
\begin{align*}
    0\to \pi^*E_0\xrightarrow{\sigma(P_0)}\pi^*E_1\xrightarrow{\sigma(P_1)}\pi^*E_2\xrightarrow{\sigma(P_2)}\ldots
\end{align*}
is exact. 
We denote the complex by $(E,P)$.

Moreover, after equipping $\Gamma(E_k)$ with an $L^2$-inner product as in Definition \ref{Def:L2InnerProd} we define the \textit{(Hodge) Laplace operator} in degree $k\in \N$ as
\begin{align*}
    \Delta_k\coloneqq P_{k-1}P^*_{k-1}+P_k^* P_k:\Gamma(E_k)\to \Gamma(E_k)
\end{align*}
where $P^*_k:\Gamma(E_{k+1})\to \Gamma(E_k)$ is the formal $L^2$-adjoint of $P_k$.
The Laplacian operator is a self-adjoint elliptic operator of order $2n$.

Lastly we call sections $\alpha\in \Gamma(E_k)$ with $\Delta_k\alpha=0$ \textit{harmonic} and denote the space of harmonic sections by
\begin{align*}
    \Hcal^k(E,P)\coloneqq \ker \Delta_k \subset \Gamma(E_k).
\end{align*}
\end{definition}

\begin{proposition}[{\cite[§4, Theorem 4.12, Theorem 5.2 and Theorem 5.3]{Wells07}}]
\label{Prop:HodgeRelations}
Let $(E,P)$ be an elliptic complex of fixed order $n$.
For each $k\in \N$ there are operators $G_k\in \OP_{-2n}(E_k,E_k)$ and $\Hcal_k\in \OP_{-\infty}(E_k,E_k)$ such that 
\begin{enumerate}
    \item $\Hcal^k(E,P)=\Hcal_k(E_k)$,
    \item $\dim_{\mathbb C}\Hcal^k(E,P)<\infty$,
    \item $\Delta_k G_k+\Hcal_k=G_k \Delta_k+\Hcal_k=\mathrm{Id}_{E_k}$,
    \item $P_k \Delta_k=\Delta_{k+1} P_k$, $P^*_{k} \Delta_{k+1}=\Delta_{k} P^*_k$,
    \item $P_k G_k=G_{k+1} P_k$, $P^*_k G_{k+1}=G_{k} P^*_k$,
    \item $\Hcal_k G_k=G_k \Hcal_k=0$, and
    \item \label{Item:Hd=0} $\Hcal_{k+1} P_k=P_k \Hcal_k=0$, $\Hcal_{k} P^*_k=P^*_k \Hcal_{k+1}=0$.
\end{enumerate}
\end{proposition}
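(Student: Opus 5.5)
This proposition is cited from Wells, so the plan is to show how it follows from the standard Hodge theory of a single self-adjoint elliptic operator. I would first reduce the real case to the complex case via complexification, as in Remark \ref{Rem:RealOpsComplexifications}. Then I would apply the theory to $\Delta_k$ for each $k$.

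\textbf{Green's operator and harmonic projection.} Since $\Delta_k$ is a self-adjoint elliptic operator of order $2n$, it has a parametrix in $\OP_{-2n}$. Combined with Rellich compactness (Proposition \ref{Prop:Rellich}), this gives the following:
\begin{itemize}
\item $\ker\Delta_k$ is finite dimensional and consists of smooth sections, by elliptic regularity.
\item There is an $L^2$-orthogonal decomposition $\Gamma(E_k)=\ker\Delta_k\oplus\operatorname{im}\Delta_k$.
\end{itemize}
I would define $\Hcal_k$ as the $L^2$-orthogonal projection onto the finite-dimensional space $\ker\Delta_k$. Choosing an orthonormal basis $h_1,\dots,h_m$ of smooth sections, we have $\Hcal_k s=\sum_i (s,h_i)h_i$. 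This is a smoothing operator, so it lies in $\OP_{-\infty}$, and items 1 and 2 follow. Next I would define $G_k$ to be $0$ on $\ker\Delta_k$ and the inverse of $\Delta_k$ on $(\ker\Delta_k)^\perp=\operatorname{im}\Delta_k$. The elliptic estimate $\nnorm{s}{m+2n}^S\le C(\nnorm{\Delta_k s}{m}^S+\nnorm{s}{m}^S)$, together with a compactness argument that removes the lower-order term on $(\ker\Delta_k)^\perp$, shows that $G_k\in\OP_{-2n}$. Items 3 and 6 are then immediate from the definitions.

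\textbf{Commutation relations.} Item 4 is algebra using $P_{k+1}P_k=0$ and $P_k^*P_{k+1}^*=0$:
\[
P_k\Delta_k=P_kP_k^*P_k=\Delta_{k+1}P_k,
\]
and the adjoint relation follows in the same way. Item 7 holds because harmonic sections satisfy $P_kh=0$ and $P_{k-1}^*h=0$. This follows from
\[
0=(\Delta_k h,h)=\nnorm{P_kh}{}^2+\nnorm{P_{k-1}^*h}{}^2,
\]
where $\nnorm{\cdot}{}$ is the $L^2$-norm of Definition \ref{Def:L2InnerProd}. So $P_k\Hcal_k=0$. Moreover, $\operatorname{im}P_k$ is $L^2$-orthogonal to harmonic sections, since $(P_ks,h)=(s,P_k^*h)=0$; hence $\Hcal_{k+1}P_k=0$. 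The relations for $P_k^*$ follow dually.

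For item 5, I would apply $G_{k+1}$ to $P_k=P_k(\Delta_kG_k+\Hcal_k)=P_k\Delta_kG_k$ and use item 4:
\[
G_{k+1}P_k=G_{k+1}\Delta_{k+1}P_kG_k=(\mathrm{Id}-\Hcal_{k+1})P_kG_k=P_kG_k,
\]
where the last step uses item 7.

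\textbf{Main obstacle.} The hard analytic step is the elliptic regularity and a priori estimate needed to show that $G_k$ is a bounded operator $W^m\to W^{m+2n}$ for all $m$. I would not reprove this but import it from Wells' parametrix construction, as the citation indicates.
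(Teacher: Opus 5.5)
Your proposal is correct and follows essentially the same route as the paper: import the analytic core from Wells, observe that $\Hcal_k$ is smoothing because its image is a finite-dimensional space of smooth sections, and obtain $G_k\in\OP_{-2n}(E_k,E_k)$ from the elliptic estimate for $\Delta_k$. The differences are minor: the paper bootstraps inductively from Wells' bound $W^0(E_k)\to W^{2n}(E_k)$, whereas you remove the lower-order term with a Rellich compactness argument at each Sobolev level, and you also derive items 4, 5 and 7 explicitly where the paper only cites them.
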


\begin{remarks}
    In \cite[§4, Theorem 4.12, Theorem 5.2 and Theorem 5.3]{Wells07} Wells only established that the operators $G_k$ and $\Hcal_k$ are of order $0$. However, since $\Hcal_k$ has finite-dimensional image all Sobolev norms are equivalent on its image and so it follows that it is a smoothing operator of infinite order. To get that $G_k$ is of order $-2n$ one can use the construction in \cite[Theorem 4.12]{Wells07} to see that $G_k$ extends continuosly to $W^0(E_k)\to W^{2n}(E_k)$ and inductively use an elliptic estimate for $\Delta_k$ which we obtain for instance from \cite[Theorem 4.4]{Wells07} to show that $G_k\in \OP_{-2n}(E_k,E_k)$.

\label{Rem:HodgeOpsSmoothTame}
By Theorem \ref{Theorem:OP_k=Gr_k} the operators $P_k$,
$P^*_k$, $\Delta_k$, $G_k$ and $\Hcal_k$ are all tame linear with respect to the Sobolev norms (cf.~equation \eqref{SobolevNorm}) and the $C^k$-norms (cf.~equation \eqref{VBsectGlobNorm} and equation \eqref{VBsectLocNorm}). So, they are in particular smooth tame.
\end{remarks}

The de Rham complex is an elliptic complex of differential operators of fixed order $n=1$, see \cite[§4, Example 2.5]{Wells07}. So, the previous proposition holds for the de Rham complex of complex differential forms. However, in this case, it is easy to verify that all the operators involved are real. 
So, by Remark \ref{Rem:RealOpsComplexifications} we get the following corollary.

\begin{corollary}
    Proposition \ref{Prop:HodgeRelations} holds for the real de Rham complex
    \begin{align*}
        C^\infty(M)\stackrel{\d}{\to}\Forms{1}\stackrel{\d}{\to} \Forms{2}\stackrel{\d}{\to}\ldots
    \end{align*}
    of any compact manifold $M$.
\end{corollary}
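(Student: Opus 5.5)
The plan is to deduce the real statement from the complex one by complexification, using that every operator in Proposition \ref{Prop:HodgeRelations} commutes with complex conjugation. First I fix a Riemannian metric $g$ on $M$. It induces real bundle metrics on $\bigwedge^kT^*M$ and the Riemannian density $d\mu$, hence real $L^2$-inner products as in Definition \ref{Def:L2InnerProd}. On $\Omega^k(M)^{\mathbb C}=\Gamma((\bigwedge^kT^*M)^{\mathbb C})$ I use the Hermitian extensions of these metrics. By \cite[§4, Example 2.5]{Wells07} the complexified de Rham complex is elliptic of order $1$, so Proposition \ref{Prop:HodgeRelations} yields operators $\d^*$, $\Delta_k$, $G_k\in\OP_{-2}$ and $\Hcal_k\in\OP_{-\infty}$ on complex forms satisfying relations (1)--(7). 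Let $c$ denote complex conjugation of forms. It is an antilinear involution with fixed-point set $\Forms{k}$, and it satisfies $(c\alpha,c\beta)=\overline{(\alpha,\beta)}$.

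The core step is to show that each of $\d,\d^*,\Delta_k,\Hcal_k,G_k$ commutes with $c$. For $\d$ this is clear. For $\d^*$, I would compute $(\alpha,c\,\d^*c\,\beta)=\overline{(c\alpha,\d^*c\beta)}=\overline{(\d c\alpha,c\beta)}=(\d\alpha,\beta)$, and then invoke uniqueness of the formal adjoint (Lemma \ref{Lemma:DiffOpsInOP}) to get $c\,\d^*c=\d^*$. Hence $\Delta_k$ is real, and $\ker\Delta_k$ is $c$-invariant.

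The operator $\Hcal_k$ is the $L^2$-orthogonal projection onto the finite-dimensional space $\ker\Delta_k$. Since $c$ preserves $\ker\Delta_k$ and is anti-unitary, it also preserves its orthogonal complement, so $c\Hcal_kc=\Hcal_k$.

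The step I expect to be the main obstacle is the reality of $G_k$, because $G_k$ is constructed analytically rather than by a formula. I would avoid the construction entirely and use a uniqueness argument based only on the relations.

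1. Suppose $G'$ satisfies $\Delta_kG'+\Hcal_k=\mathrm{Id}$ and $\Hcal_kG'=0$.
2. Then $\Delta_k(G'-G_k)=0$, so the image of $G'-G_k$ lies in $\ker\Delta_k$, which is the image of $\Hcal_k$.
3. On the other hand $\Hcal_k(G'-G_k)=0$. Since $\Hcal_k$ is the identity on its image, it follows that $G'=G_k$.
4. Take $G'\coloneqq cG_kc$. It is complex linear, and it satisfies both relations because $c$ commutes with $\Delta_k$ and $\Hcal_k$. Therefore $cG_kc=G_k$.

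Consequently all of $\d,\d^*,\Delta_k,G_k,\Hcal_k$ map $\Forms{k}$ into real forms, and the relations (3)--(7) restrict verbatim to the real de Rham complex. The restricted operators $G_k$ and $\Hcal_k$ lie in $\OP_{-2}(\bigwedge^kT^*M,\bigwedge^kT^*M)$ and $\OP_{-\infty}(\bigwedge^kT^*M,\bigwedge^kT^*M)$ respectively, by Remark \ref{Rem:RealOpsComplexifications}, where the infinite-order case is checked degree by degree.

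For item (1), the real harmonic forms are $\ker\Delta_k\cap\Forms{k}=\Hcal_k(\Forms{k})$. For item (2), since the complex harmonic space is $c$-invariant, it is the complexification of the real harmonic space. Hence the real harmonic space has real dimension equal to the complex dimension of the complex harmonic space, which is finite.
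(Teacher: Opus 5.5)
Your proposal is correct and takes the same route as the paper: complexify, apply Proposition \ref{Prop:HodgeRelations} to the elliptic complexified de Rham complex, check that every operator commutes with complex conjugation, and pass to real forms via Remark \ref{Rem:RealOpsComplexifications}. The paper only asserts that the reality of the operators is ``easy to verify''; you actually verify it, using uniqueness of the formal adjoint for $\d^*$, anti-unitarity of conjugation for $\Hcal_k$, and a uniqueness argument from the Hodge relations for $G_k$, so your write-up is a more complete version of the paper's argument.
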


\begin{convention}
In line with the previous exposition the formal $L^2$-adjoint of $\d_k$ is denoted by $\d^*_k$. 
From now on $G_k$, $\Hcal_k$, and $\Delta_k$ will refer to the operators from Proposition \ref{Prop:HodgeRelations} in the context of explicitly the real de Rham complex.
Moreover, we will mostly suppress the degree of the operators $\d_k$, $\d^*_k$, $\Delta_k$, $G_k$, and $\Hcal_k$, $k\in \N$, and simply write $\d$, $\d^*$, $\Delta$, $G$ and $\Hcal$, respectively.
\end{convention}

\begin{lemma}
\label{Lemma:dQex=Id}
The tame linear operator $Q\coloneqq \d^*G$ satisfies $(\d Q)(\d\alpha)=\d \alpha$ for $\alpha\in \Forms{k}$. Similarly, the tame linear operator $Q'\coloneqq \d G$ satisfies $(\d^*Q')(\d^*\alpha)=\d^*\alpha$ for $\alpha\in \Forms{k}$.
\end{lemma}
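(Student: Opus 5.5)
The plan is to compute $\d Q \d\alpha = \d\d^* G\d\alpha$ directly from the identities in Proposition \ref{Prop:HodgeRelations}, specialised to the real de Rham complex. Tame linearity of $Q$ and $Q'$ is already settled: $\d$, $\d^*$ and $G$ are tame linear by Remark \ref{Rem:HodgeOpsSmoothTame}, and compositions of tame linear maps are tame linear. So only the two identities need proof.

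For the first identity, set $\beta \coloneqq \d\alpha \in \Forms{k+1}$. The steps are as follows.
\begin{enumerate}
\item By item 5 of Proposition \ref{Prop:HodgeRelations}, $G\d=\d G$, so $\d\beta=\d\d\alpha=0$ gives $\d G\beta = G\d\beta = 0$.
\item By item \ref{Item:Hd=0}, $\Hcal\beta=\Hcal\d\alpha=0$.
\item By item 3, $\beta=\Delta G\beta+\Hcal\beta=\Delta G\beta$.
\item Expanding $\Delta=\d\d^*+\d^*\d$ gives
\begin{align*}
\beta=\d\d^*G\beta+\d^*\d G\beta=\d\d^*G\beta+0=\d Q\beta.
\end{align*}
\end{enumerate}
This is exactly $(\d Q)(\d\alpha)=\d\alpha$.

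The second identity is the dual argument. Set $\gamma \coloneqq \d^*\alpha \in \Forms{k-1}$.
\begin{enumerate}
\item Since $\d^*\d^*=0$ (as the adjoint of $\d\d=0$) and $\d^*G=G\d^*$ by item 5, we get $\d^* G\gamma=G\d^*\d^*\alpha=0$.
\item By item \ref{Item:Hd=0}, $\Hcal\gamma=\Hcal\d^*\alpha=0$.
\item Hence $\gamma=\Delta G\gamma=\d\d^*G\gamma+\d^*\d G\gamma=\d^*\d G\gamma=\d^* Q'\gamma$.
\end{enumerate}

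The only point needing care is degree bookkeeping: the operators $G$, $\Hcal$ and $\Delta$ appearing above must be the ones in the correct degree ($k+1$ for $\beta$, $k-1$ for $\gamma$), and the edge case $k=0$, where $\d^*_{-1}$ is zero, should be handled by the convention that the forms of negative degree vanish. I expect no real obstacle; the commutation relations in item 5 carry the whole argument.
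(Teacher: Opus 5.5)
Your proof is correct and is essentially the paper's argument. Both write $\d\alpha=\Delta G\d\alpha$ using $\Delta G+\Hcal=\mathrm{Id}$ and $\Hcal\d=0$, expand $\Delta=\d\d^*+\d^*\d$, and kill the second term via $\d G=G\d$ and $\d^2=0$. The $Q'$ identity is handled the same way, with $\d^*G=G\d^*$, $\Hcal\d^*=0$ and $(\d^*)^2=0$. Your degree bookkeeping and the $k=0$ edge case are details the paper leaves implicit.
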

\begin{proof}
From Proposition \ref{Prop:HodgeRelations} we know that $\Delta G+\Hcal=\mathrm{Id}_{\Forms{k}}$, $\d G=G\d$, $\d^*G=G\d^*$, and $\Hcal \d=0$. Hence, $\d\alpha=\Delta G\d\alpha=\d \d^*Gd\alpha+\d^*G\d^2\alpha=\d Q\d\alpha$ for every $\alpha\in \Forms{k}$. 
Similarly, we have
$\d^*\alpha=\Delta G\d^*\alpha=\d G(\d^*)^2\alpha+\d^*\d G\d^*\alpha=\d^*Q'\d^*\alpha$
for $\alpha \in \Forms{k+1}$. 
\end{proof}

\begin{convention}
We fix the notation $Q=\d^*G$ and $Q'=\d G$ for the tame linear operators introduced in the previous lemma.
\end{convention}

\begin{lemma}
\label{Lemma:ExFormsClosedSubsp}
In every degree $k\in \N$ the spaces of exact differential forms, $\d^*$-exact differential forms, closed differential forms, $\d^*$-closed differential forms, and harmonic differential forms are closed subspaces of the tame Fréchet space $\Forms{k}$ respectively.
\end{lemma}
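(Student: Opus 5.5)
The plan is to write each of the five subspaces as the kernel of a continuous (indeed tame) linear map on $\Forms{k}$ and to use that kernels of continuous linear maps are closed. By Remarks \ref{Rem:HodgeOpsSmoothTame}, the operators $\d$, $\d^*$, $\Delta$, $G$ and $\Hcal$ are tame linear and hence continuous. Since $\{0\}$ is closed in the Hausdorff spaces $\Forms{k\pm1}$ and $\Forms{k}$, their kernels are closed subspaces.

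The cases are as follows.
\begin{itemize}
\item Closed forms: $\ClForms{k}=\ker \d_k$.
\item $\d^*$-closed forms: $\ker \d^*_{k-1}$.
\item Harmonic forms: $\Hcal^k=\ker\Delta_k$.
\end{itemize}
These three are immediate.

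The exact and $\d^*$-exact forms are images, not kernels, so I will rewrite them as kernels. For exact forms, I claim $\ExForms{k}=\ker(\mathrm{Id}-\d Q)$ with $Q=\d^*G$. One inclusion is Lemma \ref{Lemma:dQex=Id}: if $\beta=\d\alpha$, then $\d Q\beta=\beta$. The other inclusion holds because $\beta=\d Q\beta$ is visibly exact. Since $\mathrm{Id}-\d Q$ is tame linear and hence continuous, its kernel is closed. Symmetrically, the $\d^*$-exact forms equal $\ker(\mathrm{Id}-\d^*Q')$ with $Q'=\d G$, again by Lemma \ref{Lemma:dQex=Id}.

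The only step with any content is this rewriting of the images as kernels. It rests entirely on the identities $\d Q\d=\d$ and $\d^*Q'\d^*=\d^*$ already established in Lemma \ref{Lemma:dQex=Id}, so I expect no real obstacle.
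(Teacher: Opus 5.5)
Your proposal is correct and follows essentially the paper's argument. The kernel cases are identical. For (co)exact forms, the paper shows sequential closedness: it takes exact forms $\alpha_i\to\alpha$ and uses the continuity of $\d Q$ together with $\d Q\d=\d$ to conclude $\alpha=\d(Q\alpha)$. Your description $\ExForms{k}=\ker(\mathrm{Id}-\d Q)$ packages the same identity more cleanly, and it avoids the appeal to metrizability needed to pass from sequential closedness to closedness.
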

\begin{proof}
Since $\d$, $\d^*$, and $\Delta$ are tame linear operators their kernels are closed subspaces. So, the spaces of closed, $\d^*$-closed, and harmonic $k$-forms are closed subspaces of $\Forms{k}$ for every $k\in \N$ respectively.

To show that the subspaces of exact and $\d^*$-exact forms are closed in $\Forms{k}$ we show that they are complete as subspaces of the metrizable space $\Forms{k}$. 
To see completeness, let $(\alpha_i)_{i\in \N}$ be a Cauchy sequence in $\ExForms{k}\subseteq \Forms{k}$ with limit
$\lim_{i\to\infty}\alpha_i\eqqcolon \alpha \in \Forms{k}$. Using Lemma \ref{Lemma:dQex=Id} and the continuity of $\d Q$ we have
\begin{align*}
    \d(Q\alpha)=(\d Q)\left(\lim_{i\to \infty}\alpha_i\right)=\lim_{i\to \infty}(\d Q)(\alpha_i)=\lim_{i\to \infty}\alpha_i=\alpha.
\end{align*}
Hence, $\alpha\in \ExForms{k}$ which proves completeness.
Analogously, one can show completeness of the subspace of $\d^*$-exact differential forms using instead the operator $Q'$ and $\d^*$ in place of $Q$ and $\d$.
\end{proof}

\begin{lemma}
For each $k\in \N$ the exact differential forms, the $\d^*$-exact differential forms, and the harmonic differential forms are tame Fréchet spaces. Specifically, they are tame direct summands of the tame Fréchet space $\Forms{k}$.
\end{lemma}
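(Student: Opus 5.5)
The plan is to exhibit each of the three subspaces as the image of a tame linear projection $\Forms{k}\to\Forms{k}$, and then to invoke Proposition \ref{Prop:TameDirSumBySequence}. Write $\iota$ for the inclusion of a subspace into $\Forms{k}$ and $\Pi$ for the projection, so that $\Pi\circ\iota=\mathrm{Id}$. We give each subspace the grading restricted from $\Forms{k}$. By Lemma \ref{Lemma:ExFormsClosedSubsp} each subspace is closed, hence a graded Fréchet space, and the inclusion $\iota$ is trivially tame linear of degree $0$.

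The candidate projections come from the Hodge decomposition $\mathrm{Id}=\d\d^*G+\d^*\d G+\Hcal$, which follows from Proposition \ref{Prop:HodgeRelations}(3) together with $\Delta=\d\d^*+\d^*\d$ and the commutation of $G$ with $\d$ and $\d^*$. We set
\begin{align*}
\Pi_{\mathrm{ex}}\coloneqq \d Q=\d\d^*G,\qquad \Pi_{\mathrm{ex}^*}\coloneqq \d^*Q'=\d^*\d G,\qquad \Pi_{\Hcal}\coloneqq \Hcal.
\end{align*}
Each of these is a composition of operators that are tame linear by Remark \ref{Rem:HodgeOpsSmoothTame}, so each is tame linear.

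Next we check the image and the identity property for each projection.

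For $\Pi_{\mathrm{ex}}$, the image is clearly contained in $\ExForms{k}$. Lemma \ref{Lemma:dQex=Id} shows that $\Pi_{\mathrm{ex}}$ restricts to the identity on exact forms.

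For $\Pi_{\mathrm{ex}^*}$, the image lies in the $\d^*$-exact forms. Lemma \ref{Lemma:dQex=Id} again gives $\Pi_{\mathrm{ex}^*}\d^*\beta=\d^*\beta$.

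For $\Pi_{\Hcal}$, the image is $\Hcal^k$ by Proposition \ref{Prop:HodgeRelations}(1). For $h$ harmonic, item (3) gives $h=G\Delta h+\Hcal h=\Hcal h$.

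In all three cases $\Pi\circ\iota=\mathrm{Id}$, and Proposition \ref{Prop:TameDirSumBySequence} then shows that each subspace is a tame direct summand of $\Forms{k}$.

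It remains to show that each subspace is tame. By Theorem \ref{VBsectTameFre}, $\Forms{k}$ is a tame Fréchet space. Remark \ref{Rem:DirSumm&ProdAreTame} states that a tame direct summand of a tame space is tame. This step needs a quick composition of maps: if $\Forms{k}$ is a summand of $\Sigma(B)$ via $L,M$, then the subspace is a summand of $\Sigma(B)$ via $L\iota$ and $\Pi M$. Hence all three subspaces are tame Fréchet spaces.

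The only point needing care is bookkeeping rather than a real obstacle: we must fix the grading on each subspace as the restriction of a grading on $\Forms{k}$, and check that the identities from Proposition \ref{Prop:HodgeRelations} hold on smooth forms in the real setting. The latter is covered by the corollary for the real de Rham complex.
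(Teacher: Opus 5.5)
Your proposal is correct and is essentially the paper's proof. The paper also pairs each inclusion with one of the tame linear projections $\d Q$, $\d^*Q'$, $\Hcal$, uses Proposition \ref{Prop:HodgeRelations} and Lemma \ref{Lemma:dQex=Id} to check that each pair composes to the identity, and concludes via Proposition \ref{Prop:TameDirSumBySequence}. Your added step, composing with the maps to $\Sigma(B)$ to show that a tame direct summand of a tame space is tame, just spells out Remark \ref{Rem:DirSumm&ProdAreTame}.
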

\begin{proof}
Since all of these subspaces are closed it is clear that they are graded Fréchet spaces. 
By Proposition \ref{Prop:HodgeRelations} and Lemma \ref{Lemma:dQex=Id} the following maps are tame linear maps that compose to the identity
\begin{align*}
    \ExForms{k}\hookrightarrow \Forms{k}\xrightarrow{\d Q}\ExForms{k}\\
    \d^*\Forms{k+1}\hookrightarrow\Forms{k}\xrightarrow{\d^*Q'}\d^*\Forms{k+1}\\
    \Hcal^k(M)\hookrightarrow\Forms{k}\xrightarrow{\Hcal}\Hcal^k(M).
\end{align*}
Hence, these spaces are tame direct summands of $\Forms{k}$ (cf.~Proposition \ref{Prop:TameDirSumBySequence}).
\end{proof}

The maps above naturally induce maps
\begin{align*}
    \Hcal^k(M)\times\ExForms{k}\times \d^*\Forms{k+1}&\to \Forms{k}\\
    (\alpha,\beta,\gamma)&\mapsto \alpha+\beta+\gamma
\end{align*}
and 
\begin{align*}
    \Forms{k}&\to \Hcal^k(M)\times\ExForms{k}\times \d^*\Forms{k+1}\\
    \om&\mapsto (\Hcal w,\d Q \om,\d^*Q'\om)
\end{align*}
using the previous lemma and Proposition \ref{Prop:HodgeRelations} and Lemma \ref{Lemma:dQex=Id} we see that these tame linear maps are inverse to each other implying the following proposition.

\begin{proposition}
\label{Prop:HodgeDecompTameIso}
The Hodge decomposition
$\Forms{k}\cong\Hcal^k(M)\times \ExForms{k}\times d^*\Forms{k+1}$ is a tame isomorphism of tame Fréchet spaces for each $k\in \N$.
\end{proposition}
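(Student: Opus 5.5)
We prove Proposition \ref{Prop:HodgeDecompTameIso} by exhibiting the two tame linear maps displayed just before the statement and checking that they are mutually inverse. Define
\begin{align*}
    \Phi:\Hcal^k(M)\times\ExForms{k}\times \d^*\Forms{k+1}&\to \Forms{k},\quad (\alpha,\beta,\gamma)\mapsto \alpha+\beta+\gamma,\\
    \Psi:\Forms{k}&\to \Hcal^k(M)\times\ExForms{k}\times \d^*\Forms{k+1},\quad \om\mapsto (\Hcal\om,\d Q\om,\d^*Q'\om).
\end{align*}
The domain of $\Phi$ carries the product grading of Convention \ref{Conv:ProdOfFreSpa}. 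With this grading $\Phi$ is tame linear of degree $0$ and base $0$ by the triangle inequality. Each component of $\Psi$ is a composition of the tame linear operators $\Hcal$, $\d$, $\d^*$, $G$ (cf.~Remark \ref{Rem:HodgeOpsSmoothTame}), so $\Psi$ is tame linear. The image of $\Psi$ lies in the right factors: $\Hcal\om$ is harmonic by Proposition \ref{Prop:HodgeRelations}(1), and the other two components are manifestly exact and $\d^*$-exact, respectively.

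Next we check that $\Phi\circ\Psi=\mathrm{Id}$. Using $\Delta G+\Hcal=\mathrm{Id}$ and the commutation $\d G=G\d$, $\d^*G=G\d^*$, we obtain
\begin{align*}
    \om=\Hcal\om+\d\d^*G\om+\d^*\d G\om=\Hcal\om+\d Q\om+\d^*Q'\om.
\end{align*}

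For $\Psi\circ\Phi=\mathrm{Id}$ we check each component on each factor.
\begin{itemize}
    \item For harmonic $\alpha$: $\Hcal\alpha=\alpha$ because $\Hcal=\mathrm{Id}-\Delta G=\mathrm{Id}-G\Delta$. Also $\d Q\alpha=\d\d^*G\alpha=G\d\d^*\alpha=0$ and $\d^*Q'\alpha=G\d^*\d\alpha=0$, since harmonic forms are closed and coclosed. This follows from $0=(\Delta\alpha,\alpha)=\|\d\alpha\|^2+\|\d^*\alpha\|^2$.
    \item For exact $\beta=\d\eta$: $\Hcal\beta=0$ by Proposition \ref{Prop:HodgeRelations}(\ref{Item:Hd=0}), $\d Q\beta=\beta$ by Lemma \ref{Lemma:dQex=Id}, and $\d^*Q'\beta=\d^*\d G\d\eta=\d^*G\d\d\eta=0$.
    \item For $\gamma=\d^*\zeta$: the argument is symmetric. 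We have $\Hcal\gamma=0$, $\d^*Q'\gamma=\gamma$ by Lemma \ref{Lemma:dQex=Id}, and $\d Q\gamma=\d G\d^*\d^*\zeta=0$.
\end{itemize}

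Hence $\Phi$ is a vector space isomorphism with tame linear inverse $\Psi$, that is, a tame isomorphism. Each factor is a tame Fréchet space by the preceding lemma, so the product is one too by Remark \ref{Rem:DirSumm&ProdAreTame}.

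The only step requiring care is the harmonic component, namely that harmonic forms are closed and coclosed. The integration-by-parts identity settles it because $\d^*$ is the formal $L^2$-adjoint of $\d$. Everything else is bookkeeping with the identities of Proposition \ref{Prop:HodgeRelations}.
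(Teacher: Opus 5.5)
Your proof is correct and takes the same approach as the paper. The paper also exhibits the sum map and $\om\mapsto(\Hcal\om,\d Q\om,\d^*Q'\om)$ as mutually inverse tame linear maps via Proposition~\ref{Prop:HodgeRelations} and Lemma~\ref{Lemma:dQex=Id}; you have written out the verification it leaves implicit. For the harmonic factor you can skip the integration by parts: $\alpha=\Hcal\alpha$ together with $G\Hcal=0$ (item (6) of Proposition~\ref{Prop:HodgeRelations}) already gives $\d Q\alpha=\d^*Q'\alpha=0$.
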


\begin{corollary}
\label{Cor:ClFormsTameDirSumm}
The space of closed differential forms $\ClForms{k}\subseteq \Forms{k}$ is a tame Fréchet space and tamely isomorphic to the tame Fréchet space $\Hcal^k(M)\times \ExForms{k}$ for each $k\in \N$.
\end{corollary}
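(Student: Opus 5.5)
The plan is to exhibit $\ClForms{k}$ as a tame direct summand of $\Forms{k}$ via the Hodge decomposition of Proposition \ref{Prop:HodgeDecompTameIso}, and then restrict that tame isomorphism to closed forms.

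First, I show that under the tame isomorphism $\Forms{k}\cong\Hcal^k(M)\times \ExForms{k}\times \d^*\Forms{k+1}$ the closed forms correspond exactly to the first two factors. Harmonic forms are closed, since $\d\Hcal=0$ by Proposition \ref{Prop:HodgeRelations}, item \ref{Item:Hd=0}, and exact forms are trivially closed. For the converse, let $\om$ be closed. I claim its component $\d^*Q'\om=\d^*\d G\om$ vanishes. Using $\d G=G\d$, this equals $\d^*G\d\om=0$. Hence $\om=\Hcal\om+\d Q\om$.

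Second, I write down the two maps
\begin{align*}
    \ClForms{k}\to\Hcal^k(M)\times\ExForms{k},\quad \om\mapsto(\Hcal\om,\d Q\om),
\end{align*}
and $(\alpha,\beta)\mapsto\alpha+\beta$ in the other direction. Both are tame linear, being restrictions and compositions of the tame linear operators of Remark \ref{Rem:HodgeOpsSmoothTame} together with the inclusion maps. The first step shows they are mutually inverse. Here $\ClForms{k}$ carries the grading inherited from $\Forms{k}$, and it is a closed subspace by Lemma \ref{Lemma:ExFormsClosedSubsp}, so it is a graded Fréchet space.

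Third, $\Hcal^k(M)$ and $\ExForms{k}$ are tame Fréchet spaces by the preceding lemma, so their product is tame by Remark \ref{Rem:DirSumm&ProdAreTame}. The same remark says that a graded Fréchet space tamely isomorphic to a tame space is itself tame, which gives the conclusion.

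There is no serious obstacle. The only point needing care is the identity $\d^*\d G\om=0$ for closed $\om$, which rests on the commutation $\d G=G\d$.
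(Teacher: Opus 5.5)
Your proposal is correct and follows essentially the same route as the paper: you restrict the tame Hodge decomposition isomorphism of Proposition~\ref{Prop:HodgeDecompTameIso} to $\ClForms{k}$ and conclude with Remark~\ref{Rem:DirSumm&ProdAreTame}. The only difference is in how the $\d^*$-exact component is shown to vanish: you do it algebraically via $\d^*\d G\om=\d^*G\d\om=0$, whereas the paper writes $\om=\alpha+\d\beta+\d^*\gamma$ and uses $0=(\d\d^*\gamma,\gamma)_{L^2}=(\d^*\gamma,\d^*\gamma)_{L^2}$; both arguments are valid.
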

\begin{proof}
Let $\om\in\ClForms{k}$. Then there is some $\alpha\in \Hcal^k(M)$, $\beta\in \Forms{k-1}$, and
$\gamma\in \Forms{k+1}$ such that $\om=\alpha+\d\beta+\d^*\gamma$. Then $0=\d\om=\d \d^*\gamma$ and so 
$0=(\d \d^*\gamma,\gamma)_{L^2}=(\d^*\gamma,\d^*\gamma)_{L^2}$ where $(-,-)_{L^2}$ denotes the respective $L^2$-inner product (cf.~Definition \ref{Def:L2InnerProd}). So $\d^*\gamma=0$. Since clearly $\Hcal^k(M)\subset \ClForms{k}$ and 
$\ExForms{k}\subset \ClForms{k}$ we see that the tame isomorphism
$\Forms{k}\cong\Hcal^k(M)\times \ExForms{k}\times \d^*\Forms{k+1}$ from Proposition \ref{Prop:HodgeDecompTameIso} restricts to a tame isomorphism of the tame Fréchet space $\Hcal^k(M)\times \ExForms{k}$ and the graded Fréchet space $\ClForms{k}$. Hence, $\ClForms{k}$ is a tame Fréchet space by Remark \ref{Rem:DirSumm&ProdAreTame}.
\end{proof}

\section{Tame Fréchet manifolds}
\label{Section:TameFrechetManifolds}

It is our goal to show that the group of autoequivalences of an exact Courant algebroid over a compact base manifold is a tame Fréchet Lie group. That means that we need to understand infinite-dimensional manifolds and Lie groups, in particular, tame Fréchet manifolds and tame Fréchet Lie groups.

In this section we develop in a very general language some of the theory of infinite-dimensional manifolds, and Lie groups and their Lie algebras. We discuss certain spaces of smooth maps as quintessential examples of tame Fréchet manifolds and tame Fréchet Lie groups. We discuss in particular the diffeomorphism group of a compact manifold as a tame Fréchet Lie group. Furthermore, we compare two notions of smooth families of smooth maps. Lastly, we show that the pullback of tensor fields along diffeomorphism is a smooth tame map of tame Fréchet manifolds.

\subsection{Infinite-dimensional manifolds}

To understand the theory of tame Fréchet Lie groups we first need to understand some of the theory of infinite-dimensional manifolds. Here, we lay out some very general definitions of different types of infinite-dimensional manifolds and present some basic results regarding them. This section is largely based on \cite{InfDimLieneeb} and \cite{hamilton}.

\begin{definition}[{\cite[Definition II.3.1]{InfDimLieneeb}}]
Let $\Mcal$ be a Hausdorff topological space. Let $\Ucal\subseteq \Mcal$ be an open subset, let $F$ be some locally convex vector space, and let $V\subseteq F$ be open. As in the finite-dimensional case we call a homeomorphism $\psi:\Ucal\to V$ a \textit{chart of $\Mcal$} and denote it by $(\Ucal,\psi)$.  

We call two charts $(\Ucal,\psi)$ and $(\mathcal{V},\varphi)$ smoothly compatible if either $\Ucal\cap \Vcal=\emptyset$ or the coordinate transition $\psi\circ \varphi^{-1}:\varphi(\Ucal \cap \Vcal)\to \psi(\Ucal \cap \Vcal)$ is smooth.

If there is a family of pairwise smoothly compatible charts $((\Ucal_\alpha,\psi_\alpha))_{\alpha \in A}$, for some index set $A$, with values in locally convex vector spaces $F_\alpha$, and if $(\Ucal_\alpha)_{\alpha\in A}$ is an open covering of $\Mcal$, then we call $((\Ucal_\alpha,\psi_\alpha))_{\alpha \in A}$ a \textit{smooth atlas} of $\Mcal$. 

A \textit{smooth structure} on $\Mcal$ is a maximal smooth atlas of $\Mcal$, that is, an atlas such that any chart of $\Mcal$ that is smoothly compatible to any chart in the atlas is already in the atlas. 

The space $\Mcal$ together with a smooth structure is called a locally convex manifold.
\end{definition}

\begin{remark}
Clearly, any smooth atlas on $\Mcal$ induces a unique smooth structure containing it.    
\end{remark}

\begin{definition}{\cite[p.~146]{hamilton}}
Let $\Mcal$ be a locally convex manifold. Let there be an atlas of $\Mcal$ such that all charts have image in (tame) Fréchet spaces and have smooth (tame) coordinate transition maps, then we call that atlas a smooth (tame) Fréchet atlas.

By a smooth (tame) Fréchet structure we mean a maximal smooth (tame) Fréchet atlas. 

A Hausdorff topological space together with a smooth (tame) Fréchet structure is called a (tame) Fréchet manifold. 
\end{definition}

\begin{remark}
Clearly any smooth (tame) Fréchet atlas induces a unique smooth (tame) Fréchet structure containing it.
\end{remark}

\begin{remark}
Just as we can define a smooth tame Fréchet atlas, we can also define a zero-tame Fréchet atlas as a smooth tame Fréchet atlas whose chart transition functions are zero-tame. This then leads to the notions of a zero-tame Fréchet structures and zero-tame manifolds. It is clear that any zero-tame Fréchet atlas induces a unique zero-tame structure containing it.
\end{remark}

\begin{example}
\label{Example:ManifoldsModelledOnBanachSpaces}
If $\mathcal{M}$ is a locally convex manifold with an atlas for which its charts take values in Banach spaces then this is a zero-tame atlas (cf.~Corollary \ref{Cor:MapsOfBanachSpacesZeroTame} and Example \ref{Ex:BanachSpacesAreTameFre}). Moreover, smooth maps of such manifolds are zero-tame with respect to the induced zero-tame structure (cf.~Corollary \ref{Cor:MapsOfBanachSpacesZeroTame}).
\end{example}

\begin{definition}
Let $\Mcal$ and $\Ncal$ be locally convex or (zero-)tame Fréchet manifolds. A map $f:\Mcal\to \Ncal$ is called \textit{smooth} if for every $x\in \Mcal$ there is a chart around $x$ and a chart around $f(x)\in \Ncal$ such that the local representative of $f$ in these charts is smooth. If $\Mcal$ and $\Ncal$ are (zero-)tame Fréchet manifolds and around each $x\in \Mcal$ and $f(x)$ are respective charts such that the local representative of $f$ is smooth (zero-)tame then we call $f$ a \textit{smooth (zero-)tame map between (zero-)tame Fréchet manifolds}.
\end{definition}

\begin{convention}
\label{Conv:ScalManifolds}
Because the following definitions and results are sensible in the context of locally convex manifolds as well as (tame) Fréchet manifolds and zero-tame Fréchet manifolds, we want to formulate them as general as possible. To this end, we denote by $\Scal$ the category of either locally convex manifolds and smooth maps, or Fréchet manifolds and smooth maps, or tame Fréchet manifolds and smooth tame maps, or zero-tame Fréchet manifolds and zero-tame maps.

We will refer to the objects of the category $\Scal$ as $\Scal$-manifolds and to the morphisms as $\Scal$-smooth maps.

We will also refer to the respective types of vector spaces, i.e.~locally convex vector spaces, Fréchet spaces, tame Fréchet spaces, or zero-tame Fréchet spaces as $\Scal$-vector spaces.

If $\Mcal$ is an $\Scal$-manifold then by charts we always mean charts in the $\Scal$-smooth structure on $\Mcal$.
\end{convention}

\begin{definition}[{\cite[Part I, Definition 4.2.1]{hamilton}}]
Let $\Ncal \subseteq \Mcal$ be a subset of an $\Scal$-manifold $\Mcal$. We say that $\Ncal$ is an (embedded) \textit{$\Scal$-submanifold of $\Mcal$} if around each $p\in \Ncal$ there is an open subset $\Ucal\subseteq \Mcal$ and a chart $\varphi:\Ucal \to W\times V$ of $\Mcal$, where $W$ and $V$ are $\Scal$-vector spaces, such that $\varphi(\Ucal\cap \Ncal)=\varphi(\Ucal)\cap (W\times \{0\in V\})$.

We call such a chart an \textit{($\Scal$-)submanifold chart} and we call a collection of $\Scal$-submanifold charts whose domains cover $\Ncal$ an \textit{($\Scal$-)submanifold atlas}. 
\end{definition}

\begin{example}
Let $\Mcal$ and $\Ncal$ be $\Scal$-manifolds. Then, clearly, $\Mcal\cong \Mcal\times \{y\}$ and $\Ncal\cong \{x\}\times \Ncal$ are embedded $\Scal$-submanifolds of $\Mcal \times\Ncal$ for any $x \in \Mcal$, $y\in \Ncal$ respectively.
\end{example}

\begin{example}
\label{Ex:GraphSubmfd}
Let $F$ and $G$ be $\Scal$-vector spaces, $U\subseteq F$ be open, and
$P:U\to G$ be an $\Scal$-smooth map. Then the graph $\left\{(x,P(x))\in U \times G\right\}$ is an embedded $\Scal$-submanifold of $U\times G$. In fact, we can simply define the map 
$\varphi:U\times G\to U\times G$ by $\varphi(x,g)=(x,g-P(x))$ for $(x,v)\in U\times G$. This map is clearly $\Scal$-smooth and has the $\Scal$-smooth inverse $\varphi^{-1}(x,g)=(x,g+P(x))$. Hence, it is an $\Scal$-diffeomorphism. In particular, $\varphi(x,v)=(x,0)$ is equivalent to $v=P(x)$. So, $\varphi$ is a submanifold chart.

By locally identifying $\Scal$-manifolds with open subsets of $\Scal$-vector spaces it immediately follows that the graph of any $\Scal$-smooth map $P:\Mcal\to \Ncal$ between $\Scal$-manifolds $\Mcal$ and $\Ncal$ is again an embedded $\Scal$-submanifold of $\Mcal\times \Ncal$.
\end{example}

\begin{definition}
Let $\mathcal{B}$ and $\Mcal$ be $\Scal$-manifolds and let $\pi:\mathcal B\to \Mcal$ be a surjection. As in the finite-dimensional case we call $\mathcal{B}\xrightarrow{\pi}\Mcal$ an \textit{$\Scal$-fiber bundle} if
\begin{itemize}
    \item there is an open cover $(\Ucal_\alpha)_{\alpha \in A}$ of $\Mcal$, for some index set $A$, such that there is an $\Scal$-diffeomorphism $\phi_\alpha:\pi^{-1}(\Ucal_\alpha)\xrightarrow{\cong}\Ucal_\alpha \times \mathcal F_\alpha$ with $\mathcal F_\alpha$ being some $\Scal$-manifold, and
    \item $\pi$ corresponds to the canonical projection $\Ucal_\alpha \times \mathcal{F}_\alpha\to \Ucal_\alpha$.
\end{itemize}
We call such $\Scal$-diffeomorphisms $\phi_\alpha$ satisfying the above condition \textit{$\Scal$-local trivializations} or simply \textit{local trivializations}. We call the collection $((\Ucal_\alpha, \phi_\alpha))_{\alpha\in A}$, as above, an \textit{$\Scal$-fiber bundle atlas}. 

Let $\mathcal B\xrightarrow{\pi} \Mcal$ be an $\Scal$-fiber bundle. We say that it has \textit{typical fiber $\mathcal{F}$}, where $\mathcal F$ is some $\Scal$-manifold, if there is an $\Scal$-fiber bundle atlas $((\Ucal_\alpha,\phi_\alpha))_{\alpha\in A}$, as above, such that $\phi_\alpha:\pi^{-1}(\Ucal_\alpha)\xrightarrow{\cong} \Ucal_\alpha\times \mathcal F$ for every $\alpha\in A$.
\end{definition}

\begin{remarks}
Let $\mathcal B\xrightarrow{\pi}\Mcal$ be an $\Scal$-fiber bundle. Then clearly any fiber $\mathcal B_p \coloneqq \pi^{-1}(p)$, $p\in \Mcal$, is an embedded $\Scal$-submanifold of $\mathcal B$. 

Similarly, for any $\Scal$-smooth section $s$ of $\mathcal B\xrightarrow{\pi}\Mcal$ its image $s(\Mcal)$ is an embedded $\Scal$-submanifold. This follows as in a local trivialization of $\mathcal B$ the image of the section $s$ corresponds to the graph of an $\Scal$-smooth map of $\Scal$-manifolds which is an embedded $\Scal$-submanifold as discussed in Example \ref{Ex:GraphSubmfd}. 
\end{remarks}

\begin{definition}[{\cite[Part I, Definition 4.3.1]{hamilton}}]
Let $\mathcal{V}$ and $\Mcal$ be $\Scal$-manifolds and let $\pi:\mathcal V\to \Mcal$ be an $\Scal$-fiber bundle. As in the finite-dimensional case we call $\mathcal{V}\xrightarrow{\pi}\Mcal$ an \textit{$\Scal$-vector bundle} if
\begin{itemize}
    \item $\pi^{-1}(x)$ has the structure of a vector space for every $x\in \Mcal$,
    \item if there is an $\Scal$-fiber bundle atlas $((\Ucal_\alpha,\phi_\alpha))_{\alpha\in A}$, for some index set $A$, such that $\phi_\alpha:\pi^{-1}(\Ucal_\alpha)\xrightarrow{\cong}\Ucal_\alpha \times V_\alpha$ with $V_\alpha$ being some $\Scal$-vector space, and
    \item if the vector space structure on $\pi^{-1}(x)$ agrees with that of $V_\alpha$ for every $x\in \Ucal_\alpha$, that is, if $\left.\phi_\alpha\right|_{\pi^{-1}(x)}:\pi^{-1}(x)\to V_\alpha$ is a vector space isomorphism.
\end{itemize}
We call the collection $((\Ucal_\alpha, \phi_\alpha))_{\alpha\in A}$, as above, an \textit{$\Scal$-vector bundle atlas}.
\end{definition}

\begin{remarks}
If $\mathcal V\to \Mcal$ is an $\Scal$-vector bundle with an $\Scal$-vector bundle atlas $((\Ucal_\alpha,\phi_\alpha:\pi^{-1}(\Ucal_\alpha)\to V_\alpha))_{\alpha\in A}$ then if $\Ucal_\alpha\cap \Ucal_\beta\neq \emptyset$ the transition functions 
$\phi_\alpha\circ\phi_\beta^{-1}:(\Ucal_\alpha\cap \Ucal_\beta)\times V_\beta\to (\Ucal_\alpha\cap \Ucal_\beta)\times V_\alpha$ are $\Scal$-smooth families of $\Scal$-isomorphisms for all $\alpha,\beta\in A$.
\end{remarks}

\begin{definition}[{\cite[Definition II.3.3]{InfDimLieneeb}}]
Let $\Mcal$ be an $\Scal$-manifold then we can define its \textit{tangent bundle} $\mathcal{TM}$ as in the finite-dimensional case.
Let $\mathcal{A}$ be the smooth structure of $\Mcal$ in the category $\Scal$, then we can define the tangent bundle as
\begin{align*}
    \mathcal{TM}\coloneqq \left(\bigcup_{(\Ucal_\psi,\psi)\in \mathcal A} \psi(\Ucal_\psi)\times F_\psi\right) \bigg/\sim
\end{align*}
where $\psi:\Ucal_\psi\to F_\psi$ are charts with values in an $\Scal$-vector spaces $F_\psi$. 
Moreover, the equivalence relation $\sim$ is defined such that if $(\Ucal_\psi,\psi),(\Ucal_\phi,\phi)\in \mathcal A$ and $\Ucal_\psi \cap \Ucal_\phi\neq \emptyset$, then $(x,v)\in \psi(\Ucal_\psi\cap \Ucal_\phi)\times F_\psi$ is equivalent to $(y,w)\in \phi(\Ucal_\psi\cap\Ucal_\phi)\times F_\phi$ if $T(\phi\circ\psi^{-1})(x,v)=(y,w)$. The tangent space $\mathcal{T}_p\Mcal$ at the point $p\in \Mcal$ is the vector space with the underlying set $\{[\psi(p),v]|~(\Ucal_\psi,\psi)\in \mathcal A,\ p\in \Ucal_\psi,\ v\in F_\psi\}$ and the vector space structure $[\psi(p),v]+[\psi(p),w]\coloneqq [\psi(p),v+w]$ which hence agrees with that of $F_\psi$ for $(\Ucal_\psi,\psi)\in \mathcal A,\ p\in \Ucal_\psi$.
\end{definition}

\begin{remark}
Since we can locally identify $\mathcal{TM}$ with $\psi(\Ucal_\psi)\times F_\psi$,
$(\Ucal_\psi, \psi:\Ucal_\psi\to F_\psi)\in \mathcal A$, and the transition functions are precisely the tangent maps of the chart transition functions of $\Mcal$, and hence $\Scal$-diffeomorphisms, it is clear that this naturally gives $\mathcal{TM}$ the structure of an $\Scal$-manifold. We also get a natural projection $\mathcal{TM}\xrightarrow{\pi}\Mcal$ since the local trivial projections $\psi(\Ucal_\psi)\times F_\psi\to \psi(\Ucal_\psi)$,
$(\Ucal_\psi, \psi:\Ucal_\psi\to F_\psi)\in \mathcal A$, define a well defined global map which is hence also an $\Scal$-smooth surjection. So the tangent bundle of an $\Scal$-manifold is an $\Scal$-vector bundle.
\end{remark}

\begin{remark}
\label{Rem:TangentMapManifolds}
Let $f:\Mcal\to \Ncal$ be a $C^1$-map of $\Scal$-manifolds. We define the map $\mathcal Tf:\mathcal{TM}\to \mathcal{TN}$, as follows. Let $\phi$ and $\psi$ be charts on $\Mcal$ and $\Ncal$, respectively. Then in the induced local trivializations of $\TMcal$ and $\mathcal{TN}$ the map $\mathcal{T}f$ corresponds simply to $T(\phi\circ f\circ \psi^{-1})$. The map $\mathcal Tf$ is clearly well defined and continuous. If $f$ is $\Scal$-smooth then so is $\mathcal{T}f$.
\end{remark}

\begin{definition}
Let $\Mcal$ be an $\Scal$-manifold then we call an $\Scal$-smooth section of $\mathcal{TM}$ an $\Scal$-smooth vector field. We denote the vector space of $\Scal$-smooth vector fields by $\mathfrak{X}_\Scal(\Mcal)$. In each category $\Scal$, we denote the smooth vector fields simply by $\mathfrak{X}(\Mcal)$. For example in the zero-tame category $\mathfrak{X}_\Scal(\Mcal)$ denotes the zero-tame vector fields and $\mathfrak{X}(\Mcal)$ denotes the smooth vector fields.
\end{definition}

\begin{remark}
\label{Rem:MapsOfLocConvManifolds}
If $\mathcal{M}$ and $\mathcal{N}$ are locally convex manifolds it is indeed possible to give $C^\infty(\mathcal{M},\mathcal{N)}$ a sensible topology, the so called $C^\infty$-compact open topology. It is the initial topology with respect to the tangent maps $\mathcal{T}^k:C^\infty(\Mcal,\Ncal)\to C(\mathcal{T}^k\Mcal,\mathcal{T}^k\Ncal)$, $k\in \N$, where $C(\mathcal{T}^k\Mcal,\mathcal{T}^k\Ncal)$ is equipped with the compact open topology. 
If $(F,(\nnorm{-}{\alpha})_{\alpha\in A})$ is a locally convex vector space, then one can show that $C^\infty(\mathcal{M},F)$ equipped with the $C^\infty$-compact open topology is a locally convex vector space and its topology is induced by the family of seminorms of the form $\nnorm{f}{\alpha,k,K}\coloneqq \sup_{x\in K}\nnorm{\mathcal{T}^kf}{\alpha}$ for $k\in \N$, $K\subseteq \mathcal{T}^k \Mcal$ compact, $\alpha\in A^k$, and $f\in  C^\infty(\mathcal{M},F)$. Here we write $\nnorm{x}{\alpha}=\sum_{i=1}^k\nnorm{x_i}{\alpha_i}$ for
$\alpha=(\alpha_1,...,\alpha_k)\in A^k$ and $x=(x_1,...,x_k)\in F^k=\mathcal{T}^kF$. For more details, see \cite[Definition 2.1, Remark 2.2, and p.~208, B.6 and Lemma B.7]{schmeding_2022}.
In fact, one can additionally see that if $F$ is a Fréchet space and $\Mcal$ is modeled on metrizable locally convex vector spaces, e.g.~Fréchet spaces, then $C^\infty(\Mcal,F)$, like $F$, is still sequentially complete. One uses completenes and metrizability of $F^n$, $n\in \N$, to show that for any Cauchy sequence $(f_i)_{i\in \N}$ in $C^\infty(\Mcal,F)$, the sequence $(\mathcal{T}^nf_i)_{i\in \N}$ converges uniformly on compact sets as it is a uniform Cauchy sequence on each compact set. 
We can now consider the sequence $(\left.\mathcal T^n(f_i\right|_\Ucal))_{i\in \N}$, $n\in \N$, where $\Ucal\subseteq \Mcal$ is open in $\Mcal$ and diffeomorphic to an open subset of a metrizable locally convex vector space $G$. Since $\Ucal\times G^n\cong \mathcal{T}^n\Ucal$, $n\in \N$, is metrizable it is a $k$-space (\cite[§7, p.~231, Theorem 13]{Kelley}) and since $(\left.\mathcal T^n(f_i\right|_\Ucal))_{i\in \N}$ converges uniformly on compact sets its limit is continuous (\cite[§7, p.~231, Theorem 12]{Kelley}). So $\lim_{i\to \infty}\mathcal{T}^n(\left.f_i\right|_\Ucal)$ is continuous and finally $\lim_{i\to \infty}\mathcal{T}^nf_i$ is continuous for every $n\in \N$. 
Then, applying locally a similar argument to that presented by Hamilton in \cite[p.~38 (lemma)]{hamilton1977deformation}, we can iteratively show that $\lim_{i\to \infty}\mathcal T^n f_i=\mathcal{T}^n\lim_{i\to \infty}f_i$ for every $n\in \N$, hence showing that $\lim_{i\to \infty}f_i$ is smooth and the space is sequentially complete. However, in general, $C^\infty(\Mcal,F)$ is no longer metrizable.

If $\mathcal{V}\xrightarrow{\pi} \mathcal{M}$ is a smooth locally convex vector bundle then its sections can be given the structure of a locally convex vector space in a sensible way. Namely, let 
$((\Ucal_i,\phi_i:\Ucal_i\to F_i))_{i\in I}$, for some index set $I$, be a smooth locally convex vector bundle atlas. Then we can equip the smooth sections $\Gamma(\mathcal V)$ with the initial topology with respect to the maps $\Gamma(\mathcal{V})\to C^\infty(\Ucal_i,F_i)$, $i\in I$, that map a section to its local representative. Here $C^\infty(\Ucal_i,F_i)$ is equipped with the $C^\infty$-compact open topology making it into a locally convex space. We identify $\Gamma(\mathcal V)$ with a subspace of the locally convex space $\Pi_{i\in I}C^\infty(\Ucal_i,F_i)$. Hence, it becomes itself a locally convex space. So, in particular, for a locally convex manifold $\Mcal$ the vector fields $\mathfrak{X}(\mathcal{M})$ inherit in a natural way the structure of a locally convex vector space. For more details, see \cite[p.~216, C.7]{schmeding_2022}.
\end{remark}

\begin{remark}
\label{Rem:X(f)}
Let $\Mcal$ be a locally convex manifold and $f\in C^\infty(\Mcal,V)$ for $V$ being some locally convex vector space, and let $X$ be a smooth vector field on $\Mcal$. Then $Tf\circ X:\Mcal\to \mathcal{T}V\cong V\times V$ is smooth and hence by projecting to the second factor $V\times V\xrightarrow{\pi_2}V$ the map $(X(f))(p)\coloneqq \pi_2(Tf(X(p)))$ is a well defined smooth function in $C^\infty(\Mcal,V)$. Furthermore, if $V=\R$ it is a derivation as for $f,g\in C^\infty(\Mcal,\R)$ we have $X(fg)=\pi_2(T(fg)(X))=gX(f)+fX(g)$. The product rule easily follows from Theorem \ref{Thm:PartialDer} and the smoothness of the diagonal map $\Delta:\Mcal\to \Mcal\times\Mcal$. Note that in local coordinates $\pi_2\circ T(f)$ simply corresponds to $Df$. 
\end{remark}

Neeb formulates the following lemma as an exercise, see \cite[Exercise II.17]{InfDimLieneeb}. In the following we provide a proof. 
\begin{lemma}
\label{Lemma:X.fInj}
Let $\Mcal$ be an $\Scal$-manifold and $\Ucal\subseteq \Mcal$ be an open chart domain $\Scal$-diffeomorphic to an open subset $U\subseteq F$ of some $\Scal$-vector space $F$. Then the map 
\begin{align*}
    E:\mathfrak{X}(\Ucal)&\longrightarrow \mathrm{Der}(C^\infty(\Ucal,\R))\\
    X&\longmapsto (f\mapsto X(f))
\end{align*}
with $X(f)$ as in Remark \ref{Rem:X(f)}, and $\mathrm{Der}(C^\infty(\Ucal,\R))$ denoting the derivations on the space $C^\infty(\Ucal,\R)$, is well defined and injective.
\end{lemma}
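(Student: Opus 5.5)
Since $\Ucal$ is $\Scal$-diffeomorphic to an open $U\subseteq F$, and vector fields, smooth functions and derivations all transport along $\Scal$-diffeomorphisms, we may assume $\Ucal=U\subseteq F$ is open. Then $\mathcal{T}U\cong U\times F$, and a smooth vector field is the same as a smooth map $X:U\to F$.

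The first step is well-definedness. For $f\in C^\infty(U,\R)$, Remark \ref{Rem:X(f)} gives $X(f)(p)=\pi_2(Tf(X(p)))=Df(p)X(p)$. This is a composition of the smooth maps $p\mapsto(p,X(p))$ and $Df:U\times F\to\R$, so it is smooth by the chain rule (Proposition \ref{Prop:PropsOfDiff}). Linearity in $f$ follows from linearity of the derivative. The Leibniz rule $X(fg)=gX(f)+fX(g)$ is already established in Remark \ref{Rem:X(f)}. Hence $E(X)$ is a derivation, and $E$ is clearly linear in $X$.

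The second step is injectivity. Since $E$ is linear, it suffices to show that if $X(f)=0$ for all $f\in C^\infty(U,\R)$ then $X=0$. Suppose instead that $X(p)=v\neq 0$ for some $p\in U$. Corollary \ref{Cor:l(v)=l(w)Thenv=w}, which rests on Hahn--Banach (Corollary \ref{Cor:HahnBanachForLocConv}), provides a continuous linear functional $l:F\to\R$ with $l(v)\neq0$. Set $f\coloneqq l|_U$. Being continuous and linear, $f$ is smooth with $Df(q)h=l(h)$. Then $X(f)(p)=l(v)\neq 0$, which is a contradiction.

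The only delicate point is that we need a \emph{globally defined} smooth function on $U$ that detects $v$. In infinite dimensions there may be no bump functions, but the restriction of a linear functional avoids that problem. One must also check that $l|_U$ is $\Scal$-smooth in the relevant category. In the tame setting, continuous linear functionals into $\R$ are maps into a Banach space, so Corollary \ref{Cor:SmToBanSmTame} shows they are smooth tame, and they are in fact zero-tame after regrading. Moreover, $C^\infty(\Ucal,\R)$ as written only requires ordinary smoothness. Finally, since the argument only used restrictions of global linear functionals, it does not depend on shrinking $\Ucal$.
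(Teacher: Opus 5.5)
Your proof is correct and is essentially the paper's argument: you identify $\mathfrak{X}(\Ucal)$ with smooth maps $U\to F$, take well-definedness from Remark~\ref{Rem:X(f)}, and detect a nonzero value $X(p)$ by testing against the restriction $l|_U$ of a continuous linear functional supplied by Hahn--Banach (Corollary~\ref{Cor:l(v)=l(w)Thenv=w}). The differences are cosmetic: you argue via the kernel of the linear map $E$ rather than comparing $X,X'$ with $E(X)=E(X')$, and your closing remark on the tameness of $l|_U$ is unnecessary, since $C^\infty(\Ucal,\R)$ and $\mathfrak{X}(\Ucal)$ only involve ordinary smoothness.
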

\begin{proof}
The map $E$ is well defined as we have seen in Remark \ref{Rem:X(f)}.

Note that we may describe a vector field over $\Ucal$ as a smooth map $U\to F$.
Injectivity now follows, since for any continuous linear functional $l:F\to \R$ we have that $\left.l\right|_U\in C^\infty(U,\R)$. Moreover, let $X,X'\in C^\infty(U,F)$ such that $E(X)=E(X')$ then in particular for every $x\in U$ and every continuous linear functional $l:F\to \R$ we have
$l(X(x))=(X(\left.l\right|_U))(x)=(X'(\left.l\right|_U))(x)=l(X'(x))$. By the Hahn-Banach theorem (cf.~Corollary \ref{Cor:l(v)=l(w)Thenv=w}) $X(x)=X'(x)$ for every $x\in U$. Hence, $X=X'$, which proves the injectivity of $E$.
\end{proof}

\begin{proposition}[{\cite[Lemma II.3.6 and Proposition II.3.7]{InfDimLieneeb}}]
Let $\Mcal$ be an $\Scal$-manifold, and $X,Y\in \mathfrak{X}(\Mcal)$. Then there is a unique vector field denoted by $[X,Y]\in \mathfrak{X}(\Mcal)$ such that for any open chart domain $\Ucal\subseteq \Mcal$ we have
\begin{align*}
    [X,Y](f)=X(Y(f))-Y(X(f))
\end{align*}
for all $f\in C^\infty(\Ucal,\R)$. Furthermore, $(\mathfrak{X}(\Mcal),[-,-])$ is a Lie algebra and $(\mathfrak{X}_\Scal(\Mcal),[-,-])$ is a Lie subalgebra of $(\mathfrak{X}(\Mcal),[-,-])$.
\end{proposition}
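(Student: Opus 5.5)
The approach is to work chart by chart, using the injectivity result Lemma \ref{Lemma:X.fInj}. On a chart domain $\Ucal\cong U\subseteq F$ I represent $X,Y$ by smooth maps $X,Y\in C^\infty(U,F)$. I then define the local candidate
\begin{align*}
[X,Y]_U(x)\coloneqq DY(x)X(x)-DX(x)Y(x).
\end{align*}
This is smooth by the chain rule (Proposition \ref{Prop:PropsOfDiff}), since $DY$ and $DX$ are smooth and the evaluation is just composition with smooth maps.

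The first step is to verify the derivation identity on $\Ucal$. For $f\in C^\infty(U,\R)$ we have $Y(f)(x)=Df(x)Y(x)$. Differentiating this in the direction $X(x)$ uses the chain rule and the symmetry of $D^2f(x)$ (item \ref{Item:HigherDiffMultLin} of Proposition \ref{Prop:PropsOfDiff}), and gives
\begin{align*}
X(Y(f))(x)=D^2f(x)\{Y(x),X(x)\}+Df(x)DY(x)X(x).
\end{align*}
In the antisymmetrization the second-derivative terms cancel, leaving $Df(x)[X,Y]_U(x)=[X,Y]_U(f)(x)$. Uniqueness on $\Ucal$ then follows from Lemma \ref{Lemma:X.fInj}: any vector field with this derivation property induces the same derivation as $[X,Y]_U$, so the two agree.

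The second step is globalization. Take $\Ucal\cap\Vcal$ with two local brackets. Restricting to a chart domain contained in the intersection, both local brackets satisfy the defining identity for all smooth $f$ there. Here I use that $X(f)$ is defined intrinsically (Remark \ref{Rem:X(f)}) and commutes with restriction to open subsets. Injectivity then forces the two to coincide, so the local brackets glue to a global smooth vector field $[X,Y]$. I expect this to be the main technical point. One must make sure that restriction to smaller chart domains is compatible, i.e. that the derivation of a restricted field is the restriction of the derivation. This holds because $Tf\circ X$ is computed pointwise.

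The third step is the algebraic structure. Bilinearity and antisymmetry are immediate from the local formula. For the Jacobi identity, the derivation property gives, for every $f$ on each chart domain,
\begin{align*}
[[X,Y],Z](f)+\text{cyclic}=0,
\end{align*}
because the commutator of operators satisfies Jacobi. Injectivity (Lemma \ref{Lemma:X.fInj}) then yields Jacobi for the vector fields themselves.

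Finally, for the $\Scal$-subalgebra claim, take $X,Y$ $\Scal$-smooth, i.e. with smooth (tame, resp. zero-tame) local representatives. Then $DX$ and $DY$ are again $\Scal$-smooth. The map $(x)\mapsto DY(x)X(x)$ is a composition of $T Y$ with $(\mathrm{Id},X)$, so it is $\Scal$-smooth because compositions of such maps are of the same type. Hence $[X,Y]\in\mathfrak{X}_\Scal(\Mcal)$.
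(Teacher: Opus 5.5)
Your proposal is correct and takes the same route as the paper. The paper cites Neeb for existence, uniqueness and the Lie algebra axioms; these rest on exactly your local formula $D\tilde Y\,\tilde X - D\tilde X\,\tilde Y$ together with the injectivity of $E$ from Lemma \ref{Lemma:X.fInj}, and the paper then obtains the $\Scal$-subalgebra claim from that same local formula, as you do. You write out the details the paper delegates to Neeb (the cancellation via symmetry of $D^2 f$, the gluing on overlaps, and Jacobi via the commutator of derivations), and all of them are sound.
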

\begin{proof}
Neeb has shown in \cite[Lemma II.3.6]{InfDimLieneeb} that $[X,Y]$ exists and is unique. Moreover, he shows in \cite[Proposition II.3.7]{InfDimLieneeb} that $[-,-]$ is indeed a Lie bracket. This is an immediate consequence of the fact that, by definition, the map $E$ from Lemma \ref{Lemma:X.fInj} is an injective map that maps local representatives of $[X,Y]$ to the commutator of the induced derivations. Hence, it only remains to prove that $[-,-]$ is closed under $\mathfrak{X}_\Scal(\Mcal)$. This follows trivially from Neeb's proof. In fact, Neeb shows in \cite[Lemma II.3.6]{InfDimLieneeb} that if $\Ucal$ is some chart domain on $\Mcal$ which is $\Scal$-diffeomorphic to some open subset $U$ of some $\Scal$-vector space $F$ then expressing $X$ and $Y$ locally as $\Scal$-smooth maps $\Tilde{X},\Tilde{Y}:U\to F$ the local representative of $[X,Y]$ viewed as an $\Scal$-smooth map $\widetilde{[X,Y]}:U\to F$ is given by $\widetilde{[X,Y]}(p)=D\Tilde{Y}(p)\Tilde{X}(p)-D\Tilde{X}(p)\Tilde{Y}(p)$, $p\in U$, which is $\Scal$-smooth.
\end{proof}

\begin{remark}
\label{Rem:LieBraVFsLocConv}
We call $[X,Y]$ as above the Lie bracket of the vector fields $X$ and $Y$.
\end{remark}

\begin{definition}
Let $\Mcal$ and $\Ncal$ be $\Scal$-manifolds, and $\phi:\Mcal\to \Ncal$ a smooth map. Then, as in the finite-dimensional case, two vector fields $X\in\mathfrak{X}(\Mcal)$ and $Y\in\mathfrak{X}(\Ncal)$ are called \textit{$\phi$-related} if $(\mathcal{T}\phi) \circ X=Y\circ \phi$.
\end{definition}

\begin{proposition}[{\cite[Lemma II.3.8]{InfDimLieneeb}}]
\label{Prop:LieBraPhiRel}
Let $\Mcal$ and $\Ncal$ be $\Scal$-manifolds, and 
$\phi:\Mcal\to \Ncal$ a smooth map. Let $X,X'\in \mathfrak{X}(\Mcal)$ and $Y,Y'\in \mathfrak{X}(\Ncal)$ and let $X$ and $Y$, and $X'$ and $Y'$ be $\phi$ related respectively. Then, $[X,X']$ and $[Y,Y']$ are $\phi$-related as well.
\end{proposition}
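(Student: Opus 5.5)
The argument is local, so I first reduce to charts and then use the derivation characterization from Lemma \ref{Lemma:X.fInj}. Two vector fields on $\Ncal$ agree as soon as their local representatives agree on every chart domain. Let $p\in\Mcal$, and choose a chart $\Vcal\ni\phi(p)$ of $\Ncal$, modeled on an open subset $V$ of an $\Scal$-vector space $G$. Then choose a chart domain $\Ucal\ni p$ of $\Mcal$ with $\phi(\Ucal)\subseteq\Vcal$; this is possible by continuity of $\phi$. It then suffices to prove
\begin{align*}
    \mathcal T\phi\circ[X,X']=[Y,Y']\circ\phi
\end{align*}
on $\Ucal$. Since $\Vcal$ is a chart domain, Lemma \ref{Lemma:X.fInj} gives injectivity of derivations there, and I will exploit this through continuous linear functionals.

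\emph{Step 1.} I first record a pullback identity. For any $g\in C^\infty(\Vcal,\R)$, $\phi$-relatedness of $X$ and $Y$ gives
\begin{align*}
    X(g\circ\phi)=Y(g)\circ\phi \quad\text{on } \Ucal .
\end{align*}
By Remark \ref{Rem:X(f)}, $X(g\circ\phi)(q)=\pi_2(T(g\circ\phi)X(q))$. The chain rule (Proposition \ref{Prop:PropsOfDiff}) rewrites this as $\pi_2(Tg(\mathcal T\phi X(q)))=\pi_2(Tg(Y(\phi(q))))$. The same identity holds for $X'$ and $Y'$.

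\emph{Step 2.} Applying Step 1 twice and using the defining property of the bracket from the preceding proposition, I obtain
\begin{align*}
    [X,X'](g\circ\phi)=X(X'(g\circ\phi))-X'(X(g\circ\phi))=X(Y'(g)\circ\phi)-X'(Y(g)\circ\phi)=[Y,Y'](g)\circ\phi .
\end{align*}
This uses that $Y'(g)$ and $Y(g)$ are again smooth on $\Vcal$. On the other hand, the same computation as in Step 1 gives $[X,X'](g\circ\phi)(q)=\pi_2\bigl(Tg(\mathcal T\phi[X,X'](q))\bigr)$. Hence the two tangent vectors $v:=\mathcal T\phi[X,X'](q)$ and $w:=[Y,Y'](\phi(q))$ satisfy $\pi_2(Tg\,v)=\pi_2(Tg\,w)$ for every such $g$.

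\emph{Step 3.} I conclude by separation. In the chart $\Vcal\cong V\subseteq G$, take $g=l|_V$ for a continuous linear functional $l:G\to\R$. Then $\pi_2(Tg\,v)=l(v)$, so $l(v)=l(w)$ for all such $l$. Corollary \ref{Cor:l(v)=l(w)Thenv=w} then gives $v=w$, which is exactly the relatedness claim at $q$.

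The main obstacle is a technical point about domains. The bracket identity $[X,X'](f)=X(X'(f))-X'(X(f))$ is stated only for $f$ defined on a chart domain of $\Mcal$. The function $g\circ\phi$ is defined on $\Ucal$, and I chose $\Ucal$ to be a chart domain, so the identity applies; I need to check that the restriction of brackets to open subsets is compatible, which follows from the uniqueness in the preceding proposition. Everything else is the chain rule.
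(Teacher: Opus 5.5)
Your proof is correct. The paper gives no argument of its own here and simply cites \cite[Lemma II.3.8]{InfDimLieneeb}. Your route has three ingredients: you pull back local functions along $\phi$, you use the derivation characterization of the bracket on chart domains, and you separate $\mathcal T\phi[X,X'](q)$ from $[Y,Y'](\phi(q))$ by continuous linear functionals via Corollary \ref{Cor:l(v)=l(w)Thenv=w}. This is exactly the mechanism the paper sets up in Lemma \ref{Lemma:X.fInj} and in the proof of the preceding proposition, so it fits the paper's framework without any extra input.

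The other standard route is a pure chart computation. You differentiate $\tilde Y\circ\tilde\phi=D\tilde\phi\,\tilde X$ (and likewise for $X',Y'$) and insert the result into the local formula $\widetilde{[Y,Y']}=D\tilde Y'\,\tilde Y-D\tilde Y\,\tilde Y'$ quoted in the paper. The terms $D^2\tilde\phi\{\tilde X,\tilde X'\}$ and $D^2\tilde\phi\{\tilde X',\tilde X\}$ cancel by the symmetry in Proposition \ref{Prop:PropsOfDiff}, which leaves $D\tilde\phi\,\widetilde{[X,X']}$. That version avoids Hahn--Banach but needs second derivatives and their symmetry. Yours uses only the chain rule on top of the derivation characterization of the bracket.

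Finally, the domain issue you flag is not a real obstacle. The paper states the bracket identity for every chart domain $\Ucal\subseteq\Mcal$, and $\Ucal_0\cap\phi^{-1}(\Vcal)$ is again a chart domain because the atlas is maximal. So no separate argument about compatibility of brackets with restriction is needed.
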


\subsection{Manifolds of smooth maps}

Smooth maps of finite-dimensional manifolds and sections of bundle spaces are ubiquitous in differential geometry. It turns out that these spaces, at least in the case of a compact domain, can be equipped with the structure of a zero-tame Fréchet manifold. We introduce a zero-tame manifold structure on the sections of surjctive submersions with compact codomain and apply this result to other spaces of mappings. We also discuss some types of zero-tame maps between such manifolds. This will allow us to understand the diffeomorphism group of a compact manifold, which we will discuss in the next section and which is one of the key components of autoequivalences of exact Courant algebroids (cf.~Theorem \ref{AuteqCharacterizationTheorem}). Moreover, we briefly discuss the relationship between finite-dimensional smooth manifolds and finite dimensional locally convex manifolds.

\begin{theorem}[{\cite[Part II, Theorem 2.3.1]{hamilton} and \cite[Proposition 3.2.4]{RWang}}]
\label{FiberBundleSection}
Let $M$ be a compact manifold, and $p:B\to M$ be a surjective submersion, e.g.~a fiber bundle. Then, the smooth sections $\Gamma(B)$ of $p$ form a zero-tame manifold.
\end{theorem}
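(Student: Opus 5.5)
Given $s\in\Gamma(B)$, I will build a chart centered at $s$ from a tubular-neighbourhood construction along the image $s(M)\subseteq B$. Let $V_s\coloneqq s^*(\ker \d p)\to M$ be the pullback of the vertical bundle of the submersion; it is a finite-dimensional vector bundle over the compact manifold $M$. I will choose a Riemannian metric on $B$ (or, more conveniently, a fiberwise exponential map: a spray/connection on the vertical bundle) and use it to obtain a smooth fiber-preserving map $\Phi_s:W_s\to B$ over $M$. Here $W_s\subseteq V_s$ is an open neighbourhood of the zero section, and $\Phi_s$ restricted to each fiber $W_s\cap (V_s)_x$ is a diffeomorphism onto an open neighbourhood $O_s$ of $s(M)$ intersected with $p^{-1}(x)$, with $\Phi_s(0_x)=s(x)$. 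Because $p$ is a submersion, the vertical exponential map lands in the fibers $p^{-1}(x)$, and it is a local diffeomorphism near the zero section. By compactness of $M$ and a standard shrinking argument, $\Phi_s:W_s\to O_s$ is then a diffeomorphism onto an open set $O_s\subseteq B$.

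The chart will be $\psi_s:\{\sigma\in\Gamma(B)\mid \sigma(M)\subseteq O_s\}\to\Gamma(W_s\subseteq V_s)$, defined by $\sigma\mapsto \Phi_s^{-1}\circ\sigma$. Its image is open in the tame Fréchet space $\Gamma(V_s)$ by Lemma \ref{Lemma:Gamma(UsubE)Open}, and $\Gamma(V_s)$ is tame by Theorem \ref{VBsectTameFre}. I will topologize $\Gamma(B)$ by declaring these domains open with the induced topology; equivalently, I take the $C^\infty$ Whitney topology, where Hausdorffness is clear. The domains cover $\Gamma(B)$, since each contains its center.

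The key step is the chart transition. For two centers $s,t$, the map $\psi_t\circ\psi_s^{-1}$ sends $\xi\in\Gamma(V_s)$ to $\Phi_t^{-1}\circ\Phi_s\circ\xi$. The map $\Phi_t^{-1}\circ\Phi_s$ is a smooth fiber-preserving map from the open set $\Phi_s^{-1}(O_s\cap O_t)\subseteq V_s$ to $V_t$, covering $\Id_M$. So the transition is a nonlinear vector bundle operator in the sense of Definition \ref{Def:VBop}. Lemma \ref{VBopSmoothTame} then shows it is zero-tame, which gives a zero-tame atlas and hence a zero-tame manifold structure.

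I expect the main obstacle to be constructing $\Phi_s$ as a diffeomorphism onto an open set while preserving fibers when $p$ is only a surjective submersion rather than a locally trivial bundle. I would first fix a complement (Ehresmann connection) and a fiberwise metric on $\ker\d p$, then use the vertical geodesic exponential of the induced metrics on the fibers $p^{-1}(x)$. This depends smoothly on $x$, because it is the flow of a smooth vertical spray on $\ker\d p$. Injectivity on a uniform neighbourhood then follows from the inverse function theorem along the compact set $s(M)$, together with the usual argument by contradiction with sequences.
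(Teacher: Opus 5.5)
Your proposal is correct and follows essentially the same route as the paper: charts $\sigma\mapsto\Phi_s^{-1}\circ\sigma$ are modeled on sections of the vertical bundle along $s(M)$, and the transition maps are nonlinear vector bundle operators, hence zero-tame by Lemma \ref{VBopSmoothTame}. The only difference is that you build the fiber-preserving tubular neighbourhood yourself via a vertical exponential map and get Hausdorffness from the Whitney $C^\infty$ topology, whereas the paper cites \cite[Lemma 3.2.1, Proposition 3.2.4]{RWang} for both.
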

\begin{proof}
We will give an outline of the proof and refer to \cite{RWang} for details.

Given a section $\sigma\in \Gamma(B)$ Wang showed in \cite[Lemma 3.2.1]{RWang} that there is an open neighbourhood $U\subseteq B$ and an open neighbourhood $V$ of the zero section in $\left.T^{\tn{vert}}B\right|_{\sigma(M)}$, by which we denote the vertical tangent bundle of $B$ along $\sigma$, such that there is a fiber-preserving diffeomorphism $\phi_\sigma:U\to V$. This induces a bijection on sections $\Phi_{\sigma}:\{\tau\in \Gamma(B)|~\tau(M)\subseteq U\}\to \Gamma(V\subseteq \left.T^{\tn{vert}}B\right|_{\sigma(M)})$. These induced maps will be the charts of $\Gamma(B)$. We note that the transition functions are then nonlinear vector bundle operators (cf.~Definition \ref{Def:VBop}) and, hence, zero-tame (cf.~Lemma \ref{VBopSmoothTame}).

For more details and a proof that the resulting topology is actually Hausdorff we refer to \cite[Proposition 3.2.4]{RWang}. 
\end{proof}

If $M$ and $N$ are manifolds and $M$ is compact then by simply choosing $B=M\times N$ and $p$ to be the projection to the first factor we can deduce the following corollary from the previous Theorem.
\begin{corollary}[{\cite[Part II, Corollary 2.3.2]{hamilton}}]
\label{Cor:SmoothMapsMfd}
Let $M$ and $N$ be manifolds and let $M$ be compact then the space $C^\infty(M,N)$ is a zero-tame Fréchet manifold.
\end{corollary}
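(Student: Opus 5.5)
The plan is to reduce Corollary \ref{Cor:SmoothMapsMfd} directly to Theorem \ref{FiberBundleSection} by identifying maps with sections of a trivial bundle. Take $B\coloneqq M\times N$ with $p\coloneqq \mathrm{pr}_1:M\times N\to M$. This is a smooth surjective submersion, indeed a trivial fiber bundle with typical fiber $N$, and its base $M$ is compact. So Theorem \ref{FiberBundleSection} applies and makes $\Gamma(M\times N)$ a zero-tame Fréchet manifold.

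Next I will use the canonical bijection
\begin{align*}
    \Theta: C^\infty(M,N)\longrightarrow \Gamma(M\times N),\qquad f\longmapsto (\Id_M,f),
\end{align*}
with inverse $s\mapsto \mathrm{pr}_2\circ s$. Both directions are clearly well defined: a smooth $f$ gives a smooth section of the graph form, and every smooth section $s$ with $\mathrm{pr}_1\circ s=\Id_M$ has this form with $f=\mathrm{pr}_2\circ s$ smooth. I then transport the topology and the maximal zero-tame atlas of $\Gamma(M\times N)$ along $\Theta$. Concretely, the charts of $C^\infty(M,N)$ are $\Phi_\sigma\circ\Theta$. Their transition maps are literally those of $\Gamma(M\times N)$, hence nonlinear vector bundle operators and zero-tame by Lemma \ref{VBopSmoothTame}. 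Hausdorffness also transfers, since $\Theta$ is a homeomorphism by construction.

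I would briefly make the charts explicit to show the structure is the natural one. For $\sigma=(\Id,f)$, the vertical bundle $T^{\mathrm{vert}}(M\times N)|_{\sigma(M)}$ is $f^*TN$. One can take the fiber-preserving diffeomorphism of Theorem \ref{FiberBundleSection} to be induced by the inverse of the exponential map of a Riemannian metric on $N$, restricted to a neighbourhood of the graph. This recovers Hamilton's standard charts $g\mapsto \exp_f^{-1}\circ g$ into $\Gamma(f^*TN)$.

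I expect no real obstacle. The only point needing care is whether "zero-tame manifold" means the transported structure or some a priori topology, such as the $C^\infty$ compact-open topology. I would note that the topology induced by the charts agrees with the $C^\infty$ topology, but I would not rely on this for the statement itself.
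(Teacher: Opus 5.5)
Your proposal is correct and matches the paper's argument. The paper also deduces the corollary directly from Theorem \ref{FiberBundleSection} by taking $B=M\times N$ with $p$ the projection onto the first factor, and identifying $C^\infty(M,N)$ with $\Gamma(M\times N)$. Your additional details, namely transporting the atlas along $f\mapsto(\Id_M,f)$ and the exponential-map charts into $\Gamma(f^*TN)$, simply make explicit what the paper leaves implicit.
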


\begin{proposition}[{\cite[Part II, Theorem 2.3.3]{hamilton}}]
\label{Prop:CompSmoothTame}
Let $M$, $M'$ and $N$ be manifolds, and let $M$ and $M'$ be compact. Then, the map 
\begin{align*}
    C: C^\infty(M',N)\times C^\infty(M,M')&\longrightarrow C^\infty(M,N)\\
    (f,g)&\longmapsto f\circ g
\end{align*}
is a smooth tame map.
\end{proposition}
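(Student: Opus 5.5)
Since the result is attributed to Hamilton, the plan is to reduce the composition map $C$, in the zero-tame charts of Theorem \ref{FiberBundleSection}, to compositions of maps already shown to be smooth tame. These are nonlinear vector bundle operators (Lemma \ref{VBopSmoothTame}), nonlinear partial differential operators (Proposition \ref{Prop:NonlinPartDiffOpsAreSmoothTame}), and tame linear maps.

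First we localize. Fix $(f_0,g_0)$ and choose a Riemannian metric on $N$ and on $M'$. Using the exponential maps, charts around $f_0$ and $g_0$ are given by sections $\xi\in\Gamma(f_0^*TN)$ and $\eta\in\Gamma(g_0^*TM')$ near zero, via $f=\exp\circ\xi$ and $g=\exp\circ\eta$. A chart around $f_0\circ g_0$ is given by $\Gamma((f_0\circ g_0)^*TN)$. The local representative of $C$ is
\begin{align*}
(\xi,\eta)\longmapsto \exp^{-1}_{f_0(g_0(x))}\bigl(\exp_{f_0(g(x))}(\xi(g(x)))\bigr),\qquad g=\exp\circ\eta .
\end{align*}
The outer operation, applying $\exp$ and then $\exp^{-1}$ fiberwise, is a nonlinear vector bundle operator once its input is known. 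So the essential difficulty is the evaluation map
\begin{align*}
\Gamma(f_0^*TN)\times\Gamma(U\subseteq g_0^*TM')\to\Gamma(g_0^*f_0^*TN\text{-type bundle}),\qquad(\xi,\eta)\mapsto \xi\circ(\exp\circ\eta).
\end{align*}
Here we embed $N$ into some $\R^m$, or work in local trivializations, so that $\xi$ may be treated as a vector-valued function on $M'$. We reduce further to the model map $E:C^\infty(M',\R^m)\times C^\infty(M,M')\to C^\infty(M,\R^m)$, $(u,g)\mapsto u\circ g$.

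For $E$ we argue as Hamilton does. Smoothness follows from the chain rule: $DE(u,g)(v,h)=v\circ g+(Du\circ g)h$. Inductively, every derivative $D^nE$ is a finite sum of terms of the form $(D^ju)\circ g$ or $(D^j v)\circ g$, multiplied by components of $h$. Hence it suffices to show that $E$ itself and its derivatives are continuous and satisfy tame estimates. The tame estimate is the core. Using local norms \eqref{VBsectLocNorm}, the Faà di Bruno formula writes $\partial^\alpha(u\circ g)$ as a sum of products $(\partial^\beta u)\circ g\cdot\partial^{\gamma_1}g\cdots\partial^{\gamma_j}g$. We then apply the standard interpolation inequalities on $\Gamma(V)$, namely $\nnorm{u}{i}\le C\nnorm{u}{0}^{1-i/n}\nnorm{u}{n}^{i/n}$, together with the Hölder/convexity argument. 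Since $g$ stays in a $C^1$-neighbourhood, this bounds products of this type by
\begin{align*}
C\bigl(\nnorm{u}{n}+\nnorm{u}{1}\nnorm{g}{n}\bigr),
\end{align*}
which gives $\nnorm{E(u,g)}{n}\le C_n(1+\nnorm{u}{n}+\nnorm{g}{n})$ locally. The same estimate then applies to each $D^nE$, with the extra multilinear factors $h$ handled by the same interpolation.

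The main obstacle is exactly this interpolation/Faà di Bruno estimate. The interpolation inequalities for the $C^k$-norms have not been established in the paper. I would first try to derive them from the Sobolev-scale characterization: Theorem \ref{Theorem:C^k-Norm=Sob-Norm} together with Hilbert-space interpolation, accepting a fixed loss of degree. Alternatively, I would cite \cite[Part II, Theorem 2.3.3]{hamilton}, where these estimates are carried out. The remaining steps are then routine: handling chart changes, and assembling the pieces, since compositions of smooth tame maps are smooth tame.
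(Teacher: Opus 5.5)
The paper gives no argument of its own here and simply cites \cite[Part II, Theorem 2.3.3]{hamilton}. Your outline is essentially Hamilton's proof and is correct at this level of detail: you localize in charts, write $D^nC$ via the chain rule as sums of compositions $(D^ju)\circ g$ times multilinear factors, and prove the core bound $\|u\circ g\|_n\le C(\|u\|_n+\|u\|_1\|g\|_n)$ for $g$ in a $C^1$-bounded set by Fa\`a di Bruno plus interpolation. Your fallback also works: derive interpolation with a fixed loss from the Sobolev scale, via Theorem \ref{Theorem:C^k-Norm=Sob-Norm} and H\"older on the Fourier side. A tame estimate may lose a fixed number of derivatives, provided $u$ and $g$ are kept locally bounded in the corresponding lower norm.
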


\begin{proposition}
\label{Prop:FiberBundleOperators}
Let $M$ be a compact manifold, and $A\xrightarrow{\pi} M$ and $B\to M$ be surjective submersions. Let $U\subseteq A$ be open and $p:U\to B$ be a smooth fiber-preserving map. Then $\Gamma(U\subseteq A)\coloneqq \{s\in \Gamma(A)|~s(M)\subset U\}$ is an open subset of $\Gamma(A)$ and the map $P:\Gamma(U\subseteq A)\to \Gamma(B)$ given by $P(\sigma)\coloneqq p\circ \sigma$, $\sigma\in \Gamma(U\subseteq A)$, is zero-tame.
\end{proposition}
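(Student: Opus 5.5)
The plan is to reduce the statement to Lemma \ref{VBopSmoothTame} by working in the charts of Theorem \ref{FiberBundleSection}.

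\emph{Openness.} Recall that a chart of $\Gamma(A)$ centered at $\sigma\in\Gamma(A)$ is given by a fiber-preserving diffeomorphism $\phi_\sigma:U_\sigma\to V_\sigma$. Here $U_\sigma\subseteq A$ is an open neighbourhood of $\sigma(M)$ and $V_\sigma$ is an open neighbourhood of the zero section in $E_\sigma\coloneqq \left.T^{\tn{vert}}A\right|_{\sigma(M)}$. The induced chart is $\Phi_\sigma(\tau)=\phi_\sigma\circ\tau$, viewed as a section of $E_\sigma$.

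Now fix $\sigma\in\Gamma(U\subseteq A)$. Since $U\cap U_\sigma$ is an open neighbourhood of $\sigma(M)$, the set $\phi_\sigma(U\cap U_\sigma)$ is an open subset of $E_\sigma$ containing the zero section. Then $\Phi_\sigma$ maps $\Gamma(U\cap U_\sigma\subseteq A)$ onto $\Gamma(\phi_\sigma(U\cap U_\sigma)\subseteq E_\sigma)$. By Lemma \ref{Lemma:Gamma(UsubE)Open}, the latter set is open in $\Gamma(E_\sigma)$, and it contains $\Phi_\sigma(\sigma)=0$. Hence $\sigma$ has an open neighbourhood contained in $\Gamma(U\subseteq A)$, and $\Gamma(U\subseteq A)$ is open.

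\emph{Zero-tameness.} Fix $\sigma$ as above and set $\rho\coloneqq p\circ\sigma\in\Gamma(B)$. Take the chart $\Phi_\sigma$ of $\Gamma(A)$ around $\sigma$ and the chart $\Psi_\rho$ of $\Gamma(B)$ around $\rho$, given by $\psi_\rho:U'_\rho\to V'_\rho\subseteq E'_\rho\coloneqq\left.T^{\tn{vert}}B\right|_{\rho(M)}$.

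Shrink the domain to $W\coloneqq U\cap U_\sigma\cap p^{-1}(U'_\rho)$. This is open because $p$ is continuous, and it contains $\sigma(M)$ because $p(\sigma(M))=\rho(M)\subseteq U'_\rho$. Define the fiber-preserving smooth map
\begin{align*}
q\coloneqq \psi_\rho\circ p\circ\phi_\sigma^{-1}:\phi_\sigma(W)\to E'_\rho .
\end{align*}
Fiber preservation holds because $\phi_\sigma$, $p$ and $\psi_\rho$ all commute with the projections to $M$. Both $E_\sigma$ and $E'_\rho$ are honest vector bundles over $M$, after identifying $\sigma(M)$ and $\rho(M)$ with $M$ via the projections. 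The local representative $\Psi_\rho\circ P\circ\Phi_\sigma^{-1}$, restricted to the open set $\Gamma(\phi_\sigma(W)\subseteq E_\sigma)$, is exactly $\tau\mapsto q\circ\tau$. This is a nonlinear vector bundle operator in the sense of Definition \ref{Def:VBop}, so Lemma \ref{VBopSmoothTame} shows it is zero-tame. Since $\sigma$ was arbitrary, $P$ is zero-tame as a map of zero-tame manifolds.

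\emph{Main obstacle.} The main point to check is that these particular charts belong to the zero-tame structure of Theorem \ref{FiberBundleSection}. This holds by construction, because that structure is generated by exactly these charts $\Phi_\sigma$. A secondary point is that sections of $E_\sigma$ correspond to sections over $M$. This holds because $\sigma:M\to\sigma(M)$ is a diffeomorphism, so $E_\sigma$ pulls back to a vector bundle over $M$ without changing anything. Beyond this, the argument is only bookkeeping with fiber-preserving maps.
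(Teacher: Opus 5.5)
Your proposal is correct and follows essentially the same route as the paper: you get openness by shrinking a chart domain from Theorem \ref{FiberBundleSection} into $U$ and applying Lemma \ref{Lemma:Gamma(UsubE)Open}, and zero-tameness by noting that in such charts the local representative of $P$ is a nonlinear vector bundle operator, hence zero-tame by Lemma \ref{VBopSmoothTame}. Your explicit shrinking to $W=U\cap U_\sigma\cap p^{-1}(U'_\rho)$ makes precise a step the paper leaves implicit, namely that the local representative is actually defined on an open neighbourhood of the chart center.
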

\begin{proof}
As in the case of sections of vector bundles $\Gamma(U\subseteq A)$ might be empty in which case it is trivially open.

So, we will assume that $\Gamma(U\subseteq A)\neq \emptyset$. Let $s\in \Gamma(U\subseteq A)$ then we know there is an open neighbourhood $V\subseteq A$ of $s(M)$ with a fiber-preserving diffeomorphism to an open subset $W\subseteq E$ for some vector bundle $E$ inducing a chart on $\Gamma(A)$ (cf.~Theorem \ref{FiberBundleSection}). We may assume that $V\subseteq U$. Then the subset $\Gamma(W\subseteq E)\cong \Gamma(V\subseteq A)\subseteq \Gamma(U\subseteq A)$ is open in $\Gamma(A)$ (cf.~Lemma \ref{Lemma:Gamma(UsubE)Open}) and so $\Gamma(U\subseteq A)$ is open.
Furthermore, in any choice of charts on $\Gamma(U\subseteq A)$ and $\Gamma(B)$ induced by such fiber-preserving diffeomorphisms to open subsets of vector bundles the representative of the map $P$ is induced by a smooth fiber-preserving map of open subsets of vector bundles and thus the representative is a vector bundle operator and hence zero-tame (cf.~Lemma \ref{VBopSmoothTame}). So, $P$ is zero-tame.
\end{proof}

\begin{remark}
    Note that in the previous proposition we have shown that if $U\to M$ is again a surjective submersion then $\Gamma(U)\cong \Gamma(U\subset A)$ as zero-tame manifolds, i.e. the inherent zero-tame structure of $\Gamma(U)$ and the inherited zero-tame structure of $\Gamma(U\subseteq A)$ agree.
    We will still mostly use the notation $\Gamma(U\subseteq A)$ for easier readability.
\end{remark}

\begin{convention}
\label{Conv:FiberBundleOperators}
Let $A\xrightarrow{\pi} M$ and $B\to M$ be surjective submersions and let $M$ be a compact manifold. Let $U\subseteq A$ be open and $p:U\to B$ be a smooth fiber-preserving map. Then we refer to the zero-tame map $P:\Gamma(U\subseteq A)\to \Gamma(B)$ given by $P(\sigma)\coloneqq p\circ \sigma$, $\sigma\in \Gamma(U\subseteq A)$ as a \textit{bundle operator}. 
\end{convention}

\begin{proposition}[{\cite[Lemma 3.2.9]{RWang}}]
\label{Prop:SmMapsIntoProd=ProdOfSmMaps}
Let $M$ be a compact manifold, $B_1\to M$ and $B_2\to M$ be surjective submersions and $B_1\times_MB_2$ denote their fiber product. Then, the obvious bijection between and $\Gamma(B_1\times_MB_2)$ and $\Gamma(B_1)\times \Gamma(B_2)$ is a zero-tame diffeomorphism. 
\end{proposition}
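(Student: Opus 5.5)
The plan is to prove the statement chart by chart, using the local description of sections of surjective submersions from Theorem \ref{FiberBundleSection}. Write $B_1\times_M B_2 \to M$ for the fiber product. It is again a surjective submersion, since it is locally a product of submersions over $M$, so $\Gamma(B_1\times_MB_2)$ is a zero-tame manifold by Theorem \ref{FiberBundleSection}. The bijection is $s\mapsto (\mathrm{pr}_1\circ s,\mathrm{pr}_2\circ s)$, with inverse $(s_1,s_2)\mapsto (s_1,s_2)$ viewed pointwise.

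Smoothness of the forward map in each component follows at once from Proposition \ref{Prop:FiberBundleOperators}. Indeed, $\mathrm{pr}_i:B_1\times_MB_2\to B_i$ is a smooth fiber-preserving map, so $s\mapsto \mathrm{pr}_i\circ s$ is a zero-tame bundle operator. Since a map into a product of zero-tame manifolds is zero-tame when its components are (using the product grading of Convention \ref{Conv:ProdOfFreSpa}), the forward map is zero-tame.

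For the inverse, I work in charts. Fix $(\sigma_1,\sigma_2)$ and choose the tubular fiber-preserving diffeomorphisms $\phi_{\sigma_i}:U_i\to V_i\subseteq T^{\mathrm{vert}}B_i|_{\sigma_i(M)}$ from the proof of Theorem \ref{FiberBundleSection}. Then $\phi_{\sigma_1}\times_M\phi_{\sigma_2}:U_1\times_MU_2\to V_1\oplus V_2$ is a fiber-preserving diffeomorphism onto an open subset of
\[
\sigma^*T^{\mathrm{vert}}(B_1\times_MB_2)\cong \sigma_1^*T^{\mathrm{vert}}B_1\oplus\sigma_2^*T^{\mathrm{vert}}B_2,
\]
where $\sigma=(\sigma_1,\sigma_2)$. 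This is therefore a legitimate chart of $\Gamma(B_1\times_MB_2)$, compatible with the maximal zero-tame structure because transition maps with any other such chart are bundle operators. In these charts the bijection is represented by the linear identification $\Gamma(E_1\oplus E_2)\cong\Gamma(E_1)\times\Gamma(E_2)$, restricted to open sets, and this is a zero-tame isomorphism by Proposition \ref{Prop:EplusFsect}. Both directions are thus zero-tame.

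The main obstacle is justifying that the product chart $\phi_{\sigma_1}\times_M\phi_{\sigma_2}$ actually belongs to the zero-tame structure of $\Gamma(B_1\times_MB_2)$, which was constructed from Wang's tubular neighbourhoods of $B_1\times_MB_2$ itself. I would handle this by noting that any two such fiber-preserving tubular diffeomorphisms around the same section differ by a smooth fiber-preserving map between open subsets of vector bundles. The induced transition is then a nonlinear vector bundle operator, hence zero-tame by Lemma \ref{VBopSmoothTame}.
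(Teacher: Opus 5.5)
Your proof is correct. The paper does not prove this itself; it defers to \cite[Lemma 3.2.9]{RWang}. Your chart-wise argument is therefore a self-contained substitute that uses only tools already in the paper.

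The core of your argument is the fibered product $\phi_{\sigma_1}\times_M\phi_{\sigma_2}$ of the tubular diffeomorphisms. In that chart the bijection becomes the restriction of the linear zero-tame isomorphism $\Gamma(E_1\oplus E_2)\cong\Gamma(E_1)\times\Gamma(E_2)$ from Proposition \ref{Prop:EplusFsect}. The paper relies on the same mechanism implicitly in the proof of Proposition \ref{Prop:FiberBundleOperators}, where it works with ``any choice of charts'' induced by fiber-preserving diffeomorphisms onto open subsets of vector bundles.

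You correctly identified the one real subtlety: showing that the product chart belongs to the maximal zero-tame atlas. Two details should be stated in full.
\begin{itemize}
\item Compatibility has to be checked against every chart of the atlas, not only those around the same section.
\item The chart domain $\Gamma(U_1\times_M U_2\subseteq B_1\times_M B_2)$ must be open.
\end{itemize}
Both follow from one observation. For any two fiber-preserving diffeomorphisms onto open subsets of vector bundles, the transition is a smooth fiber-preserving map. It therefore induces a nonlinear vector bundle operator (Lemma \ref{VBopSmoothTame}) on an open set of sections (Lemma \ref{Lemma:Gamma(UsubE)Open}).

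Once the chart computation is in place, your separate bundle-operator argument for the forward map is redundant, though harmless.
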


\begin{corollary}
Let $M$, $N$ and $N'$ be manifolds and $M$ compact then the obvious bijection of $C^\infty(M,N\times N')$ and $C^\infty(M,N)\times C^\infty(M,N')$ is a zero-tame diffeomorphism.
\end{corollary}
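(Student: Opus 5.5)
The statement to prove is the corollary: the obvious bijection $C^\infty(M,N\times N')\cong C^\infty(M,N)\times C^\infty(M,N')$ is a zero-tame diffeomorphism. The plan is to deduce it directly from Proposition \ref{Prop:SmMapsIntoProd=ProdOfSmMaps} by viewing maps as sections of trivial bundles, exactly as Corollary \ref{Cor:SmoothMapsMfd} was deduced from Theorem \ref{FiberBundleSection}.

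Set $B_1\coloneqq M\times N\to M$ and $B_2\coloneqq M\times N'\to M$, both with the projection to the first factor. These are surjective submersions, and by construction the zero-tame structure on $C^\infty(M,N)$ is the one transported from $\Gamma(B_1)$ via the bijection $f\mapsto (\Id,f)$; the same holds for $N'$. Next, identify the fiber product $B_1\times_M B_2$ with $M\times(N\times N')\to M$ via $((x,y),(x,y'))\mapsto (x,(y,y'))$. This map is a fiber-preserving diffeomorphism of surjective submersions over $M$, and so is its inverse.

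By Proposition \ref{Prop:FiberBundleOperators}, applied with $U$ the whole space, this diffeomorphism and its inverse induce zero-tame bundle operators that are mutually inverse. Hence $\Gamma(B_1\times_M B_2)\cong\Gamma(M\times(N\times N'))=C^\infty(M,N\times N')$ is a zero-tame diffeomorphism. Composing with the zero-tame diffeomorphism $\Gamma(B_1\times_MB_2)\cong\Gamma(B_1)\times\Gamma(B_2)$ from Proposition \ref{Prop:SmMapsIntoProd=ProdOfSmMaps} gives a zero-tame diffeomorphism onto $C^\infty(M,N)\times C^\infty(M,N')$.

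It remains to check that this composite is the obvious bijection $f\mapsto(\mathrm{pr}_N\circ f,\mathrm{pr}_{N'}\circ f)$. This follows by tracing the graph sections through each step: the section $x\mapsto(x,f(x))$ goes to $((x,\mathrm{pr}_Nf(x)),(x,\mathrm{pr}_{N'}f(x)))$ and then to the pair of graph sections of the two components.

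I expect no real obstacle here. The only point requiring care is that the identification $\Gamma(M\times N)\cong C^\infty(M,N)$ is the defining one, so that all structures in play agree. The product $C^\infty(M,N)\times C^\infty(M,N')$ carries the product zero-tame structure, with charts given by products of charts, in accordance with Convention \ref{Conv:ProdOfFreSpa}.
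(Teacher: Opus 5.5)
Your proposal is correct and is the argument the paper intends: the paper states this corollary without proof directly after Proposition \ref{Prop:SmMapsIntoProd=ProdOfSmMaps}, to be obtained by specializing to the trivial bundles $M\times N\to M$ and $M\times N'\to M$, exactly as you do. Your extra step, identifying $(M\times N)\times_M(M\times N')$ with $M\times(N\times N')$ through a fiber-preserving diffeomorphism and applying Proposition \ref{Prop:FiberBundleOperators}, makes explicit a detail the paper leaves implicit.
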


\begin{lemma}
\label{Lemma:GateauxImpliesFrechetOnFinDim}
Any continuously G\^ateaux differentiable map $f:(U\subseteq \R^n)\to \R^m$, with $n,m\in \N$ and $U\subseteq \R^n$ open, is continuously Fréchet differentiable. In particular any G\^ateaux differentiable [smooth] map between finite-dimensional manifolds is Fréchet differentiable [smooth].
\end{lemma}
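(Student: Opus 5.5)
The plan is to prove the first assertion (continuous Gâteaux differentiability on $U\subseteq\R^n$ implies continuous Fréchet differentiability) with the standard finite-dimensional argument: the mean value theorem applied along coordinate directions. Then I pass to manifolds via charts and induction on the order of differentiability.

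Let $f:U\to\R^m$ be continuously Gâteaux differentiable. By Lemma \ref{Lemma:DP(f)Lin}, $Df(x)$ is linear in $h$. Since $Df:U\times\R^n\to\R^m$ is continuous, the partial derivatives $\partial_i f(x)=Df(x)e_i$ are continuous in $x$. Fix $x\in U$ and a small $h=\sum_i h_ie_i$, and write $x_j=x+\sum_{i\le j}h_ie_i$. Then telescope:
\begin{align*}
f(x+h)-f(x)-Df(x)h=\sum_{j=1}^n\bigl(f(x_j)-f(x_{j-1})-h_j\partial_jf(x)\bigr).
\end{align*}
Each summand is a difference along a single coordinate line. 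I would apply the integral form of Taylor's formula (Proposition \ref{Prop:PropsOfDiff}, or simply the fundamental theorem of calculus componentwise) to rewrite each summand as
\begin{align*}
h_j\int_0^1\bigl(\partial_jf(x_{j-1}+th_je_j)-\partial_jf(x)\bigr)dt.
\end{align*}
By continuity of $\partial_jf$ at $x$, the integrand is uniformly small once $|h|$ is small. Hence the whole remainder is $o(|h|)$, so $f$ is Fréchet differentiable with Fréchet derivative $Df(x)$.

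Continuity of $x\mapsto Df(x)$ in operator norm follows because the operator norm on $\mathrm{Hom}(\R^n,\R^m)$ is bounded by $C\sum_i|Df(x)e_i|$, and each term is continuous. This proves the $C^1$ statement.

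For the smooth case, I will argue by induction on $k$ that a Gâteaux $C^k$ map is Fréchet $C^k$. The key observation is that the $k$-th Gâteaux differential $D^kf:U\times(\R^n)^k\to\R^m$ is continuous and multilinear in the last $k$ slots (Proposition \ref{Prop:PropsOfDiff}). Consider the map $x\mapsto D^{k-1}f(x)$ with values in the finite-dimensional space of multilinear maps $\mathrm{Mult}^{k-1}(\R^n,\R^m)\cong\R^N$; its coordinates are $D^{k-1}f(x)\{e_{i_1},\dots,e_{i_{k-1}}\}$. This map is continuously Gâteaux differentiable with derivative given by $D^kf$, so the $C^1$ case applies to it. Finally, the statement for maps between finite-dimensional manifolds follows by passing to charts, since both notions are local and chart transitions are smooth in either sense.

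The only mildly delicate point is the identification of iterated Gâteaux derivatives with derivatives of the map into multilinear maps. This is handled by evaluating on basis vectors, which is harmless in finite dimensions. I do not expect a genuine obstacle here.
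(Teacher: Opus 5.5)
Your proposal is correct and takes essentially the paper's approach: both rest on the observation that the Gâteaux derivatives along a basis, $Df(x)e_i=\partial_i f(x)$, are continuous in $x$, together with the classical fact that continuous partial derivatives make a map Fréchet $C^1$. The paper simply cites that classical criterion and leaves the higher-order and manifold cases implicit. You instead prove the criterion via the telescoping and fundamental-theorem-of-calculus estimate, and you spell out the induction through $x\mapsto D^{k-1}f(x)\in\mathrm{Mult}^{k-1}(\R^n,\R^m)$; this is a sound and welcome elaboration of the same argument.
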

\begin{proof}
We choose a basis $(e_i)_{i=1,\ldots,n}$ of $\R^n$. Then $D_if(x)e_i=\lim_{t\to 0}\frac{f(x+t e_i)-f(x)}{t}=\del_if (x)$, $x\in U$. Since this limit exists and defines a continuous map by assumption, we find that $f$ is Fréchet differentiable as all of its partial derivatives exist and are continuous.
\end{proof}

\begin{proposition}
\label{Prop:FinDimMfdsAsFrechetMfds}
Any second countable locally convex manifold which can be covered by charts with values in $\R^n$ for some $n\in \N$ is a smooth finite-dimensional manifold. Moreover, any smooth finite-dimensional manifold is a zero-tame Fréchet manifold. Maps between smooth finite-dimensional manifolds that are continuously Gâteaux differentiable are Fréchet differentiable. Additionally, any smooth map between smooth finite-dimensional manifolds is zero-tame.
\end{proposition}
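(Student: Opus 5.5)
The proposition has four parts, and I will prove them in the order stated.

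\emph{Part 1: locally convex manifolds with $\R^n$-charts.} Let $\Mcal$ be such a manifold. Each chart $\psi:\Ucal\to V\subseteq \R^n$ is a homeomorphism onto an open subset of $\R^n$ carrying its usual topology; by Lemma \ref{Lemma:FinDimHausdTVS} the locally convex topology on $\R^n$ is the Euclidean one. The transition maps $\psi\circ\varphi^{-1}$ are smooth in the sense of Definition \ref{Def:HigherDiffs}, i.e.\ iterated Gâteaux derivatives exist and are continuous. Applying Lemma \ref{Lemma:GateauxImpliesFrechetOnFinDim} inductively to $f$, then to $Df$ viewed as a map $U\to \R^{m\times n}$, and so on, shows that all classical partial derivatives exist and are continuous. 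Hence the transition maps are classically $C^\infty$. Together with the Hausdorff assumption built into the definition and the assumed second countability, $\Mcal$ is a smooth finite-dimensional manifold.

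\emph{Part 2: finite-dimensional manifolds as zero-tame manifolds.} Let $M$ be a smooth manifold of dimension $n$. The space $\R^n$ is a Banach space and hence tame (Example \ref{Ex:BanachSpacesAreTameFre}). A classically smooth transition map is in particular smooth in the Gâteaux sense, because Fréchet derivatives are Gâteaux derivatives and the higher derivatives agree. By Example \ref{Example:ManifoldsModelledOnBanachSpaces}, which invokes Corollary \ref{Cor:MapsOfBanachSpacesZeroTame}, the standard atlas is therefore a zero-tame atlas, and so $M$ is a zero-tame Fréchet manifold.

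\emph{Parts 3 and 4: maps.} For a continuously Gâteaux differentiable map between finite-dimensional manifolds, I work in charts and apply Lemma \ref{Lemma:GateauxImpliesFrechetOnFinDim} directly; this gives Fréchet differentiability. For the last claim, a classically smooth map between finite-dimensional manifolds has local representatives that are smooth maps between open subsets of Banach spaces. These are zero-tame by Corollary \ref{Cor:MapsOfBanachSpacesZeroTame}, so the map is zero-tame.

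The main obstacle is the inductive step in Part 1. Here I must identify the Gâteaux derivative $D^kf$, a continuous map on $U\times(\R^n)^k$ that is multilinear in the last $k$ slots, with a continuous map $U\to \mathrm{Mult}^k(\R^n,\R^m)$. Continuity of the coefficient functions $D^kf(x)\{e_{i_1},\dots,e_{i_k}\}$ gives this immediately, because finite-dimensional multilinear maps are determined by their values on basis vectors. This is the step on which the induction rests, and I would make it explicit.
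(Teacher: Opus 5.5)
Your proposal is correct and follows essentially the same route as the paper: Lemma \ref{Lemma:GateauxImpliesFrechetOnFinDim} for the transition maps and for maps between finite-dimensional manifolds, and Example \ref{Example:ManifoldsModelledOnBanachSpaces} (via Corollary \ref{Cor:MapsOfBanachSpacesZeroTame}) for zero-tameness; your explicit induction on iterated partial derivatives spells out what the paper leaves implicit. The paper additionally notes that every chart of the maximal locally convex atlas is itself $\R^n$-valued: compatibility with an $\R^n$-chart forces its model space to be $n$-dimensional and Hausdorff, hence $\cong\R^n$ by Lemma \ref{Lemma:FinDimHausdTVS}. Consequently the whole locally convex smooth structure, and not only the given covering atlas, coincides with the classical one.
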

\begin{proof}
By Lemma \ref{Lemma:GateauxImpliesFrechetOnFinDim} the coordinate transition functions of a locally convex manifold that can be covered by charts with values in $\R^n$, $n\in \N$, are smooth in the conventional sense. Since each chart in the smooth structure then also has to take values in $\R^n$ (cf.~Lemma \ref{Lemma:FinDimHausdTVS}) this extends to the smooth (locally convex) structure. Since by Lemma \ref{Lemma:GateauxImpliesFrechetOnFinDim} continuous Gâteaux differentiability and continuous Fréchet differentiability agree on finite-dimensional manifolds, if we additionally assume second-countability, we get that the smooth finite dimensional structure and the smooth locally convex structure on the manifold induced by the given atlas agree.

Zero-tameness of smooth finite-dimensional manifolds and smooth maps thereof follows as a special case of Example \ref{Example:ManifoldsModelledOnBanachSpaces}. 
\end{proof}

\begin{remark}
We note that any finite-dimensional manifold is zero-tame with respect to the usual Euclidean norm on $\R^n$, for $n\in \N$. However, we can equip $\R^n$ with a grading that is not zero-tamely equivalent to the trivial grading via the Euclidean norm. And so on a finite-dimensional manifold we may have different non-equivalent zero-tame structures.
\end{remark}

\begin{remark}
\label{Rem:M=Cinfty(.,M)}
Any finite-dimensional manifold $M$ is diffeomorphic to the zero-tame Fréchet manifold $C^\infty(*,M)$ where $*$ denotes the manifold consisting of only one point. In fact, let $p\in M$ then $p$ corresponds to the section $s_p$ of $*\times M\to *$ sending $\{*\}\mapsto (\{*\},p)$. Choosing a chart $(U,\varphi)$ around $p$ gives a diffeomorphism of $(*\times U)$ and some open subset $(*\times W)\subseteq (*\times \R^{\dim(M)})$. The induced map on sections under the identification of $s_p$ with $p$ is simply the map itself and is hence a chart on $C^\infty(*,M)$. So, indeed the manifold structure of $C^\infty(*,M)$ as a tame manifold is precisely the same as that of $M$ as a finite-dimensional smooth manifold.
\end{remark}

\subsection{Infinite-dimensional Lie Groups and their Lie algebras}

In this section, we define tame Fréchet Lie groups and describe how we can, as in the finite-dimensional case, obtain a Lie algebra structure on the tangent space at the identity element from the Lie algebra of (left-invariant) vector fields over the tame Fréchet Lie group. Lastly, we present the notion of a regular Lie group. Despite our focus being on tame Fréchet Lie groups, this section also covers locally convex, and zero-tame Fréchet Lie groups. To accommodate these different types of manifolds we continue to use the language of $\Scal$-manifolds and $\Scal$-smooth maps introduced in Convention \ref{Conv:ScalManifolds}. The following section is mostly based on \cite[Section III.1]{InfDimLieneeb}.

\begin{definition}[{\cite[Definition III.1.1]{InfDimLieneeb}, \cite[p.~148]{hamilton}}]
Let $(\Gcal,\mu,V)$ be a group with a group operation $\mu:\Gcal\times \Gcal\to \Gcal$ and inversion map $V:\Gcal\to \Gcal$. If $\Gcal$ is additionally an $\Scal$-manifold, and if $\mu$ and $V$ are $\Scal$-smooth maps, then we call $\Gcal$ an $\Scal$-Lie group. 
\end{definition}

\begin{convention}
Let $(\Gcal,\mu,V)$ be an $\Scal$-Lie group, and let $g\in \Gcal$ then we write
$L_g\coloneqq \mu(g,-):\Gcal\to \Gcal$ and $R_g\coloneqq \mu(-,g):\Gcal\to \Gcal$ for the left and right translations respectively. These are, of course, $\Scal$-diffeomorphisms for each $g\in \Gcal$.

Moreover, we will frequently just write $\Gcal$ instead of $(\Gcal,\mu,V)$ and denote $gh\coloneqq\mu(g,h)$ and $g^{-1}\coloneqq V(g)$ for $g,h\in \Gcal$.
\end{convention}

\begin{example}
\label{Ex:C(M,G)}
Let $G$ be a finite-dimensional Lie group and $M$ be a compact manifold then the tame Fréchet manifold $C^\infty(M,G)$ (cf.~Corollary \ref{Cor:SmoothMapsMfd}) is a zero-tame Fréchet Lie group via pointwise multiplication and pointwise inversion. Note that these maps are bundle operators and hence zero-tame (cf.~Convention \ref{Conv:FiberBundleOperators} and Proposition \ref{Prop:FiberBundleOperators}). Here, we view multiplication as being induced by the smooth map
$M\times G\times G\ni (p,g,h)\mapsto (p,gh)\in M\times G$, where we use that there is a zero-tame diffeomorphism 
$C^\infty(M,G\times G)\cong C^\infty(M,G)\times C^\infty(M,G)$ (cf.~Proposition \ref{Prop:SmMapsIntoProd=ProdOfSmMaps}).
\end{example}

\begin{definition}[{\cite[Definition III.1.5]{InfDimLieneeb}}]
Let $\Gcal$ be an $\Scal$-Lie group with identity element $e\in \Gcal$. Then a smooth vector field $X\in \mathfrak{X}(\Gcal)$ is called left-invariant if $X(g)=\mathcal{T}L_g(X_e)$ for all $g\in\Gcal$. Equivalently, $X$ is left-invariant if $X\circ L_g=\mathcal{T}L_g\circ X$.
\end{definition}

\begin{remark}
Naturally, any left-invariant vector field $X$ on an $\Scal$-Lie group $\Gcal$ is in $\mathfrak{X}_\Scal(\Gcal)$ as with respect to local trivializations around $e$ and $g$ we may express it, suppressing the trivializations, as $X(g)=(g,D_2\mu(g,e)X_e)=D\mu(g,e)(0,X_e)$ which is is $\Scal$-smooth in $g$ since $\mu$ is $\Scal$-smooth.
\end{remark}

\begin{corollary}[of Proposition \ref{Prop:LieBraPhiRel}]
Let $\Gcal$ be an $\Scal$-Lie group. Then, the Lie bracket $[X,Y]$ (cf.~Remark \ref{Rem:LieBraVFsLocConv}) of two left-invariant vector fields $X$ and $Y$ is again left-invariant. In particular, the space of left-invariant vector fields forms a Lie subalgebra of $\mathfrak{X}_\Scal(\Gcal)$.    
\end{corollary}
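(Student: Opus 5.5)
The claim has two parts: $[X,Y]$ is again left-invariant, and the left-invariant fields form a Lie subalgebra of $\mathfrak{X}_\Scal(\Gcal)$. I would derive the first part directly from Proposition \ref{Prop:LieBraPhiRel}, applied with $\Mcal=\Ncal=\Gcal$ and $\phi=L_g$ for an arbitrary $g\in\Gcal$.

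\emph{Left-invariance as $L_g$-relatedness.} By definition, $X$ is left-invariant if and only if $X\circ L_g=\mathcal{T}L_g\circ X$ for every $g\in\Gcal$. This says exactly that $X$ is $L_g$-related to itself for every $g$. The same holds for $Y$.

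\emph{Applying the proposition.} Since $L_g$ is $\Scal$-smooth, and in particular smooth, Proposition \ref{Prop:LieBraPhiRel} applies with $X'=X$, $Y'=Y$ on both sides. It shows that $[X,Y]$ is $L_g$-related to $[X,Y]$, that is,
\begin{align*}
    [X,Y]\circ L_g=\mathcal{T}L_g\circ [X,Y].
\end{align*}
As $g$ was arbitrary, $[X,Y]$ is left-invariant.

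\emph{Subalgebra structure.} The left-invariant vector fields form a linear subspace, because the defining condition is linear and $\mathcal{T}L_g$ is fiberwise linear. The preceding remark shows that every left-invariant vector field lies in $\mathfrak{X}_\Scal(\Gcal)$. The earlier proposition on the Lie bracket shows that $(\mathfrak{X}_\Scal(\Gcal),[-,-])$ is a Lie subalgebra of $\mathfrak{X}(\Gcal)$. Combined with closure under the bracket from the previous step, the left-invariant fields form a Lie subalgebra of $\mathfrak{X}_\Scal(\Gcal)$.

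There is no real obstacle here. The only point to check is that Proposition \ref{Prop:LieBraPhiRel} needs nothing beyond smoothness of $\phi$, and that condition is met by the left translations.
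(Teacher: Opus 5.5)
Your proposal is correct and follows the paper's argument: left-invariance of $X$ and $Y$ means each is $L_g$-related to itself for every $g$, and Proposition \ref{Prop:LieBraPhiRel} then gives that $[X,Y]$ is $L_g$-related to itself, i.e.\ left-invariant. Your extra remarks on the subalgebra part (the condition is linear, and left-invariant fields lie in $\mathfrak{X}_\Scal(\Gcal)$ by the preceding remark) spell out what the paper leaves implicit.
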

\begin{proof}
By definition $X$ and $Y$ are $L_g$ related to themselves, respectively, for every $g\in \Gcal$. Hence, by Proposition \ref{Prop:LieBraPhiRel} the vector field $[X,Y]$ is $L_g$ related to itself for every $g\in \Gcal$, i.e.~left-invariant.
\end{proof}

\begin{remark}
\label{Rem:TeGLieAlg}
Let $\Gcal$ be an $\Scal$-Lie group with identity element $e\in \Gcal$, and let $\mathfrak{X}_L(\Gcal)$ denote the left-invariant vector fields on $\Gcal$. Exactly as in the finite-dimensional case, it is clear that the map $\mathfrak{X}_L(\Gcal)\ni X\mapsto X_e\in \mathcal T_e\Gcal$ is a vector space isomorphism. Note that for each $v\in \mathcal{T}_e\Gcal$ we have a left-invariant vector field $X^v(g)\coloneqq \mathcal{T}L_g(v)$, $g\in \Gcal$, so the induced Lie bracket on $\mathcal{T}_e\Gcal$ is $[u,v]=[X^u,X^v](e)$ for $u,v \in \mathcal T_e\Gcal$. 

This has the advantage that $\mathcal T_e\Gcal$ has the structure of an $\Scal$-vector space. Indeed, we will see that $[-,-]:\mathcal T_e\Gcal\times \mathcal T_e\Gcal\to \mathcal T_e\Gcal$ is an $\Scal$-smooth bilinear map in Theorem \ref{Thm:ad=[-,-]}.
\end{remark}

\begin{definition}
Let $\Gcal$ be an $\Scal$-manifold with identity element $e\in \Gcal$. Then we denote by $\mathrm{Lie}(\Gcal)$ the Lie algebra $(\mathcal T_e\Gcal,[-,-])$ defined in the previous remark and call it the \textit{Lie algebra of $\Gcal$}.
\end{definition}

\begin{convention}
Let $\Gcal$ and $\Hcal$ be $\Scal$-Lie groups, and let $\phi:\Gcal\to \Hcal$ be a morphism of $\Scal$-Lie groups, i.e.~an $\Scal$-smooth group homomorphism. Then, we denote by
$\mathrm{Lie}(\phi)\coloneqq \left.\mathcal{T}\phi\right|_{\mathrm{Lie}(\Gcal)}:\mathrm{Lie}(\Gcal)\to \mathrm{Lie}(\Hcal)$. It is clearly a linear and $\Scal$-smooth map.
\end{convention}

\begin{proposition}[{\cite[Proposition III.1.8]{InfDimLieneeb}}]
Let $\Gcal$ and $\Hcal$ be $\Scal$-Lie groups, and let $\phi:\Gcal\to \Hcal$ be a morphism of $\Scal$-Lie groups. Then $\mathrm{Lie}(\phi):\mathrm{Lie}(\Gcal)\to \mathrm{Lie}(\Hcal)$ is a homomorphism of Lie algebras. Hence, in particular, $\mathrm{Lie}(-)$ is a functor from $\Scal$-Lie groups and $\Scal$-Lie group homomorphisms to $\Scal$-Lie algebras and $\Scal$-Lie algebra homomorphisms.    
\end{proposition}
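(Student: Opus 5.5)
The plan is to reduce everything to the Lie bracket of vector fields and its behaviour under push-forward by smooth maps, which is Proposition \ref{Prop:LieBraPhiRel}. Write $\mathrm{Lie}(\phi)=\mathcal{T}_e\phi:\mathcal T_e\Gcal\to\mathcal T_e\Hcal$. Linearity and $\Scal$-smoothness are immediate: in charts around $e$ and $\phi(e)=e$ this map is $D\tilde\phi(e)$, a derivative of an $\Scal$-smooth map at a fixed point. Linearity follows from Proposition \ref{Prop:PropsOfDiff}. Smoothness (tameness in the tame case) comes from $D\tilde\phi$ being $\Scal$-smooth, and restricting to the slice $\{e\}\times F$ preserves this.

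The main step is bracket preservation. Fix $u\in\mathcal T_e\Gcal$ and let $X^u(g)=\mathcal T L_g(u)$ be the associated left-invariant vector field from Remark \ref{Rem:TeGLieAlg}. Put $v\coloneqq\mathcal T_e\phi(u)$ and let $Y^v$ be the left-invariant field on $\Hcal$ with $Y^v(h)=\mathcal T L_h(v)$. I claim that $X^u$ and $Y^v$ are $\phi$-related. Since $\phi$ is a homomorphism, $\phi\circ L_g=L_{\phi(g)}\circ\phi$. The chain rule for tangent maps of $\Scal$-manifolds (Remark \ref{Rem:TangentMapManifolds} together with Proposition \ref{Prop:PropsOfDiff}) then gives
\begin{align*}
\mathcal T\phi(X^u(g))=\mathcal T\phi\,\mathcal T L_g(u)=\mathcal T L_{\phi(g)}\,\mathcal T\phi(u)=Y^v(\phi(g)).
\end{align*}
Now apply Proposition \ref{Prop:LieBraPhiRel} to the pairs $X^u,Y^v$ and $X^{u'},Y^{v'}$. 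It shows that $[X^u,X^{u'}]$ is $\phi$-related to $[Y^v,Y^{v'}]$. Evaluating at $e$ yields $\mathcal T_e\phi([u,u'])=[\mathcal T_e\phi(u),\mathcal T_e\phi(u')]$, which is exactly the homomorphism property by the definition of the bracket on $\mathcal T_e\Gcal$.

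Functoriality is then formal. We have $\mathrm{Lie}(\mathrm{Id})=\mathrm{Id}$, and $\mathrm{Lie}(\psi\circ\phi)=\mathrm{Lie}(\psi)\circ\mathrm{Lie}(\phi)$ follows from $\mathcal T(\psi\circ\phi)=\mathcal T\psi\circ\mathcal T\phi$ restricted to $e$.

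The only real obstacle is making sure the chain rule $\mathcal T(\phi\circ L_g)=\mathcal T\phi\circ\mathcal T L_g$ is available on $\Scal$-manifolds. It is, since it reduces chartwise to item 7 of Proposition \ref{Prop:PropsOfDiff}. One must also confirm that Proposition \ref{Prop:LieBraPhiRel} applies to smooth vector fields rather than only $\Scal$-smooth ones; left-invariant fields are $\Scal$-smooth in any case.
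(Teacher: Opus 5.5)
Your proof is correct. The paper gives no proof of its own here; it only cites Neeb. Your argument is the standard one behind that citation: $\phi\circ L_g=L_{\phi(g)}\circ\phi$ makes the left-invariant fields $X^u$ and $X^{\mathcal T_e\phi(u)}$ $\phi$-related, and you then evaluate Proposition \ref{Prop:LieBraPhiRel} (the paper's import of Neeb's lemma on $\phi$-related fields) at $e$. For the functorial phrasing, the only thing to add is that the objects $\mathrm{Lie}(\Gcal)$ carry an $\Scal$-smooth bracket, and this comes from Theorem \ref{Thm:ad=[-,-]} rather than from your argument.
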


\begin{definition}[{\cite[Definition III.1.15]{InfDimLieneeb}}]
\label{Def:AdAndad}
Let $\Gcal$ be an $\Scal$-Lie group with identity element $e\in \Gcal$. Then we denote by $c_g\coloneqq L_g\circ R_{g^{-1}}$ the conjugation by $g\in \Gcal$. We then denote by $\mathrm{Ad}_g$ the linear map $\mathrm{Lie}(c_g):\mathrm{Lie}(\Gcal) \to \mathrm{Lie}(\Gcal)$. We note that the map $\Gcal\times \mathrm{Lie}(\Gcal)\ni (g,v)\mapsto \mathrm{Ad}_g(v)\in \mathrm{Lie}(\Gcal)$ is an $\Scal$-smooth map which we denote by 
$\mathrm{Ad}^{\wedge}$. Then we can consider its tangent map $$\mathcal{T}\mathrm{Ad}^\wedge:\mathcal{T}(\Gcal\times \mathrm{Lie}(\Gcal))\cong \mathcal{T}\Gcal\times (\mathrm{Lie}(\Gcal)\times \mathrm{Lie}(\Gcal))\to \mathcal{T}(\mathrm{Lie}(\Gcal))\cong \mathrm{Lie}(\Gcal)\times \mathrm{Lie}(\Gcal).$$ We also denote by $Z:\mathrm{Lie}(\Gcal)\to \mathrm{Lie}(\Gcal)\times \mathrm{Lie}(\Gcal)$ the map $Z(v)=(v,0)$ for $v\in \mathrm{Lie}(\Gcal)$, by $i:\mathrm{Lie}(\Gcal)\to \mathcal T \Gcal$ the natural inclusion, and by $\mathrm{Lie}(\Gcal)\times \mathrm{Lie}(\Gcal)\xrightarrow{\pi_2}\mathrm{Lie}(\Gcal)$ the projection to the second factor. All of these maps are clearly $\Scal$-smooth.
We then define the map $\mathrm{ad}\coloneqq \pi_2\circ \mathcal T \mathrm{Ad}^\wedge \circ (i\times Z): \mathrm{Lie}(\Gcal)\times\mathrm{Lie}(\Gcal)\to \mathrm{Lie}(\Gcal)$. Clearly, the map $\mathrm{ad}$ is $\Scal$-smooth and bilinear.
\end{definition}

\begin{remark}
\label{Rem:LocalFormulaForad}
We keep the notation from the previous definition.
In local coordinates centered at $e$ with value in some $\Scal$-vector space $F$ we can express $\mathrm{ad}\{u,v\}$ by $D\widetilde{\mathrm{Ad}^\wedge}(0)\{v,u\}$ for $u,v\in F$, where $\widetilde{\mathrm{Ad}^\wedge}$ denotes the local coordinate expression of $\mathrm{Ad}^\wedge$, and we note that $\mathrm{Ad}^\wedge$ is an $\Scal$-smooth family of linear maps and we use the notation for the differential introduced in Convention \ref{Conv:DiffOfFamOfLins}.
The switch of the order of $u$ and $v$ comes from the isomorphism $\mathcal T(\Gcal\times \mathrm{Lie}(\Gcal))\cong \mathcal T \Gcal \times (\mathrm{Lie}(\Gcal)\times \mathrm{Lie}(\Gcal))$. In fact, in the local chart we have that $(0,u)\in \{0\}\times F\cong \mathcal{T}_e\Gcal$ and $(v,0)\in \{v\}\times \mathrm{Lie}(\Gcal)\cong \mathcal{T}_v\mathrm{Lie}(\Gcal)$ corresponds to the tangent vector $((0,v),(u,0))\in \{(0,v)\}\times (F\times \mathrm{Lie}(\Gcal))\cong \mathcal T_{(e,v)}(\Gcal\times \mathrm{Lie}(\Gcal))$ on the product and hence $\pi_2(T\widetilde{\mathrm{Ad}^\wedge}((0,v),(u,0)))=D\widetilde{\mathrm{Ad}^\wedge}(0,v)(u,0)=D_1\widetilde{\mathrm{Ad}^\wedge}(0,v)u=D\widetilde{\mathrm{Ad}^\wedge}(0)\{v,u\}$.     
\end{remark}

\begin{theorem}[{\cite[Proposition III.1.16]{InfDimLieneeb}}]
\label{Thm:ad=[-,-]}
Let $\Gcal$ be an $\Scal$-Lie group then $\mathrm{ad}\{u,v\}=[u,v]$ for $u,v\in \mathcal{T}_e\Gcal$.
\end{theorem}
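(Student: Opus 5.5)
We argue in a chart $\psi:\Ucal\to U\subseteq F$ centered at $e$, so $\psi(e)=0$, and translate both sides into derivatives of the local multiplication $m(x,y)\coloneqq\psi(\psi^{-1}(x)\psi^{-1}(y))$, defined near $(0,0)$ with $m(x,0)=x$ and $m(0,y)=y$. By Remark \ref{Rem:TeGLieAlg}, $[u,v]=[X^u,X^v](e)$. By the local formula from \cite[Lemma II.3.6]{InfDimLieneeb} (recalled in the proof of the proposition on the Lie bracket of vector fields), this equals $D\Tilde X^v(0)\Tilde X^u(0)-D\Tilde X^u(0)\Tilde X^v(0)$. Here $\Tilde X^u(x)=D_2m(x,0)u$, so $\Tilde X^u(0)=u$ and $D\Tilde X^u(0)w=D_1D_2m(0,0)\{u,w\}$. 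Writing $\beta(w,u)\coloneqq D_1D_2m(0,0)\{u,w\}$, which is bilinear by Proposition \ref{Prop:PropsOfDiff}, we get
\begin{align*}
[u,v]=\beta(u,v)-\beta(v,u).
\end{align*}

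Next we compute $\mathrm{ad}$ in the same chart. By Remark \ref{Rem:LocalFormulaForad}, $\mathrm{ad}\{u,v\}=D\widetilde{\mathrm{Ad}^\wedge}(0)\{v,u\}$, where $\widetilde{\mathrm{Ad}^\wedge}(x)v=D_2c(x,0)v$ and $c(x,y)\coloneqq m(m(x,y),\iota(x))$, with $\iota$ the local inversion. We differentiate $x\mapsto D_2c(x,0)v$ at $x=0$ in direction $u$. By the chain rule (Proposition \ref{Prop:PropsOfDiff}),
\begin{align*}
D_2c(x,0)v=D_1m(x,\iota(x))\,D_2m(x,0)v .
\end{align*}
Differentiating this at $x=0$, and using $m(x,0)=x$, $m(0,y)=y$, $D\iota(0)=-\mathrm{Id}$ and the symmetry of second derivatives of $m$, the terms should collapse to $\beta(u,v)-\beta(v,u)$. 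The derivative $D\iota(0)=-\mathrm{Id}$ itself follows from differentiating $m(x,\iota(x))=0$ at $0$.

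The main obstacle is the bookkeeping in this last differentiation. We must keep track of which slot of $m$ each derivative acts on, and use the normalization identities (for instance $D_1^2m(0,0)=0$ and $D_2^2m(0,0)=0$, which hold because $m(x,0)=x$ and $m(0,y)=y$ are linear) so that only the mixed terms $D_1D_2m$ survive. We must also make sure that the argument swap of Remark \ref{Rem:LocalFormulaForad} produces the correct sign.

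Finally, every step uses only the calculus of Proposition \ref{Prop:PropsOfDiff}, so the argument is uniform in the category $\Scal$. The $\Scal$-smoothness of $\mathrm{ad}$ is already given by Definition \ref{Def:AdAndad}, so it transfers to $[-,-]$.
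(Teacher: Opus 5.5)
Your argument is correct and is the standard local-coordinate proof. The paper gives no proof of its own; it imports the statement from \cite{InfDimLieneeb}. Your part $[u,v]=\beta(u,v)-\beta(v,u)$ is fine. The one weakness is that the decisive step, that the terms ``should collapse'', is asserted rather than carried out, so here is the check.

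Put $\beta(a,b)\coloneqq D^2m(0,0)\{(a,0),(0,b)\}$; by symmetry of $D^2m(0,0)$ this is your $\beta$. Write $\widetilde{\mathrm{Ad}^\wedge}(x)v=Dm(x,\iota(x))\bigl(D_2m(x,0)v,0\bigr)$ and use $D_1m(0,0)=D_2m(0,0)=\mathrm{Id}$. The chain rule then gives
\begin{align*}
\frac{\d}{\d t}\Big|_{t=0}\widetilde{\mathrm{Ad}^\wedge}(tu)v
&=D^2m(0,0)\{(v,0),(u,D\iota(0)u)\}+D^2m(0,0)\{(0,v),(u,0)\}\\
&=D^2m(0,0)\{(v,0),(u,0)\}-\beta(v,u)+\beta(u,v)=\beta(u,v)-\beta(v,u).
\end{align*}
The second equality uses $D\iota(0)=-\mathrm{Id}$, and the remaining first term vanishes because $D_1^2m(0,0)=0$, which follows from $m(x,0)=x$. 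Only these two normalizations and the symmetry of $D^2m(0,0)$ are needed; $D_2^2m(0,0)=0$ never enters.

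The left-hand side is exactly $D\widetilde{\mathrm{Ad}^\wedge}(0)\{v,u\}$ in the sense of Convention \ref{Conv:DiffOfFamOfLins}: the derivative in direction $u$, applied to $v$. So the argument swap in Remark \ref{Rem:LocalFormulaForad} introduces no extra sign, and together with your formula for $[u,v]$ this proves the theorem.
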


To familiarize ourselves with this concept in the infinite-dimensional case let us take a look at the easiest nontrivial example.
\begin{example}
Let $G$ be a finite-dimensional Lie group with Lie algebra $\mathrm{Lie}(G)=\gfrak$, and let $M$ be a compact manifold. Then the Lie algebra of the zero-tame Lie group $C^\infty(M,G)$ (cf. Example \ref{Ex:C(M,G)}) is $C^\infty(M,\gfrak)$ with the bracket $[\gamma,\sigma](p)=[\gamma(p),\sigma(p)]_{\gfrak}\in \gfrak$ for every $p\in M$.
The proof is as follows.
Let $\mathrm{exp}:\gfrak\to G$ be the exponential map of the Lie group $G$, $e\in G$ the identity element, let $U\subseteq \gfrak$ be an open neighbourhood of $0\in \gfrak$, and let $ V\subseteq G$ be an open neighbourhood of $e\in G$ such that $U\stackrel{\mathrm{exp}}{\cong}V$. Then, the map $\Id\times (\left.\mathrm{exp}\right|_U):M\times U\to M\times V$ is a fiber-preserving diffeomorphism and hence induces a chart on the level of smooth sections, i.e.~it induces a tame diffeomorphism of the open subsets $C^\infty(M,U)\subseteq C^\infty(M,\gfrak)$ and $C^\infty(M,V)\subseteq C^\infty(M,G)$ (for openness see Proposition \ref{Prop:FiberBundleOperators}) and so
$T_EC^\infty(M,G)\cong C^\infty(M,\gfrak)$. Here we denote by $E$ the constant map $E(x)= e$ for all $x\in M$, which is the identity element of $C^\infty(M,G)$.
Now, the conjugation is simply $c:C^\infty(M,G)\times C^\infty(M,G)\to C^\infty(M,G)$ by
$c(f,g)(p)=c_f(g)(p)=f(p)g(p)(f(p))^{-1}$ for $f,g\in C^\infty(M,G)$ and $p\in M$. Because $c_f(g)(p)=c_{f(p)}(g(p))$, and using Remark \ref{Rem:Ev Phi=phi_p Ev} on the evaluation map $\Ev_p:C^\infty(M,\gfrak)\to \gfrak$ with $\Ev_p(f)\coloneqq f(p)$ for $f\in C^\infty(M,\gfrak)$ and $p\in M$, we find that the calculation layed out in Definition \ref{Def:AdAndad}  breaks down to a pointwise calculation in the chart induced by $\left.\mathrm{exp}\right|_U:U\to V$ and hence $[\gamma,\sigma](p)=[\gamma(p),\sigma(p)]_{\gfrak}$ for every $p\in M$.
Note that $(\left.\mathrm{exp}\right|_U)^{-1}:V\to U$ has the identity as its differential at $e\in G$.
\end{example}

\begin{definition}[{\cite[Part I, Definition 4.6.5]{hamilton}}]
\label{Def:PrincBundle}
Let $\Gcal$ be an $\Scal$-Lie group, $\Mcal$ an $\Scal$-manifold and $\Pcal\xrightarrow{p} \Mcal$ be an $\Scal$-fiber bundle with typical fiber $\Gcal$.
We call $\Pcal\xrightarrow{p} \Mcal$ an $\Scal$-principal bundle if there is an $\Scal$-smooth $\Gcal$-action $\alpha:\Pcal\times\Gcal\to \Pcal$ on $\Pcal$ such that 
\begin{enumerate}
    \item the action of $\Gcal$ preserves the fibers of $\Pcal\xrightarrow{p}\Mcal$ and acts simply transitively on them and
    \item \label{item:LocalActionOfGonP} for every $x\in \Mcal$ there is an open neighbourhood $\Ucal\subseteq \Mcal$ of $x$ and a local trivialization $\phi:p^{-1}(\Ucal)\to\Ucal\times \Gcal$ such that $\alpha$ corresponds to the right action $(\Ucal\times\Gcal)\times \Gcal\ni((y,g),h)\mapsto (y,gh)\in \Ucal\times \Gcal$.
\end{enumerate}
\end{definition}

\begin{remark}
\label{Rem:PrincBundleViaTransFunct}
Let $\Gcal$ be an $\Scal$-Lie group, $\Mcal$ an $\Scal$-manifold, and let $\Pcal\xrightarrow{p} \Mcal$ be an $\Scal$-fiber bundle with typical fiber $\Gcal$. Let $((\Ucal_\alpha, \phi_\alpha))_{\alpha\in A}$, for some index set $A$, be a fiber bundle atlas such that $\phi_\alpha\circ \phi_\beta^{-1}(x,g)=(x,f(x)g)$ for $\alpha,\beta\in A$, $x\in \Ucal_\alpha \cap \Ucal_\beta$, $g\in \Gcal$, and $f\in C^\infty(\Ucal_\alpha\cap\Ucal_\beta, \Gcal)$. Then, defining locally the action of $\Gcal$ on $\Pcal$ by 
$(\Ucal_\alpha\times \Gcal)\times \Gcal\ni ((x,g),h)\mapsto (x,gh)\in  \Ucal_\alpha\times \Gcal$ we easily see that it is well defined and that $\Pcal$ is indeed an $\Scal$-principal bundle. 
\end{remark}

The next definition was originally formulated in \cite[Section 38]{ConvSetting}. We follow the language used by Schmeding in \cite[Definition 3.30]{schmeding_2022}
\begin{definition}
\label{Def:RegularLieGr}
Let $\Gcal$ be a locally convex Lie group with Lie algebra $\mathrm{Lie}(\Gcal)$ and identity element $e\in \Gcal$.
We say that such a Lie group is \textit{semiregular} if for every $\eta\in C^\infty([0,1],\mathrm{Lie}(\Gcal))$ the differential equation
\begin{align*}
    \gamma_\eta'(t)=\mathcal{T}R_{\gamma_\eta(t)}(\eta(t))           
\end{align*}
with the identification $\mathrm{Lie}(\Gcal)=\mathcal{T}_e\Gcal$ has a unique solution $\gamma_\eta:[0,1]\to \Gcal$ with $\gamma_\eta(0)=e$. Here, $R$ denotes the right translation.
We call a semiregular Lie group \textit{regular} if the evaluation map 
\begin{align*}
    \Ev_1:C^\infty([0,1],\mathrm{Lie}(\Gcal))&\to \Gcal\\
    \eta&\mapsto \gamma_\eta(1)
\end{align*}
is smooth.
\end{definition}

\begin{remark}
\begin{enumerate}
    \item Regular Lie groups fulfill Lie's second theorem, see \cite[Remark 3.32 (b)]{schmeding_2022}
    \item Usually in literature the definition of (semi)regularity of a Lie group involves the left multiplication instead of the right multiplication. These notions are, however, equivalent, see \cite[Remark 3.32 (c)]{schmeding_2022}
\end{enumerate}
\end{remark}

\begin{example}
\label{Ex:DiffRegLieGr}
It turns out that the diffeomorphism group of a compact manifold, which we will focus on in the next section is a regular Lie group, see \cite[3.36 Example]{schmeding_2022}.
\end{example}

\subsection{The Diffeomorphism Group}

By Theorem \ref{AuteqCharacterizationTheorem} we know that the autoequivalences of an exact Courant algebroid consist of closed $2$-forms and diffeomorphisms fulfilling a compatibility condition. In this section we discuss the diffeomorphism group of a compact manifold and show that it is a tame Fréchet Lie group.

\begin{proposition}[{\cite[Chapter III, Proposition 1.10]{StableMappings}}]
\label{Prop:DiffOpen}
Let $M$ be a compact manifold. Then, the diffeomorphism group $\Diff$ is an open subset of the tame Fréchet manifold $C^\infty(M,M)$ and is, hence, itself a tame Fréchet manifold. \end{proposition}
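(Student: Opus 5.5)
The statement has two parts. First, $\Diff$ is an open subset of $C^\infty(M,M)$. Second, as an open subset it inherits the zero-tame, hence tame, Fréchet structure of $C^\infty(M,M)$ from Corollary \ref{Cor:SmoothMapsMfd}. The second part is formal: restricting the charts of $C^\infty(M,M)$ to an open subset gives charts whose transition maps are restrictions of zero-tame maps to open sets, and these are again zero-tame. So the only real content is openness.

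To prove openness I would split $\Diff$ into two conditions inside $C^\infty(M,M)$:
\begin{align*}
    \Diff=\{f\in C^\infty(M,M)\,|~f\text{ is a local diffeomorphism}\}\cap\{f\,|~f\text{ is bijective}\}.
\end{align*}
Being a local diffeomorphism is a $C^1$-open condition. Consider the open subset $U\subseteq \mathrm{Hom}(TM,f^*TM)$ of invertible homomorphisms. The map $f\mapsto \d f$ is, in a chart $\Gamma(V\subseteq f_0^*TM)$ around $f_0$, a first-order nonlinear differential operator, so it is continuous from the Fréchet topology into the $C^0$-topology of $\d f$. Moreover, by compactness of $M$, invertibility of $\d f_p$ at every $p$ survives $C^1$-small perturbations, in the same way that $\Gamma(U\subseteq E)$ is open in Lemma \ref{Lemma:Gamma(UsubE)Open}. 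Hence local diffeomorphisms form an open set $\mathcal L$.

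For bijectivity I would use a covering argument. Since $M$ is compact, a local diffeomorphism $f:M\to M$ is proper, so it is a finite covering over each connected component of $M$. Its degree, the number of preimages of a point, is locally constant in $f\in\mathcal L$ for the $C^1$-topology. The key step, which I expect to be the main obstacle, is to show this carefully, most likely via a uniform inverse function theorem. Fix $f_0\in\Diff$ and choose a Riemannian metric. Compactness gives $\delta,c>0$ such that $f_0$ restricted to any $\delta$-ball is injective and $\abs{\d f_0 v}\geq c\abs{v}$. Then every $g$ that is $C^1$-close to $f_0$ is still injective on $\delta$-balls, by a mean value estimate in charts. Such a $g$ is also $C^0$-close to $f_0$, so $g(x)=g(y)$ forces $f_0(x)$ and $f_0(y)$ to be close. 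Since $f_0^{-1}$ is uniformly continuous, $x$ and $y$ are then within distance $\delta$, hence $x=y$. So $g$ is injective.

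Surjectivity then follows from invariance of domain together with compactness. The image $g(M)$ is open because $g$ is a local diffeomorphism, and closed because $M$ is compact, so it is a union of components of $M$. It meets every component because $g$ is $C^0$-close to $f_0$ and $f_0$ is surjective; for this I would work componentwise and choose the $C^0$-closeness smaller than the distances between components. Hence $g\in\Diff$, and a whole neighbourhood of $f_0$ in the $C^1$-topology, and therefore in the finer Fréchet topology, lies in $\Diff$. This gives openness, and Corollary \ref{Cor:SmoothMapsMfd} then supplies the tame Fréchet manifold structure.
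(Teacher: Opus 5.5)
Your proof is correct, and it is the classical argument. The full-rank condition on $1$-jets is open; uniform injectivity on small balls, together with $C^0$-closeness and uniform continuity of $f_0^{-1}$, gives global injectivity; and an image that is open, closed and meets every component gives surjectivity. The detour through covering degrees is not needed.

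The paper gives no proof of its own. It cites \cite[Chapter III, Proposition 1.10]{StableMappings}, where openness is proved for the Whitney $C^\infty$ topology, and it tacitly identifies that topology with the Fréchet manifold topology on $C^\infty(M,M)$ from Corollary \ref{Cor:SmoothMapsMfd}. Your version makes this transfer explicit. You observe that, in a chart around $f_0$, the map $g\mapsto j^1g$ is a first-order operator and hence continuous into $C^0$. So the Fréchet topology is finer than the $C^1$ topology, which is exactly the direction needed.

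One precision in the mean value step. The bound $\abs{\d f_0 v}\geq c\abs{v}$ by itself does not make nearby maps injective on $\delta$-balls. You should choose $\delta$ so that each $\delta$-ball lies in a coordinate-convex set on which $D\tilde f_0$ varies by less than $c/4$ from its value $A$ at a fixed point, with $\abs{Av}\geq c\abs{v}$. Then require $\|D\tilde g-D\tilde f_0\|<c/4$ there, which gives $\abs{\tilde g(x)-\tilde g(y)}\geq \frac{c}{2}\abs{x-y}$.
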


\begin{remark}
Let $h$ be a Riemannian metric on $M$, let $e^h:TM\to M$ denote its geodesic exponential map, and let $\mathrm{Exp}^h:TM\to M\times M$ denote the map that sends $T_pM\ni v\mapsto (p,e^h(v))\in M\times M$ for every $p\in M$. It is easy to see that $\mathrm{Exp}^h$ has full rank along the zero section since $\left.e^h\right|_{T_pM}$ has full rank at $0\in T_pM$.
So, for each $p\in M$ there is a neighbourhood $V_p$ of $0\in T_pM\subseteq TM$ such that $\left.\mathrm{Exp}^h\right|_{V_p}$ is a diffeomorphism onto its image. We write $V\coloneqq \cup_{p\in M}V_p$. Since $\mathrm{Exp}^h$
is fiber-preserving we see, that the map $\left.\mathrm{Exp}^h\right|_V$ is injective and hence a diffeomorphism onto its image. Now, we define the open neighbourhood $\Vcal\coloneqq \Gamma(V\subseteq TM)$ of $0\in \VFs$ and denote the map induced by $\left.\mathrm{Exp}^h\right|_V:V\to M\times M$ by
$\exh:\Vcal\to C^\infty(M,M)$. Since $\exh$ is induced by a fiber-preserving diffeomorphism $\eh{\Vcal}\subseteq C^\infty(M,M)$ is open and the map $\exh:\Vcal\to \exh(\Vcal)$ is a smooth tame diffeomorphism (cf.~Proposition \ref{Prop:FiberBundleOperators}). Since additionally $\Diff\subseteq C^\infty(M,M)$ is open (cf.~Proposition \ref{Prop:DiffOpen}) there is a neighbourhood $\Ucal\subseteq \Vcal$ of $0\in \VFs$ such that $\exh(\Ucal)\subseteq \Diff$ is open and the map $\exh:\Ucal \to \Diff$ is a smooth tame diffeomorphism onto its image. Note that by Proposition \ref{Prop:CompSmoothTame} the composition $R_f(g)=g\circ f$, $g\in C^\infty(M,M)$ is a smooth tame diffeomorphism from $C^\infty(M,M)$ onto itself for every $f\in \Diff$. And hence $R_f\circ\exh:\Ucal\to R_f(\eh{\Ucal})\subseteq \Diff$ is a smooth tame diffeomorphism.
\end{remark}

\begin{convention}
\label{Conv:ChartsOnDiff}
Using the notation of the previous remark we denote by $(\Ucal,\exh_f)$, the local parametrization of $\Diff$ around $f\in \Diff$ defined by the diffeomorphism $\exh_f\coloneqq R_f\circ\exh:\Ucal\to R_f(\eh{\Ucal})\subseteq \Diff$. Note that its inverse is a chart around $f\in \Diff$. For convenience, we denote $(\Ucal,\exh_{\Id})$ by $(\Ucal,\exh)$.
\end{convention}

\begin{proposition}[{\cite[Part II, Theorem 2.3.5]{hamilton}}]
\label{Proposition:DiffLieGrAndLieAlg} 
Let $M$ be a compact manifold. Then, $\Diff$ is a tame Fréchet Lie group and its Lie algebra is $(\VFs,-[-,-])$, where $[-,-]$ denotes the usual Lie bracket of vector fields.
\end{proposition}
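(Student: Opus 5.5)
The plan is to use the charts of Convention \ref{Conv:ChartsOnDiff} and reduce everything to composition being smooth tame. By Proposition \ref{Prop:DiffOpen}, $\Diff$ is an open subset of the tame Fréchet manifold $C^\infty(M,M)$. We take as atlas the parametrizations $(\Ucal,\exh_f)$, $f\in\Diff$. Multiplication $\mu(f,g)=f\circ g$ is the restriction of the map $C$ from Proposition \ref{Prop:CompSmoothTame} to the open subset $\Diff\times\Diff$, so it is smooth tame.

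For inversion, I would follow Hamilton's argument and apply the Nash--Moser inverse function theorem to the map $\Phi(f,g)=(f,f\circ g)$ on $\Diff\times\Diff$. Its derivative at $(f,g)$ is invertible by the explicit formula $(h,k)\mapsto(h,\,h\circ g+\d f\circ k)$. The inverse of this derivative is built from composition with $g$ and multiplication by the matrix $(\d f)^{-1}$, which is a nonlinear vector bundle operator in the $1$-jet of $f$. Hence the family of inverses is smooth tame, using Proposition \ref{Prop:NonlinPartDiffOpsAreSmoothTame} and Proposition \ref{Prop:CompSmoothTame}.

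The Nash--Moser theorem then gives that $\Phi^{-1}(f,h)=(f,f^{-1}\circ h)$ is locally smooth tame. Since $\Phi$ is a global bijection, $\Phi^{-1}$ is smooth tame everywhere, and inversion is $f\mapsto \pi_2\Phi^{-1}(f,\Id)$. I expect this inversion step to be the main obstacle, precisely because it needs the inverse function theorem rather than a direct formula. Alternatively, one can simply cite \cite[Part II, Theorem 2.3.5]{hamilton}, as the statement does.

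For the Lie algebra, I would use Theorem \ref{Thm:ad=[-,-]} and compute $\mathrm{ad}$ in the chart $\exh$. Its differential at $0$ is the identity under $\mathcal{T}_{\Id}\Diff\cong\VFs$, since $\d e^h$ is the identity along the zero section. Let $\varphi_t$ be the flow of $X$. Then $\mathrm{Ad}_{\varphi_t}Y=\frac{d}{ds}\big|_{0}\,\varphi_t\circ\psi_s\circ\varphi_t^{-1}=(\varphi_t)_*Y$, where $\psi_s$ is the flow of $Y$. Differentiating in $t$ at $0$ gives
\begin{align*}
\frac{d}{dt}\Big|_{t=0}(\varphi_t)_*Y=-\LieD_XY=-[X,Y].
\end{align*}
By Remark \ref{Rem:LocalFormulaForad}, only first derivatives in the chart enter. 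Second-order terms of $\exh$ therefore do not contribute to this expression, which follows from the symmetry of $D^2\widetilde{\mathrm{Ad}^\wedge}$ in the relevant arguments. Hence the bracket on $\mathrm{Lie}(\Diff)=\VFs$ is $-[-,-]$.
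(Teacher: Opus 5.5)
Your proposal is correct and follows the paper's route. The paper takes the Lie group structure directly from \cite[Part II, Theorem 2.3.5]{hamilton}, and your Nash--Moser sketch for inversion via $(f,g)\mapsto(f,f\circ g)$ is a sound way to fill that in, since the inverse of its derivative is built without using $f^{-1}$. The bracket is then computed exactly as in the paper, from $\mathrm{Ad}_fY=f_*Y$ (Lemma~\ref{Lemma:PullbackVfields}) and $\frac{\d}{\d t}\big|_{t=0}(\varphi_t)_*Y=-\LieD_XY$ (Remark~\ref{Rem:PushForwDifferential}).

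The one point to fix is your final justification: the symmetry of $D^2\widetilde{\mathrm{Ad}^\wedge}$ plays no role here. Since $\mathrm{ad}\{X,Y\}=D_1\widetilde{\mathrm{Ad}^\wedge}(0,Y)X$ is a first derivative along any curve through $0$ with velocity $X$, the chain rule together with Lemma~\ref{Lemma:FlowInitialVelocity} alone justifies replacing $\exh(tX)$ by the flow of $X$, and likewise $\exh(sY)$ by the flow of $Y$.
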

\begin{proof}
By openness in $C^\infty(M,M)$ we already know that $\Diff$ is a smooth tame manifold (cf.~Proposition \ref{Prop:DiffOpen}). Hamilton shows in \cite[Part II, Theorem 2.3.5]{hamilton} that it is indeed a smooth tame Lie group. 

To determine the Lie algebra of $\Diff$ we follow Theorem \ref{Thm:ad=[-,-]}. Let $X,Y\in \VFs$, and $(\Ucal,\exh)$ be a local parametrization of $\Diff$ around $\Id$ as in Convention \ref{Conv:ChartsOnDiff}. As $(\Ucal,\exh)$ is a local parametrization around $\Id$ it is clear that $\mathcal{T}_{\Id}(\Diff)\cong \VFs$. We will prove later that using this identification we find 
$\mathrm{Ad}_f(Y)=D((\exh)^{-1}\circ c_f\circ \exh)(0)Y=f_*Y$ (cf.~Lemma \ref{Lemma:PullbackVfields}) and that $D_Z((\eh{Z})_*Y)(0)X=-\LieD_XY=-[X,Y]$, $Z\in \Ucal$ (cf.~Remark \ref{Rem:PushForwDifferential}).  
\end{proof}

\subsection{Families of smooth maps}

In this section we compare two notions of families of smooth maps. We prove an exponential law for such families. In particular we show for a surjective submersion $B\to M$ with compact codomain $M$ that the evaluation map $\Gamma(B)\times M\to B$ is smooth tame. This will allow us to compute derivatives of maps with values in spaces of the type $\Gamma(B)$ by considering the derivative in a fixed fiber.

\begin{proposition}
\label{Prop:EvVBSmooth}
Let $V\to M$ be a smooth vector bundle over a compact manifold $M$. Then the map $\mathrm{Ev}: \Gamma(V)\times M\to V$ mapping $(s,p)\mapsto s(p)$ for $p\in M$ and $s\in \Gamma(V)$ is smooth tame. In particular, for every $k\in \N$ we can find tame estimates for $\mathcal T^k\mathrm{Ev}$ of base $0$ and degree $k$ around every point in $\mathcal{T}^k(\Gamma(V)\times M)$.
\end{proposition}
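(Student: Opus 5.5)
The plan is to reduce to local computations in charts. The manifold $\Gamma(V)\times M$ is covered by charts of the form $\Gamma(V)\times \varphi^{-1}(W)$, with $(U,\varphi)$ a coordinate chart of $M$ over which $V$ is trivial, say $V|_U\cong U\times\R^r$, and $W\subseteq\R^n$ open. We take $\overline{\varphi^{-1}(W)}\subset U$ compact. In these charts the local representative of $\mathrm{Ev}$ is
\begin{align*}
\widetilde{\mathrm{Ev}}:\Gamma(V)\times W\to W\times\R^r,\qquad (s,x)\mapsto \bigl(x,\tilde s(x)\bigr),
\end{align*}
where $\tilde s=(s^1,\ldots,s^r)\circ\varphi^{-1}$ are the coefficient functions of $s$. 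The first component is trivially smooth tame, so everything reduces to the map $(s,x)\mapsto\tilde s(x)\in\R^r$.

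\textbf{Step 1: smoothness and the derivatives.} The map is linear in $s$. Its partial derivative in $x$ is $D\tilde s(x)$, which is again an evaluation map, now applied to the section $j^1 s$. Inductively, $D^k\widetilde{\mathrm{Ev}}(s,x)\{(h_1,\xi_1),\ldots,(h_k,\xi_k)\}$ is a finite sum of two kinds of terms: terms $D^k\tilde s(x)\{\xi_1,\ldots,\xi_k\}$, and terms $D^{k-1}\tilde h_i(x)\{\xi_j:j\neq i\}$. Terms containing two or more $h$'s vanish by linearity. For continuity we use Lemma \ref{Lemma:ContFamOfMultLin}. Each such term is continuous because $|D^\alpha\tilde s(x)-D^\alpha\tilde s'(x')|\le \sup|D^\alpha(\tilde s-\tilde s')|+|D^\alpha\tilde s'(x)-D^\alpha\tilde s'(x')|$. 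The first summand is bounded by the local norm $\nnorm{s-s'}{|\alpha|}'$ of equation \eqref{VBsectLocNorm}, after choosing the partition of unity so that $\theta_k\equiv1$ near $\overline{\varphi^{-1}(W)}$. The second summand tends to $0$ by uniform continuity on the compact set. Thus $\widetilde{\mathrm{Ev}}$ is $C^k$ for all $k$, hence smooth.

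\textbf{Step 2: tame estimates.} The target is finite-dimensional, so it carries the trivial grading, and $\R^n$ in the source likewise has the Euclidean norm in every degree. Locally $|\xi_j|$ is bounded, and $|D^k\tilde s(x)|\le C\nnorm{s}{k}'$ and $|D^{k-1}\tilde h(x)|\le C\nnorm{h}{k-1}'$. This gives
\begin{align*}
\bigl|D^k\widetilde{\mathrm{Ev}}(s,x)\{\ldots\}\bigr|\le C\bigl(1+\nnorm{(s,h_1,\ldots,h_k)}{m+k}\bigr)
\end{align*}
for every $m\ge0$, so a tame estimate of base $0$ and degree $k$ holds. The same bookkeeping applies to $\mathcal T^k\mathrm{Ev}$, since its components are $D^j\mathrm{Ev}$ for $j\le k$, and these are linear combinations of the expressions above. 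Corollary \ref{Cor:SmToBanSmTame} can serve as a shortcut for tameness once smoothness is known, but it does not give the degree. That is why the explicit estimate is needed.

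\textbf{Main obstacle.} The real difficulty is bookkeeping rather than analysis. We must correctly identify $\mathcal T^k\mathrm{Ev}$ with a map of the same form as the one treated above, with the degree increasing by exactly one at each step. We must also check that the chart-level estimates patch together into the global statement. This patching uses the independence of the grading from the chosen charts and partition of unity (Remark \ref{Rem:NormsOnSectOfVBIndepOfChoices}), together with the fact that the charts on $M$ are finite-dimensional, so they are zero-tame by Proposition \ref{Prop:FinDimMfdsAsFrechetMfds}.
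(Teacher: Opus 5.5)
Your proposal is correct and follows essentially the same route as the paper. You use linearity in $s$ to reduce $D^k\mathrm{Ev}$ to the terms $D^k\tilde s(x)\{\xi_1,\ldots,\xi_k\}$ and $D^{k-1}\tilde h_i(x)\{\xi_j: j\neq i\}$. You get continuity from Lemma \ref{Lemma:ContFamOfMultLin} together with the bound $\sup|D^\alpha\tilde s|\le C\nnorm{s}{|\alpha|}'$ on a relatively compact chart domain, and the same bound gives the base-$0$, degree-$k$ tame estimate. The paper additionally cites Theorem \ref{Thm:PartialDer} to pass from continuous partial derivatives to $C^k$, a step your inductive computation uses implicitly.
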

\begin{proof}
We note that since $V$ is a finite-dimensional manifold or more generally a manifold modeled on Banach spaces we automatically obtain by Corollary \ref{Cor:SmToBanSmTame} that if $\Ev$ is smooth, then it is also smooth tame.
We note that the map $\mathrm{Ev}(s,p)=s(p)$ with 
$p\in M$ and $s\in \Gamma(V)$ is linear in $s$. So we already know that 
$D_1\Ev(s,p)h=\Ev(h,p)=h(p)$ for $h\in\Gamma(V)$ and $D_1^2\Ev(s,p)=0$. Similarly for a fixed $s\in \Gamma(V)$ the differential $D_2\Ev(s,p)v=ds_p(v)$, $v\in T_pM$, is the usual derivative in finite dimension. It easily follows that if it exists, then $D^k\Ev$ is of the form
\begin{align*}
    D^k\Ev(s,p)\{(h_1,v_1),\ldots,(h_k,v_k)\}=&D^k_2 \Ev(s,p)\{v_1,\ldots,v_k\}\\
    &+\sum_{i=1}^kD^{k-1}_2 \Ev(h_i,p)\{v_1,\ldots,v_{i-1},v_{i+1},\ldots,v_k\}
\end{align*}
for $p\in M$, $s,h_1,\ldots,h_k\in \Gamma(V)$ and $v_1,\ldots,v_k\in T_pM$.
So, by Theorem \ref{Thm:PartialDer} it suffices to show that the map $D_2^k\Ev(s,p)\{v_1,\ldots,v_k\}$ is continuous and tame for every $k\in \N$.
To see this let $\Tilde{U}\cong \R^n$ be a chart domain around $p\in M$ and $v_1,\ldots,v_k\in \R^n$ with $n=\mathrm{dim}(M)$. We additionally assume that there is also a trivialization of $V$ over $\Tilde{U}$. We now take $U\subset \Tilde{U}$ to be an open subset such that $\Bar{U}\subset \Tilde{U}$. Then in the chart and trivialization over $\Tilde{U}$ using the Einstein summation convention we can express 
\begin{align}
\label{eq:D_1^k(Ev)}
    D_2^k\widetilde{\Ev}(s,x)\{v_1,\ldots,v_k\}=(D_{i_1}\ldots D_{i_k} s)(x)v_1^{i_1}\ldots v_n^{i_k}
\end{align}
for $x\in \R^n\cong \Tilde{U}$.
After possibly estimating some factors, stemming from transition functions and coordinate transformations, on the relatively compact set $U$ we find
that there is a constant $C>0$ such that for all $x\in U$ and $v_1,\ldots,v_k \in \R^n$ 
\begin{align}
\label{eq:EstForD^k_1Ev}
    \abs{D_2^k\widetilde{\Ev}(s,x)\{v_1,\ldots,v_k\}}\leq  C\nnorm{s}{k}'\abs{v_1}\ldots\abs{v_k}
\end{align}
is satisfied.
If we additionally restrict $\abs{v_i}\leq c$, $i=1,\ldots,k$, for $c>0$ some constant then we surely find that there is a constant $C>0$ such that for all $x\in U$
\begin{align*}
    \abs{D_2^k\widetilde{\Ev}(s,x)\{v_1,\ldots,v_k\}}\leq C\left(1+\nnorm{s}{k}'+\sum_{i=1}^k\abs{v_i}\right).
\end{align*}
This is a tame estimate of degree $k$ in $s$, degree $0$ in $v_1,\ldots,v_k$ and base $0$. 
By definition $D_2^k\widetilde{\Ev}(s,x)\{v_1,\ldots,v_k\}=(D_{i_1}\ldots D_{i_k} s)(x)v_1^{i_1}\ldots v_n^{i_k}$ is continuous in $x\in U$. Moreover, by equation \eqref{eq:EstForD^k_1Ev} we find that the other requirements of Lemma \ref{Lemma:ContFamOfMultLin} are satisfied and so $D^k_2\Ev$ is in fact continuous.  
\end{proof}

\begin{remark}
\label{Rem:DifferentialEvAtPoint}
Let $M$ be a compact manifold, $V\to M$ be a vector bundle, $U\subseteq V$ be open, $W\to M$ be a vector bundle, and $P:\Gamma(U\subseteq V)\to \Gamma(W)$ a nonlinear vector bundle operator induced by a fiber-preserving smooth map $p:U\to V$. Then, by continuity of $\Ev$ the equation
\begin{align*}
    \Ev(DP(\sigma)\gamma,x)&=\Ev\left(\lim_{t\to 0}\frac{P(\sigma+t\gamma)-P(\sigma)}{t},x\right)\\
    &=\lim_{t\to 0}\Ev\left(\frac{P(\sigma+t\gamma)-P(\sigma)}{t},x\right)=\d_vp(\sigma(x))(\gamma(x))
\end{align*}
holds for every $x\in M$, $\sigma\in \Gamma(U\subseteq V)$ and $\gamma\in \Gamma(V)$ since the map $\Ev(-,x)$ is a continuous linear map. Here, by $\d_v$ we mean the differential along the fibers of $V$.
\end{remark}

\begin{corollary}
\label{Cor:EvFBSmooth}
Let $M$ be a compact manifold and let $B\to M$ be a surjective submersion. Then the evaluation map $\mathrm{Ev}:\Gamma(B)\times M\to B$ mapping $(\sigma,p)\to \sigma(p)$, for $\sigma\in \Gamma(B)$ and $p\in M$, is smooth tame. In particular $\mathcal T^k\mathrm{Ev}$ has local tame estimates of base $0$ and degree $k$ around each point in $\mathcal T^k(\Gamma(B)\times M)$.
\end{corollary}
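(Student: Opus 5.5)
The plan is to reduce to Proposition \ref{Prop:EvVBSmooth} using the charts on $\Gamma(B)$ from Theorem \ref{FiberBundleSection}. Smoothness and tameness are local, so fix $(\sigma_0,p_0)\in \Gamma(B)\times M$. By the construction in Theorem \ref{FiberBundleSection} (following \cite[Lemma 3.2.1]{RWang}) there are three pieces of data. First, an open neighbourhood $U\subseteq B$ of $\sigma_0(M)$. Second, a vector bundle $E\coloneqq \left.T^{\tn{vert}}B\right|_{\sigma_0(M)}\to M$ together with an open neighbourhood $V\subseteq E$ of the zero section. Third, a fiber-preserving diffeomorphism $\phi:U\to V$. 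These induce the chart $\Phi:\Gamma(U\subseteq B)\to \Gamma(V\subseteq E)$, $\Phi(\tau)=\phi\circ\tau$, and $\Gamma(U\subseteq B)$ is an open neighbourhood of $\sigma_0$ (Proposition \ref{Prop:FiberBundleOperators}).

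On the open set $\Gamma(U\subseteq B)\times M$ we have the identity
\begin{align*}
    \Ev_B(\tau,p)=\tau(p)=\phi^{-1}(\phi(\tau(p)))=\phi^{-1}\bigl(\Ev_E(\Phi(\tau),p)\bigr).
\end{align*}
Thus, in the chart $\Phi\times\Id_M$, the map $\Ev_B$ is the composition $\phi^{-1}\circ \Ev_E$, where we restrict to $\Gamma(V\subseteq E)\times M$. The composition lands in $V$ because $\Phi(\tau)$ takes values in $V$. By Proposition \ref{Prop:EvVBSmooth}, the map $\Ev_E:\Gamma(E)\times M\to E$ is smooth tame, and $\mathcal T^k\Ev_E$ has local tame estimates of base $0$ and degree $k$. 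The map $\phi^{-1}:V\to U\subseteq B$ is a smooth map between finite-dimensional manifolds, so it is zero-tame by Proposition \ref{Prop:FinDimMfdsAsFrechetMfds}. The chain rule then gives $\mathcal T^k(\phi^{-1}\circ\Ev_E)=\mathcal T^k\phi^{-1}\circ\mathcal T^k\Ev_E$. Composing with a zero-tame map does not change degree or base, so $\mathcal T^k\Ev_B$ locally satisfies tame estimates of base $0$ and degree $k$ in this chart.

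The one point needing care is that these estimates should hold with respect to the tame structure on $\Gamma(B)$, and not merely in one chart. This follows because the chart transitions on $\Gamma(B)$ are bundle operators, hence zero-tame (Theorem \ref{FiberBundleSection}, Lemma \ref{VBopSmoothTame}), and zero-tame transitions again preserve degree and base. On the target side, $B$ is finite-dimensional, so any chart of $B$ around $\sigma_0(p_0)$ gives a zero-tame transition, as in Corollary \ref{Cor:SmToBanSmTame}. I expect no real obstacle. The only bookkeeping is to check that the tangent identification $\mathcal T^k(\Gamma(V\subseteq E)\times M)$ is compatible with $\mathcal T^k\Phi\times\mathcal T^k\Id_M$. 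This follows from Proposition \ref{Prop:EplusFsect} and the remark after Lemma \ref{VBopSmoothTame}, which say that $T\Phi$ is again a vector bundle operator up to a zero-tame isomorphism.
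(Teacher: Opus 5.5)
Your proposal is correct and follows essentially the same route as the paper. Both conjugate $\Ev$ by the chart $\Phi$ coming from the fiber-preserving diffeomorphism $\phi$ of Theorem \ref{FiberBundleSection}, which reduces the claim to the vector-bundle case in Proposition \ref{Prop:EvVBSmooth}, and both use that $\phi^{\pm1}$ is zero-tame as a smooth map of finite-dimensional manifolds. One small citation point: for zero-tameness of chart transitions on $B$, cite Proposition \ref{Prop:FinDimMfdsAsFrechetMfds} or Corollary \ref{Cor:MapsOfBanachSpacesZeroTame}, not Corollary \ref{Cor:SmToBanSmTame}, which only gives smooth tameness.
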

\begin{proof}
This is a direct consequence of Proposition \ref{Prop:EvVBSmooth}. We choose some $\sigma\in\Gamma(B)$. By Theorem \ref{FiberBundleSection} we know that there is an open neighbourhood $U\subseteq B$ of $\sigma$ and a fiberwise diffeomorphism $\phi:U\to W$ to some open neighbourhood $W$ of the zero section in some smooth vector bundle $V\to M$. Then we denote the induced map of sections by $\Phi$. Then the map $\phi\circ \mathrm{Ev}\circ \Phi^{-1}:\Gamma(W\subseteq V)\to W$ is smooth tame by Proposition \ref{Prop:EvVBSmooth}. The degrees of tameness also directly follow from that proposition. We note that $\phi$ is a zero-tame map as it is a smooth map of smooth finite-dimensional manifolds (cf.~Proposition \ref{Prop:FinDimMfdsAsFrechetMfds}). 
\end{proof}

\begin{remark}
\label{Rem:Ev Phi=phi_p Ev}
Let $V\to M$ be a vector bundle, $M$ a compact manifold, and 
$f:(a,b)\to \Gamma(V)$, $a,b\in \R$ be a $C^1$ curve. Then by continuity and linearity of
$\Ev_p=\Ev(-,p):V\to V_p$, $p\in M$, we find $(\Ev_p(f))'=\Ev_p(f')$. Let $B\to M$ be a surjective submersion, $\sigma\in \Gamma(B)$, $U\subseteq B$ an open neighbourhood of $\sigma$, $W\subseteq V$ be an open subset of a vector bundle $V\to M$ and $\phi:U\to V$ a fiber-preserving diffeomorphism inducing a chart $\Phi:\Gamma(U\subseteq B)\to \Gamma(W\subseteq V)$ of $\Gamma(B)$ then $\Ev_p\circ \Phi=\left.\phi\right|_{B_p}\circ \Ev_p$ for $p\in M$, where we understand $\Gamma(B)\xrightarrow{\Ev_p}B_p\xrightarrow{\left.\phi\right|_{B_p}}V_p$. Hence, if $f:(a,b)\to \Gamma(B)$, $a,b\in \R$, is a $C^1$ curve then we have
$$\Ev_p((\Phi\circ f)')(0)=(\Ev_p\circ \Phi \circ f)'(0)=(\left.\phi\right|_{B_p}\circ \Ev_p\circ f)'(0)=\d(\left.\phi\right|_{B_p})_{f(0)(p)}((\Ev_p\circ f)'(0)).$$ 
\end{remark}

\begin{theorem}
\label{Theorem:FamiliesOfFBSections}
Let $N$ and $M$ be smooth manifolds, and let $M$ be compact. Let $B\to M$ be a surjective submersion. Let $f:N\times M\to B$ be a smooth map such that $f(n,-)\in \Gamma(B)$ for each $n\in N$. Then the induced map $F:N\to \Gamma(B)$ with $F(n)=f(n,-)$, $n\in N$, is zero-tame.
\end{theorem}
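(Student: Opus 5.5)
The plan is to reduce to the vector bundle case by working locally in $N$ and in the charts of $\Gamma(B)$ from Theorem \ref{FiberBundleSection}, and then to use that $N$ is finite dimensional. By Proposition \ref{Prop:FinDimMfdsAsFrechetMfds} and Remark \ref{Rem:M=Cinfty(.,M)}, $N$ is a zero-tame manifold modeled on $\R^k$ with the Euclidean norm, where $k=\dim N$. So it suffices to fix $n_0\in N$ and a chart $\R^k\supseteq W\cong$ neighbourhood of $n_0$, and to show that $F$ is zero-tame near $n_0$ in that chart.

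\textbf{Step 1 (continuity and localization).} Put $\sigma\coloneqq F(n_0)$ and choose, as in Theorem \ref{FiberBundleSection}, an open neighbourhood $U\subseteq B$ of $\sigma(M)$ and a fiber-preserving diffeomorphism $\phi:U\to V$ onto an open neighbourhood of the zero section of a vector bundle $E\to M$. The set $f^{-1}(U)\subseteq N\times M$ is open and contains $\{n_0\}\times M$. Since $M$ is compact, the tube lemma gives a neighbourhood $W$ of $n_0$ with $f(W\times M)\subseteq U$. On $W$ the local representative is $\tilde F\coloneqq \Phi\circ F:W\to\Gamma(V\subseteq E)$, which is induced by the smooth map $g\coloneqq\phi\circ f:W\times M\to E$. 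This map is fiber-preserving over $M$.

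\textbf{Step 2 (smoothness).} Using a trivialization of $E$ and charts on $M$ with a partition of unity, the seminorms $\nnorm{-}{m}'$ of Theorem \ref{VBsectTameFre} reduce $\tilde F$ to finitely many compactly supported smooth functions $h(x,y)\coloneqq\theta(y)g(x,y)$ on $W\times\R^{\dim M}$. I will show that $x\mapsto h(x,-)$ is smooth into $\Gamma(E)$, with $D^j\tilde F(x)\{v_1,\ldots,v_j\}$ being the section $D_x^jg(x,-)\{v_1,\ldots,v_j\}$. For the difference quotient I apply Taylor's formula in $x$ to each $y$-derivative $\del_y^\alpha h$. On a compact neighbourhood $K\subseteq W$ of $x$ this gives
\begin{align*}
\sup_y\abs{\del_y^\alpha\bigl(h(x+tv,y)-h(x,y)-tD_xh(x,y)v\bigr)}\leq t^2\abs{v}^2\sup_{K\times\mathrm{supp}\theta}\abs{\del_y^\alpha D_x^2h},
\end{align*}
and this tends to $0$ uniformly. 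Inductively, every $D^j\tilde F$ exists and is induced by a smooth map of the same type. Continuity in $(x,v_1,\ldots,v_j)$ follows from uniform continuity of $\del_y^\alpha D_x^jh$ on compacta, using multilinearity in the $v_i$ and Lemma \ref{Lemma:ContFamOfMultLin}.

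\textbf{Step 3 (zero-tameness).} The $m$-th norm of $D^j\tilde F(x)\{v_1,\ldots,v_j\}$ is bounded by $C_{m,j,K}\abs{v_1}\cdots\abs{v_j}$. On a bounded neighbourhood of $(x,v_1,\ldots,v_j)$ this is at most $C_m(1+\sum\abs{v_i})$, which is a tame estimate of degree $0$ and base $0$ in the Euclidean grading of the source. Composing with the zero-tame charts then yields zero-tameness of $F$.

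The main obstacle is Step 2: obtaining convergence of the difference quotients in every $C^m$-norm simultaneously. The uniform Taylor remainder over the compact set $K\times\mathrm{supp}\,\theta$ is the key point there.
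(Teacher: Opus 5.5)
Your argument is correct, but it takes a different route from the paper's.

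\textbf{The paper's route.} The paper does no hands-on analysis. It reads $f$ as a smooth fiber-preserving map from the trivial bundle $M\times N\to M$ into $B\to M$. This induces a bundle operator $\tilde F:C^\infty(M,N)\to\Gamma(B)$, which is zero-tame by Proposition \ref{Prop:FiberBundleOperators}. Then $F=\tilde F\circ\mathrm{const}$, where $\mathrm{const}:N\to C^\infty(M,N)$ sends $n$ to the constant map with value $n$. Zero-tameness of $\mathrm{const}$ is checked locally. A chart $U\cong\R^d$ of $N$ induces a chart $C^\infty(M,U)\cong\Gamma(\underline{\R}^d)$. In that chart, $\mathrm{const}$ becomes the inclusion of the constant sections $\R^d\cong\Gamma_c(\underline{\R}^d)\hookrightarrow\Gamma(\underline{\R}^d)$, and every grading norm of a constant section with value $c$ equals $|c|$. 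So all the analysis is delegated to Hamilton's result that nonlinear vector bundle operators are zero-tame. No tube lemma is needed, because $\tilde F$ is defined on all of $C^\infty(M,N)$.

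\textbf{Your route.} You localize with the tube lemma into a chart of $\Gamma(B)$. You then essentially re-prove, by hand, the special case of that lemma that is needed here. Uniform Taylor remainders on $K\times\mathrm{supp}\,\theta$ give convergence of the difference quotients in every $C^m$-norm at once. The degree-$0$, base-$0$ estimates then come from the bound $\nnorm{D^j\tilde F(x)\{v_1,\ldots,v_j\}}{m}'\leq C_{m,j,K}\,|v_1|\cdots|v_j|$ with $x\in K$.

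\textbf{What each buys.} Your approach is longer but self-contained. It also makes the derivatives explicit: $D^j(\Phi\circ F)(x)\{v_1,\ldots,v_j\}$ is the section $D_x^j(\phi\circ f)(x,-)\{v_1,\ldots,v_j\}$. In the paper this kind of information is only extracted afterwards, pointwise via the evaluation map (cf.~Remark \ref{Rem:Ev Phi=phi_p Ev}). The paper's version is shorter and more structural: it exhibits $F$ as a composite of maps already known to be zero-tame, at the cost of the small extra check that the constant-section embedding is zero-tame rather than merely continuous linear.
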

\begin{proof}
The condition $f(n,-)\in \Gamma(B)$ is equivalent to the condition that $f$ is a smooth fiber-preserving map from the trivial fiber bundle $M\times N\to M$ to the surjective submersion $B\to M$. So, it induces a bundle operator $\Tilde{F}:C^\infty(M,N)\to \Gamma(B)$ which is hence zero-tame (cf.~Proposition \ref{Prop:FiberBundleOperators}).

Let $\phi:U\to \R^d$, $d=\dim(N)$, be a chart for $U\subseteq N$ open. Then $C^\infty(M,U)\subseteq C^\infty(M,N)$ is open (cf.~Proposition \ref{Prop:FiberBundleOperators}). Let $\underline{\R}^d$ denote the trivial vector bundle $M\times \R^d \to M$ then as $\phi$ induces a bundle operator we get a zero-tame diffeomorphism $\Phi:C^\infty(M,U)\cong \Gamma(\underline{\R}^d)$, which is a local chart on $C^\infty(M,N)$. 
We can now look at the subspace of of constant sections of $\underline{\R}^d$ and denote it by $\Gamma_c(\underline{\R}^d) \subset \Gamma(\underline{\R}^d)$. This is a finite-dimensional subspace continuously isomorphic to $\R^d$ by the obvious bijection that maps a constant section with value $c\in \R^d$ to $c$ (cf.~Lemma \ref{Lemma:FinDimHausdTVS}). Moreover, since $\Gamma_c(\underline{\R}^d)\subseteq \Gamma(\underline{\R}^d)$ as a complete subspace of a metrizable space is a closed subspace it is a graded Fréchet space and for $s_c\in \Gamma_c(\underline{\R}^d)$ with constant value $c\in \R^d$ we have $\nnorm{s_c}{l}=\nnorm{s_c}{0}=|c|$, $l\in \N$ so the previously mentioned isomorphism $\Gamma_c(\underline{\R}^d)\to \R^d$ is zero-tame. Here, $\nnorm{-}{l}$, $l\in \N$, is the norm on $\Gamma(\underline{\R}^d)$ defined as in equation \eqref{VBsectGlobNorm} with respect to the trivial covariant derivative, and Euclidean norm. Note that in general not every (continuous) linear map of (finite-dimensional) graded Fréchet spaces is also zero-tame. So, noting that $\phi$ as a smooth map of smooth finite-dimensional manifolds (with the canonical zero-tame structure induced by the Euclidean norm) is zero-tame (cf.~Proposition \ref{Prop:FinDimMfdsAsFrechetMfds}), it is clear that the injective map $$U\stackrel{\phi}{\cong} \R^d \cong \Gamma_c(\underline{\R}^d)\hookrightarrow \Gamma(\underline{\R}^d)\stackrel{\Phi^{-1}}{\cong} C^\infty(M,U)\hookrightarrow C^\infty(M,N)$$ is zero-tame and is precisely the map that maps some $n\in N$ to the constant map $M\to N$ with value $n$. Hence, we find that the induced map $\left.F\right|_U:U\hookrightarrow C^\infty(M,N)\xrightarrow{\Tilde{F}} \Gamma(B)$ is zero-tame and thus $F:N\to \Gamma(B)$ is a zero-tame map. 
\end{proof}

\begin{corollary}
\label{FamiliesOfDiffeos}
Let $M$, $N$ and $Y$ be smooth manifolds, let $M$ be compact, and let $f:N\times M\to Y$ smooth. Then the induced map $N\to C^\infty(M,Y)$ is zero-tame. Moreover, if $Y=M$ and $f$ is such that $f(n,-)\in \Diff$, for all $ n\in N$, then the induced map $N\to \Diff$ is zero-tame.
\end{corollary}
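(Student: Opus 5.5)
The first assertion follows from Theorem \ref{Theorem:FamiliesOfFBSections} applied to the trivial bundle. Take $B\coloneqq M\times Y\xrightarrow{\mathrm{pr}_1}M$; this is a surjective submersion over the compact manifold $M$. Define $\tilde f:N\times M\to M\times Y$ by $\tilde f(n,p)\coloneqq (p,f(n,p))$. This map is smooth, and $\tilde f(n,-)\in\Gamma(M\times Y)$ for every $n\in N$. Theorem \ref{Theorem:FamiliesOfFBSections} then shows that $\tilde F:N\to\Gamma(M\times Y)$ is zero-tame. By construction of the manifold structure in Corollary \ref{Cor:SmoothMapsMfd}, $C^\infty(M,Y)$ is exactly $\Gamma(M\times Y)$, via the identification $g\mapsto (\Id,g)$, so this identification is the identity on zero-tame structures. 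Under it, $\tilde F$ is the induced map $n\mapsto f(n,-)$, which is therefore zero-tame.

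For the second assertion, put $Y=M$ and suppose $f(n,-)\in\Diff$ for all $n$. The induced map $F:N\to C^\infty(M,M)$ is zero-tame by the first part, and its image lies in $\Diff$. By Proposition \ref{Prop:DiffOpen}, $\Diff$ is an open subset of $C^\infty(M,M)$ and carries the structure obtained by restricting charts. Hence $F$, regarded as a map into $\Diff$, has exactly the same local representatives in these restricted charts, so it remains zero-tame.

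The only point that needs care is that the structure on $\Diff$ is the open-subset structure inherited from $C^\infty(M,M)$. The charts $\exh_f$ from Convention \ref{Conv:ChartsOnDiff} are restrictions of bundle-operator charts of $C^\infty(M,M)$ and are compatible with it by Proposition \ref{Prop:FiberBundleOperators}, so they do not change zero-tameness.
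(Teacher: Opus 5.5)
Your proof is correct and matches the paper's argument: extend $f$ to $\tilde f(n,p)=(p,f(n,p))$, apply Theorem \ref{Theorem:FamiliesOfFBSections} to the trivial bundle $M\times Y\to M$, and then use that $\Diff$ is open in $C^\infty(M,M)$ (Proposition \ref{Prop:DiffOpen}). Your final paragraph is unnecessary, since openness together with the inherited chart structure already suffices, and it is slightly imprecise: for $g\neq\Id$ the parametrizations $\exh_g=R_g\circ\exh$ also involve right composition with $g$, so they are not pure bundle-operator charts.
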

\begin{proof}
We first observe that we can extend a smooth map $f:N\times M\to Y$ to 
$\Hat{f}:N\times M\to M\times Y$ by sending $\Hat{f}(n,m)=(m,f(n,m))$. By Theorem \ref{Theorem:FamiliesOfFBSections} we know that the induced map $N\to \Gamma(M\times Y\to M)\cong C^\infty(M,Y)$ is smooth tame. Since 
$\Diff \subset C^\infty(M,M)$ is open (cf.~Proposition \ref{Prop:DiffOpen}) the result for families of diffeomorphisms follows in the same way. 
\end{proof}

We now summarize the main results of this section.
\begin{corollary}
Let $M$, $N$ and $Y$ be finite-dimensional smooth manifolds, and let $M$ be compact. Let $B\to M$ be a surjective submersion. Then, we have found the following bijective correspondences:
\begin{enumerate}
    \item $$\{f\in C^\infty(N\times M,B)|~f(n,-)\in \Gamma(B)\ \forall n\in N\}\overset{1:1}{\longleftrightarrow} \{f:N\to \Gamma(B) \ \tn{zero-tame}\}$$
    \item $$C^\infty(N\times M,Y)\overset{1:1}{\longleftrightarrow} \{N\to C^\infty(M,Y) \ \tn{zero-tame}\}$$
    \item $$\{f\in C^\infty(N\times M,M)|~f(n,-)\in \Diff\ \forall n\in N\}\overset{1:1}{\longleftrightarrow} \{N\to \Diff \ \tn{zero-tame}\}.$$
\end{enumerate}
\end{corollary}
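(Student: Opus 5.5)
Each of the three correspondences is assembled from two directions. The forward direction is already contained in the results of this section; for the reverse direction I use the smooth tame evaluation maps.

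\emph{Forward direction.} Given $f\in C^\infty(N\times M,B)$ with $f(n,-)\in\Gamma(B)$ for all $n$, Theorem \ref{Theorem:FamiliesOfFBSections} yields a zero-tame map $F:N\to\Gamma(B)$, $F(n)=f(n,-)$. The second and third items follow in the same way from Corollary \ref{FamiliesOfDiffeos}. The assignment $f\mapsto F$ is clearly injective, because $f$ is recovered from $F$ by $f(n,p)=F(n)(p)$.

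\emph{Reverse direction.} Let $F:N\to\Gamma(B)$ be zero-tame, in particular smooth. I define $f\coloneqq \Ev\circ(F\times\Id_M):N\times M\to B$. By Corollary \ref{Cor:EvFBSmooth}, $\Ev:\Gamma(B)\times M\to B$ is smooth tame. The map $F\times\Id_M$ is smooth as a product of smooth maps of Fréchet manifolds. Hence $f$ is smooth in the locally convex (Gâteaux) sense by the chain rule, Proposition \ref{Prop:PropsOfDiff}. Since $N\times M$ and $B$ are finite-dimensional, Proposition \ref{Prop:FinDimMfdsAsFrechetMfds} (via Lemma \ref{Lemma:GateauxImpliesFrechetOnFinDim}) upgrades this to smoothness in the usual finite-dimensional sense. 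Moreover $f(n,-)=F(n)\in\Gamma(B)$, and applying the forward construction to $f$ returns $F$.

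The two constructions are therefore mutually inverse, which gives the first bijection. For the second, identify $C^\infty(M,Y)\cong\Gamma(M\times Y\to M)$ and apply the first item to the projection $M\times Y\to M$, passing from $f$ to $\hat f(n,m)=(m,f(n,m))$ and back via the second component. For the third, use that $\Diff\subseteq C^\infty(M,M)$ is open (Proposition \ref{Prop:DiffOpen}). A zero-tame map $N\to\Diff$ is then the same as a zero-tame map $N\to C^\infty(M,M)$ with values in $\Diff$, and that condition corresponds exactly to $f(n,-)\in\Diff$.

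The main point to watch is the reverse direction. Smoothness of $\Ev$ must be used jointly in both variables: it is not enough to know that each $F(n)$ is smooth in $p$. The Gâteaux-to-Fréchet passage on the finite-dimensional manifold $N\times M$ is what makes this hold, and it is exactly what Corollary \ref{Cor:EvFBSmooth} together with Proposition \ref{Prop:FinDimMfdsAsFrechetMfds} provides.
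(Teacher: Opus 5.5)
Your proposal is correct and takes essentially the same approach as the paper: the forward direction comes from Theorem~\ref{Theorem:FamiliesOfFBSections}, and the reverse direction uses that $\Ev\circ(F\times\Id)$ is smooth by Corollary~\ref{Cor:EvFBSmooth}, hence classically smooth by Proposition~\ref{Prop:FinDimMfdsAsFrechetMfds}. You go slightly further than the paper, which treats items 2 and 3 only as direct consequences: you check explicitly that the two constructions are mutually inverse, and you spell out how items 2 and 3 reduce to item 1.
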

\begin{proof}
We will prove this only for families of sections of $B$ since the other two cases follow as direct consequences.

The correspondence from left to right immediately follows from Theorem \ref{Theorem:FamiliesOfFBSections}.
The other direction follows since $\Ev:M\times \Gamma(B)\to B$ is smooth. So, if $f:N\to \Gamma(B)$ is smooth, then $\Ev\circ (f\times \Id):N\times M\to B$ is a smooth map of smooth finite-dimensional manifolds by Proposition \ref{Prop:FinDimMfdsAsFrechetMfds}.  
\end{proof}

\begin{remark}
We note that there is a broader theory of so-called canonical manifolds which is discussed in \cite[2.3 Manifolds of Mappings]{schmeding_2022}.
\end{remark}

\begin{remark}
\label{Rem:C^nFamilies}
Note that more generally for any $n\in \N$ all $C^n$-maps $N\to \Gamma(B)$, $N\to C^\infty(M,Y)$, and $N\to \Diff$ results again in $C^n$-maps $N\times M\to B$, $N\times M\to Y$, and $N\times M\to M$, respectively, by smoothness of the evaluation map $\Ev$ and Proposition \ref{Prop:FinDimMfdsAsFrechetMfds}.
\end{remark}

\begin{remark}
We note that every smooth map $N\to \Gamma(B)$, $N\to C^\infty(M,Y)$, or $N\to \Diff$ is zero-tame. 
To see this, let $F:N\to \Gamma(B)$ be smooth. Then we know that $f\coloneqq\Ev\circ (F\times \Id):N\times M\to B$ is a smooth map of finite-dimensional manifolds in the conventional sense (cf.~Proposition \ref{Prop:FinDimMfdsAsFrechetMfds}). Since $F:N\to \Gamma(B)$ is the map induced by the smooth map $f:N\times M\to B$ it is zero-tame.
\end{remark}

\begin{remark}
\label{Remark:ConvCalcForMapOfSect}
Let $M$ be a compact manifold, let $A\to M$ and $B\to M$ be surjective submersions, and let $\Ucal\subseteq \Gamma(A)$ be open. As laid out in Remark \ref{Rem:ConvenientCalculus} we find that a map $F:\Ucal\to \Gamma(B)$ is smooth if and only if $F\circ c$ is smooth for every smooth curve $c\in C^\infty(\R,\Ucal)$. Hence, to show smoothness of $F$ it suffices to check that the map $F\circ c$ can be induced by a smooth map $f_c:\R\times M\to B$ with $f(t,p)\in B_p$ for all $(t,p)\in \R \times M$ for every $c\in C^\infty(\R,\Gamma(B))$. However, we note that we are not aware of any way to infer smooth tameness of $F$ by some similar method. 
\end{remark}

Using these results we can for instance prove the following.
\begin{corollary}
\label{Cor:FlowSmooth}
    Let $M$ be a compact manifold. Then the map  $\Phi:\VFs\times \R\to \Diff$ mapping a vector field $X$ and parameter $t$ to the flow of the vector field at time $t$ is smooth.
\end{corollary}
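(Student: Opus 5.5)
The plan is to use the convenient-calculus criterion of Remark \ref{Remark:ConvCalcForMapOfSect}. Since $\VFs\times\R$ is a Fréchet space and $\Diff$ is a Fréchet manifold that is open in $C^\infty(M,M)$, it suffices to show that $\Phi\circ c$ is smooth for every smooth curve $c=(X_\cdot,t_\cdot)\in C^\infty(\R,\VFs\times\R)$. By the correspondences established above (Corollary \ref{FamiliesOfDiffeos} together with its converse via the evaluation map), this reduces to a finite-dimensional statement. We must show that the map
\begin{align*}
    \R\times M\ni(s,p)\longmapsto \mathrm{Fl}^{X_s}_{t_s}(p)\in M
\end{align*}
is smooth, where $\mathrm{Fl}^{X}_t$ denotes the flow of $X$ at time $t$.

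\textbf{Step 1: the curve as a finite-dimensional family.} By Remark \ref{Rem:C^nFamilies} and the smoothness of $\Ev$ (Corollary \ref{Cor:EvFBSmooth}), the curve $s\mapsto X_s$ corresponds to a smooth map $\R\times M\to TM$, $(s,p)\mapsto X_s(p)$, with $X_s(p)\in T_pM$. Similarly, $s\mapsto t_s$ is an ordinary smooth real function.

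\textbf{Step 2: the suspended vector field.} On the finite-dimensional manifold $\R\times M$, define $\widetilde X(s,p)\coloneqq (0,X_s(p))$. This is a smooth vector field, and since $M$ is compact, each slice $\{s\}\times M$ is invariant. Its flow is $\mathrm{Fl}^{\widetilde X}_t(s,p)=(s,\mathrm{Fl}^{X_s}_t(p))$.

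\textbf{Step 3: completeness and smoothness of the flow.} This is the main point to verify. Completeness holds because integral curves stay in $\{s\}\times M$, which is compact, so they cannot escape in finite time. The standard ODE theorem on smooth dependence on initial conditions then shows that $(t,s,p)\mapsto \mathrm{Fl}^{X_s}_t(p)$ is smooth on $\R\times\R\times M$.

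\textbf{Step 4: conclusion.} Composing with $s\mapsto(t_s,s,p)$ shows that $(s,p)\mapsto \mathrm{Fl}^{X_s}_{t_s}(p)$ is smooth, and each map $p\mapsto \mathrm{Fl}^{X_s}_{t_s}(p)$ is a diffeomorphism. Corollary \ref{FamiliesOfDiffeos} then shows that $s\mapsto\Phi(c(s))$ is smooth (indeed zero-tame) into $\Diff$. Hence $\Phi\circ c$ is smooth for every smooth curve $c$, and $\Phi$ is smooth by Remark \ref{Rem:ConvenientCalculus}.

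The only subtle part is Step 1: the identification of smooth curves in $\VFs$ with jointly smooth maps $\R\times M\to TM$. In one direction this uses the smoothness of evaluation; in the other it uses Theorem \ref{Theorem:FamiliesOfFBSections}. Both are available from the results above. Note that this argument gives smoothness but no tame estimates, matching the caveat in Remark \ref{Remark:ConvCalcForMapOfSect}.
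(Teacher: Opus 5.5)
Your proposal is correct and takes essentially the same route as the paper. Both arguments test smoothness on smooth curves via the convenient-calculus criterion, turn the curve into a finite-dimensional smooth family, invoke smooth parameter dependence of flows, and read the result back as a smooth curve in $\Diff$ via Corollary \ref{FamiliesOfDiffeos}. The only cosmetic difference is that the paper first reduces to the time-one map via $\Phi_t(X)=\Phi_1(tX)$ and appeals to standard ODE theory, whereas you carry $t_s$ along the curve and make the parameter dependence explicit through the suspended vector field on $\R\times M$.
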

\begin{proof}
    Since $M$ is compact the flow is indeed globally defined.
    We check that the time-one flow $\Phi_1$ as a map $\VFs\to \Diff$ is smooth. The result then follows from $\Phi_t(X)=\Phi_1(tX)$. Now let $Y\in C^\infty(\R,\VFs)$. Then, let $\phi^Y_1:\R\times M\to M$ be the map $(x,\lambda)\mapsto (\Phi_1(Y(\lambda)))(x)$. Note that $\lambda$ is treated as a parameter independent of time and the arguments still apply if we replaced the parameter space $\R$ by an arbitrary (finite-dimensional) manifold. Then by standard ODE theory $\phi^Y_1$ is smooth. But then $\Phi_1\circ Y$ is precisely the map induced by $\phi^Y_1$ and so $\Phi_1$ is smooth by the previous remark. 
\end{proof}

\subsection{The Pullback of tensor fields along diffeomorphisms as a smooth tame map}

As we have seen in Theorem \ref{AuteqCharacterizationTheorem} the characterization of the group of autoequivalences of a Courant algebroid involves the pullback of differential forms along diffeomorphisms. We will show later in Section \ref{SubSect:LieGrStrOfAuteq} the charts of the group of autoequivalences will also involve the pullback. 

In this section we show that on a compact manifold the pullback of tensor fields along diffeomorphisms is a smooth tame map. We deduce that the pushforward of tensor fields along diffeomorphisms must be smooth tame and compute the differential of both the pullback and pushforward.

\begin{convention}
\label{Conv:tensfields}
As there are different conventions in literature, we want to clarify that in the following we will use the notation $\tensf{n}{m}\coloneqq \Gamma((TM)^{\tens n}\tens(T^*M)^{\tens m})$, $n,m\in \N$, to denote the smooth $(n,m)$-tensor fields on a (compact) manifold $M$.
\end{convention}

\begin{theorem}
\label{PullbackSmoothTame}
Let $M$ be a compact smooth finite-dimensional manifold.
The pullback of tensor fields along diffeomorphisms as a map
\begin{align*}
    P:\mathrm{Diff}(M)\times \tensf{n}{m}&\longrightarrow \tensf{n}{m}\\
    (f,\tau)&\longmapsto P(f,\tau)\coloneqq f^*\tau
\end{align*}
of tame Fréchet manifolds is a smooth tame map for every $n,m\in \N$. 
\end{theorem}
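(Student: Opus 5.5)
Our plan is to factor the pullback $P(f,\tau)=f^*\tau$ into maps whose smooth tameness is already established: composition (Proposition \ref{Prop:CompSmoothTame}), bundle operators (Proposition \ref{Prop:FiberBundleOperators}) and the tame linear jet map $j^1$.

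Pointwise we have
\begin{align*}
(f^*\tau)_p=\bigl((\d f_p)^{-1}\bigr)^{\tens n}\tens(\d f_p^{\,t})^{\tens m}\,\tau_{f(p)}.
\end{align*}
So $f^*\tau$ depends only on two pieces of data: the first jet of $f$ at $p$, and the value $\tau\circ f(p)$. We would realize this in three steps.

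\textbf{Step 1.} The map $(f,\tau)\mapsto \tau\circ f$ is smooth tame. Here $\tau$ is viewed as an element of $C^\infty(M,E)$ with $E=(TM)^{\tens n}\tens(T^*M)^{\tens m}$, and $\tau\circ f\in C^\infty(M,E)$ is covered by Proposition \ref{Prop:CompSmoothTame}. One subtlety is that $\tensf{n}{m}=\Gamma(E)$ sits inside $C^\infty(M,E)$ as the closed submanifold $\{\pi\circ\tau=\Id\}$. The inclusion $\Gamma(E)\hookrightarrow C^\infty(M,E)$ is a bundle operator, induced by the fiber-preserving map $E\to M\times E$, $e\mapsto(\pi(e),e)$, so it is zero-tame.

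\textbf{Step 2.} The map $f\mapsto j^1f$ is smooth tame from the open subset $\Diff\subseteq C^\infty(M,M)$ to $\Gamma(J^1(M\times M))$, where $J^1(M\times M)$ is the first jet bundle of $M\times M\to M$. This is a nonlinear partial differential operator of order $1$ in charts. Alternatively, $f\mapsto \d f$ is the map $C^\infty(M,M)\to C^\infty(TM,TM)$, and we keep only the fiberwise-linear data. We restrict to jets that are invertible, which form an open set $U\subseteq J^1(M\times M)$.

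\textbf{Step 3.} We bundle the two outputs into a section of the fiber product $A=U\times_M (M\times E)$ over $M$. Using Proposition \ref{Prop:SmMapsIntoProd=ProdOfSmMaps}, the pair $(j^1f,\tau\circ f)$ is a section of $A$. We then define the fiber-preserving smooth map
\begin{align*}
a:A\to E,\qquad \bigl(j^1_pf,(p,e)\bigr)\mapsto \bigl((\d f_p)^{-1}\bigr)^{\tens n}\tens(\d f_p^{\,t})^{\tens m}e,
\end{align*}
which is well defined only on the closed subset where $e\in E_{f(p)}$. To avoid this, we would instead use $\tau\circ f\in\Gamma(f^*E)$ viewed inside $\Gamma(A')$ with $A'=\{(j,e)\in U\times E:\ e\in E_{\mathrm{target}(j)}\}$. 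This $A'$ is a surjective submersion over $M$, a vector bundle over $U$. The map $(f,\tau)\mapsto (j^1f,\tau\circ f)\in\Gamma(A')$ is then smooth tame, and $a:A'\to E$ is a smooth fiber-preserving map, hence a bundle operator. Composing gives $P$ as a composition of smooth tame maps.

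We expect the main obstacle to be the bookkeeping in Step 3: showing that the joint map into $\Gamma(A')$, rather than into the ambient product, is smooth tame. Because $A'$ is only a submanifold of $U\times_M(M\times E)$, we would handle this locally via charts of Theorem \ref{FiberBundleSection}, adapted to the submanifold $A'\subseteq U\times E$ (a graph-type condition). A possible fallback is to work in the exponential charts of Convention \ref{Conv:ChartsOnDiff} around $f\in\Diff$. There, writing $g=\exh(X)\circ f$, the local representative $X\mapsto (\exh(X)\circ f)^*\tau$ becomes a nonlinear differential operator of order $1$ in $X$, composed with the fixed tame linear isomorphism $f^*$ and with $\tau$. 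One could then verify smooth tameness via Proposition \ref{Prop:NonlinPartDiffOpsAreSmoothTame} together with Step 1 for the $\tau$-dependence.
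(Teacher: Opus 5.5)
Your proposal is correct, but it takes a genuinely different route from the paper. The paper never writes $f^*\tau$ as a single bundle operator. Instead it proceeds in four stages:
\begin{itemize}
\item It first freezes one argument: $\tau\mapsto f^*\tau$ is tame linear, and $f\mapsto f^*\tau$ is a first-order nonlinear differential operator in the chart $\exh$ (Lemma \ref{PullbackFixed_w}).
\item It gets the joint statement for vector fields from the tame Lie group structure of $\Diff$, writing $f^*X$ as the derivative at $0$ of conjugation by $f^{-1}$ in that chart (Lemma \ref{Lemma:PullbackVfields}).
\item It passes to $1$-forms through a Riemannian metric $g$, via $f^*\omega=(f^*g)(f^*(g^{-1}\omega),-)$ (Lemma \ref{PullbackOneForms}).
\item It handles general $(n,m)$-tensors by cutting $\tau$ with bump functions and a partition of unity into finite sums of tensor products of global vector fields and $1$-forms, using that $f^*$ commutes with $\tens$.
\end{itemize}

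Your factorization $P(f,\tau)=a_*\bigl(j^1f,\tau\circ f\bigr)$ instead puts the non-local dependence on $\tau_{f(p)}$ (the obstruction the paper notes before its proof) into Hamilton's composition theorem, Proposition \ref{Prop:CompSmoothTame}. This treats all $(n,m)$ at once and avoids the localization bookkeeping. It also never uses inversion on $\Diff$, since $(\d f_p)^{-1}$ is only a fiberwise matrix inverse on the open set of invertible jets.

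The price is two ingredients the paper does not provide explicitly.
\begin{itemize}
\item The nonlinear jet map $j^1:C^\infty(M,M)\to\Gamma(J^1(M\times M))$. You can treat it in the charts of Theorem \ref{FiberBundleSection} by prolonging the chart diffeomorphisms to $1$-jets. This reduces it to the tame linear $j^1$ on a vector bundle composed with bundle operators.
\item The step you flag as the main obstacle. This is settled by \cite[Lemma 3.2.11]{RWang}, which the paper already uses for Proposition \ref{Proposition:GACSZeroTameSubmfd}. $A'$ is the preimage of the diagonal of $M\times M$ under $(j,e)\mapsto(\mathrm{target}(j),\pi_E(e))$ on $U\times_M(M\times E)\cong U\times E$. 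This map is transverse to the diagonal because the projection $\pi_E:E\to M$ is a submersion, and $A'\to M$ is a surjective submersion. Hence $\Gamma(A')$ is a zero-tame submanifold, and a smooth tame map into the ambient section space with image in $\Gamma(A')$ is smooth tame into it, just as in the proof of Theorem \ref{Thm:ActionSmTame}.
\end{itemize}

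Conversely, the paper's route needs only vector bundle operators, linear jets and the Lie group structure of $\Diff$. Its vector-field step also produces the identity $\mathrm{Ad}_f=f_*$, which is reused in the Lie algebra computation.
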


We will prove this theorem in several steps. Firstly, one might be tempted to think that $P$ is simply a nonlinear partial differential operator (cf.~Definition \ref{Def:NonlinPartDiffOp}). However, take for instance $\om\in \tensf{0}{1}=\Forms{1}$ and $f\in \Diff$ by definition we have
\begin{align*}
    (f^*\om)_p(X)=\om_{f(p)}(\d f_p(X))
\end{align*}
for $p\in M$, and $X\in \VFs$.
This expression depends on $f$ and its first derivative in the point $p$. However, it depends on $\om$ at the point $f(p)$ and thus cannot be interpreted as a nonlinear differential operator jointly in $(f,\om)$.

\begin{lemma}
\label{PullbackFixed_f}
Let $f\in \Diff$ then the map
\begin{align*}
    P_f:\tensf{0}{m}&\longrightarrow \tensf{0}{m}\\
    \tau &\longmapsto P_f(\tau)\coloneqq f^*\tau
\end{align*}
is a tame linear map for any $m\in \N$.
\end{lemma}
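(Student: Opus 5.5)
Linearity of $P_f$ is immediate, since $f^*(\tau+\lambda\sigma)=f^*\tau+\lambda f^*\sigma$. The work is therefore the tame estimate $\nnorm{f^*\tau}{k}\leq C_k\nnorm{\tau}{k}$ for all $k\in\N$, i.e.\ tame linearity of degree $0$ and base $0$, with constants depending on $f$. By Theorem \ref{VBsectTameFre} and Remark \ref{Rem:NormsOnSectOfVBIndepOfChoices} it suffices to prove this for the local norms $\nnorm{-}{k}'$ from equation \eqref{VBsectLocNorm}. We are free to choose the charts, so we adapt them to $f$.

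We fix a finite atlas $(\Tilde U_j,\varphi_j)_{j=1,\ldots,N}$ with refinement $U_j$, $\Bar U_j\subset\Tilde U_j$, and partition of unity $(\theta_j)$, and use it to define $\nnorm{-}{k}'$ on the target side. On the source side we use the pulled-back data $(f^{-1}(\Tilde U_j),\varphi_j\circ f)$, $f^{-1}(U_j)$, and $(\theta_j\circ f)$. This is again an admissible atlas, refinement and partition of unity, because $f$ is a diffeomorphism. In these coordinates $f$ is the identity on $\R^n$, so the components of $f^*\tau$ with respect to the induced coordinate coframes are exactly the components of $\tau$ composed with $f$, and $(\theta_j\circ f)$ multiplies them correspondingly. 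Hence $\nnorm{f^*\tau}{k}''=\nnorm{\tau}{k}'$, where $\nnorm{-}{k}''$ is the norm built from the pulled-back data.

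By Theorem \ref{VBsectTameFre}, the norms $\nnorm{-}{k}''$ and $\nnorm{-}{k}'$ are equivalent for each $k$, say $\nnorm{\sigma}{k}'\leq C_k\nnorm{\sigma}{k}''$. Combining this with the identity above gives $\nnorm{f^*\tau}{k}'\leq C_k\nnorm{\tau}{k}'$, which is the claimed tame estimate.

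If one prefers to avoid changing the charts, a direct chain-rule computation gives the same bound. Locally, $(f^*\tau)_{i_1\ldots i_m}(x)=\tau_{j_1\ldots j_m}(f(x))\,\partial_{i_1}f^{j_1}(x)\cdots\partial_{i_m}f^{j_m}(x)$. Differentiating $k$ times by the Leibniz rule and Faà di Bruno's formula, each resulting term is a derivative of $\tau$ of order at most $k$, evaluated at $f(x)$, multiplied by derivatives of $f$ of order at most $k+1$. The latter are bounded on the compact sets $\Bar U_j$, since $f$ is fixed. The only bookkeeping is that $f(U_j)$ need not lie in a single chart. This is handled by a Lebesgue-number refinement, taking the $U_j$ small enough that each $f(\Bar U_j)$ lies in some target chart $\Tilde U_l$, together with the norm equivalence of Theorem \ref{VBsectTameFre}. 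I expect this chart bookkeeping to be the only real obstacle, and the pulled-back atlas argument sidesteps it.
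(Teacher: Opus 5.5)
Your proposal is correct and is essentially the paper's argument. The paper also transports the chart data along $f$ so that $f$ becomes the identity in coordinates, and then invokes the independence of the local norms from the choices made. It pushes the charts forward to $(f(U_i),\varphi_i\circ f^{-1})$ for the norm on $\tau$, whereas you pull them back to $(f^{-1}(\Tilde{U}_j),\varphi_j\circ f)$ for the norm on $f^*\tau$. Because you also transport the partition of unity, you get the exact identity $\|f^*\tau\|''_k=\|\tau\|'_k$, so you never need to estimate derivatives of $f$, which the paper leaves somewhat vague.
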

\begin{proof}
Let there be some coordinates $(U,\varphi=(x^1,\ldots,x^d))$ on an open subset of $U\subseteq M$ with $d=\dim(M)$. Then, $(f(U),\varphi \circ f^{-1})$ is a chart on $M$. Let $$\left.\tau\right|_{f(U)}=\tau_{i_1,\ldots,i_m}\d(x^{i_1}\circ f^{-1})\tens \ldots\tens \d(x^{i_m}\circ f^{-1})$$
using the Einstein summation convention where $\tau_{i_1,\ldots,i_m}\in C^\infty(f(U))$, and the indices have range in $i_1,\ldots,i_m\in \{1,\ldots,d\}$.  Then for $p\in U$ we have $$(f^*\tau)_p=\tau_{i_1,\ldots,i_m}(f(p))\d(x^{i_1}\circ f^{-1}\circ f)_p\tens \ldots\tens \d(x^{i_m}\circ f^{-1}\circ f)_p=\tau_{i_1,\ldots,i_m}(f(p))\d x^{i_1}_p\tens\ldots\tens \d x^{i_m}_p.$$
If $(U_i,\varphi_i)_{i=1,\ldots,k}$, $k\in \N$, is an open cover on $M$ by charts, such that each $(U_i,\varphi_i)$ can be extended to a larger chart $(V_i,\Tilde{\varphi_i})$ such that $\Bar{U}_i\subset V_i$, then the same holds true for $(f(U_i),\varphi\circ f^{-1})_{i=1,\ldots,k}$. Let the former induce the norms $(\nnorm{-}{i}')_{i\in \N}$ and the latter induce the norms $(\nnorm{-}{f,j}')_{j\in \N}$ as described in equation \eqref{VBsectLocNorm}. Then, it is clear that after potentially locally estimating some partial derivatives of $f$ (which come up due to the chain rule) over the relatively compact chart domains there is a constant $C_i>0$ for every $i\in \N$ such that 
\begin{align*}
    \nnorm{f^*\tau}{i}'\leq C_i\nnorm{\tau}{f,i}'.
\end{align*}
So, since $\nnorm{-}{k}'$ does not depend on the choices of charts or partition of unity up to zero-tame equivalence of gradings this shows tame linearity.
\end{proof}

\begin{lemma}
\label{PullbackFixed_w}
Let $\tau\in \tensf{0}{m}$. Then the map 
\begin{align*}
    P_\tau:\Diff&\longrightarrow \tensf{0}{m}\\
    f&\longmapsto P_\tau(f)\coloneqq f^*\tau
\end{align*}
is a nonlinear partial differential operator of degree $1$ and hence smooth tame. 
\end{lemma}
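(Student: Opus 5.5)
The map $P_\tau$ is nonlinear in $f$, but for each point $p$ the value $(f^*\tau)_p$ depends only on the $1$-jet of $f$ at $p$, with $\tau$ held fixed. The plan is to realize $P_\tau$ locally as a nonlinear vector bundle operator composed with a jet extension. Smooth tameness then follows from Proposition \ref{Prop:NonlinPartDiffOpsAreSmoothTame}.

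\textbf{Step 1: a pointwise formula.} Since $\Diff\subseteq C^\infty(M,M)=\Gamma(M\times M\to M)$ is open (Proposition \ref{Prop:DiffOpen}), $f$ is a section of the trivial bundle $B=M\times M\to M$. Its first jet bundle $J^1B$ has fiber over $p$ consisting of pairs $(q,A)$ with $q\in M$ and $A\in\mathrm{Hom}(T_pM,T_qM)$. Define the fiber-preserving smooth map
\begin{align*}
    \tilde p:J^1B\longrightarrow \textstyle\bigotimes^m T^*M,\qquad (p;q,A)\longmapsto A^*\tau_q=\tau_q(A-,\ldots,A-).
\end{align*}
This map is smooth because $\tau$ is a smooth tensor field and the expression is polynomial in $A$. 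By definition of the pullback, $P_\tau(f)=\tilde p\circ j^1f$.

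\textbf{Step 2: reduction to vector bundles.} Definition \ref{Def:NonlinPartDiffOp} is stated for sections of vector bundles, so we pass to charts. Around $f_0\in\Diff$ we use the chart $(\Ucal,\exh_{f_0})$ of Convention \ref{Conv:ChartsOnDiff}, or more directly the chart of Theorem \ref{FiberBundleSection}. This chart comes from a fiber-preserving diffeomorphism $\phi$ between an open neighbourhood of the graph of $f_0$ in $M\times M$ and an open set $W$ in the vector bundle $f_0^*TM$. In this chart $f$ corresponds to a section $s$ of $f_0^*TM$, and $j^1f$ is a fiber-preserving smooth function of $j^1s$, namely the first prolongation $j^1\phi^{-1}$, defined on an open subset of $J^1(f_0^*TM)$. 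Hence the local representative is
\begin{align*}
    s\longmapsto \tilde p\circ j^1\phi^{-1}\circ j^1 s .
\end{align*}
This is a nonlinear vector bundle operator, given by the smooth fiber-preserving map $\tilde p\circ j^1\phi^{-1}$, composed with $j^1$. It is therefore a nonlinear partial differential operator of order at most $1$, defined on the open set $\Gamma(W\subseteq f_0^*TM)$.

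\textbf{Step 3: conclusion.} Proposition \ref{Prop:NonlinPartDiffOpsAreSmoothTame} now shows that each local representative is smooth tame, with estimates of degree $1$ and base $0$. So $P_\tau$ is smooth tame on the tame Fréchet manifold $\Diff$. The order is exactly $1$ unless $\tau=0$; for the degree-$1$ tame estimates only the upper bound matters.

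\textbf{Expected obstacle.} The main subtlety is Step 2: checking that the chart map on $\Diff$ induces a smooth fiber-preserving map on first jets, so that the local representative really factors through $j^1 s$. I would verify this in local coordinates. If $\phi(p,q)=(p,\psi(p,q))$, then differentiating gives $\partial(\phi^{-1})$ in terms of $s(p)$ and $\d s_p$ via the chain rule. This dependence is smooth, which is all that is needed.
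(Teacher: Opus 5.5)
Your proof is correct, but it is organized differently from the paper's.

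The paper works in a single chart. Near $\Id$ it uses $\exh$ and observes that $(j^1X)_p\mapsto(\d\eh{X}_p)^*\tau_{\eh{X}(p)}$ is a smooth fiber-preserving map $J^1(TM)\to(T^*M)^{\otimes m}$, so $P_\tau\circ\exh$ is a first-order nonlinear partial differential operator. It then transports only smooth tameness to a neighbourhood of an arbitrary $g$. This uses the identity $P_\tau(f\circ g)=P_g(P_\tau(f))$, the tame linearity of $P_g$ (Lemma~\ref{PullbackFixed_f}), and the smooth tameness of right translation.

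You instead treat every base point $f_0$ directly, in a chart of Theorem~\ref{FiberBundleSection} modelled on $\Gamma(f_0^*TM)$. You factor the local representative through the prolongation $j^1\phi^{-1}$ and the intrinsic first-order map $\tilde p$ on $J^1(M\times M)$.

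What each route buys:
\begin{itemize}
\item Your route gives the \emph{differential operator} part of the statement literally around every point, with degree-$1$, base-$0$ estimates. It needs neither Lemma~\ref{PullbackFixed_f} nor the group structure.
\item By contrast, in the paper's translated chart $\exh_g=R_g\circ\exh$ the representative $X\mapsto g^*(\eh{X}^*\tau)$ reads the $1$-jet of $X$ at $g(p)$ rather than at $p$. There it is only a differential operator followed by a tame linear map.
\item The paper's route, in exchange, needs only the one fixed jet bundle $J^1(TM)$ and an explicit formula. It avoids prolongations of general fiber-preserving maps and the bundles $f_0^*TM$.
\end{itemize}

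One small correction to your side remark: for $m=0$ the map $f\mapsto\tau\circ f$ has order $0$ even when $\tau\neq0$. So your claim that the order is exactly $1$ unless $\tau=0$ holds only for $m\geq1$. As you say, only the upper bound matters for the estimates.
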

\begin{proof}
Let $(\Ucal,\exh)$ be as in Convention \ref{Conv:ChartsOnDiff}. We note that since $M$ is compact it is geodesically complete and the map $\exh:\VFs \to C^\infty(M,M)$ makes sense.
Let $X$ be a vector field and $j^1X$ its $1$-jet. Then it is clear that the map that sends 
$(j^1X)_p\mapsto (\eh{X}^*\tau)_p=(\d\eh{X}_p)^*(\tau_{\eh{X}(p)})$ defines a smooth fiber-preserving map from the first jet bundle $J^1(TM)$ to the tensor bundle $(T^*M)^{\tens m}$. One can see this easily by writing the expression down in local coordinates and applying the chain rule to $\d\eh{X}_p$. Thus, it induces a nonlinear vector bundle operator $F:\Gamma(J^1(TM))\to \mathcal{T}^0_m$, see Definition \ref{Def:VBop}. Hence, the map $(F\circ \left.j^1\right|_\Ucal):\Ucal\to \tensf{0}{m}$ is a first-order nonlinear partial differential operator (cf.~Definition \ref{Def:NonlinPartDiffOp}) and thus smooth tame (cf.~Proposition \ref{Prop:NonlinPartDiffOpsAreSmoothTame}).
So we see that $P_\tau$ is smooth tame around $\Id$. For an arbitrary $g\in \Diff$ we observe that $P_\tau(f\circ g)=P_g(P_\tau(f))$ for all $f\in \Diff$ where $P_g$ is defined as in the previous lemma. Since $P_g$ is a tame linear map, and $\Diff \ni f\mapsto f\circ g\in \Diff$ is a smooth tame diffeomorphism we see that $P_\tau$ is smooth tame on a neighbourhood of $g$. Since $g\in \Diff$ was arbitrary $P_\tau:\Diff\to \tensf{0}{m}$ is smooth tame.
\end{proof}

\begin{lemma}
\label{IntProdSmTame}
Any map of sections of vector bundles induced by a fiberwise contraction, e.g. tensor contractions, is zero-tame. In particular, the musical isomorphisms associated with a (semi-)Riemannian metric or an (almost) symplectic form are zero-tame.
\end{lemma}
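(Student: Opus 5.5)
The natural approach is to reduce the statement to Lemma \ref{VBopSmoothTame} together with Proposition \ref{Prop:EplusFsect}. A fiberwise contraction is induced by a smooth fiber-preserving map
\begin{align*}
c: E_1\oplus E_2\longrightarrow F,
\end{align*}
where $E_1,E_2,F\to M$ are vector bundles, for instance $E_1=(TM)^{\tens n}\tens (T^*M)^{\tens m}$, $E_2=(TM)^{\tens n'}\tens (T^*M)^{\tens m'}$ and $F$ the bundle of the contracted tensor. This map is fiberwise bilinear, hence in particular smooth. Since the tensors and dual pairings involved are smooth bundle data, smoothness of $c$ can be checked in local frames, where $c$ is given by polynomial expressions in the coefficients. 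The induced map on sections $\Gamma(E_1\oplus E_2)\to\Gamma(F)$, $s\mapsto c\circ s$, is then a nonlinear vector bundle operator in the sense of Definition \ref{Def:VBop}, with $U=E_1\oplus E_2$. Lemma \ref{VBopSmoothTame} therefore shows it is zero-tame.

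Next, I compose with the zero-tame isomorphism $\Gamma(E_1)\times\Gamma(E_2)\cong\Gamma(E_1\oplus E_2)$ from Proposition \ref{Prop:EplusFsect}. This shows that the bilinear contraction map $\Gamma(E_1)\times\Gamma(E_2)\to\Gamma(F)$ is zero-tame, since compositions of zero-tame maps are zero-tame. If one argument is held fixed, for example a fixed metric $g\in\tensf{0}{2}$, the remaining map $\Gamma(E_1)\to\Gamma(F)$ is itself a vector bundle operator induced by the smooth fiberwise linear map $v\mapsto c(v,g(\pi(v)))$. So it is zero-tame directly by Lemma \ref{VBopSmoothTame}, without passing through the product. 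Interior products $\iota_X\tau$ fit into this framework as contractions of $X\in\tensf{1}{0}$ with $\tau$.

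For the musical isomorphisms, a (semi-)Riemannian metric or an (almost) symplectic form $\omega$ defines a smooth fiberwise linear isomorphism $\flat:TM\to T^*M$, $v\mapsto\iota_v\omega$. Its inverse $\sharp$ is also a smooth fiber-preserving map, because fiberwise inversion of a nondegenerate matrix depends smoothly on the base point. Both induced maps on sections are therefore vector bundle operators and hence zero-tame, and they are mutually inverse. The same reasoning applies to the induced isomorphisms between higher tensor bundles.

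I do not expect a serious obstacle here. The only point requiring care is making precise that a contraction is a vector bundle operator on the direct sum bundle, together with the product identification; Proposition \ref{Prop:EplusFsect} already handles the latter.
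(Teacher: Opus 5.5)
Your proposal is correct and follows the paper's own argument: the paper likewise treats a fiberwise contraction as a smooth fiber-preserving map on a direct sum bundle, so that the induced map on sections is a nonlinear vector bundle operator (Definition \ref{Def:VBop}) and hence zero-tame by Lemma \ref{VBopSmoothTame}, with Proposition \ref{Prop:EplusFsect} giving $\Gamma(E_1\oplus E_2)\cong\Gamma(E_1)\times\Gamma(E_2)$. Your fixed-argument case and your argument that $\sharp$ is smooth because fiberwise matrix inversion is smooth fill in details the paper leaves implicit.
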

\begin{proof}
We can interpret these operations as nonlinear vector bundle operators (cf.~Definition \ref{Def:VBop}) by Proposition \ref{Prop:EplusFsect}.
\end{proof}

We will now show that the pullback of vector fields and $1$-forms is smooth tame.

\begin{lemma}
\label{Lemma:FlowInitialVelocity}
Let $(\Ucal,\exh)$ be the standard local parametrization of $\Diff$ around $\Id$ (cf.~Convention \ref{Conv:ChartsOnDiff}). Let $X\in \VFs$ then the flow $\Phi^X_t$ of $X$ understood as a path in $\Diff$ is a smooth curve, and it has initial velocity $((\exh)^{-1}\circ \Phi^X)'(t=0)=X$.
\end{lemma}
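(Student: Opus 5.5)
Smoothness of $t\mapsto\Phi^X_t$ as a curve in $\Diff$ will come from Corollary \ref{FamiliesOfDiffeos}, and the initial velocity will be computed pointwise using the evaluation map.

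\emph{Smoothness.} The flow map $\R\times M\to M$, $(t,p)\mapsto\Phi^X_t(p)$, is smooth by standard ODE theory. It is globally defined because $M$ is compact, and each $\Phi^X_t$ is a diffeomorphism. Corollary \ref{FamiliesOfDiffeos}, applied with $N=\R$, then shows that the induced map $\R\to\Diff$, $t\mapsto\Phi^X_t$, is zero-tame and in particular smooth. Since $\Phi^X_0=\Id$ and $\exh(\Ucal)$ is an open neighbourhood of $\Id$, there is an $\epsilon>0$ such that $\Phi^X_t\in\exh(\Ucal)$ for $|t|<\epsilon$. Hence $c(t)\coloneqq(\exh)^{-1}(\Phi^X_t)$ is a smooth curve in $\Ucal\subseteq\VFs$ with $c(0)=0$.

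\emph{Initial velocity.} The chart $(\exh)^{-1}$ is induced by the fiber-preserving diffeomorphism $(\left.\mathrm{Exp}^h\right|_V)^{-1}$ from an open subset of $M\times M$ onto $V\subseteq TM$. So I will apply Remark \ref{Rem:Ev Phi=phi_p Ev} to the surjective submersion $M\times M\to M$. This gives, for every $p\in M$,
\begin{align*}
\Ev_p(c'(0))=\d\bigl(\left.\mathrm{Exp}^h\right|_{V_p}\bigr)^{-1}_{(p,p)}\Bigl(\frac{\d}{\d t}\Big|_{t=0}(p,\Phi^X_t(p))\Bigr).
\end{align*}
The curve $t\mapsto\Phi^X_t(p)$ has velocity $X_p$ at $t=0$. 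Moreover, $\left.\mathrm{Exp}^h\right|_{T_pM}$ is $v\mapsto(p,e^h(v))$, and the differential of the geodesic exponential map at $0\in T_pM$ is the identity under the canonical identification $T_0(T_pM)\cong T_pM$. Therefore the right-hand side equals $X_p$, so $\Ev_p(c'(0))=X_p$ for all $p\in M$, and hence $c'(0)=X$.

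The main obstacle is justifying the step $\Ev_p(c'(0))=(\Ev_p\circ c)'(0)$. It follows from continuity and linearity of $\Ev_p$ on $\VFs$, as recorded in Remark \ref{Rem:Ev Phi=phi_p Ev}, together with the fact that $\Ev_p\circ\Phi^X_\cdot$ is the ordinary flow curve of $X$ through $p$. Apart from this, one must keep track of the fiberwise identifications, namely the vertical tangent space of $M\times M\to M$ at $(p,p)$ being $T_pM$.
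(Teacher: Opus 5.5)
Your proposal is correct and follows essentially the same route as the paper: first smoothness of $t\mapsto\Phi^X_t$, then a pointwise computation via Remark~\ref{Rem:Ev Phi=phi_p Ev} using that $\d(\left.\mathrm{Exp}^h\right|_{T_pM})_0$ is the identity onto the vertical direction. The only difference is that you get smoothness from Corollary~\ref{FamiliesOfDiffeos} with $N=\R$, where the paper uses Corollary~\ref{Cor:FlowSmooth}; your route is equally valid and even shows the curve is zero-tame.
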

\begin{proof}
By Corollary \ref{Cor:FlowSmooth} we know that the flow $\Phi^X:\R\to \Diff$ of a vector field $X$ is a smooth one-parameter group of diffeomorphisms. Note that $X$ is complete as $M$ is compact.

Let $(\Ucal,\exh)$ be a local parametrization of $\Diff$ around $\Id$ as in Convention \ref{Conv:ChartsOnDiff}. Then we express the flow of $X\in \VFs$ in this parametrization by $(\exh)^{-1}\circ\Phi^X_t$ which is well defined at least for small $t$. As layed out in Remark \ref{Rem:Ev Phi=phi_p Ev} we find
\begin{align*}
    \left(\left.\frac{\d}{\d t}\right|_{t=0}(\exh)^{-1}\circ\Phi^X_t\right)_p&=(\d(\left.\mathrm{Exp}\right|_{T_pM})^{-1})_p\left(\left.\frac{\d}{\d t}\right|_{t=0}(\Phi_t^X(p))\right)\\
    &=(\d(\left.\mathrm{Exp}\right|_{T_pM})_0)^{-1}(X_p)=X_p.
\end{align*}
Here we have used the usual identification of the diffeomorphism $\Phi^X_t$ with the section $m\in M\mapsto (m,\Phi_t^X(m))\in M\times M$ of the projection to the first factor $M\times M\xrightarrow{\pi_1}M$.    
\end{proof}

\begin{lemma}
\label{Lemma:PullbackVfields}
The map 
\begin{align*}
    P^1:\Diff \times \VFs& \longrightarrow \VFs\\
    (f,X)&\longmapsto f^*X
\end{align*}
is smooth tame.
\end{lemma}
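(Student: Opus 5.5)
Since $f^*X=(f^{-1})_*X$, i.e.~$(f^*X)_p=(\d f_p)^{-1}\big(X_{f(p)}\big)$, I would split the map into three pieces: a composition $X\circ f$, a transport of $X_{f(p)}$ back to $T_pM$, and a pointwise algebraic operation depending on the $1$-jet of $f$. Each piece should be smooth tame by results already established. As in the proof of Lemma \ref{PullbackFixed_w}, I would first work near $\Id$ in the standard parametrization $(\Ucal,\exh)$ of Convention \ref{Conv:ChartsOnDiff}. I would then globalize using
\begin{align*}
    P^1(f,X)=f_0^*\big(P^1(f\circ f_0^{-1},X)\big).
\end{align*}
This requires that $f_0^*:\VFs\to\VFs$ is tame linear for fixed $f_0\in\Diff$, which is proved exactly as Lemma \ref{PullbackFixed_f} with $(T^*M)^{\tens m}$ replaced by $TM$. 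It also requires that $f\mapsto f\circ f_0^{-1}$ is a smooth tame diffeomorphism of $\Diff$, which follows from Proposition \ref{Prop:CompSmoothTame}.

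Near $\Id$, write $f=\eh{Y}$ with $Y\in\Ucal$. The steps are as follows.
\begin{enumerate}
    \item The map $(Y,X)\mapsto X\circ\eh{Y}\in C^\infty(M,TM)$ is smooth tame. This follows from Proposition \ref{Prop:CompSmoothTame} together with the smooth tameness of $\exh$.
    \item Bring $X_{\eh{Y}(p)}$ back to $T_pM$ by a bundle operator. Fix a Whitney embedding $M\subseteq\R^N$, so that $TM\subseteq M\times\R^N$, and let $\mathrm{pr}_q:\R^N\to T_qM$ denote the orthogonal projection. Let $\tau_v:T_{e^h(v)}M\to T_pM$ denote $h$-parallel transport along the geodesic $t\mapsto e^h(tv)$. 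The map
\begin{align*}
    (v,q,w)\longmapsto \tau_v\big(\mathrm{pr}_{e^h(v)}(w)\big)\in T_pM,
\end{align*}
defined for $v\in V\cap T_pM$ and $(q,w)\in TM$, is smooth and fiber-preserving on the whole fiber product $V\times_M(M\times TM)\to M$. Whenever $q=e^h(v)$ it returns $\tau_v(w)$.
    \item Combine the first two steps. By Proposition \ref{Prop:SmMapsIntoProd=ProdOfSmMaps} and Proposition \ref{Prop:FiberBundleOperators}, $Z(Y,X):=\tau_Y\big(X\circ\eh{Y}\big)\in\VFs$ is smooth tame in $(Y,X)$.
    \item Set $A_Y(p):=\tau_{Y_p}\circ\d(\eh{Y})_p\in\mathrm{End}(T_pM)$. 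This depends smoothly and fiberwise on the $1$-jet $(j^1Y)_p$, and it is invertible for $(j^1Y)_p$ in an open neighbourhood of the zero section of $J^1(TM)$ (shrinking $\Ucal$ if necessary).
    \item Express the pullback through these pieces. We have
\begin{align*}
    (\eh{Y})^*X=A_Y^{-1}\,Z(Y,X).
\end{align*}
Hence $(\eh{Y})^*X$ is the bundle operator $(\xi,z)\mapsto A(\xi)^{-1}z$ on an open subset of $J^1(TM)\oplus TM$, applied to $(j^1Y,Z(Y,X))$. By Lemma \ref{VBopSmoothTame}, the tame linearity of $j^1$, and the chain rule, this is smooth tame.
\end{enumerate}

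The main obstacle I expect is the one addressed in step 2. The pair $(\d f,X\circ f)$ naturally lives in a closed incidence submanifold (vectors based at $f(p)$ rather than $p$), not in an open subset of a vector bundle over $M$. So it cannot be fed directly into a bundle operator. Parallel transport together with the extension via $\mathrm{pr}$ is my proposed way around this: it produces a fiber-preserving map defined on the full fiber product, which agrees with the desired expression on compatible pairs.

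If this proves awkward, the fallback is to check smoothness on curves using Remark \ref{Remark:ConvCalcForMapOfSect}, and to obtain tame estimates directly in local coordinates. In the latter approach, derivatives of $X\circ f$ are estimated by the tame estimates of the composition map from Proposition \ref{Prop:CompSmoothTame}.
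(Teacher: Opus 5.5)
Your proof is correct, but it takes a different route from the paper. The paper never decomposes $f^*X$ pointwise. It takes from Hamilton that $\Diff$ is a tame Fr\'echet Lie group, so conjugation $(f,g)\mapsto f\circ g\circ f^{-1}$ and inversion are smooth tame. Using the flow (Lemma \ref{Lemma:FlowInitialVelocity}), it computes that $D\big((\exh)^{-1}\circ c_f\circ\exh\big)(0)X=f_*X$. Then $P^1(f,X)$ is simply the partial derivative at $Y=0$, in direction $X$, of the smooth tame map $(f,Y)\mapsto(\exh)^{-1}\big(c_{f^{-1}}(\eh{Y})\big)$. That needs no local construction, no parallel transport and no separate globalization step.

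Your route is more elementary. It uses only smooth tameness of composition (Proposition \ref{Prop:CompSmoothTame}), bundle operators and the jet map, and never needs inversion on $\Diff$ to be smooth tame. The parallel-transport-plus-Whitney-projection device handles the incidence condition $q=e^h(v)$ correctly: it produces a fiber-preserving map on an open subset of the fiber product, so Proposition \ref{Prop:FiberBundleOperators} applies. The identity $A_Y^{-1}Z(Y,X)=(\d f_p)^{-1}X_{f(p)}$ also checks out.

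In exchange, you get an explicit local description of $P^1$ as a composition followed by a first-order nonlinear operator in $Y$, and hence explicit tame degrees. The paper's argument is much shorter, and it simultaneously gives the identity $\mathrm{Ad}_f=f_*$ in the chart $\exh$. The paper reuses that identity to determine the Lie algebra of $\Diff$ in Proposition \ref{Proposition:DiffLieGrAndLieAlg}.
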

\begin{proof}
Let $c:\Diff\times \Diff\to \Diff$ denote the conjugation map $c_f(g)=f\circ g\circ f^{-1}$ with $f,g\in \Diff$. It is smooth tame since $\Diff$ is a tame Fréchet Lie group (cf. Proposition \ref{Proposition:DiffLieGrAndLieAlg}).
Let $X\in \VFs$ and $(\Ucal,\exh)$ be a local parametrization of $\Diff$ around $\Id$ as in Convention \ref{Conv:ChartsOnDiff}. Then, by Lemma \ref{Lemma:FlowInitialVelocity} we see that
\begin{align*}
    (D((\exh)^{-1}\circ c_f\circ \exh)(0)X)_p&=((\exh)^{-1}\circ c_f\circ \Phi_t^X)'(t=0)(p)\\
    &=((\exh)^{-1}(f\circ \Phi_t^X\circ f^{-1}))'(t=0)(p)\\
    &=(\d(\left.\mathrm{Exp}\right|_{T_pM})_0)^{-1}((f\circ \Phi_t^X\circ f^{-1})_p)'(t=0)\\
    &=(\d f\circ X\circ f^{-1})_p=(f_*X)_p.
\end{align*}
for $f\in \Diff$.
Then $P^1(f,X)=D((\exh)^{-1}\circ c_{V(f)}\circ \exh)(0)X$, and so it is smooth tame. Here, $V$ denotes the smooth tame inversion map of $\Diff$.
\end{proof}

\begin{lemma}
\label{PullbackOneForms}
The map
\begin{align*}
    P_1:\Diff\times \Forms{1}&\longrightarrow \Forms{1}\\
    (f,\omega)&\longmapsto f^*\omega
\end{align*}
is smooth tame.
\end{lemma}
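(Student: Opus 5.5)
We reduce Lemma \ref{PullbackOneForms} to Lemma \ref{Lemma:PullbackVfields} using a fixed Riemannian metric and the musical isomorphisms. Fix a Riemannian metric $h$ on $M$, with $h^\flat:\VFs\to\Forms{1}$ and $h^\sharp=(h^\flat)^{-1}$. By Lemma \ref{IntProdSmTame} both are zero-tame linear maps. The natural identity is that for $f\in\Diff$ and $\om\in\Forms{1}$,
\begin{align*}
    f^*\om=(f^*h)^\flat\bigl(f^*(h^\sharp\om)\bigr),
\end{align*}
where $f^*(h^\sharp\om)$ is the pullback of the vector field $h^\sharp\om$, and $(f^*h)^\flat$ is the flat map of the pulled-back metric. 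This holds because pullback along a diffeomorphism commutes with tensor contractions: $f^*(\iota_Y h)=\iota_{f^*Y}f^*h$ with $Y=h^\sharp\om$ and $\iota_Yh=\om$.

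We then check each building block.
\begin{itemize}
    \item The map $(f,\om)\mapsto f^*(h^\sharp\om)$ is smooth tame, as the composition of the tame linear map $\mathrm{Id}\times h^\sharp$ with $P^1$ from Lemma \ref{Lemma:PullbackVfields}.
    \item The map $f\mapsto f^*h$ is smooth tame by Lemma \ref{PullbackFixed_w}, applied to $\tau=h\in\tensf{0}{2}$; this is the case $m=2$.
    \item The map $(g,Y)\mapsto \iota_Y g$, for $g\in\tensf{0}{2}$ and $Y\in\VFs$, is a fiberwise contraction. It is a nonlinear vector bundle operator on $\Gamma(T^*M^{\tens 2}\oplus TM)\cong\tensf{0}{2}\times\VFs$, so it is zero-tame by Lemma \ref{IntProdSmTame} and Proposition \ref{Prop:EplusFsect}.
\end{itemize}
Composing these with the diagonal $f\mapsto(f,f)$ gives $P_1$ as a composition of smooth tame maps. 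Hence $P_1$ is smooth tame.

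The identity itself is a pointwise check. At $p\in M$ and $X\in T_pM$,
\begin{align*}
    \bigl((f^*h)(f^*Y,X)\bigr)_p=h_{f(p)}\bigl(\d f_p(f^*Y)_p,\d f_pX\bigr)=h_{f(p)}(Y_{f(p)},\d f_pX)=\om_{f(p)}(\d f_pX)=(f^*\om)_p(X),
\end{align*}
using $(f^*Y)_p=(\d f_p)^{-1}Y_{f(p)}$. This is consistent with the paper's convention, since in the proof of Lemma \ref{Lemma:PullbackVfields} the pullback of a vector field is $f^*=(f^{-1})_*$.

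The main point to get right is this sign and convention check: $P^1$ must be the pullback in the sense that makes the contraction identity natural. Everything else is a formal composition of results already established.
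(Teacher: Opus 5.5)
Your proposal is correct and follows essentially the same route as the paper: fix a Riemannian metric, write $f^*\om=(f^*h)(f^*(h^\sharp\om),-)$, and obtain smooth tameness by composing $P^1$ from Lemma \ref{Lemma:PullbackVfields} with $f\mapsto f^*h$ from Lemma \ref{PullbackFixed_w} and the contraction from Lemma \ref{IntProdSmTame}. Your pointwise verification, using $(f^*Y)_p=(\d f_p)^{-1}Y_{f(p)}$, is the same check the paper performs.
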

\begin{proof}
We can use a Riemannian metric $g$ on $M$ which induces tame linear isomorphisms $g:\VFs\to \Forms{1}$, and $g^{-1}:\Forms{1}\to \VFs$ (cf.~Lemma \ref{IntProdSmTame}). Similarly, we can look at the induced map $G:\Diff\times\VFs\to \Forms{1}$ defined by $G(f,X)=(f^*g)(X,-)$ which is a smooth tame family of isomorphisms by Lemma \ref{PullbackFixed_w} and Lemma \ref{IntProdSmTame}.

Then we find that $\Diff\times \Forms{1}\ni(f,\omega)\mapsto G(f,P^1(f,g^{-1}\om))\in \Forms{1}$ is smooth tame (cf.~Lemma \ref{Lemma:PullbackVfields}). We claim that the resulting differential form is precisely $f^*\omega$. To show this, let us evaluate it on some vector field $X\in \VFs$. We find that
\begin{align*}
    G(f,P^1(f,g^{-1}\omega))(X)(p)&=(f^*g)_p((\d f_p)^{-1}((g^{-1}w)_{f(p)}),X_p)\\
    &=g_{f(p)}(g^{-1}_{f(p)}\omega_{f(p)},\d f_p(X_p))\\
    &=\omega_{f(p)}(\d f_p(X_p))\\
    &=(f^*\omega)(X)(p)
\end{align*}
for $p\in M$.
This concludes the proof.
\end{proof}

The goal is now to use the result for the pullback of vector fields and $1$-forms to show that the pullback of $(n,m)$-tensor fields for any $n,m\in \N$ is smooth tame.

\begin{proof}[Proof of Theorem \ref{PullbackSmoothTame}]
Note that for $n=m=0$ we only have the pullback of functions along diffeomorphisms, which is smooth tame as the composition of smooth maps is smooth tame (cf.~Proposition \ref{Prop:CompSmoothTame}) and we have also already covered the cases of $(1,0)$- and $(0,1)$-tensor fields.

In the following, we will assume $n\geq 1$ or $m\geq 1$. 
Let $p\in M$ then we can find an open chart domain $V_p\subseteq M$ around $p$ such that $\Bar{V}_p$ lies in a bigger chart.
We may also assume that there is an even smaller open neighbourhood $U_p\subseteq V_p\subseteq M$ of $p$ and a function $b_p\in C^\infty(M)$, so that $b_p(x)=1$ for $x\in U_p$ and $b_p(x)=0$ for $x\in M\setminus V_p$. Now, since $M$ is compact there are finitely many $p_i\in M$, for $i$ in some finite index set $I$, such that $(U_{p_i})_{i\in I}$ provides an open cover of $M$. We can also find a partition of unity $(\rho_i)_{i\in I}$ subordinate to $(U_{p_i})_{i\in I}$ such that $\rho_i$ has support in $U_{p_i}$. We now denote $U_i\coloneqq U_{p_i}$, $V_i\coloneqq V_{p_i}$ and $b_i\coloneqq b_{p_i}$ for $i\in I$.

We now consider the following series of maps. First we map $\tau \in \tensf{n}{m}$ to $b_i^{(n+m)} \tau$. Since this has support in $V_i$ which is properly contained in a chart domain we may write $b_i^{(n+m)}\tau$ as
\begin{align*}
    \sum_{\substack{\alpha_1,\ldots,\alpha_n\\\beta_1,\ldots,\beta_m}}\tau^{\alpha_1,\ldots,\alpha_n}_{i,\beta_1,\ldots,\beta_m}(b_i\del_{x_{\alpha_1}})\tens\ldots\tens (b_i\del_{x_{\alpha_n}})\tens (b_i\d x_i^{\beta_1}) \tens \ldots \tens (b_i\d x_i^{\beta_m})
\end{align*}
where $(x_i^1,\ldots,x_i^d)$, $d=\dim(M)$, denote the coordinates on this larger chart domain, and $\tau^{\alpha_1,\ldots,\alpha_n}_{i,\beta_1,\ldots,\beta_m}$ denote the associated smooth component functions of $\tau$ with $\alpha_i=1,\ldots,d$, $i=1,\ldots,n$, and $\beta_j=1,\ldots,d$, $j=1,\ldots,m$.
For each component function, with the indices as above, we can induce a well defined map
\begin{align*}
    \pi^{(\alpha_1,\ldots\alpha_n)}_{i,(\beta_1,\ldots,\beta_m)}:\tensf{n}{m}&\longrightarrow \VFs^n\times \Forms{1}^m \\
    \tau &\longmapsto(\tau^{\alpha_1,\ldots,\alpha_n}_{i,\beta_1,\ldots,\beta_m}\rho_ib_i \del_{x_i^{\alpha_1}},b_i\del_{x_i^{\alpha_2}},\ldots,b_i\del_{x_i^{\alpha_2}},b_i \d x_i^{\beta_1},\ldots,b_i \d x_i^{\beta_m}).
\end{align*}
Here we do not sum over indices that appear multiple times. Note that since $b_i$ are bump functions these are actually global vector fields and $1$-forms with support in $\Bar{V}_i$. In the case that $n=0$ the component functions are simply multiplied with the first covariant factor.
These maps are clearly well defined. They are smooth zero-tame as they are nonlinear vector bundle operators (cf.~Definition \ref{Def:VBop}, and Lemma \ref{VBopSmoothTame}) where we use the zero-tame isomorphism $\Gamma((TM)^{\oplus n}\oplus (T^*M)^{\oplus m})\cong \VFs^n\times \Forms{1}^m$ (cf.~Proposition \ref{Prop:EplusFsect}).

Now we have the map 
\begin{align*}
    P_m^n:\Diff\times \VFs^n\times \Forms{1}^m &\longrightarrow \VFs^n\times \Forms{1}^m\\
    (f,X_1,\ldots,X_n,\om_1,\ldots,\om_m)&\longmapsto (f^*X_1,\ldots,f^*X_n,f^*\om_1,..,f^*\om_m)
\end{align*}
which is smooth tame by Lemma \ref{Lemma:PullbackVfields} and Lemma \ref{PullbackOneForms}. 

Now we define the map
\begin{align*}
    \tens_m^n:\VFs^n\times \Forms{1}^m&\longrightarrow \tensf{n}{m}\\
    (X_1,\ldots,X_n,\om_1,\ldots,\om_m)&\longmapsto X_1\tens\ldots\tens X_n \tens \om_1\tens\ldots\tens \om_m.
\end{align*}
We again use the zero-tame isomorphism
$\Gamma((TM)^{\oplus n}\oplus (T^*M)^{\oplus m})\cong \VFs^n\times \Forms{1}^m$. Under this identification $\tens^n_m$ is simply a nonlinear vector bundle operator (cf.~Definition \ref{Def:VBop}) as the map 
\begin{align*}
    (TM)^{\oplus n}\oplus(T^*M)^{\oplus m}&\longrightarrow (TM)^{\tens n}\tens (T^*M)^{\tens m}\\
    X_1\oplus\ldots\oplus X_n\oplus \om_1\oplus\ldots\oplus \om_m&\longmapsto X_1\tens\ldots\tens X_n \tens \om_1\tens\ldots\tens \om_m
\end{align*}
is a smooth (nonlinear) fiber-preserving map. Hence, $\tens^n_m$ is zero-tame (cf. Lemma \ref{VBopSmoothTame}).

Now we observe that the map 
\begin{align*}
    \Tilde{P}\coloneqq \sum_{i\in I}\sum_{\substack{\alpha_1,\ldots,\alpha_n\\\beta_1,\ldots,\beta_m}} \tens^n_m\circ P_m^n\circ \left(\mathrm{Id}_{\Diff} \times \pi^{(\alpha_1,\ldots\alpha_n)}_{i,(\beta_1,\ldots,\beta_m)}\right):\Diff\times \tensf{n}{m}\to \tensf{n}{m}
\end{align*}
is smooth tame as a sum over compositions of smooth tame maps.
We claim that the map $\Tilde{P}$ is already $P$. For that purpose let $f\in \Diff$ and $\tau\in \tensf{n}{m}$. Then 
\begin{align*}
    &\Tilde{P}(f,\tau)\\
    &= \sum_{i\in I}\sum_{\substack{\alpha_1,\ldots,\alpha_n\\\beta_1,\ldots,\beta_m}} \tens^n_m\big{(} f^*(\tau^{\alpha_1,\ldots,\alpha_n}_{i,\beta_1,\ldots,\beta_m}\rho_ib_i \del_{x_i^{\alpha_1}}),f^*(b_i\del_{x_i^{\alpha_2}}),\ldots,f^*(b_i\del_{x_i^{\alpha_2}}),f^*(b_i \d x_i^{\beta_1}),\ldots,f^*(b_i \d x_i^{\beta_m})\big{)}\\
    &=\sum_{i\in I}f^*(\rho_ib_i^n \tau)=\sum_{i\in I}f^*(\rho_i\tau)=f^*\tau=P(f,\tau).
\end{align*}
Here we use that $\rho_ib_i=\rho_i$ since $\rho_i$ has support in $U_i$ but $\left.b_i\right|_{U_i}=1$. A similar proof that $\Tilde{P}$ and $P$ agree also holds in the case that $n=0$.
This shows that $P$ is smooth tame and concludes the proof.

\end{proof}

\begin{corollary}
\label{Cor:PullBackForms}
The map 
\begin{align*}
    P:\Diff \times \Forms{n}&\longrightarrow \Forms{n}\\
    (f,\om)&\longmapsto f^*\om
\end{align*}
is smooth tame for every $n\in \N$.
\end{corollary}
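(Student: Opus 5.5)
The plan is to realise $\Forms{n}$ as a tame direct summand of $\tensf{0}{n}$ and reduce the statement to Theorem \ref{PullbackSmoothTame}. Regard $\bigwedge^n T^*M$ as the subbundle of alternating tensors in $(T^*M)^{\tens n}$. Two linear maps are needed: the inclusion $\iota:\Forms{n}\hookrightarrow \tensf{0}{n}$, and the antisymmetrisation $\mathrm{Alt}:\tensf{0}{n}\to\Forms{n}$, given fiberwise by
\begin{align*}
\mathrm{Alt}(\tau)(X_1,\ldots,X_n)=\frac{1}{n!}\sum_{\sigma\in S_n}\mathrm{sgn}(\sigma)\,\tau(X_{\sigma(1)},\ldots,X_{\sigma(n)}).
\end{align*}
Both are induced by smooth fiber-preserving linear bundle maps. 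Hence they are nonlinear vector bundle operators and zero-tame by Lemma \ref{VBopSmoothTame}; for the inclusion one may alternatively invoke Corollary \ref{Cor:FsubbundleE->FtameDirectSummand}. They satisfy $\mathrm{Alt}\circ\iota=\mathrm{Id}_{\Forms{n}}$.

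The key pointwise identity is that pullback commutes with the inclusion: for $\om\in\Forms{n}$ and $f\in\Diff$, the tensor $f^*(\iota\om)$ is again alternating, because $(f^*\om)_p=\om_{f(p)}\circ(\d f_p)^{\times n}$ preserves antisymmetry. This gives
\begin{align*}
P(f,\om)=\mathrm{Alt}\big(P^{0}_{n}(f,\iota\om)\big),
\end{align*}
where $P^0_n$ denotes the pullback map of Theorem \ref{PullbackSmoothTame} on $\tensf{0}{n}$. Applying $\mathrm{Alt}$ here is not strictly necessary: one could instead observe that $P^0_n$ maps the closed subspace $\iota(\Forms{n})$ into itself. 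However, composing with $\mathrm{Alt}$ makes the codomain explicitly $\Forms{n}$ with its own grading.

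Smoothness and tameness then follow because $P=\mathrm{Alt}\circ P^0_n\circ(\mathrm{Id}_{\Diff}\times\iota)$ is a composition of smooth tame maps. The one point to check is that the grading of $\Forms{n}$ used here agrees, up to tame equivalence, with the grading induced from $\tensf{0}{n}$. This holds by Remark \ref{Rem:NormsOnSectOfVBIndepOfChoices} together with Proposition \ref{Prop:EplusFsect}, since $\bigwedge^nT^*M$ is a direct summand of $(T^*M)^{\tens n}$ via the complement $\ker\mathrm{Alt}$.

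I expect no real obstacle; the only care needed is this identification of gradings. For $n=0$ the statement is the pullback of functions, which is already covered by Proposition \ref{Prop:CompSmoothTame}.
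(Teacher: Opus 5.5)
Your proposal is correct and is essentially the paper's argument. The paper also identifies $\bigwedge^n T^*M$ with the subbundle of alternating tensors in $(T^*M)^{\tens n}$, uses that $\Forms{n}$ is then a tame direct summand of $\tensf{0}{n}$ (Corollary \ref{Cor:FsubbundleE->FtameDirectSummand}), and reduces to Theorem \ref{PullbackSmoothTame}. Your explicit $\mathrm{Alt}$ in place of the metric-orthogonal projection is only a cosmetic difference.
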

\begin{proof}
This follows directly from Theorem \ref{PullbackSmoothTame} and Corollary \ref{Cor:FsubbundleE->FtameDirectSummand} where we identify $\bigwedge^n(T^*M)$ with the subbundle of $(T^*M)^{\tens n}$ of antisymmetric tensors.
\end{proof}

\begin{convention}
\label{Conv:StandChartOnDifftimesTensOrForms}
Let $(\Ucal,\exh_f)$ be the standard local parametrization of $\Diff$ centered at $f\in \Diff$ (cf.~Convention \ref{Conv:ChartsOnDiff}). We then denote the resulting standard local parametrization of $\Diff \times \tensf{n}{m}$, $n,m \in \N$, centered at $(f,0)$ by
$\exh_{f,\mathcal T}\coloneqq \exh_f\times \mathrm{Id}_{\tensf{n}{m}}$ and similarly we denote the standard local parametrization of $\Diff \times \Forms{k}$, $k\in \N$, centered at $(f,0)$ by $\exh_{f,\Omega}\coloneqq \exh_f \times \mathrm{Id}_{\Forms{k}}$. Note that $\exh_{f,\mathcal T}$, and $\exh_{f,\Omega}$ are tame diffeomorphisms onto their image and so the inverses of these maps are local charts of the respective manifolds. For $f=\Id$ we will again denote the respective local parametrizations by $\exh_{\mathcal{T}}\coloneqq \exh_{\Id,\mathcal{T}}$ and $\exh_{\Omega}\coloneqq \exh_{\Id,\Omega}$. The respective degrees will be clear from context. 
\end{convention}

\begin{lemma}
\label{Lemma:PullbackDifferential}
Let $P:\Diff\times \tensf{n}{m}\to \tensf{n}{m}$, $n,m\in \N$, be the pullback as in the previous theorem. Then in the chart $\exh_{f,\mathcal T}$ (cf.~Convention \ref{Conv:StandChartOnDifftimesTensOrForms}) centered at $(f,0)\in \Diff\times \tensf{n}{m}$ we find $D(P\circ \exh_{f,\mathcal T})(0,\tau)(X,T)=f^*\LieD_X \tau+f^*T$ for
$\tau,T\in \tensf{n}{m}$, and $X\in \VFs$.  
\end{lemma}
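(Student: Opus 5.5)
Since $P$ is linear in its second argument, the function $P\circ\exh_{f,\mathcal T}(Z,\sigma)=(\eh{Z}\circ f)^*\sigma=f^*(\eh{Z})^*\sigma$ is linear in $\sigma$ and smooth tame in $Z$ by Theorem \ref{PullbackSmoothTame}. By Theorem \ref{Thm:PartialDer} the total differential at $(0,\tau)$ splits as
\begin{align*}
D(P\circ\exh_{f,\mathcal T})(0,\tau)(X,T)=D_Z\big(f^*(\eh{Z})^*\tau\big)(0)X+f^*(\eh{0})^*T .
\end{align*}
Since $\eh{0}=\Id$, the second summand is $f^*T$. Moreover $P_f=f^*$ is tame linear and hence continuous (Lemma \ref{PullbackFixed_f} for covariant tensors; for general $(n,m)$ use the fixed-$f$ version of Theorem \ref{PullbackSmoothTame}). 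So it commutes with the difference-quotient limit, and it remains to show $D_Z((\eh{Z})^*\tau)(0)X=\LieD_X\tau$.

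For this I replace the straight line $tX$ by the flow. The map $Z\mapsto(\eh{Z})^*\tau$ is $C^1$, and by Lemma \ref{Lemma:FlowInitialVelocity} the curve $c(t)\coloneqq(\exh)^{-1}(\Phi^X_t)$ is a $C^1$ curve in $\Ucal$ with $c(0)=0$ and $c'(0)=X$. The chain rule (Proposition \ref{Prop:PropsOfDiff}) then gives
\begin{align*}
D_Z\big((\eh{Z})^*\tau\big)(0)X=\left.\frac{\d}{\d t}\right|_{t=0}(\eh{c(t)})^*\tau=\left.\frac{\d}{\d t}\right|_{t=0}(\Phi^X_t)^*\tau ,
\end{align*}
where the derivative is taken in the Fréchet space $\tensf{n}{m}$. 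The classical definition of the Lie derivative gives $\LieD_X\tau$ pointwise. To identify the Fréchet derivative with this pointwise one, I apply the continuous linear evaluation maps $\Ev_p$ as in Remark \ref{Rem:Ev Phi=phi_p Ev}. These commute with the $t$-derivative, so at each $p\in M$ the Fréchet derivative agrees with $\left.\frac{\d}{\d t}\right|_{t=0}((\Phi^X_t)^*\tau)_p=(\LieD_X\tau)_p$. A section is determined by its values, so $D_Z((\eh{Z})^*\tau)(0)X=\LieD_X\tau$.

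The main obstacle is to justify the pointwise identification rigorously, namely that the derivative of a $C^1$ curve in $\tensf{n}{m}$, evaluated at $p$, equals the finite-dimensional derivative of the evaluated curve. Remark \ref{Rem:Ev Phi=phi_p Ev} gives exactly this for vector bundles. For mixed tensors one also has to take care that $f^*$ and $\Phi_t^*$ act on both the $TM$ and $T^*M$ factors, with the convention $f^*X=(f^{-1})_*X$ as in Lemma \ref{Lemma:PullbackVfields}. Once that convention is fixed, the classical formula $\left.\frac{\d}{\d t}\right|_{t=0}(\Phi^X_t)^*\tau=\LieD_X\tau$ holds for all $(n,m)$. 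Applying $f^*$ to it yields the claimed formula $f^*\LieD_X\tau+f^*T$.
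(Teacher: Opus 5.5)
Your proof is correct and follows essentially the same route as the paper. You split the differential into its two partial derivatives and use that $P(f,-)=f^*$ is continuous linear, which gives $f^*T$ and lets you pull out $f^*$. You then replace the line $tX$ by the flow $\Phi^X_t$ via Lemma \ref{Lemma:FlowInitialVelocity} and the chain rule, and identify the Fréchet derivative pointwise with $\LieD_X\tau$ through the continuous linear evaluation maps; the paper instead first treats $f=\Id$ and then writes $P(\eh{tX}\circ f,\tau)=f^*(\eh{tX})^*\tau$, which is only a cosmetic difference.
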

\begin{proof}
Let $\Phi^X:\R\to \Diff$ be the flow of the vector field $X\in \VFs$. We have seen in Lemma \ref{Lemma:FlowInitialVelocity} that $((\exh)^{-1}\circ \Phi^X)'(t=0)=X$ and clearly $\Phi^X_0=\Id$. So, by the chain rule, and continuity and linearity of the evaluation at the point $p\in M$ (cf.~Lemma \ref{Prop:EvVBSmooth}) we find that
\begin{align}
\begin{split}
\label{LieDviaExph}
    \left(D_1(P\circ \exh_\mathcal{T})(0,\tau)X\right)_p&=\left(\left.\frac{\d}{\d t}\right|_{t=0}P(\eh{tX},\tau)\right)_p  =\left(\left.\frac{\d}{\d t}\right|_{t=0}P(\Phi^X_t,\tau)\right)_p\\
    &=\left.\frac{\d}{\d t}\right|_{t=0}(P(\Phi^X_t,\tau))_p=(\LieD_X\tau)_p
\end{split}
\end{align}
for every $\tau \in \tensf{n}{m}$.

We can now use this to compute the differential of $P$. For this purpose let $f\in \Diff$, $X\in \VFs$ and $\tau,T \in \tensf{n}{m}$. Noting that $(f,\tau)$ corresponds to $(0,\tau)\in \VFs\times \tensf{n}{m}$ in the chart $\exh_{f,\mathcal T}$, we obtain in this chart that
\begin{align*}
    D(P\circ \exh_{f,\mathcal{T}})(0,\tau)(X,T)&=\left.\frac{\d}{\d t}\right|_{t=0}P(\eh{tX}\circ f,\tau)+\left.\frac{\d}{\d t}\right|_{t=0}P(f,\tau+tT)\\
    &=\left.\frac{\d}{\d t}\right|_{t=0}f^*(\eh{tX}^*\tau)+f^*T=f^*\LieD_X\tau+f^*T.
\end{align*}
Here we used equation \eqref{LieDviaExph} and the fact that $P(f,-):\tensf{n}{m}\to \tensf{n}{m}$ is a (tame) continuous linear map.
\end{proof}

\begin{remark}
\label{Rem:PushForwDifferential}
Let $V$ denote the inversion map of $\Diff$. The pushforward of a tensor field is the map $P\circ (V\times \mathrm{Id}_{\tensf{n}{m}}):\Diff\times \tensf{n}{m}\to \tensf{n}{m}$, $n,m\in \N$, which is smooth tame as a composition of smooth tame maps. 
Completely analogously to Lemma \ref{Lemma:PullbackDifferential} we can now take the flow $\Phi_t^X$ of a vector field $X\in \VFs$ and calculate for $\tau \in \tensf{n}{m}$ the differential
\begin{align*}
    \left.\frac{\d}{\d t}\right|_{t=0}P(V(\eh{tX}),\tau)=\left.\frac{\d}{\d t}\right|_{t=0}P(V(\Phi_t^X),\tau)=\left.\frac{\d}{\d t}\right|_{t=0}P(\Phi_{-t}^X,\tau)=-\LieD_X\tau.
\end{align*}
As before this shows that in the standard chart $\exh_{f,\mathcal T}$ (cf.~Convention \ref{Conv:StandChartOnDifftimesTensOrForms}) around $(f,0)\in \Diff\times \tensf{n}{m}$ we find
\begin{align*}
    D(P\circ (V\times \mathrm{Id}_{\Forms{n}})\circ \exh_{f,\mathcal T})(0,\tau)(X,T)&=\left(\left.\frac{\d}{\d t}\right|_{t=0}P(\Phi_{-t}^X,f_*\tau)\right)+f_*T\\
    &=-\LieD_X(f_*\tau)+f_*T
\end{align*}
for $X\in \VFs$ and $T\in \tensf{n}{m}$.
\end{remark}

\section{The tame Lie group of autoequivalences of an exact Courant algebroid over a compact base}
\label{Section:TheGroupOfautoequivalencesOfAnExCAAsATameLieGroup}

In this section $M$ always denotes a compact manifold, $H$ denotes a closed differential $3$-form $H\in \ClForms{3}$, and $(\GenTM,H)$ denotes the generalized tangent bundle twisted by $H$ as defined in Definition \ref{Def:GenTanBund} and Convention \ref{Conv:ExCAshorthand}.

In this section, we show that the space of diffeomorphisms $\SDiff\subseteq \Diff$ preserving the \Severa\ class is an open subgroup of $\Diff$.
We show that $\Auteq$ is a tame Fréchet manifold and that its manifold structure is given by 
$\Auteq \cong \SDiff \times \ClForms{2}$.
We show that, equipped with this tame Fréchet manifold structure, $\Auteq$ becomes a tame Fréchet Lie group. Moreover, we show that $\Auteq$ and $\mathrm{Auteq}(M,H')$ are isomorphic as tame Fréchet Lie groups if the \Severa\ classes agree, i.e.~$[H]=[H']\in H_{\mathrm{dR}}^3(M)$. We conclude that autoequivalences of an exact Courant algebroid over a compact manifold have a unique tame Fréchet Lie group structure up to isomorphism of tame Fréchet Lie groups.
We compute the Lie algebra of the tame Fréchet Lie group $\Auteq$.
Lastly, we equip the space of generalized almost complex structures of $(\GenTM,H)$ with the structure of a tame Fréchet manifold and show that the canonical left action of $\Auteq$ on it is smooth tame.

\subsection{The group of diffeomorphisms preserving the \Severa\ class}

When describing the autoequivalences of an exact Courant algebroid $(\GenTM,H)$, we showed in Theorem \ref{AuteqCharacterizationTheorem} that for $f\in \Diff$ and $B\in \Forms{2}$ the map $f_*e^B$ is an autoequivalence of $(\GenTM,H)$ if and only if $f^*H-H=\d B$. In particular, that means that $f_*e^B$ is an autoequivalence only if the \Severa\ class is preserved by $f$, i.e.~only if
$[H]=[f^*H]\in H_{dR}^3(M)$.
So we first wish to take a closer look at the following group.

\begin{definition}
We denote the diffeomorphisms preserving the \Severa\ class of $H$ by 
\begin{equation*}
    \SDiff\coloneqq \{f\in \mathrm{Diff}(M)|~[f^*H]=[H]\}.
\end{equation*}
We will also call this group the \textit{group of \Severa-diffeomorphisms}.
\end{definition}

\begin{theorem}
\label{Thm:SDiffClopen}
The subset $\SDiff\subseteq \Diff$ is a subgroup and is open and closed as a subset of $\Diff$. In particular, $\SDiff$ is a tame Lie group. 
\end{theorem}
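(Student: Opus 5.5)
The plan has four steps: check the group axioms, show that the identity component of $\Diff$ lies in $\SDiff$, deduce openness, and then deduce closedness.

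\emph{Subgroup.} If $f,g\in\SDiff$, then $[(f\circ g)^*H]=[g^*f^*H]=g^*[f^*H]=g^*[H]=[H]$, using that pullback commutes with $\d$ and so descends to cohomology. For the inverse, applying $(f^{-1})^*$ to $[f^*H]=[H]$ gives $[H]=[(f^{-1})^*H]$. Clearly $\Id\in\SDiff$.

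\emph{Key step: a neighbourhood of $\Id$ lies in $\SDiff$.} I will show that every $f$ in the path component of $\Id$ in $\Diff$ satisfies $[f^*H]=[H]$. The standard chart $(\Ucal,\exh)$ of Convention~\ref{Conv:ChartsOnDiff} may be shrunk to a convex (e.g.\ ball-like) neighbourhood of $0$ in $\VFs$. For $f=\eh{X}$ with $X\in\Ucal$, the map $F(t,x)\coloneqq\eh{tX}(x)$ is a smooth map $[0,1]\times M\to M$ with $F(0,-)=\Id$ and $F(1,-)=f$; smoothness follows from the definition of $\exh$ via the geodesic exponential. Hence $f$ is smoothly homotopic to $\Id$. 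By homotopy invariance of de Rham cohomology, $f^*=\Id$ on $H^3_{\mathrm{dR}}(M)$, so $f\in\SDiff$. Explicitly, $f^*H-H=\d\big(\int_0^1 i_t^*\iota_{\partial_t}F^*H\,\d t\big)$ by the homotopy formula. Thus $\exh(\Ucal)\subseteq\SDiff$, and this set is open in $\Diff$.

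\emph{Openness and closedness.} For any $g\in\SDiff$, the set $R_g(\exh(\Ucal))=\exh_g(\Ucal)$ is an open neighbourhood of $g$ in $\Diff$. Since $R_g$ is a smooth tame diffeomorphism, by the remark preceding Convention~\ref{Conv:ChartsOnDiff}, this neighbourhood lies in $\SDiff$ because $\SDiff$ is a subgroup. So $\SDiff$ is open. It is closed because its complement is a union of cosets $g\,\SDiff$, each of which is open by the same argument applied to $R_h$, $h\in g\,\SDiff$; equivalently, an open subgroup of a topological group is closed.

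\emph{Lie group structure.} As an open subset of the tame Fréchet manifold $\Diff$ (Proposition~\ref{Prop:DiffOpen}), $\SDiff$ inherits a tame Fréchet manifold structure from the charts $\exh_g$. Multiplication and inversion are restrictions of the smooth tame maps of Proposition~\ref{Proposition:DiffLieGrAndLieAlg} to open subsets that the subgroup property maps into $\SDiff$, so they remain smooth tame.

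The main obstacle is the key step. One must make sure that the path $t\mapsto\eh{tX}$ stays among smooth maps, which requires $\Ucal$ to be star-shaped about $0$, and that it yields a jointly smooth homotopy $[0,1]\times M\to M$. Joint smoothness follows directly from the fact that $\exh$ is induced by the fiber-preserving smooth map $\mathrm{Exp}^h$.
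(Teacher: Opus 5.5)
Your proof is correct. It uses the same key fact as the paper, homotopy invariance of de Rham cohomology, but packages the topology differently.

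The paper argues globally. By Remark \ref{Rem:C^nFamilies}, any path in $\Diff$ is an isotopy $[0,1]\times M\to M$, so each path component of $\Diff$ lies entirely in $\SDiff$ or entirely in its complement. Since $\Diff$ is a manifold modelled on locally convex spaces, it is locally path-connected, so its path components are open. This gives openness and closedness at once, without using the group structure.

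You work only near $\Id$. The rescaled geodesic map $F(t,x)=e^h(tX_x)$ is an explicitly smooth homotopy, so you need only the smooth version of homotopy invariance, and the homotopy formula even gives an explicit primitive of $f^*H-H$. You then spread the neighbourhood $\exh(\Ucal)$ around by translations and invoke that an open subgroup of the topological group $\Diff$ is closed.

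Two small remarks:
\begin{itemize}
\item The star-shapedness of $\Ucal$ that you flag as the main obstacle is not really needed. For compact $M$ the map $e^h$ is defined on all of $TM$, and homotopy invariance does not require the intermediate maps $\eh{tX}$ to be diffeomorphisms.
\item In the closedness step, translating a \emph{left} coset $g\,\SDiff$ by $R_h$ needs one more line. For $f\in\exh(\Ucal)$ you have $g^{-1}\circ f\circ g\in\SDiff$, because $f^*$ is the identity on $H^3_{\mathrm{dR}}(M)$. Alternatively, use $L_h$ directly; your fallback that open subgroups are closed already covers this.
\end{itemize}
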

\begin{proof}
It is clear that $\SDiff\subseteq \Diff$ is a subgroup.
The rest of the statement is a direct result of the homotopy invariance of de Rham cohomology (\cite[Proposition 17.10]{Lee2003}). Noting that a path in $\Diff$ corresponds to an isotopy by Remark \ref{Rem:C^nFamilies}, we find that the path-connected component of any $f\in \SDiff$ lies again in $\SDiff$. So, since $\Diff$ is a manifold, $\SDiff\subseteq \Diff$ is open and closed as a subset by the usual arguments. Since $\SDiff$ is open in $\Diff$ and is a subgroup, it is a tame Lie group.
\end{proof}

\subsection{The tame Fréchet manifold of autoequivalences}
In this section, we establish the tame Fréchet manifold structure on $\Auteq$ that will make it a tame Fréchet Lie group. We will show that as a tame Fréchet manifold $\Auteq$ can be identified with $\SDiff\times \ClForms{2}$. Here, as before, $\ClForms{2}$ denotes the closed differential $2$-forms.

First, we want to lay out a strategy on how we can equip the group $\mathrm{Auteq}(M,H)$ with the tame manifold structure of a tame principal bundle (cf.~Definition \ref{Def:PrincBundle}) with structure group $(\Forms{k}_{\mathrm{ex}},+)$ and base $\SDiff$. We show that, as such, it is indeed globally trivializable. Hence, the manifold structure of the group of autoequivalences of an exact Courant algebroid $(\GenTM,H)$ can be shown to be the product $\SDiff\times \ClForms{2}$ in a sensible way.
However, the group structure is more complicated than the product structure.

From Theorem \ref{AuteqCharacterizationTheorem} it is clear that the map 
$$\{(g,C)\in \SDiff\times \Forms{k}|~g^*H-H=\d C\}\ni (f,B)\to f_* e^B\in \Auteq$$ is bijective.
Now we can look at the problem more systematically. If we fix a diffeomorphism $f\in \SDiff$, then we know that there has to be a $B\in \Forms{2}$ such that $f^*H-H=\d B$. So, any other $b\in \Forms{2}$ such that $f^*H-H=\d b$ differs from $B$ by a closed differential $2$-form, i.e.~$B-b\in \Forms{2}_{\mathrm{cl}}$. Hence, for a fixed $f\in \SDiff$ the permissible differential forms $B\in \Forms{2}$ such that $f_*e^B$ is an autoequivalence form an affine space modeled on $\Forms{2}_{\mathrm{cl}}$.

We will now describe a way to define a smooth tame manifold structure and, in fact, the structure of a smooth tame principal bundle on $\Auteq$. First, we will define it abstractly and then make sure that this is actually a sensible way to define the structure. 

\begin{definition}
\label{gauge}
Let $\Vcal \subseteq \SDiff$ be open. We call a smooth tame map 
$\psi:\Vcal\to \Forms{k}$ a \textit{gauge over $\Vcal$} (or simply a \textit{local gauge}) if it fulfills 
\begin{align*}
    f^*H-H=\d\psi(f)
\end{align*}
for every $f\in \Vcal$. If $\Vcal=\SDiff$, then we refer to $\psi$ as a \textit{global gauge}.
\end{definition}

\begin{definition}
\label{Triv}
Let $\Vcal\subseteq \SDiff$ be open. We call a map $\Psi:\Vcal\times \ClForms{2}\to \Auteq$ a \textit{local trivialization of $\Auteq$} if it is of the form $\Psi(f,b)=f_*e^{b+\psi(f)}$, $(f,b)\in \Vcal\times \ClForms{2}$, where $\psi$ is some gauge over $\Vcal$.
\end{definition}

\begin{remark}
Let $\Psi$ be a local trivialization as above. Note that usually we would call $\Psi^{-1}$ a local trivialization. However, in this context it is simply more convenient to denote it this way around.    
\end{remark}

\begin{remark}
\label{Rem:TransFunctAuteq}
Let $\Vcal\subseteq \SDiff$ be open. Now, if we have two such local trivializations $\Psi$, $\Psi'$ defined without loss of generality on $\Vcal\times \ClForms{2}$ induced by gauges $\psi$ and $\psi'$ over $\Vcal$ respectively then the change of trivialization has the following form
\begin{align*}
    \Psi'^{-1}\circ \Psi (f,b)=(f,b+(\psi-\psi')(f)).
\end{align*}
Note that this map is smooth tame since by definition the gauges are smooth tame. Also note that $\d(\psi(f)-\psi'(f))=f^*H-H-(f^*H-H)=0$. Hence, the transition maps are precisely given by addition of a smooth tame family of closed two-forms.
\end{remark}

Assuming that such gauges always exist locally we can induce a topology and smooth tame structure on $\Auteq$. Note that the topology will automatically be Hausdorff.
So the question becomes that of existence of local gauges.
In fact we will now show that there is a global gauge. In other words in the abstract framework which we have set up we may in fact write $\Auteq$ as the product manifold $\SDiff\times \ClForms{2}$. However, note that this is only the manifold structure and explicitly not the group structure.

\begin{lemma}
\label{Lemma:PhiGlobGauge}
The map $\phi$ defined by 
\begin{align*}
    \phi:\SDiff&\longrightarrow \Forms{2}\\
    f&\longmapsto Q(f^*H-H)
\end{align*}
is a global gauge.
Here $Q$ again denotes the operator $\d^*G$ introduced in Lemma \ref{Lemma:dQex=Id}
\end{lemma}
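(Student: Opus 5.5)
We must show two things: that $\phi$ is smooth tame on $\SDiff$, and that it satisfies $f^*H-H=\d\phi(f)$ for every $f\in\SDiff$.

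\emph{Smooth tameness.} By Corollary~\ref{Cor:PullBackForms}, the map $P:\Diff\times\Forms{3}\to\Forms{3}$, $(f,\om)\mapsto f^*\om$, is smooth tame. Restricting to the open subset $\SDiff$ (Theorem~\ref{Thm:SDiffClopen}) and fixing the second argument $\om=H$ gives a smooth tame map $f\mapsto f^*H$. Concretely, this is $P$ composed with the smooth tame inclusion $f\mapsto(f,H)$; alternatively one can cite Lemma~\ref{PullbackFixed_w} with $m=3$ together with Corollary~\ref{Cor:FsubbundleE->FtameDirectSummand}. Subtracting the constant $H$ is smooth tame. Finally, $Q=\d^*G$ is a tame linear map $\Forms{3}\to\Forms{2}$ (Remark~\ref{Rem:HodgeOpsSmoothTame} and Lemma~\ref{Lemma:dQex=Id}), and tame linear maps are smooth tame. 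Hence $\phi$ is a composition of smooth tame maps and is therefore smooth tame.

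\emph{The gauge equation.} Fix $f\in\SDiff$. By definition $[f^*H]=[H]$, so $f^*H-H=\d\beta$ for some $\beta\in\Forms{2}$. Lemma~\ref{Lemma:dQex=Id} applied to $\alpha=\beta$ gives $\d Q(\d\beta)=\d\beta$, that is,
\[
\d\phi(f)=\d Q(f^*H-H)=f^*H-H.
\]
So $\phi$ satisfies the defining identity of a global gauge on all of $\SDiff$.

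No step here is a real obstacle. The only points needing care are, first, that smooth tameness of the pullback must be taken on $3$-forms, which Corollary~\ref{Cor:PullBackForms} provides, and second, that the exactness of $f^*H-H$ comes exactly from the defining property of $\SDiff$; this is what allows Lemma~\ref{Lemma:dQex=Id} to be applied.
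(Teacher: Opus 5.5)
Your proof is correct and follows the paper's argument essentially verbatim: smooth tameness comes from Corollary~\ref{Cor:PullBackForms} composed with the tame linear operator $Q$, and the gauge identity comes from the exactness of $f^*H-H$ for $f\in\SDiff$ together with Lemma~\ref{Lemma:dQex=Id}. You also correctly record $Q=\d^*G$ as a map $\Forms{3}\to\Forms{2}$, whereas the paper's proof writes $\Forms{k}\to\Forms{k+1}$, which is a harmless typo.
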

\begin{proof}
Since for every $k\in \N$ the pullback $P:\mathrm{Diff}(M)\times \Forms{k}\to \Forms{k}$ is a smooth tame family of isomorphisms (cf.~Corollary \ref{Cor:PullBackForms}), and the operator $Q:\Forms{k}\to \Forms{k+1}$ is a tame linear map (cf.~Remark \ref{Rem:HodgeOpsSmoothTame}) the map
$\phi:\SDiff\to \Forms{2}$ is smooth tame. Also note that by definition if $f\in \SDiff$, then $f^*H-H$ is exact. So by Lemma \ref{Lemma:dQex=Id} the equation $\d(Q(f^*H-H))=f^*H-H$ holds. Hence $\phi$ defines a global gauge. 
\end{proof}

\begin{convention}
\label{Phi,phi}
In the previous lemma we have constructed the canonical global gauge $\phi$.
We denote the associated global trivialization by $\Phi$. We will use this notation throughout the rest of the thesis.  
\end{convention}

\begin{theorem}
\label{Thm:AuteqTameMfd}
The space $\Auteq$ can be equipped with the structure of a tame Fréchet manifold smooth tamely diffeomorphic to $\SDiff\times \ClForms{2}$.
\end{theorem}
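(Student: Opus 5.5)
The plan is to transport the manifold structure of $\SDiff\times\ClForms{2}$ to $\Auteq$ through the global trivialization $\Phi$ of Convention \ref{Phi,phi}, given by $\Phi(f,b)=f_*e^{b+\phi(f)}$ with $\phi(f)=Q(f^*H-H)$. First I will check that $\Phi:\SDiff\times\ClForms{2}\to\Auteq$ is a bijection. For well-definedness, take $(f,b)$; Lemma \ref{Lemma:PhiGlobGauge} gives $\d(b+\phi(f))=0+f^*H-H$, so Theorem \ref{AuteqCharacterizationTheorem} shows that $f_*e^{b+\phi(f)}$ is an autoequivalence. For surjectivity, let $F\in\Auteq$. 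Theorem \ref{AuteqCharacterizationTheorem} writes $F=f_*e^B$ with $f^*H-H=\d B$. Then $[f^*H]=[H]$, so $f\in\SDiff$, and $b\coloneqq B-\phi(f)$ satisfies $\d b=0$. For injectivity, note that $f$ is recovered as the diffeomorphism covered by $F$. Applying $f_*^{-1}$ and evaluating on vectors $X\in TM$ then recovers $B$ from $e^B(X)=X+\iota_XB$. Hence $b=B-\phi(f)$ is determined as well.

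Next I will define the topology and the tame Fréchet structure on $\Auteq$ by declaring $\Phi$ to be a homeomorphism. I will then take as atlas the compositions $\Phi\circ(\varphi\times\mathrm{Id})$, where $\varphi$ runs over the local parametrizations $\exh_f$ of $\SDiff$ (Convention \ref{Conv:ChartsOnDiff}). Openness of $\SDiff$ is given by Theorem \ref{Thm:SDiffClopen}. The model spaces are $\VFs\times\ClForms{2}$, which is a tame Fréchet space by Corollary \ref{Cor:ClFormsTameDirSumm} and Remark \ref{Rem:DirSumm&ProdAreTame}. The chart transitions are exactly the transitions of $\SDiff$ times the identity, so they are smooth tame. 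The space is Hausdorff because the product is Hausdorff. With this structure, $\Phi$ is a smooth tame diffeomorphism by construction.

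To see that the structure is canonical, I will invoke Remark \ref{Rem:TransFunctAuteq}. Any other local trivialization $\Psi$, built from some gauge $\psi$, satisfies $\Phi^{-1}\circ\Psi(f,b)=(f,b+(\psi-\phi)(f))$, which is smooth tame with smooth tame inverse. So every local trivialization is compatible with the structure and defines the same smooth tame structure.

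The only genuinely nontrivial ingredient is smooth tameness of $\phi$, and Lemma \ref{Lemma:PhiGlobGauge} already provides it. A subtle point remains: $\phi$ takes values in $\Forms{2}$, while the fibre coordinate lies in the closed subspace $\ClForms{2}$. The difference $\psi-\phi$ is closed-valued, and it must be smooth tame as a map into $\ClForms{2}$ and not merely into $\Forms{2}$. I expect this to be the main obstacle. I will handle it by composing with the tame projection onto the tame direct summand $\ClForms{2}$, which is the identity on closed forms; such a projection exists by Corollary \ref{Cor:ClFormsTameDirSumm} and Proposition \ref{Prop:TameDirSumBySequence}.
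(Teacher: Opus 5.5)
Your proposal is correct and follows the paper's proof: the paper likewise transports the topology and the tame Fréchet structure along the global trivialization $\Phi$, with bijectivity coming from Theorem \ref{AuteqCharacterizationTheorem} and Lemma \ref{Lemma:PhiGlobGauge}, and compatibility with other local trivializations is recorded in Remark \ref{Rem:TransFunctAuteq}. The closed-valuedness issue you flag is harmless, so the projection argument works but is not needed. A map into the closed subspace $\ClForms{2}$, carrying the restricted grading, is smooth tame as soon as it is smooth tame into $\Forms{2}$.
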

\begin{proof}
We use $\Phi:\SDiff\times \ClForms{2}\to \Auteq$ to induce a topology and smooth structure on $\Auteq$, i.e.~we use the unique topology on $\Auteq$ making $\Phi$ into a homeomorphism, and if $(\Ucal,\varphi)$ is a chart on $\SDiff\times \ClForms{2}$,
$\Ucal \subset \SDiff\times \ClForms{2}$ open, then the induced chart on $\Auteq$ is $(\Phi(\Ucal),\varphi\circ\Phi^{-1})$. By construction this gives $\Auteq$ the structure of a tame Fréchet manifold and makes $\Phi$ into a smooth tame diffeomorphism.
\end{proof}

\begin{remark}
The result from Theorem \ref{Thm:AuteqTameMfd} gives $\Auteq$ the structure of a globally trivial tame principal bundle (cf.~Definition \ref{Def:PrincBundle}) with structure group $(\ClForms{2},+)$ and base $\SDiff$. 
Also note that automatically any local trivialization (cf.~Definition \ref{Triv}) is a smooth tame diffeomorphism onto its image.  
Moreover, by Remark \ref{Rem:PrincBundleViaTransFunct} and Remark \ref{Rem:TransFunctAuteq} we find that the action of $\ClForms{2}$ on $\Auteq$ is the trivial action (cf.~Definition \ref{Def:PrincBundle} \ref{item:LocalActionOfGonP}) in each local trivialization (as in Definition \ref{Triv}).
\end{remark}

\begin{remark}
\label{AuteqChart}
For any $f\in \SDiff$ we can use the local parametrization $(\Ucal_{\Id},\exh_f)$ (cf.~Convention \ref{Conv:ChartsOnDiff}), and any gauge $\psi$ over $R_f(\eh{\Ucal})$ (cf.~Definition \ref{gauge}) with associated trivialization $\Psi$ (cf.~Definition \ref{Triv}) to identify the open subset
$\Ucal\times \ClForms{2}\subseteq \VFs\times \ClForms{2}$ with its image under $\Psi\circ (\exh_f\times \mathrm{Id}_{\ClForms{2}})$ in $\Auteq$. This provides an explicit local parametrization of $\Auteq$, the inverse of which is a chart. We will denote this local parametrization by $\exh_{f,\Psi}$.
\end{remark}

\begin{remark}
The group of autoequivalences $\Auteq$ viewed as the set 
$\{(f,B)\in\Diff\times \Forms{2}|~f^*H-H=\d B \}$ is a tame submanifold of $\Diff\times \Forms{2}$.
In fact, we may extend the global trivialization $\Phi$ to the tame diffeomorphism
$\Phi':\Diff\times \Forms{2}\to \Diff\times \Forms{2}$ such that $\Phi'(f,\om)=(f,\om + Q(f^*H-H))$ for $f\in \Diff$, $\om \in \Forms{2}$. We know that $\ClForms{2}\subseteq \Forms{2}$ is a tame direct summand (cf.~Corollary \ref{Cor:ClFormsTameDirSumm}) and that $\SDiff$ is an open subset of $\Diff$, so $\SDiff\times \ClForms{2}\subseteq \Diff\times \Forms{2}$ is a tame submanifold. Now $\Auteq$ corresponds precisely to
$\Phi'(\SDiff\times \ClForms{2})$, which is, hence, a tame Fréchet submanifold. 
\end{remark}

\subsection{The tame Lie group structure of the group of autoequivalences}
\label{SubSect:LieGrStrOfAuteq}

In this section, we show that the group multiplication and inversion map of $\Auteq$ are smooth tame maps. Moreover, we show that if $H,H'\in \ClForms{3}$ induce the same \Severa\ class $[H]=[H']\in H^3_{\mathrm{dR}}(M)$, then $\Auteq \cong  \mathrm{Auteq}(M,H')$ as tame Fréchet Lie groups and so the tame Fréchet Lie group structure of an exact Courant algebroid is determined uniquely up to isomorphism of tame Fréchet Lie groups.\\

The group multiplication map on $\Auteq$ will be denoted by
$\mu:\Auteq \times \Auteq \to \Auteq$. In the following, let $\Phi$ and $\phi$ be as in Convention \ref{Phi,phi}. Let $(f,b),(g,c)\in \SDiff\times \Forms{2}_{\mathrm{cl}}\stackrel{\Phi}{\cong}\Auteq$, then the multiplication of the associated autoequivalences gives us
\begin{align*}
    f_*e^{b+\phi(f)}g_*e^{c+\phi(g)}&=f_*g_*(g^{-1})_*e^{b+\phi(f)}g_*e^{c+\phi(g)}
    =f_*g_*e^{g^*(b+\phi(f))}e^{c+\phi(g)}\\
    &=(f\circ g)_*e^{(g^*b+c+g^*\phi(f)+\phi(g))}\\
    &=(f\circ g)_*e^{(g^*b+c+B(f,g)+\phi(f\circ g))}.
\end{align*}
Here we define
\begin{align}
\label{B(f,g)}
\begin{split}
    B(f,g)&\coloneqq g^*\phi(f)+\phi(g)-\phi(f\circ g)\\
    &= g^*Q(f^*H-H)+Q(g^*H-H)-Q((f\circ g)^*H-H)\\
    &=g^*Q(f^*H-H)-Qg^*(f^*H-H)=[g^*,Q](f^*H-H).
\end{split}
\end{align}
where $[-,-]$ denotes the commutator.
Hence, 
\begin{align}
\label{Prod}
    \mu^\Phi((f,b),(g,c))\coloneqq(\Phi^{-1}\circ \mu \circ (\Phi\times\Phi))((f,b),(g,c))=(f\circ g, g^*b+c+B(f,g)).
\end{align}
Since the pullback is smooth tame by Theorem \ref{PullbackSmoothTame} and $f\circ g$ is merely the product of elements in the tame Lie group $\SDiff$ (cf.~Theorem \ref{Thm:SDiffClopen}) we get that the product on $\Auteq$ is smooth tame. 

Now we are concerned with the inversion mapping on $\Auteq$ which we will denote by $V:\Auteq\to\Auteq$.
Let $f_*e^B$ be an autoequivalence, i.e.
$f\in \SDiff$ and $B\in \Forms{2}$ such that $f^*H-H=\d B$. Then we can easily find its inverse. It is clear that if $g_*e^C$ is the inverse of $f_*e^B$ then $g=f^{-1}$. We have
\begin{align*}
    f^{-1}_*e^Cf_*e^B=e^{f^*C+B}.
\end{align*}
So in order to obtain the identity it follows that $B=-f^*C$ or equivalently
$C=-f_*B=-(f^{-1})^*B$. 
Let us now compute this with respect to the gauge $\phi$ inducing the global trivialization $\Phi$ (cf.~Convention \ref{Phi,phi}).
If we have $(f,b)\in \SDiff\times \ClForms{2}$ then we find
\begin{align}
\begin{split}
\label{Inv}
    V^\Phi(f,b)&\coloneqq (\Phi^{-1}\circ V\circ \Phi)(f,b)=\Phi^{-1}\left(V\left(f_*e^{b+\phi(f)}\right)\right)\\
    &=\Phi^{-1}\left(f^{-1}_*e^{(-f_*b-f_*\phi(f))}\right)=\Phi^{-1}\left(f^{-1}_*e^{(-f_*b+I(f)+\phi(f^{-1}))}\right)\\
    &=(f^{-1},-f_*b+I(f)).
\end{split}
\end{align}
Here, we define 
\begin{align}
\begin{split}
\label{I(f)}
    I(f)&\coloneqq -f_*\phi(f)-\phi(f^{-1})\\
    &= -f_*(Q(f^*H-H))-Q((f^{-1})^*H-H)=[Q,f_*](f^*H-H).
\end{split}
\end{align}
We have already shown that the pullback and push forward are smooth tame operations in Theorem \ref{PullbackSmoothTame} and Remark \ref{Rem:PushForwDifferential}. Hence, $I:\SDiff\to \ClForms{2}$ is smooth tame.

To conclude, we have found that on $\Auteq$ the product as well as the inversion map are smooth tame in the tame manifold structure we had previously established on it in Theorem \ref{Thm:AuteqTameMfd}. As a direct result, we can formulate the following theorem.

\begin{theorem}
\label{Thm:AuteqTameLieGroup}
Let $M$ be a compact manifold and let $H\in \ClForms{3}$. Then the group $\Auteq$ equipped with the structure of a tame Fréchet manifold as in Theorem \ref{Thm:AuteqTameMfd} is a tame Fréchet Lie group.
\end{theorem}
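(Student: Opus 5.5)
The plan is to transport everything through the global trivialization $\Phi:\SDiff\times\ClForms{2}\to\Auteq$ of Convention~\ref{Phi,phi}. By Theorem~\ref{Thm:AuteqTameMfd}, $\Phi$ is a smooth tame diffeomorphism by construction. So $\mu$ and $V$ are smooth tame if and only if their local representatives $\mu^\Phi$ and $V^\Phi$ from equations \eqref{Prod} and \eqref{Inv} are smooth tame maps of the tame Fréchet manifolds $(\SDiff\times\ClForms{2})^2\to \SDiff\times\ClForms{2}$ and $\SDiff\times\ClForms{2}\to\SDiff\times\ClForms{2}$. I would first verify that these formulas genuinely describe the group law. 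This uses Theorem~\ref{AuteqCharacterizationTheorem} together with the identities $g^{-1}_*e^{B}g_*=e^{g^*B}$ and $e^Be^C=e^{B+C}$, and the check is routine.

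For the multiplication, the first component $(f,g)\mapsto f\circ g$ is smooth tame because $\SDiff$ is an open subgroup of the tame Lie group $\Diff$ (Theorem~\ref{Thm:SDiffClopen}, Proposition~\ref{Proposition:DiffLieGrAndLieAlg}). The second component is $g^*b+c+B(f,g)$ with
\[
B(f,g)=g^*Q(f^*H-H)-Q\big((f\circ g)^*H-H\big).
\]
It is built from several pieces, each smooth tame:
\begin{itemize}
\item the pullback $P:\Diff\times\Forms{2}\to\Forms{2}$ and the pullback of $3$-forms (Corollary~\ref{Cor:PullBackForms});
\item the tame linear operator $Q$ (Remark~\ref{Rem:HodgeOpsSmoothTame});
\item the group multiplication of $\Diff$;
\item addition, and the inclusion $\ClForms{2}\hookrightarrow\Forms{2}$, which is tame linear.
\end{itemize}
Hence the composite is a smooth tame map into $\Forms{2}$.

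The inversion is handled the same way. The first component $f\mapsto f^{-1}$ is the inversion of $\Diff$. The second component $-f_*b+I(f)$ uses $f_*=(f^{-1})^*$, so it is again a composite of the inversion, the pullback of Corollary~\ref{Cor:PullBackForms}, and $Q$.

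The one point I expect to need care is that the maps built this way land in $\Forms{2}$, whereas the manifold structure requires smooth tame maps into the tame direct summand $\ClForms{2}$. The first step is to check closedness. Pullbacks of closed forms are closed. For $B(f,g)$ we have $\d B(f,g)=g^*(f^*H-H)+(g^*H-H)-((f\circ g)^*H-H)=0$, using $\d Q\eta=\eta$ for exact $\eta$ (Lemma~\ref{Lemma:dQex=Id}) and the gauge property of $\phi$. The analogous computation gives $\d I(f)=0$.

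With closedness in hand, I would compose with the tame linear projection $\Forms{2}\to\ClForms{2}$ coming from the Hodge decomposition, $\omega\mapsto \Hcal\omega+\d Q\omega$ (Corollary~\ref{Cor:ClFormsTameDirSumm}). This projection is the identity on closed forms. The result is therefore a smooth tame map into $\ClForms{2}$ that agrees with $\mu^\Phi$, respectively $V^\Phi$, which concludes the proof.
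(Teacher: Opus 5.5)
Your proposal is correct and follows the paper's argument: you transport $\mu$ and $V$ through $\Phi$ to the explicit formulas \eqref{Prod} and \eqref{Inv} and observe that these are built from pullbacks, $Q$ and the group operations of $\SDiff$. Your further step, checking that $B(f,g)$ and $I(f)$ are closed and then composing with the tame projection $\omega\mapsto\Hcal\omega+\d Q\omega$, makes explicit the point that the maps land in $\ClForms{2}$, which the paper leaves implicit.

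There is one slip. Your displayed formula for $B(f,g)$ omits the term $+Q(g^*H-H)$, and as written it would give $\d B(f,g)=H-g^*H$. Your closedness computation, however, correctly uses the full expression $g^*\phi(f)+\phi(g)-\phi(f\circ g)$.
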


We already know that if $H,H'\in\ClForms{3}$ such that their de Rham cohomology classes agree, i.e.~$[H]=[H']\in H_{dR}^3(M)$, then $(\mathbb{T}M,H)\cong (\mathbb{T}M,H')$ as Courant algebroids via the homomorphism $e^B:(\mathbb{T}M,H)\to (\mathbb{T}M,H')$ with $B\in \Forms{2}$ and $\d B=H'-H$ (cf.~Proposition \ref{Prop:SeveraClass}). 

Thus, if $f_*e^b\in \Auteq$ for $f\in \SDiff$ and $b\in \Forms{2}$, then 
$f_*e^{b+f^*B-B}=f_*e^be^{f^*B-B}\in \mathrm{Auteq}(M,H')$ since $$f^*H'-H'=f^*H-H+f^*\d B-\d B=\d(b+f^*B-B).$$ 
This induces a map $R_B:\Auteq \to \mathrm{Auteq}(M,H')$ by $R_B(f_*e^b)=f_*e^{b+f^*B-B}$. Clearly, this map is invertible with inverse $(R_B)^{-1}=R_{-B}:\mathrm{Auteq}(M,H')\to \Auteq$. 

We want to check that this is indeed a group homomorphism. 
For this purpose let $f_*e^C,g_*e^D\in\mathrm{Auteq}(M,H)$ with $f,g\in \SDiff$ and $C,D\in \Forms{2}$. Then, we have 
\begin{align*}
    \mu(R_B(f_*e^C),R_B(g_*e^D))&=f_*e^Ce^{f^*B-B}g_*e^De^{g^*B-B}=f_*e^Cg_*(g^{-1})_*e^{f^*B-B}g_*e^De^{g^*B-B}\\
    &=f_*e^Cg_*e^De^{g^*(f^*B)-g^*B+g^*B-B}=f_*e^Cg_*e^De^{(f\circ g)^*B-B}\\
    &=R_B(\mu(f_*e^C,g_*e^D)).
\end{align*}
This shows that $R_B$ is indeed a group isomorphism.

Let again $\Phi$ (cf.~Convention \ref{Phi,phi}) denote the usual trivialization of $\Auteq$, and let $\Phi'$ denote the analogous trivialization of $\mathrm{Auteq}(M,H')$. Then, we have for $(f,b)\in \SDiff\times \ClForms{2}$ that 
\begin{align*}
    ((\Phi')^{-1}\circ R_B \circ \Phi)(f,b)&=(\Phi')^{-1}(f_*\mathrm{exp}(b+f^*B-B+Q(f^*H-H)))\\
    &=(\Phi')^{-1}(f_*\mathrm{exp}(b+f^*B-B-Q(f^*\d B-\d B)+Q(f^*H'-H')))\\
    &=(f,b+(\mathrm{Id}-Q\d)(f^*B-B)).
\end{align*}
Hence $R_B$ is smooth tame as the pullback is a smooth tame operation (cf.~Lemma \ref{PullbackSmoothTame}) and $Q$ is tame linear (cf.~Remark \ref{Rem:HodgeOpsSmoothTame}). Since its inverse is of the form $R_{-B}$ it is also smooth tame and hence, $R_B$ is an isomorphism of tame Fréchet Lie Groups.
As a direct consequence we may formulate the following theorem. 

\begin{theorem}
\label{Thm:IsoClassesOfAuteq}
Let $E$ be an exact Courant algebroid over a compact manifold $M$. Then by Proposition \ref{Prop:SeveraClass} there is some $H\in \ClForms{3}$ such that $E\cong (\GenTM,H)$ as Courant algebroids. We can identify the group of autoequivalences of $E$ with the tame Fréchet Lie group $\Auteq$. The identification is unique up to the isomorphism of tame Fréchet Lie groups.
\end{theorem}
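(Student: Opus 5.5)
The plan is to assemble Theorem \ref{Thm:IsoClassesOfAuteq} from the pieces already established. I will make precise what ``the group of autoequivalences of $E$'' means as a tame Fréchet Lie group. Choose an isomorphism of Courant algebroids $\alpha:E\to(\GenTM,H)$ covering $\Id_M$, which exists by Proposition \ref{Prop:SeveraClass}. Conjugation $F\mapsto \alpha\circ F\circ\alpha^{-1}$ is then a group isomorphism $c_\alpha:\mathrm{Aut}(E)\to\Auteq$. Here $\mathrm{Aut}(E)$ denotes the group of autoequivalences of $E$.

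To see this, I check directly from the definition of an autoequivalence that compatibility with the anchor, the scalar product and the bracket is preserved under conjugation. The anchor condition uses $\pi\circ\alpha=\pi_E$, and $\alpha$ covers the identity. I then transport the tame Fréchet Lie group structure of Theorem \ref{Thm:AuteqTameLieGroup} along $c_\alpha$, declaring $c_\alpha$ to be a tame diffeomorphism. This gives $\mathrm{Aut}(E)$ a tame Fréchet Lie group structure, and $c_\alpha$ is an isomorphism of tame Fréchet Lie groups by construction.

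The main task is independence of choices. There are two choices: the representative $H$ and the isomorphism $\alpha$. Suppose instead we pick $H'$ with $[H']=[H]$ and an isomorphism $\alpha':E\to(\GenTM,H')$. Then $\beta\coloneqq\alpha'\circ\alpha^{-1}:(\GenTM,H)\to(\GenTM,H')$ is an isomorphism of Courant algebroids covering the identity. The two structures on $\mathrm{Aut}(E)$ differ by the group isomorphism $c_\beta:\Auteq\to\mathrm{Auteq}(M,H')$, $F\mapsto\beta F\beta^{-1}$. So it suffices to show that $c_\beta$ is an isomorphism of tame Fréchet Lie groups.

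For this I classify $\beta$. The map $\beta\circ e^{-B}$ is an isomorphism $(\GenTM,H+\d B)\to(\GenTM,H')$ covering the identity, where $B$ is any form with $\d B=H'-H$. Hence $\beta\circ e^{-B}$ is an automorphism of $(\GenTM,H')$. By Proposition \ref{B-fieldTrafoProp} it is of the form $e^{b}$ with $b$ closed, so $\beta=e^{B+b}$ and we may absorb $b$ into $B$. It remains to note that $c_{e^B}$ coincides with the map $R_B$ constructed above. Indeed, for $f_*e^{C}$ we compute
\[
e^B f_*e^{C}e^{-B} = f_* (f^{-1})_*e^Bf_*\, e^{C-B} = f_*e^{f^*B+C-B},
\]
using $(f^{-1})_*e^Bf_*=e^{f^*B}$, which is the same identity used in the multiplication computation. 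This is exactly $R_B(f_*e^C)$. We have already shown that $R_B$ is an isomorphism of tame Fréchet Lie groups, so the transported structures agree up to such an isomorphism.

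The main obstacle I expect is the classification step. It requires that an arbitrary isomorphism $(\GenTM,H)\to(\GenTM,H')$ covering the identity is a $B$-field transform. Proposition \ref{B-fieldTrafoProp} only treats automorphisms, which is why I reduce to that case by composing with $e^{-B}$. Beyond this, only the sign convention in $(f^{-1})_*e^Bf_*=e^{f^*B}$ needs checking, and it is consistent with the computation of $\mu^\Phi$ in equation \eqref{Prod}.
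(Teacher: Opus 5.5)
Your proposal is correct and follows the paper's route: the paper proves the theorem by showing, just before stating it, that $R_B(f_*e^b)=f_*e^{b+f^*B-B}$ with $\d B=H'-H$ is an isomorphism of tame Fréchet Lie groups $\Auteq\to\mathrm{Auteq}(M,H')$, and your argument reduces to exactly this map. You are more careful than the paper in one respect: you also treat the choice of the isomorphism $\alpha:E\to(\GenTM,H)$, by showing that every isomorphism $(\GenTM,H)\to(\GenTM,H')$ is some $e^{B}$ and that $c_{e^B}=R_B$, whereas the paper leaves this implicit.
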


Lastly, we summarize these statements by proving Theorem \ref{MainThm:AuteqMfdAndLieGr}.

\begin{proof}[Proof of Theorem \ref{MainThm:AuteqMfdAndLieGr}]
    The result directly follows from Theorem \ref{Thm:AuteqTameMfd}, Theorem \ref{Thm:AuteqTameLieGroup}, and Theorem \ref{Thm:IsoClassesOfAuteq}.
\end{proof}

\subsection{The Lie algebra of the group of autoequivalences}

The aim in this section is to compute the Lie algebra $\auteq\coloneqq \mathrm{Lie}(\Auteq)$.
To do so, we follow Theorem \ref{Thm:ad=[-,-]} using the formula for the adjoint action in a local chart discussed in Remark \ref{Rem:LocalFormulaForad}.
This way we will find the Lie bracket on $T_{\mathrm{Id}}\Auteq$ turning it into the Lie algebra $\auteq$. Rubio and Tipler have already determined the Lie algebra of $\Auteq$ viewed as an ILH Lie group, see \cite{RubioTipler}. We show that their result agrees with ours up to isomorphism of tame Fréchet Lie algebras.\\

We are now ready to prove Theorem \ref{MainThm:LieAlg}. For the readers convenience we restate it here and make the local chart in which we aim to compute the Lie algebra precise.

\begin{theorem*}[Explicit form of Theorem \ref{MainThm:LieAlg}]
The Lie algebra of $\Auteq$ in the local parametrization $\exh_{\Id,\Phi}$ (cf.~Remark \ref{AuteqChart} and Convention \ref{Phi,phi}) centered at $\mathrm{Id}_{\GenTM}\in \Auteq$ is
$(\VFs\times \ClForms{2},[-,-])$ with the Lie bracket
\begin{align*}
    [(X,B),(Y,C)]=(\LieD_YX,\LieD_YB-\LieD_XC-(\LieD_XQ\LieD_Y-\LieD_YQ\LieD_X)H+Q(\LieD_{\LieD_XY}H))
\end{align*}
for $X,Y\in \VFs$ and $B,C\in \ClForms{2}$. 
\end{theorem*}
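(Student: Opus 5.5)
We follow Theorem \ref{Thm:ad=[-,-]} together with the local formula of Remark \ref{Rem:LocalFormulaForad}: in the chart $\exh_{\Id,\Phi}$ the bracket is
\[
[u,v]=\mathrm{ad}\{u,v\}=D\widetilde{\mathrm{Ad}^\wedge}(0)\{v,u\}.
\]
Here $\widetilde{\mathrm{Ad}^\wedge}(w)$ is the differential at $0$ of the local representative of the conjugation $c_{g}$, for $g=\exh_{\Id,\Phi}(w)$. So the plan has two steps. First compute $\mathrm{Ad}_{(f,b)}$ in the global trivialization $\Phi$, for $(f,b)\in\SDiff\times\ClForms{2}$. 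Then differentiate in $(f,b)$ at the identity along $(X,B)$, using the curves $\eh{tX}$ (or, equivalently by Lemma \ref{Lemma:FlowInitialVelocity}, the flows $\Phi^X_t$) together with Lemma \ref{Lemma:PullbackDifferential} and Remark \ref{Rem:PushForwDifferential}. Note that $\Phi$ agrees with $\exh_{\Id,\Phi}$ up to the diffeomorphism factor $\exh$, whose differential at $0$ is the identity on $\VFs$.

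\textbf{Step 1 (conjugation formula).} Using the product \eqref{Prod} and the inverse \eqref{Inv}, compute
\[
c^\Phi_{(f,b)}(g,c)=\mu^\Phi\big(\mu^\Phi((f,b),(g,c)),V^\Phi(f,b)\big).
\]
Its first component is $f\circ g\circ f^{-1}$. Its second component is
\[
(f^{-1})^*(g^*b+c+B(f,g))-f_*b+I(f)+B(f\circ g,f^{-1}).
\]
Now differentiate at $(g,c)=(\Id,0)$ in the direction $(Y,C)$, using the curve $g_t=\Phi^Y_t$, $c_t=tC$. The first component gives $f_*Y$, exactly as in Lemma \ref{Lemma:PullbackVfields}. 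For the second component:
\begin{enumerate}
\item the term $(f^{-1})^*g_t^*b$ contributes $f_*\LieD_Yb$;
\item the term $(f^{-1})^*c_t$ contributes $f_*C$;
\item the term $(f^{-1})^*B(f,g_t)$ contributes, by \eqref{B(f,g)}, the expression $f_*[\LieD_Y,Q](f^*H-H)$, because $B(f,\Id)=0$ and $\frac{d}{dt}g_t^*=\LieD_Y$;
\item the term $B(f\circ g_t,f^{-1})=[f_*,Q]((f\circ g_t)^*H-H)$ contributes $[f_*,Q]\,g\text{-derivative}$, namely $[f_*,Q]\,\LieD_Y f^*H$.
\end{enumerate}
Summing these gives $\mathrm{Ad}_{(f,b)}(Y,C)$. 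It is linear in $(Y,C)$ and smooth tame in $(f,b)$, as Definition \ref{Def:AdAndad} requires.

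\textbf{Step 2 (differentiate in $(f,b)$).} Set $f=\Phi^X_s$, $b=sB$ and differentiate at $s=0$. Since $\frac{d}{ds}(\Phi^X_s)_*=-\LieD_X$ on tensors, the terms become:
\begin{enumerate}
\item the first component gives $-\LieD_XY$;
\item $f_*\LieD_Y b$ gives $\LieD_YB$, because $b=0$ at $s=0$;
\item $f_*C$ gives $-\LieD_XC$;
\item in $f_*[\LieD_Y,Q](f^*H-H)$ only $f^*H-H$ varies to first order, since it vanishes at $s=0$, giving $[\LieD_Y,Q]\LieD_XH$;
\item $[f_*,Q]\LieD_Yf^*H$ gives $-[\LieD_X,Q]\LieD_YH$, since $[\Id_*,Q]=0$.
\end{enumerate}
Finally, apply the order swap of Remark \ref{Rem:LocalFormulaForad}, which gives $[(X,B),(Y,C)]=D\widetilde{\mathrm{Ad}^\wedge}(0)\{(Y,C),(X,B)\}$. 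Then expand $[\LieD_Y,Q]\LieD_X H-[\LieD_X,Q]\LieD_YH$. Using $[\LieD_X,\LieD_Y]=\LieD_{[X,Y]}$ and $\LieD_YX=-\LieD_XY$, the terms $\LieD_YQ\LieD_X H-\LieD_XQ\LieD_YH$ survive, and the leftover $Q(\LieD_X\LieD_Y-\LieD_Y\LieD_X)H$ becomes $Q\LieD_{\LieD_XY}H$. This should produce exactly the claimed bracket. As a consistency check, the $\VFs$-component must reduce to $(\VFs,-[-,-])$ from Proposition \ref{Proposition:DiffLieGrAndLieAlg}.

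\textbf{Main obstacle.} The difficulty is bookkeeping. First, the signs and the order of the arguments must be tracked carefully through the swap in Remark \ref{Rem:LocalFormulaForad}. Second, each derivative of $g^*$ or $f_*$ must be justified as $\pm\LieD$; this is done pointwise via the evaluation maps (Remark \ref{Rem:Ev Phi=phi_p Ev}), combined with the commutation of the tame linear $Q$ with limits. A subtle point is that the second derivative must be taken in the chart. Because $\Phi$ is globally defined and the $\ClForms{2}$-factor is linear, no further correction terms arise. If the sign of the $H$-terms comes out reversed, I would first recheck the convention $\mathrm{ad}\{u,v\}$ versus $\mathrm{ad}\{v,u\}$.
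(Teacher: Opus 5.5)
Your proposal is correct and is essentially the paper's proof. You compute the conjugation $C^\Phi_{(f,b)}(g,c)$ in the global trivialization $\Phi$ from \eqref{Prod} and \eqref{Inv}, and differentiate in $(g,c)$ along $(Y,C)$ to get the same $\mathrm{Ad}^\Phi_{(f,b)}(Y,C)$. You then differentiate in $(f,b)$ along $(X,B)$ via Lemma \ref{Lemma:PullbackDifferential} and Remark \ref{Rem:PushForwDifferential}, arriving at the same terms. Using the flows $\Phi^X_s$ instead of $\eh{sX}$ is harmless, since both curves have initial velocity $X$ (Lemma \ref{Lemma:FlowInitialVelocity}).
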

\begin{proof}
Since $\SDiff$ contains the (path-)connected component of $\Id$ in $\Diff$ (cf.~Theorem \ref{Thm:SDiffClopen}) we know that the Lie algebra of $\SDiff$ is $\VFs$ with the Lie bracket being $(X,Y)\mapsto \LieD_YX$ for $X,Y\in \VFs$ (cf.~Proposition \ref{Proposition:DiffLieGrAndLieAlg}). 

We also know that the underlying vector space of $\auteq$ is $T_{\mathrm{Id}_{\GenTM}}\Auteq\cong \VFs\times \ClForms{2}$ using the parametrization $\exh_{\Id,\Phi}$ (cf.~Remark \ref{AuteqChart} and Convention \ref{Phi,phi}) of $\Auteq$ around $\mathrm{Id}_{\GenTM}$. 

To compute the Lie bracket we make use of Theorem \ref{Thm:ad=[-,-]} and Remark \ref{Rem:LocalFormulaForad}, and calculate the adjoint representation $\mathrm{ad}^\Phi$ of $\auteq$ (cf.~Definition \ref{Def:AdAndad}) under the identification $\auteq \cong \VFs \times \ClForms{2}$ of tame Fréchet spaces via the local parametrization $\exh_{(\Id,\Phi)}$. We will first calculate the adjoint action $\mathrm{Ad}^\Phi$ of $\Auteq$ on its Lie algebra using appropriate identifications. 

To do so, we first express the left- and right translation via the canonical identification in terms of $(f,b),(g,c)\in \SDiff\times \ClForms{2}\stackrel{\Phi}{\cong}\Auteq$.
So, in the following we write 
\begin{align*}
    L^\Phi_{(f,b)}(g,c)&\coloneqq \Phi^{-1}(L_{\Phi(f,b)}\Phi(g,c))\eqqcolon R^\Phi_{(g,c)}(f,b) \text{ , and}\\
    V^\Phi(f,b)&=\Phi^{-1}(V(\Phi(f,b)))
\end{align*}
for $f,g\in \SDiff$ and $b,c\in \ClForms{2}$.
We obtain
\begin{align*}
    L^\Phi_{(f,b)}(g,c)&=(f\circ g,g^*b+c+B(f,g))\\
    R^\Phi_{(f,b)}(g,c)&=(g\circ f,f^*c+b+B(g,f)).
\end{align*}
This follows immediately from equation \eqref{Prod} with $B(f,g)$ being defined as in equation \eqref{B(f,g)}.
Using equation \eqref{Prod} and equation \eqref{Inv} we find
\begin{align*}
    C^\Phi_{(f,b)}(g,c)&\coloneqq (R^\Phi_{V^\Phi(f,b)}\circ L^\Phi_{(f,b)})(g,c)\\
    &=R^\Phi_{(f^{-1},-f_*b+I(f))}(f\circ g, g^*b+c+B(f,g))\\
    &=(c_f(g),f_*(g^*b+c+B(f,g))-f_*b+I(f)+B(f\circ g,f^{-1}))\\
    &=(c_f(g),f_*(g^*b-b+c+B(f,g))+I(f)+B(f\circ g,f^{-1}))
\end{align*}
with $c_f(g)\coloneqq f\circ g\circ f^{-1}$ and $I(f)$ as in equation \eqref{I(f)}.
We remind ourselves that $B(f,g)$ was defined in equation \eqref{B(f,g)} as
\begin{align*}
    B(f,g)=[g^*,Q]_c(f^*H-H)
\end{align*}
with $[-,-]_c$ denoting the commutator and hence,
\begin{align*}
    B(f\circ g,f^{-1})=[f_*,Q]_c((f\circ g)^*H-H)=f_*Q(g^*(f^*H)-H)-Q(c_f(g)^*H-f_*H).
\end{align*}
We note that previously we have already worked within the global gauge $\phi$ (cf. Convention \ref{Phi,phi}). Now, to work in the local parametrization $\exh_{(\Id,\Phi)}$ of $\Auteq$ centered at $\mathrm{Id}_{\GenTM}$, we merely need to take the local parametrization $\exh\times \mathrm{Id}_{\ClForms{2}}$ on $\SDiff\times \ClForms{2}$ centered at $(\Id,0)$ (cf. Convention \ref{Conv:ChartsOnDiff}). This is implicitly done in the following calculations. So, the resulting Lie bracket is, indeed, expressed with respect to the local parametrization $\exh_{(\Id,\Phi)}$. 

We now want to compute the adjoint representation of $\Auteq$ on its Lie algebra using the established identifications. To this end, let $(Y,C)\in\VFs\times \ClForms{2}$ then using Lemma \ref{Lemma:PullbackVfields}, Lemma \ref{Lemma:PullbackDifferential} and Remark \ref{Rem:PushForwDifferential} we compute
\begin{align*}
    \mathrm{Ad}^\Phi_{(f,b)}(Y,C) \coloneqq& \left.\frac{\d}{\d t}\right|_{t=0}C_{(f,b)}(\eh{tY},tC)\\
    =&(f_*Y,f_*[\LieD_Yb+C+\LieD_Y(Q(f^*H-H))-Q\LieD_Y(f^*H-H)]\\
    &+f_*Q(\LieD_Yf^*H)-Q(f_*\LieD_Yf^*H)).
\end{align*}
Following Theorem \ref{Thm:ad=[-,-]} and Remark \ref{Rem:LocalFormulaForad}, we use again Lemma \ref{Lemma:PullbackDifferential} and Remark \ref{Rem:PushForwDifferential} to compute the Lie bracket of two elements
$(X,C),(Y,D)\in\VFs\times \ClForms{2}\cong \auteq$ as follows:
\begin{align}
\label{eq:LieBracketauteq}
\begin{split}
    [(X,B),(Y,C)]=&\mathrm{ad}((X,B),(Y,C))\coloneqq \left.\frac{\d}{\d t}\right|_{t=0}\mathrm{Ad}^\Phi_{(\eh{tX},tB)}(Y,C)\\
    =& (\LieD_YX,\LieD_YB-\LieD_XC+\LieD_Y(Q(\LieD_XH))-Q(\LieD_Y\LieD_XH)\\
    &-\LieD_XQ(\LieD_YH)+Q(\LieD_Y\LieD_XH)+Q(\LieD_X\LieD_YH)-Q(\LieD_Y\LieD_XH))\\
    =&(\LieD_YX,\LieD_YB-\LieD_XC-(\LieD_XQ\LieD_Y-\LieD_YQ\LieD_X)H+Q(\LieD_{\LieD_XY}H)).
\end{split}
\end{align}
This finishes the proof.
\end{proof}

\begin{remark}
In \cite[Theorem 3.8]{RubioTipler} Rubio and Tipler have shown that the Lie algebra $\auteq$ can be identified with the Lie subalgebra 
\begin{align*}
    \gfrak'&\coloneqq\{(X,B)\in \VFs\times \Forms{2}|~\d(\iota_XH-B)=0\}\\
    &=\{(X,B)\in \VFs\times \Forms{2}|~\d B=\LieD_XH\}
\end{align*}
of $\VFs\times \Forms{2}$ with the bracket on $\VFs\times \Forms{2}$ being $$[(X,C),(Y,D)]'=(\LieD_XY,\LieD_XD-\LieD_YC).$$
Note that $[-,-]'$ differs from the natural bracket, which can be computed similarly to equation \eqref{eq:LieBracketauteq} with $H=0$ and considering non-closed differential forms, exactly by a sign.
Note as well, that as $\gfrak'=\ker(\VFs\times \Forms{2}\ni (X,B)\mapsto \d(\iota_XH-B)\in \VFs\times \Forms{2})$ it is a closed subspace of $\VFs\times \Forms{2}$ and, hence, a graded Fréchet space.

There is a tame linear isomorphism of Lie algebras 
\begin{align*}
    F:\gfrak'&\longrightarrow (\VFs\times \ClForms{2},[-,-])\\
    (X,B)&\longmapsto (-X,-B+Q\LieD_XH)
\end{align*}
with $[-,-]$ as in equation \eqref{eq:LieBracketauteq}.
The map $F$ is well defined since
$\d(-B+Q\LieD_XH)=-\LieD_XH+\LieD_XH=0$. It is clearly tame linear with a tame linear inverse. Also, it is an isomorphism of Lie algebras as the following calculation shows
\begin{align*}
    [F(X,B),F(Y,C)]=&[(-X,-B+Q\LieD_XH),(-Y,-C+Q\LieD_YH)]\\
    =&(\LieD_YX,-\LieD_Y(-B+Q\LieD_XH)+\LieD_X(-C+Q\LieD_YH)\\
    &-(\LieD_XQ\LieD_Y-\LieD_YQ\LieD_X)H+Q\LieD_{[X,Y]_V}H)\\
    =&(\LieD_YX,\LieD_YB-\LieD_XC+Q\LieD_{[X,Y]_V}H)\\
    =&F(\LieD_XY,\LieD_XC-\LieD_YB)\\
    =&F([(X,B),(Y,C)]').
\end{align*}
By Remark \ref{Rem:DirSumm&ProdAreTame} this additionally shows that $\gfrak'$ is, indeed, a tame Fréchet space, tamely isomorphic to $\VFs\times \ClForms{2}$.
\end{remark}

\begin{remark}
We had defined (semi-)regular Lie groups in Definition \ref{Def:RegularLieGr}.
In the following we will use that under certain circumstances the smooth maps from a compact manifold to a locally convex manifold can be equipped with the structure of a locally convex manifold, see \cite[C.3 Construction of the Manifold Structure]{schmeding_2022}. This is the structure we refer to throughout this remark.
Since $\Diff$ is a regular Lie group (cf.~Example \ref{Ex:DiffRegLieGr}), one might expect $\Auteq$ to be a regular Lie group. Indeed, this is true.
We use the diffeomorphism (of tame manifolds) $\Auteq\stackrel{\Phi}{\cong} \SDiff\times \ClForms{2}$ (cf.~Theorem \ref{Thm:AuteqTameMfd}), and the induced isomorphism (of tame vector spaces) $\auteq \cong \VFs\times \ClForms{2}$.
Now, let $\eta\in C^\infty([0,1],\VFs)$ and $\om\in C^\infty([0,1],\ClForms{2})$. By definition $\Auteq$ is semiregular if and only if there are curves $\gamma:[0,1]\to \SDiff$ and $\xi:[0,1]\to \ClForms{2}$ such that the equation
\begin{align}
\label{Eq:DiffEqAuteq}
    (\gamma',\xi')\coloneqq\frac{\d}{\d t}(\gamma,\xi)=DR^\Phi_{(\gamma,\xi)}(\Id,0)(\eta,\omega)
\end{align}
is satisfied.
We find that
\begin{align*}
    DR^\Phi_{(\gamma,\xi)}(\Id,0)(\eta,\om)=(\eta\circ \gamma,\gamma^*\om+[\LieD_\eta,Q](\gamma^* H-H)).  
\end{align*}
By the (semi-)regularity of $\Diff$ and hence the (semi-)regularity of $\SDiff$ (note that $\SDiff$ contains the (path-)connected component in $\Diff$ of $\Id$) we know that such a $\gamma$ exists. Actually, $\gamma$ is the flow $\mathrm{Fl}^\eta$ of the time-dependent vector field $\eta$.
Indeed the induced map $\mathrm{Fl}:C^\infty([0,1],\VFs)\to C^\infty([0,1],\SDiff)$ defined by $\eta\mapsto \mathrm{Fl}^\eta$ is well defined and smooth, see \cite[Example 3.36]{schmeding_2022}.
So, $$\xi(\eta,\om)(t)\coloneqq \int_0^t(\mathrm{Fl^\eta(s)}^*\om(s)+[\LieD_{\eta(s)},Q](\mathrm{Fl}^\eta(s)^* H-H))ds$$ is well defined. By the fundamental theorem of calculus we find that $(\mathrm{Fl}^\eta,\xi(\eta,\om))$ solves equation \eqref{Eq:DiffEqAuteq} so $\Auteq$ is semiregular.

The map 
\begin{align*}
    P:\SDiff\times \VFs \times \ClForms{2}&\longrightarrow \ClForms{2}\\
    (f,X,B)&\longmapsto f^*B+[\LieD_X,Q](f^*H-H)
\end{align*}
is clearly smooth (tame) and by \cite[Corollary 2.19, Proposition C.16, and p.~214, C.2]{schmeding_2022} it lifts to a smooth map
\begin{align*}
    \Tilde{P}:C^\infty([0,1],\SDiff\times \VFs \times \ClForms{2})\longrightarrow C^\infty([0,1],\ClForms{2}).
\end{align*}
By \cite[Corollary 2.19, Proposition C.16, and Definition 2.14]{schmeding_2022} we have that 
\begin{align*}
    &C^\infty([0,1],\SDiff\times \VFs \times \ClForms{2})\\
    &\cong C^\infty([0,1],\SDiff)\times C^\infty([0,1],\VFs) \times C^\infty([0,1],\ClForms{2})
\end{align*}
as locally convex manifolds.
So using this identification the map $$\hat{P}: C^\infty([0,1],\VFs\times\ClForms{2})\longrightarrow  C^\infty([0,1],\ClForms{2})$$ that sends $(\eta,\om)$ to $P(\mathrm{Fl}^\eta,\eta,\om)\in C^\infty([0,1],\Forms{2})$ is smooth. So we have that the map 
\begin{align*}
   C^\infty([0,1],\VFs\times\ClForms{2})\xrightarrow{\hat{P}}  C^\infty([0,1],\ClForms{2})\hookrightarrow C([0,1],\ClForms{2})\xrightarrow{\int_0^1}\ClForms{2}
\end{align*}
maps $(\eta,\om)$ to $\xi(\eta,\om)(1)$ and is smooth. In fact, note that in general for any locally convex space $F$ it is easy to see that $C^\infty([0,1],F)\hookrightarrow C([0,1],F)$ is a continuous linear map, compare \cite[C.15]{schmeding_2022} and note that $F$ is globally trivial. Moreover, we had also shown that since $\ClForms{2}$ is a Fréchet space and hence its own completion that the Riemann integral $\int_0^1:C([0,1],\ClForms{2})\to\ClForms{2}$ is a continuous linear map (cf.~Proposition \ref{Prop:RiemInt}). So we conclude that $\Auteq$ is regular.
\end{remark}

\subsection{The action of the group of autoequivalences on generalized almost complex structures}

An interesting application of the group of autoequivalences of $(\GenTM,H)$ appears in \cite[Theorem 5.4]{gualtieri} where Gualtieri uses the natural action of $\Auteq$ on the space of generalized (almost) complex structures of $(\GenTM,H)$.
In this section we recall the notion of generalized almost complex structures and equip the space of generalized almost complex structures of an exact Courant algebroid over a compact manifold with the structure of a zero-tame manifold. We show that the associated autoequivalences act smooth tamely on the space of generalized almost complex structures.

\begin{definition}
\begin{enumerate}
    \item Let $V$ be a finite-dimensional vector space and $\scal{-}{-}$ a scalar product on $V$. We call an automorphism $J:V\to V$ a \textit{linear generalized almost complex structure} if $J^2=-\mathrm{Id}_V$ and $J$ is orthogonal with respect to $\scal{-}{-}$.
    \item Let $M$ be a manifold, $E\to M$ be a vector bundle, and $\scal{-}{-}$ be a bundle metric on $E$. We say an element $J\in \Gamma(\mathrm{End}(E))$ is a generalized almost complex structure if $J^2=-\mathrm{Id}_E$, and $J$ is orthogonal with respect to $\scal{-}{-}$.
    \item We denote by $\Jcal(E,\scal{-}{-})\subset \mathrm{End}(E)$ the set of fiber-wise linear generalized almost complex structures on $E$ equipped with the scalar product $\scal{-}{-}$. Moreover, we write $\Jcal_M\coloneqq \Jcal(\GenTM,\scal{-}{-}_{\mathrm{can}})$ (cf.~Definition \ref{Def:GenTanBund}).
\end{enumerate}
\end{definition}

The following lemma is partially based on \cite[p.~43]{gualtieri}, \cite[Corollary 3.8.10]{gauge}, and \cite[Chapter II, Section 2.13, p.~65]{montgomery2018topological}.
\begin{lemma}
\label{Lemma:LinGACSSubmanifold}
Let $\scal{-}{-}$ denote the standard scalar product of signature $(n,n)$, $n\in\N$, on $\R^{2n}$. Then the set of linear complex structures $\Jcal_{n}\coloneqq \{A\in O(n,n)|~A^2=-\mathrm{Id}\}$ forms a non-empty embedded submanifold of $\mathrm{End}(\R^{2n})$ if and only if $n=2k$ for $k\in \N\setminus \{0\}$.
\end{lemma}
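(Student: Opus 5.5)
The plan is to treat nonemptiness and the submanifold property separately, both through the Hermitian form attached to a linear generalized almost complex structure.

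\emph{Nonemptiness forces $n$ even.} Let $A\in\Jcal_n$. Then $A$ makes $\R^{2n}$ into a complex vector space $\mathbb C^n$ via $i\cdot v\coloneqq Av$. Since $A$ is orthogonal and $A^2=-\mathrm{Id}$, we have $\scal{Av}{w}=-\scal{v}{Aw}$. Set
\begin{align*}
h(v,w)\coloneqq \scal{v}{w}+i\scal{v}{Aw}.
\end{align*}
A direct check shows that $h$ is a nondegenerate Hermitian form for this complex structure, with $\mathrm{Re}\,h=\scal{-}{-}$. If $h$ has complex signature $(p,q)$, then its real part has signature $(2p,2q)$. Comparing with $(n,n)$ gives $2p=2q=n$, so $n=2p$ is even.

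\emph{Existence for $n=2k$.} Identify $\R^{2n}$ with $\mathbb C^{2k}$ carrying the standard Hermitian form of signature $(k,k)$. Its real part has signature $(2k,2k)=(n,n)$, so after a real linear isometry it is the standard form on $\R^{2n}$. Multiplication by $i$ then gives an element of $\Jcal_n$. The degenerate case $n=0$ should be set aside by convention, as the statement does.

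\emph{Submanifold for $n=2k$.} I would show that $\Jcal_n$ is a single orbit of the conjugation action of $O(n,n)$ on $\mathrm{End}(\R^{2n})$.
\begin{itemize}
\item Transitivity: given $A,A'\in\Jcal_n$, both Hermitian forms $h_A$ and $h_{A'}$ have signature $(k,k)$ by the first step. Choose $h_A$- and $h_{A'}$-pseudo-unitary bases and let $g$ be the complex-linear map sending one to the other. Then $g$ intertwines $A$ and $A'$, so $gAg^{-1}=A'$, and $g$ preserves $\mathrm{Re}\,h=\scal{-}{-}$, so $g\in O(n,n)$.
\item The stabilizer of $A$ is the closed subgroup $U(k,k)$, so the orbit map induces an injective immersion $O(n,n)/U(k,k)\to\mathrm{End}(\R^{2n})$ with image $\Jcal_n$.
\end{itemize}

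The main obstacle is upgrading this injective immersion to an embedding, i.e.\ showing that $O(n,n)/U(k,k)\to\Jcal_n$ is a homeomorphism onto $\Jcal_n$ with the subspace topology. For this I would use that $\Jcal_n$ is cut out by polynomial equations and is therefore closed, hence locally compact Hausdorff. By the result in \cite[Chapter II, Section 2.13]{montgomery2018topological}, a transitive continuous action of a second countable locally compact group on a locally compact Hausdorff space induces a homeomorphism $G/G_A\to\Jcal_n$; this is a Baire category argument. An injective immersion that is a homeomorphism onto its image is an embedding, which finishes the proof.

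If that reference proves awkward to apply, the fallback is a direct slice argument. Near a fixed $A$, the map $X\mapsto e^{X}Ae^{-X}$, restricted to a complement of $\mathfrak{u}(k,k)$ in $\mathfrak{o}(n,n)$, together with the tangent-space description $\{X:\ XA+AX=0,\ X \text{ skew-adjoint}\}$, should give local graph charts.
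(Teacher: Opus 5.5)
Your proposal is correct and follows essentially the same route as the paper. The paper also realizes $\Jcal_n$ as the $O(n,n)$-conjugation orbit of a standard $J_S$ with stabilizer $U(k,k)$, and it upgrades the injective immersion $O(n,n)/U(k,k)\to\mathrm{End}(\R^{2n})$ to an embedding exactly as you do, via closedness (hence local compactness) of $\Jcal_n$ and the Montgomery--Zippin result. The only cosmetic difference is that, for parity and transitivity, you use the Hermitian form $h$ and pseudo-unitary bases, while the paper directly builds an adapted orthonormal basis $\{e_1,\ldots,e_n,Je_1,\ldots,Je_n\}$, which is just the realification of your pseudo-unitary basis.
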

\begin{proof}
It is trivial that $\Jcal_0= \emptyset$, so we consider $n>0$.
We first note that if $J\in \Jcal_n$, $n>0$, then we can always construct an orthonormal basis of the form $\{e_1,...,e_n,Je_1,...,Je_n\}$. To do this, start with a vector $e_1\in \R^{2n}$ such that $\abs{\scal{e_1}{e_1}}=1$ then $\scal{Je_1}{e_1}=-\scal{e_1}{Je_1}=0$. So we have that $e_1$ and $Je_1$ are linearly independent and $\abs{\scal{Je_1}{Je_1}}=\abs{\scal{e_1}{e_1}}=1$. Repeat the process on $(\mathrm{span}(e_1,Je_1))^\perp$ until a full basis is gathered. Moreover, by orthogonality elements of $\Jcal_n$ preserve positive definite subspaces, and negative definite subspaces with respect to $\scal{-}{-}$ and, hence, $\Jcal_n$ is nonempty if and only if $n=2k>0$ is even.
Note, that if $n=2k>0$, $k\in \N$, then the standard linear generalized complex structure
\begin{align*}
    J_S\coloneqq
    \begin{pmatrix}
        0&0 & 0 &-\mathds{1}_{k\times k}\\
        0&0 & \mathds{1}_{k\times k} &0 \\
        0 &-\mathds{1}_{k\times k} & 0&0 \\
        \mathds{1}_{k\times k} &0 &0 &0 
    \end{pmatrix}
\end{align*}
exists.

So, let us now assume $n=2k$ with $k\in \N\setminus \{0\}$.
Clearly, the action
\begin{align*}
    c:O(n,n)\times \mathrm{End}(\R^{2n})&\longrightarrow \mathrm{End}(\R^{2n})\\
    (A,F)&\longmapsto (A\circ F\circ A^{-1})
\end{align*}
is smooth. The orbit of $J_S$ under the action $c$ is precisely $\Jcal_{n}$. In fact, it is clear that $c(A,J_S)\in \Jcal_n$ for every $A\in O(n,n)$. By the existence of a standard orthonormal basis for every $J\in \Jcal_n$, as mentioned in the beginning of the proof, we get that there is an $A\in O(n,n)$ such that $J=c(A,J_S)$. Moreover, it is easy to see that the stabilizer of $J_S$ under $c$ is precisely $U(k,k)$. Indeed, this is one way to define $U(k,k)$. The induced map 
\begin{equation*}
    i:\frac{O(n,n)}{U(k,k)}\longrightarrow \mathrm{End}(\R^{2n})
\end{equation*}
is a smooth injective immersion, see \cite[Corollary 3.8.10]{gauge}. Moreover, $\Jcal_n\subset \mathrm{End}(\R^{2n})$ is the preimage of $-\mathrm{Id}$ under the smooth map $\mathrm{End}(\R^{2n})\ni F\mapsto F^2\in \mathrm{End}(\R^{2n})$ intersected with the closed submanifold $O(n,n)$ and is, hence, a closed subset. So, $\Jcal_n\subset \mathrm{End}(\R^{2n})$ is locally compact.
Since $O(n,n)$ acts transitively on the locally compact Hausdorff space $\Jcal_n\subset \mathrm{End}(\R^{2n})$ the map $i$ is a topological embedding, see \cite[Chapter II, Section 2.13, p.~65]{montgomery2018topological}. Since the map $i$ is an (injective) immersion and a topological embedding, it is a smooth embedding proving the claim. 
\end{proof}

\begin{lemma}
\label{Lemma:GACS-Subbundle}
Let $M$ be a manifold, $E\to M$ be a vector bundle, and $\scal{-}{-}$ be a bundle metric of signature $(2n,2n)$ with $n\in \N\setminus \{0\}$. So, in particular, the rank of $E$ is $\mathrm{rk}(E)=4n$. Then, the set $\Jcal(E,\scal{-}{-})$ is a fiber bundle with the bundle structure being induced by that of $\mathrm{End}(E)$, that is, there is a vector bundle atlas on $\mathrm{End}(E)$ such that it becomes a fiber bundle atlas of $\Jcal(E,\scal{-}{-})$ if restricted to $\Jcal(E,\scal{-}{-})$.  
\end{lemma}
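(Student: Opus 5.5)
The plan is to reduce the statement fiberwise to Lemma \ref{Lemma:LinGACSSubmanifold} using orthonormal frames. First I would show that $E$ admits, around every point $p\in M$, a local frame $(e_1,\ldots,e_{4n})$ on some open neighbourhood $U$ of $p$ that is orthonormal for $\scal{-}{-}$ in the sense that the matrix $(\scal{e_i}{e_j})$ is the standard matrix of signature $(2n,2n)$. To get it, I would start from an arbitrary local frame and apply the Gram--Schmidt procedure for indefinite forms. Pointwise I would first choose $e_1(p)$ with $\scal{e_1(p)}{e_1(p)}=\pm1$ and then extend it to a local section. Shrinking $U$, $\scal{e_1}{e_1}$ keeps its sign, so I can normalise $e_1$ by $|\scal{e_1}{e_1}|^{-1/2}$, which is smooth. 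The remaining frame elements I would project onto $e_1^\perp$ via $v\mapsto v\mp\scal{v}{e_1}e_1$, which preserves smoothness and linear independence, and then iterate. Nondegeneracy guarantees that at each step the orthogonal complement is again nondegenerate and contains a non-null vector, and Sylvester's law of inertia fixes the signature at $(2n,2n)$. After reordering, the frame yields a vector bundle chart $\phi_U:E|_U\to U\times\R^{4n}$ that carries $\scal{-}{-}$ to the standard form.

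Each such chart induces a chart $\mathrm{End}(E)|_U\cong U\times \mathrm{End}(\R^{4n})$ via $F\mapsto \phi_U\circ F\circ\phi_U^{-1}$ fiberwise. Under it, $\Jcal(E,\scal{-}{-})|_U$ is mapped exactly onto $U\times \Jcal_{2n}$, because orthogonality and $J^2=-\mathrm{Id}$ are conditions in the fiber that are transported by an isometric identification. Two such charts differ on overlaps by a smooth map $g_{UV}:U\cap V\to O(2n,2n)$, so the transition functions of $\mathrm{End}(E)$ take the form $(x,F)\mapsto(x,g_{UV}(x)Fg_{UV}(x)^{-1})$. These preserve $U\times \Jcal_{2n}$ and restrict to smooth maps there, since $\Jcal_{2n}$ is an embedded submanifold of $\mathrm{End}(\R^{4n})$ by Lemma \ref{Lemma:LinGACSSubmanifold}, which applies with $2n$ even and positive. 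Hence $\Jcal(E,\scal{-}{-})$ is an embedded submanifold of $\mathrm{End}(E)$, locally given as $U\times\Jcal_{2n}$. The projection is the restriction of the bundle projection, so these restricted charts form a fiber bundle atlas with typical fiber $\Jcal_{2n}$, which is the claim.

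I expect the main obstacle to be the construction of smooth local orthonormal frames in the indefinite setting, in particular ensuring that no normalisation factor vanishes along the way. I would handle this pointwise-then-extend, shrinking neighbourhoods finitely many times. This is harmless because only $4n$ steps are needed.
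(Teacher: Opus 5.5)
Your proposal is correct and follows essentially the paper's proof: trivialize $\mathrm{End}(E)$ using local orthonormal frames of $E$, observe that $\Jcal(E,\scal{-}{-})$ corresponds to $U\times \Jcal_{2n}$ in these charts, and invoke Lemma \ref{Lemma:LinGACSSubmanifold}. You additionally spell out the indefinite Gram--Schmidt construction of the frames and the $O(2n,2n)$-conjugation form of the transition functions, both of which the paper takes for granted.
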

\begin{proof}
Let $\mathrm{End}(E)\xrightarrow{\pi}M$ be the projection of the vector bundle. Around each $x\in M$ we can find an open subset $U\subseteq M$ such that there is a local orthonormal frame of $E$ with respect to $\scal{-}{-}$ over $U$. Let the induced trivialization be denoted by
$\Psi:\pi^{-1}(U)\mapsto U\times \mathrm{End}(\R^{4n})$. Then $\Psi(\pi^{-1}(U)\cap \Jcal(E,\scal{-}{-}))=U\times \Jcal_{2n}$, where $\Jcal_{2n}$ is as in Lemma \ref{Lemma:LinGACSSubmanifold}, which we had shown to be an embedded submanifold of $\mathrm{End}(\R^{4n})$. So we see that $\Jcal(E,\scal{-}{-})$ is a submanifold of $\mathrm{End}(E)$ and in fact a fiber bundle with typical fiber $\Jcal_{2n}$ with the fiber bundle structure induced by that of $\mathrm{End}(E)$. 
\end{proof}

The following proposition is a specific application of \cite[Lemma 3.2.11]{RWang} where Wang shows that if $M$ is a compact manifold, $p:B\to M$ is a surjective submersion, and $A\subseteq B$ is a subbundle, that is, $A$ is a submanifold of $B$ such that $\left.p\right|_A:A\to M$ is a surjective submersion, then $\Gamma(A)\subseteq \Gamma(B)$ is a zero-tame submanifold.
\begin{proposition}
\label{Proposition:GACSZeroTameSubmfd}
Let $M$ be a compact manifold, $E\to M$ be a vector bundle, and $\scal{-}{-}$ be a bundle metric on $E$ of signature $(2n,2n)$, $n\in \N\setminus\{0\}$. Then the set of generalized almost complex structures $\Gamma(\Jcal(E,\scal{-}{-}))$ is a zero-tame submanifold of $\Gamma(\mathrm{End}(E))$.
\end{proposition}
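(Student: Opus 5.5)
The plan is to reduce to Wang's result \cite[Lemma 3.2.11]{RWang}, quoted just before the statement: for a compact base $M$, a surjective submersion $p:B\to M$ and a subbundle $A\subseteq B$ (a submanifold such that $\left.p\right|_A$ is a surjective submersion), $\Gamma(A)\subseteq\Gamma(B)$ is a zero-tame submanifold. Take $B=\mathrm{End}(E)$ with its vector bundle projection and $A=\Jcal(E,\scal{-}{-})$. Since $\Gamma(\mathrm{End}(E))$ is a tame Fréchet space (Theorem \ref{VBsectTameFre}) and its zero-tame manifold structure as sections of a surjective submersion (Theorem \ref{FiberBundleSection}) agrees with the linear one, the conclusion about $\Gamma(\mathrm{End}(E))$ follows.

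The main work is checking the hypotheses on $A$. By Lemma \ref{Lemma:GACS-Subbundle}, around each $x\in M$ there is a local orthonormal frame of $E$ over some $U\ni x$, and the induced trivialization $\Psi:\pi^{-1}(U)\to U\times\mathrm{End}(\R^{4n})$ sends $\pi^{-1}(U)\cap\Jcal(E,\scal{-}{-})$ onto $U\times\Jcal_{2n}$. By Lemma \ref{Lemma:LinGACSSubmanifold}, $\Jcal_{2n}$ is a nonempty embedded submanifold of $\mathrm{End}(\R^{4n})$, since the signature is $(2n,2n)$ with $2n$ even and $n>0$. Hence $U\times\Jcal_{2n}$ is an embedded submanifold of $U\times\mathrm{End}(\R^{4n})$, and these local pieces glue to make $A$ an embedded submanifold of $\mathrm{End}(E)$. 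In these charts the restricted projection is the projection $U\times\Jcal_{2n}\to U$. This map is a submersion, and it is surjective because $\Jcal_{2n}\neq\emptyset$. So $A$ is a subbundle in Wang's sense.

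Applying \cite[Lemma 3.2.11]{RWang} then gives the claim. If one wants the argument more explicitly, the step I expect to need the most care is producing submanifold charts for $\Gamma(A)$ in $\Gamma(B)$. Given $J\in\Gamma(A)$, I would choose a tubular neighbourhood of $A$ in $B$ that is fiber-preserving: a fiberwise diffeomorphism from a neighbourhood of $J(M)$ in $\mathrm{End}(E)$ onto a neighbourhood of the zero section in $\left.T^{\mathrm{vert}}A\right|_{J(M)}\oplus N$. Here $N$ is a complement of $T^{\mathrm{vert}}A$ in $\left.T^{\mathrm{vert}}B\right|_{J(M)}$, chosen with a fiber metric and built by the construction behind \cite[Lemma 3.2.1]{RWang} applied fiberwise over $M$.

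Under such a diffeomorphism, $A$ corresponds to the zero section of $N$. The induced chart on sections is a bundle operator and hence zero-tame (Proposition \ref{Prop:FiberBundleOperators}). It maps $\Gamma(A)$ onto $\Gamma(T^{\mathrm{vert}}A|_{J(M)})\times\{0\}$ inside $\Gamma(T^{\mathrm{vert}}A|_{J(M)})\times\Gamma(N)$, using Proposition \ref{Prop:EplusFsect}, which is exactly a zero-tame submanifold chart. Making this tubular neighbourhood fiber-preserving and smooth in $x$ is the expected obstacle. I would handle it by working in the local orthonormal-frame trivializations, where $A$ is the product $U\times\Jcal_{2n}$, so a tubular neighbourhood of $\Jcal_{2n}$ in $\mathrm{End}(\R^{4n})$ (for example via the $O(2n,2n)$-equivariant structure) can be used uniformly in $x$.
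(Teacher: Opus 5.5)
Your proposal is correct and matches the paper's proof. The paper also notes via Theorem \ref{FiberBundleSection} that $\Gamma(\Jcal(E,\scal{-}{-}))$ is a zero-tame manifold, uses Lemma \ref{Lemma:GACS-Subbundle} (which rests on Lemma \ref{Lemma:LinGACSSubmanifold}) to see that $\Jcal(E,\scal{-}{-})\subseteq\mathrm{End}(E)$ is a subbundle in Wang's sense, and then concludes directly from \cite[Lemma 3.2.11]{RWang}; your optional tubular-neighbourhood sketch only unpacks what that lemma provides and is not needed.
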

\begin{proof}
By Theorem \ref{FiberBundleSection} it is clear that $\Gamma(\Jcal(E,\scal{-}{-}))$ is a zero-tame Fréchet manifold.
The claim then immediately follows from Lemma \ref{Lemma:GACS-Subbundle} and \cite[Lemma 3.2.11]{RWang}.
\end{proof}

\begin{lemma}
\label{Lemma:IsoClassesOfGACS}
Let $M$ be a compact manifold, let $E_i\to M$ be vector bundles, and let $\scal{-}{-}_i$ be a nondegenerate bundle metric on $E_i$, $i=1,2$ of signature $(2n,2n)$, $n\in \N\setminus\{0\}$. Let $\psi:E_1\to E_2$ be a vector bundle isomorphism with $\psi^*\scal{-}{-}_2=\scal{-}{-}_1$. Then the induced map 
\begin{align*}
    \Psi:\Gamma(\Jcal(E_1,\scal{-}{-}_1))&\longrightarrow \Gamma(\Jcal(E_2,\scal{-}{-}_2))\\
    J&\longmapsto \psi\circ J\circ \psi^{-1}
\end{align*}
is a zero-tame diffeomorphism. 
\end{lemma}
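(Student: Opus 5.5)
The plan is to realise $\Psi$ as a bundle operator in the sense of Convention \ref{Conv:FiberBundleOperators} and then apply Proposition \ref{Prop:FiberBundleOperators}. Since $\psi$ covers $\Id_M$, it induces a fiber-preserving map
\begin{align*}
    \hat\psi:\mathrm{End}(E_1)&\longrightarrow \mathrm{End}(E_2)\\
    A\in \mathrm{End}((E_1)_x)&\longmapsto \psi_x\circ A\circ \psi_x^{-1}\in \mathrm{End}((E_2)_x).
\end{align*}
This is a smooth vector bundle isomorphism over $\Id_M$; in local frames it is conjugation by the smooth matrix-valued function representing $\psi$. Its inverse is induced in the same way by $\psi^{-1}$.

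First I check that $\hat\psi$ maps $\Jcal(E_1,\scal{-}{-}_1)$ bijectively onto $\Jcal(E_2,\scal{-}{-}_2)$. The condition $(\psi J\psi^{-1})^2=-\Id$ is immediate. For orthogonality I use $\psi^*\scal{-}{-}_2=\scal{-}{-}_1$:
\[
\scal{\psi J\psi^{-1}u}{\psi J\psi^{-1}v}_2=\scal{J\psi^{-1}u}{J\psi^{-1}v}_1=\scal{u}{v}_2 .
\]
The same argument applied to $\psi^{-1}$ gives the reverse inclusion. By Lemma \ref{Lemma:GACS-Subbundle}, both $\Jcal(E_i,\scal{-}{-}_i)$ are embedded submanifolds of $\mathrm{End}(E_i)$ and fiber bundles, so they are surjective submersions over $M$. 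The restriction $p\coloneqq \left.\hat\psi\right|_{\Jcal(E_1,\scal{-}{-}_1)}$ is therefore a smooth fiber-preserving map between submersions, because restricting a smooth map to an embedded submanifold, with image in an embedded submanifold, is smooth.

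Next I apply Proposition \ref{Prop:FiberBundleOperators} with $A=U=\Jcal(E_1,\scal{-}{-}_1)$ and $B=\Jcal(E_2,\scal{-}{-}_2)$. This shows that $\Psi(J)=p\circ J$ is zero-tame. The same reasoning with $\psi^{-1}$ shows that $\Psi^{-1}$ is zero-tame, so $\Psi$ is a zero-tame diffeomorphism.

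The main point to watch is that the zero-tame structure on $\Gamma(\Jcal(E_i,\scal{-}{-}_i))$ used here is the intrinsic one from Theorem \ref{FiberBundleSection}. It has to agree with the submanifold structure of Proposition \ref{Proposition:GACSZeroTameSubmfd}, which Wang's lemma provides. Apart from this compatibility, the argument is routine.
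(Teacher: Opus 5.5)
Your proposal is correct and follows essentially the same route as the paper. You verify that conjugation by $\psi$ maps $\Jcal(E_1,\scal{-}{-}_1)$ into $\Jcal(E_2,\scal{-}{-}_2)$ via the same orthogonality computation. You then recognise $\Psi$ as a bundle operator, which is zero-tame by Proposition \ref{Prop:FiberBundleOperators}, and treat $\Psi^{-1}$ the same way using $\psi^{-1}$. Your remark on compatibility of the zero-tame structures is a harmless extra, since Proposition \ref{Prop:FiberBundleOperators} already works with the intrinsic structure from Theorem \ref{FiberBundleSection}.
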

\begin{proof}
Let $J\in \Gamma(\Jcal(E_1,\scal{-}{-}_1)$. Then we clearly have $\Psi(J)^2=-\mathrm{Id}_{E_2}$. Moreover, we have $\scal{\psi\circ J\circ\psi^{-1}u}{\psi\circ J\circ\psi^{-1}v}_2=\scal{J\circ\psi^{-1}u}{ J\circ\psi^{-1}v}_1=\scal{\psi^{-1}u}{\psi^{-1}v}_1=\scal{u}{v}_2$ and hence $\Psi(J)\in \Gamma(\Jcal(E_2,\scal{-}{-}_2))$.

The map $\Jcal(E_1,\scal{-}{-}_1)\ni J\mapsto \psi_{\pi(J)}\circ J\circ \psi^{-1}_{\pi(J)}\in \Jcal(E_2,\scal{-}{-}_2)$ is a smooth fiber-preserving map. Here, $\pi$ denotes the bundle projection map of $\Jcal(E_1,\scal{-}{-}_1)\xrightarrow{\pi} M$. So, $\Psi$ is a bundle operator (cf.~Convention \ref{Conv:FiberBundleOperators}) and hence zero-tame (cf.~Proposition \ref{Prop:FiberBundleOperators}). It is clear that $\Psi^{-1}$ is induced by conjugation with $\psi^{-1}$ and is, by the same arguments, also zero-tame.
\end{proof}

\begin{lemma}
Let $M$ be a compact manifold. Then the map 
\begin{align*}
    c_\Omega:\Forms{2}\times \Gamma(\mathrm{End}(\GenTM))&\longrightarrow \Gamma(\mathrm{End}(\GenTM))\\
    (B,F)&\longmapsto e^B\circ F\circ e^{-B} 
\end{align*}
and the map
\begin{align*}
    c_D:\Diff\times \Gamma(\mathrm{End}(\GenTM))&\longrightarrow \Gamma(\mathrm{End}(\GenTM))\\
    (f,F)&\longmapsto f_*\circ F\circ f_*^{-1}
\end{align*}
are smooth tame.
\end{lemma}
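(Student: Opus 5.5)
For $c_\Omega$ the plan is to show it is a nonlinear vector bundle operator. Pointwise, $e^B$ acts on $\GenTM_p$ by the matrix $\begin{pmatrix}1&0\\ B_p&1\end{pmatrix}$, with $B_p$ viewed as a map $T_pM\to T_p^*M$, and $e^{-B}$ is its inverse. Hence $(B,F)\mapsto e^B\circ F\circ e^{-B}$ is induced by the smooth fiber-preserving map
\[
\textstyle\bigwedge^2T^*M\oplus\mathrm{End}(\GenTM)\to\mathrm{End}(\GenTM),\qquad (B_p,F_p)\mapsto e^{B_p}F_pe^{-B_p},
\]
which is polynomial in the fiber variables. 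Using the zero-tame isomorphism $\Gamma(\bigwedge^2T^*M\oplus\mathrm{End}(\GenTM))\cong\Forms{2}\times\Gamma(\mathrm{End}(\GenTM))$ (Proposition \ref{Prop:EplusFsect}), Lemma \ref{VBopSmoothTame} then shows that $c_\Omega$ is zero-tame, and in particular smooth tame.

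For $c_D$ the idea is to reduce to the pullback result, Theorem \ref{PullbackSmoothTame}. First decompose $\mathrm{End}(\GenTM)=\mathrm{End}(TM\oplus T^*M)$ into four blocks:
\[
TM\otimes T^*M,\quad TM\otimes TM,\quad T^*M\otimes T^*M,\quad T^*M\otimes TM.
\]
This is a zero-tame isomorphism $\Gamma(\mathrm{End}(\GenTM))\cong \tensf{1}{1}\times\tensf{2}{0}\times\tensf{0}{2}\times\tensf{1}{1}$ by Proposition \ref{Prop:EplusFsect}, after identifying $\mathrm{Hom}(V,W)\cong W\otimes V^*$. Next, $f_*$ acts on $TM$ by $\d f\circ X\circ f^{-1}$ and on $T^*M$ by $\xi\circ(\d f)^{-1}$, transported to the base point $f(p)$. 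A direct check then shows that the block of $f_*\circ F\circ f_*^{-1}$ corresponding to a block $F_{ij}$ is the tensor pushforward $f_*F_{ij}=(f^{-1})^*F_{ij}$ of that block. For example, the $TM\to TM$ block gives $\d f\circ A\circ \d f^{-1}$, which is $f_*A$ for $A\in\tensf{1}{1}$. The key step is to verify this identification carefully, paying attention to how the covector part transforms.

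Once the blocks are identified, $c_D$ is the composite of the inversion $V$ on $\Diff$ (Proposition \ref{Proposition:DiffLieGrAndLieAlg}), the smooth tame pullback $P$ of Theorem \ref{PullbackSmoothTame} applied to each block, and the zero-tame block isomorphisms. It is therefore smooth tame, as in Remark \ref{Rem:PushForwDifferential}.

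The main obstacle is the index and sign bookkeeping in the second step. In particular, I need to make sure that conjugating by $f_*$ corresponds exactly to $(f^{-1})^*$ on each mixed-variance block, and that no extra $\d f$ factors appear that are not accounted for by the tensor pullback. Since Theorem \ref{PullbackSmoothTame} covers arbitrary $(n,m)$-tensors, this should work uniformly for all four blocks once the identification is right.
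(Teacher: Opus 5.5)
Your proposal is correct and follows essentially the same route as the paper. The paper also treats $c_\Omega$ as a nonlinear vector bundle operator via Proposition \ref{Prop:EplusFsect} and Lemma \ref{VBopSmoothTame}. For $c_D$ it likewise splits $\Gamma(\mathrm{End}(\GenTM))$ into four tensor-field blocks, on which conjugation by $f_*$ acts as the tensor pushforward; this is smooth tame by Theorem \ref{PullbackSmoothTame} together with Remark \ref{Rem:PushForwDifferential}. The block check you flag as the main obstacle is exactly the ``short calculation'' the paper leaves implicit. It goes through because $f_*$ is block-diagonal, and reordering $T^*M\otimes TM$ to $TM\otimes T^*M$ is a bundle isomorphism that commutes with pushforward.
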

\begin{proof}
The fact that the map $c_\Omega$ is smooth tame follows directly from it being a nonlinear vector bundle operator up to the zero-tame identification 
$\Forms{2}\times \Gamma(\mathrm{End}(\GenTM))\cong \Gamma((\bigwedge^2T^*M)\oplus\mathrm{End}(\GenTM))$ (cf.~Proposition \ref{Prop:EplusFsect}).

Let us now consider $c_D$. We note that $\mathrm{End}(\GenTM)\cong (TM\oplus T^*M)\tens (TM\oplus T^*M)^*$. Again by Proposition \ref{Prop:EplusFsect}, this induces a zero-tame isomorphism
$\Gamma(\mathrm{End}(\GenTM))\cong \tensf{1}{1}\times \tensf{2}{0}\times \tensf{0}{2}\times \tensf{1}{1}$ with $\tensf{n}{m}$ as in Convention \ref{Conv:tensfields}. So, it suffices to check how $c_D$ acts on these components. Let $J\in \Gamma(\mathrm{End}(E))$ correspond to
\begin{align*}
    \begin{pmatrix}
        J_1 & J_2\\
        J_3 & J_4
\end{pmatrix}
\end{align*}
with $J_1\in \tensf{1}{1}$, $J_2\in \tensf{2}{0}$, $J_3\in \tensf{0}{2}$, and $J_4\in \tensf{1}{1}$ then a short calculation shows that $c_D(f,J)$ corresponds simply to
\begin{align*}
    \begin{pmatrix}
        f_*J_1 & f_*J_2\\
        f_*J_3 & f_*J_4
\end{pmatrix}
\end{align*}
for every $f\in \Diff$. Hence, the map $c_D$ is smooth tame as the push forward of tensor fields along diffeomorphisms is a smooth tame map (cf.~Remark \ref{Rem:PushForwDifferential}).
\end{proof}

\begin{theorem}
\label{Thm:ActionSmTame}
Let $M$ be a compact manifold and $H\in \ClForms{3}$.
Then, the canonical left group action 
\begin{align*}
    c:\Auteq\times \Gamma(\Jcal_M)&\longrightarrow \Gamma(\Jcal_M)\\
    (F,J)&\longmapsto F\circ J\circ F^{-1}
\end{align*}
is a smooth tame map.
\end{theorem}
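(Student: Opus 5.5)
The plan is to reduce the action to the two conjugation maps $c_\Omega$ and $c_D$ from the preceding lemma, working in the global trivialization $\Phi$ of Theorem \ref{Thm:AuteqTameMfd}, and then to restrict to the submanifold $\Gamma(\Jcal_M)$.

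In the trivialization $\Phi$ (Convention \ref{Phi,phi}) an element $(f,b)\in\SDiff\times\ClForms{2}$ corresponds to $F=f_*e^{b+\phi(f)}$. For $J\in\Gamma(\Jcal_M)$ this gives
\begin{align*}
    F\circ J\circ F^{-1}=f_*\circ e^{b+\phi(f)}\circ J\circ e^{-(b+\phi(f))}\circ f_*^{-1}
    =c_D\bigl(f,c_\Omega(b+\phi(f),J)\bigr),
\end{align*}
where, as in the definition of autoequivalences, the conjugated section is understood fiberwise as $F\circ J\circ F^{-1}$ covering $\Id$. Accordingly, I would first write $c\circ(\Phi\times\mathrm{Id})$ as a composite of smooth tame maps. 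The map $(f,b)\mapsto b+\phi(f)\in\Forms{2}$ is smooth tame, since $\phi$ is a global gauge (Lemma \ref{Lemma:PhiGlobGauge}) and the inclusion $\ClForms{2}\hookrightarrow\Forms{2}$ is tame linear. Composing with $c_\Omega$ and then with $c_D$, restricted to the open subset $\SDiff\subseteq\Diff$, is smooth tame by the previous lemma. Hence the extended map $\Auteq\times\Gamma(\mathrm{End}(\GenTM))\to\Gamma(\mathrm{End}(\GenTM))$ is smooth tame, because $\Phi$ is a smooth tame diffeomorphism.

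Next I would restrict to $\Gamma(\Jcal_M)$. By Proposition \ref{Proposition:GACSZeroTameSubmfd}, $\Gamma(\Jcal_M)$ is a zero-tame, hence tame, submanifold of $\Gamma(\mathrm{End}(\GenTM))$. Here the signature is $(n,n)$ with $n=\dim M$; if $n$ is odd the space is empty and the statement is trivial. The restriction of a smooth tame map to a submanifold is smooth tame, since in a submanifold chart it is the composition with the tame linear inclusion $W\times\{0\}\hookrightarrow W\times V$.

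It then remains to show that the image lands in $\Gamma(\Jcal_M)$, and that a smooth tame map into the ambient space with values in the submanifold is smooth tame into the submanifold. The first point follows because $e^B$ and $f_*$ are orthogonal with respect to $\scal{-}{-}_{\mathrm{can}}$ pointwise. Thus the argument of Lemma \ref{Lemma:IsoClassesOfGACS} gives $(FJF^{-1})^2=-\mathrm{Id}$ together with orthogonality. For the second point, I would compose with a submanifold chart $\varphi:\Ucal\to W\times V$ and then project onto $W$; this projection is tame linear. Continuity guarantees the image of a small neighbourhood lies in $\Ucal$.

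I expect this last point, corestriction to the submanifold, to be the main technical obstacle, since the paper has no general lemma for it. The main work is checking that Wang's submanifold charts from \cite[Lemma 3.2.11]{RWang} are genuine charts of the ambient space $\Gamma(\mathrm{End}(\GenTM))$, as the definition of $\Scal$-submanifold guarantees.
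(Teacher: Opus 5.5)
Your proposal is correct and follows the paper's proof essentially step for step. The paper also writes $c\circ(\Phi\times\mathrm{Id})$ as $(f,b,J)\mapsto c_D\bigl(f,c_\Omega(b+\phi(f),i_{\Jcal_M}(J))\bigr)$, checks that $c(F,J)^2=-\mathrm{Id}$ and that $c(F,J)$ is orthogonal, and then corestricts to the submanifold $\Gamma(\Jcal_M)$. Your projection-in-a-submanifold-chart argument for that corestriction, and your observation that $\Gamma(\Jcal_M)=\emptyset$ when $\dim M$ is odd, spell out details the paper asserts without comment.
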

\begin{proof}
For $F\in \Auteq$ and $J\in \Gamma(\Jcal_M)$ it is clear that $c(F,J)^2=-\mathrm{Id}$. Moreover, since $F$ and $J$ preserve $\scal{-}{-}$ we get that $c(F,J)=F\circ J\circ F^{-1}$ is orthogonal with respect to $\scal{-}{-}$ as well. So the map $c$ is well defined. 
Let $f\in \SDiff$, $B\in \ClForms{2}$, and $\Phi$ as in Convention \ref{Phi,phi}. Then, we have
\begin{align*}
    c^\Phi(f,B,J)&\coloneqq c(\Phi(f,B),J))=f_*\circ e^{B+Q(f^*H-H)}\circ J\circ e^{-B-Q(f^*H-H)}\circ f_*^{-1}\\
    &=c_D(f,c_\Omega(B+\phi(f),i_{\Jcal_M}(J)))
\end{align*}
where $c_D$ and $c_\Omega$ are defined as in the previous lemma and $i_{\Jcal_M}$ is the (zero-)tame inclusion of the (zero-)tame submanifold $\Gamma(\Jcal_M)\subset \Gamma(\mathrm{End}(\GenTM))$ (cf.~Proposition \ref{Proposition:GACSZeroTameSubmfd}).
So as a composition of smooth tame maps we have that 
$c^\Phi:\SDiff\times \ClForms{2}\times \Gamma(\Jcal_M)\to \mathrm{End}(\GenTM)$ is smooth tame. Lastly, since the image of $c^\Phi$ lies in the (zero-)tame submanifold $\Gamma(\Jcal_M)\subset \mathrm{End}(\GenTM)$ we have that $c=c^\Phi\circ (\Phi^{-1}\times \Id_{\Gamma(\Jcal_M)}):\Auteq\times \Gamma(\Jcal_M)\to \Gamma(\Jcal_M)$ is smooth tame.
\end{proof}

We note that the results of this section are summarized in Theorem \ref{MainThm:Jcal_M}.

\begin{proof}[Proof of Theorem \ref{MainThm:Jcal_M}]
    The theorem follows directly from Lemma \ref{Lemma:GACS-Subbundle}, Proposition \ref{Proposition:GACSZeroTameSubmfd}, and Theorem \ref{Thm:ActionSmTame}.
\end{proof}

\begin{remark}
Since $\Auteq$ is a tame Fréchet Lie group (cf.~Theorem \ref{Thm:AuteqTameLieGroup}) the inversion map $V:\Auteq\to \Auteq$ is smooth tame. Hence, the canonical right action $c\circ (V\times \mathrm{Id}_{\Gamma(\Jcal_M)}):\Auteq\times \Gamma(\Jcal_M)\to \Gamma(\Jcal_M)$ is smooth tame. Here, $c$ is as in the previous theorem.
\end{remark}

\begin{remark}
We note that by Lemma \ref{Lemma:IsoClassesOfGACS} and Theorem \ref{Thm:IsoClassesOfAuteq} it suffices to consider exact Courant algebroids of the type $(\GenTM,H)$ for some $H\in \ClForms{3}$ to deduce that the natural action of the autoequivalences on the generalized almost complex structures of any exact Courant algebroid is smooth tame.
\end{remark}

\begin{remark}
One way our results for the tame Lie group structure of the autoequivalences of exact Courant algebroids and their smooth tame action on the associated generalized almost complex structures can be useful is in the context of further studies and possibly generalizations of Gualtieri's deformation theorem for generalized almost complex structures for exact Courant algebroids \cite[Theorem 5.4]{gualtieri}.    
\end{remark}

\begin{appendices}

\section{Equivalent gradings on the Fréchet space of sections of a vector bundle over a compact base.}
\label{Appendix:EquivNormsOnGamma(V)}

In this appendix we show that the two different families of seminorms on the space of sections of a vector bundle over a compact manifold introduced in Theorem \ref{VBsectTameFre} are equivalent gradings. These computations where previously done as part of a preparatory project \cite{hausarbeit} for this thesis.

\begin{definition}
Let $X$ be some topological space. We call a family of norms $(\nnorm{-}{x}:\R^n\to \R)_{x\in X}$ on $\R^n$, $n\in \N$, a continuous family of norms if the map $X\times \R^n\ni (x,v)\mapsto \nnorm{v}{x}\in \R$ is continuous.

We call two families of norms $(\nnorm{-}{x}:\R^n\to \R)_{x\in X}$ and $(\nnorm{-}{x}':\R^n\to \R)_{x\in X}$ uniformly equivalent if there are constants $c>0$ and $C>0$ such that 
\begin{align*}
    c\nnorm{v}{x}'\leq \nnorm{v}{x}\leq C\nnorm{v}{x}'
\end{align*}
holds for all $v\in \R^n$ and for all $x\in X$.
\end{definition}

\begin{lemma}
\label{Lemma:ContFamOfNormUnifEquiv}
Let $K$ be some compact topological space and let $(\nnorm{-}{x})_{x\in K}$ be some continuous family of seminorms on $\R^n$, $n\in \N$. Then there are constants $c>0$ and $C>0$ such that 
\begin{align*}
    c|v|\leq \nnorm{v}{x} \leq C|v|
\end{align*}
for all $v\in \R^n$ and $x\in K$. Moreover, this shows that any two continuous families of norms on $\R^n$ with parameter in a compact topological space are uniformly equivalent.
\end{lemma}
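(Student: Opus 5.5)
The plan is a compactness argument applied to the unit sphere of $\R^n$. Note first that the statement says ``seminorms'' but the conclusion $c|v|\leq \nnorm{v}{x}$ forces each $\nnorm{-}{x}$ to be a norm. I will therefore assume $(\nnorm{-}{x})_{x\in K}$ is a continuous family of norms, which is what the definition preceding the lemma intends. Let $S\coloneqq\{v\in\R^n\,|~|v|=1\}$ be the Euclidean unit sphere; it is compact. Hence $K\times S$ is compact, as a product of compact spaces (Tychonoff in the finite case).

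Consider the map $N:K\times S\to\R$, $(x,v)\mapsto \nnorm{v}{x}$. It is continuous, being the restriction of the continuous map $K\times\R^n\to\R$ from the definition of a continuous family of norms. A continuous real function on a compact space attains its extrema. So there are points $(x_0,v_0)$ and $(x_1,v_1)$ with
\begin{align*}
c\coloneqq N(x_0,v_0)\leq N(x,v)\leq N(x_1,v_1)\eqqcolon C
\end{align*}
for all $(x,v)\in K\times S$. Since $\nnorm{-}{x_0}$ is a norm and $v_0\neq 0$, we get $c>0$, and then also $C\geq c>0$. (If $K=\emptyset$ or $n=0$ the statement is vacuous and any constants work.)

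For arbitrary $v\neq 0$, write $v=|v|\,(v/|v|)$ and use absolute homogeneity of each $\nnorm{-}{x}$. This gives $c|v|\leq\nnorm{v}{x}\leq C|v|$ for all $x\in K$, and the case $v=0$ is trivial. For the final claim, apply this to two continuous families $\nnorm{-}{x}$ and $\nnorm{-}{x}'$, obtaining constants $c,C$ and $c',C'$. Chaining the bounds yields
\begin{align*}
\tfrac{c}{C'}\nnorm{v}{x}'\leq c|v|\leq\nnorm{v}{x}\leq C|v|\leq\tfrac{C}{c'}\nnorm{v}{x}',
\end{align*}
which is uniform equivalence. The only delicate point is positivity of the lower bound $c$, and that is exactly where the norm (rather than seminorm) property and the compactness of $K\times S$ are used.
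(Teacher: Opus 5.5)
Your proof is correct and is essentially the paper's argument: compactness of $K\times S^{n-1}$ makes $(x,v)\mapsto\nnorm{v}{x}$ attain a positive minimum and a finite maximum, and homogeneity extends these bounds to all $v\in\R^n$. You are also right that the lemma really needs norms rather than seminorms; the paper's proof uses positive definiteness of each $\nnorm{-}{x}$ at exactly the same point.
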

\begin{proof}
We define $c=\inf \{\nnorm{v}{x}|~(x,v)\in K\times S^{n-1}\}$ and $C=\sup \{\nnorm{v}{x}|~(x,v)\in K\times S^{n-1}\}$. Here, $S^{n-1}$ is the unit sphere with respect to the Euclidean norm. Then $c>0$ by the compactness of $K\times S^{n-1}$ and the positive definitness of $\nnorm{-}{x}$ for all $x\in K$. Then by the definition of $c$ and $C$ we find that
\begin{align*}
    c\leq \nnorm{\frac{v}{|v|}}{x}\leq C
\end{align*} 
for every $x\in K$ and $v\in \R^n$.
Hence, $c|v|\leq \nnorm{v}{x}\leq C|v|$ for every $x\in K$ and $v\in \R^n$. 
\end{proof}

\begin{proposition}
\label{Prop:equiv of norms on Gamma(V)}
Let $M$ be a compact manifold, and let $V\to M$ be a vector bundle of rank $\mathrm{rk}(V)=r$. Let $\{\Tilde{U}_k\}_{k=1,...,N}$ be a finite open cover of $M$ by coordinate charts $(\Tilde{U}_k,\varphi_k)$, let $\{U_k\}_{k=1,...,N}$ be a refinement such that $\Bar{U}_k\subset \Tilde{U}_k$, and let $\{\theta_k\}_{k=1,...,N}$ be a partition of unity subordinate to $\{U_k\}_{k=1,...,N}$ on $M$. Over each of the $\Tilde{U}_k$ we assume that a local frame $\{e_{k,i}\}_{i=1,...,\mathrm{rk}(V)}$ of $V$ exists. We define a family of seminorms on $\Gamma(V)$ by 
\begin{equation*}
    \nnorm{\sigma}{n}'=\sum_{k=1}^N\sum_{i=1}^r\sup_{|\alpha|\leq n}\sup_{x\in M}|D^{\alpha}(\theta_k\sigma^i)|
\end{equation*}
for $n\in \N$ where we abbreviate $\theta_k\sigma^i\coloneqq (\theta_k\sigma^{k,i})\circ\varphi_k^{-1}$ where $\sigma^{k,i}$ are the coefficients with respect to $e_{k,i}$.

Furthermore, choose covariant derivatives and positive definite bundle metrics on $V$ and $T^*M$. Let $\nabla^j\sigma\in \Gamma((T^*M)^{\tens j}\tens V)$ denote the induced $j$-th covariant derivative of $\sigma\in \Gamma(V)$, $j\in \N$, and let $|-|$ denote the induced fiberwise norms on $(T^*M)^{\tens k}\tens V$, $k\in \N$, as in Example \ref{Ex:SmoothSectionsFrechetSpace}. Then we define another grading on $\Gamma(V)$ by the seminorms
\begin{equation*}
    \nnorm{\sigma}{n}=\sum_{j=0}^n\sup_{x\in M}|\nabla^j\sigma(x)|
\end{equation*}
for $n\in \N$. We had already discussed that this family of seminorms induces a Fréchet topology on $\Gamma(V)$ (cf.~Example 
\ref{Ex:SmoothSectionsFrechetSpace}) so the term grading is appropriate.

Then, $\nnorm{-}{n}$ and $\nnorm{-}{n}'$ are equivalent as norms for each $n\in \N$ and hence $(\nnorm{-}{n}')_{n\in \N}$ is a grading on $\Gamma(V)$ which is tamely equivalent of degree and base $0$ to $(\nnorm{-}{n})_{n\in \N}$.
\end{proposition}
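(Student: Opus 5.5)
The plan is to prove, for each fixed $n$, two inequalities $\nnorm{\sigma}{n}\leq C_n\nnorm{\sigma}{n}'$ and $\nnorm{\sigma}{n}'\leq C_n'\nnorm{\sigma}{n}$ for all $\sigma\in\Gamma(V)$. Both are reduced to local statements on the finitely many relatively compact sets $\Bar U_k\subset\Tilde U_k$. There, Lemma \ref{Lemma:ContFamOfNormUnifEquiv} turns the fiber metrics on $(T^*M)^{\tens j}\tens V$, written in the coordinate frame $\d x^{i_1}\tens\cdots\tens\d x^{i_j}\tens e_{k,i}$, into norms uniformly equivalent to the Euclidean norm of the coefficient vector. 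This works because the metric coefficients form a continuous family of norms parametrized by the compact set $\Bar U_k$.

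\emph{First inequality.} Write $\sigma=\sum_k\theta_k\sigma$. By the triangle inequality,
\[
\sup_M|\nabla^j\sigma|\leq\sum_k\sup_{\Bar U_k}|\nabla^j(\theta_k\sigma)|.
\]
In the chart $\varphi_k$, expanding $\nabla^j$ of the section $\theta_k\sigma=\sum_i(\theta_k\sigma^{k,i})e_{k,i}$ by induction on $j$ gives a finite sum. Each term is a partial derivative $D^\alpha(\theta_k\sigma^i)$ with $|\alpha|\leq j$, multiplied by a polynomial in the Christoffel symbols of the two connections, the connection coefficients of $e_{k,i}$, and their derivatives up to order $j-1$. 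All of these coefficients are smooth on $\Tilde U_k$, so they are bounded on $\Bar U_k$. Combining this with the uniform equivalence above and summing over $j\leq n$ and $k$ yields $\nnorm{\sigma}{n}\leq C_n\nnorm{\sigma}{n}'$.

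\emph{Second inequality.} Invert the same relation. By induction on $|\alpha|$, the partial derivative $\del^\alpha\sigma^{k,i}$ on $\Bar U_k$ equals the corresponding coefficient of $\nabla^{|\alpha|}\sigma$ plus lower-order terms. Those lower-order terms are bounded combinations of coefficients of $\nabla^l\sigma$ with $l<|\alpha|$. Hence
\[
\sup_{\Bar U_k}|\del^\alpha\sigma^{k,i}|\leq C\sum_{l\leq|\alpha|}\sup_M|\nabla^l\sigma|.
\]
Next apply Leibniz to $D^\alpha(\theta_k\sigma^{k,i})$. Since $\mathrm{supp}\,\theta_k\subset U_k$, this expression vanishes outside $U_k$, and the derivatives of $\theta_k\circ\varphi_k^{-1}$ are bounded. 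This gives $\nnorm{\sigma}{n}'\leq C_n'\nnorm{\sigma}{n}$.

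The main obstacle is bookkeeping the inductive expansion of $\nabla^j$ in coordinates so that the coefficient functions are visibly smooth on $\Tilde U_k$, and hence bounded on the compact $\Bar U_k$. I would handle this by proving the triangular structure $\nabla^j\sigma=\del^j\sigma+(\text{lower order with smooth coefficients})$ once as a lemma by induction on $j$, and then reading off both inequalities from it. Finally, equivalence of the $n$-th norms for every $n$, with no shift in index, is exactly tame equivalence of degree $0$ and base $0$.
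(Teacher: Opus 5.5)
Your proposal is correct and follows essentially the same route as the paper: the forward inequality via $\sigma=\sum_k\theta_k\sigma$, the coordinate expansion of $\nabla^j$ with coefficients bounded on $\Bar{U}_k$, and Lemma \ref{Lemma:ContFamOfNormUnifEquiv}; and the reverse inequality by induction on the triangular structure $\nabla^j=D^j+\text{(lower order)}$. The only difference is cosmetic. You first bound $D^\alpha\sigma^{k,i}$ on $\Bar{U}_k$ and then apply the ordinary Leibniz rule to $\theta_k\sigma^{k,i}$, whereas the paper expands $\nabla^j(\theta_k\sigma)$ directly, controls its lower-order terms by the induction hypothesis on $\nnorm{-}{j-1}'$, and bounds $|\nabla^j(\theta_k\sigma)|$ via the Leibniz rule for $\nabla$.
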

\begin{proof}
We will denote $e_{k,i}$ by $e_i$ where it will be clear from context on which $U_k$ we are working. For $j\in \N$ we will furthermore regularly use that on $\Tilde{U}_k$ we have
\begin{equation*}
    \nabla^{j}\sigma = \sum_{|\alpha|=j}\sum_{i}D^{\alpha}\sigma^i \d x^{\alpha}\tens e_i + l.o.t.
\end{equation*}
with $\d x^{\alpha}=\d x^{\alpha_1}\tens...\tens \d x^{\alpha_j}$ and "$l.o.t.$" stands for lower order terms with respect to $D^{\alpha}$. Here, $\alpha$ denotes a multi-index $\alpha_1\ldots\alpha_j$ with $\alpha_l=1,\ldots,\dim(M)$, $l=1,\ldots,j$. 
Now we see that
\begin{equation*}
    \nnorm{\sigma}{n}= \sum_{j=0}^n\sup_{x\in M}|\sum_{k=1}^N\nabla^j(\theta_k\sigma)(x)| \leq \sum_{j=0}^n\sum_{k=1}^N\sup_{x\in M}|\nabla^j(\theta_k\sigma)(x)|.
\end{equation*}
We can now use that $U_k$ is relatively compact since $M$ is compact to estimate the coefficients of $|\nabla^j(\theta_k\sigma)|$ on $U_k$ from above. This leads to the following result
\begin{align*}
    \sum_{j=0}^n\sum_{k=1}^N\sup_{x\in M}|\nabla^j(\theta_k\sigma)(x)|&\leq C \sum_{j=0}^n\sum_{k=1}^N\sup_{x\in M}\sum_{|\alpha|\leq j}\sum_{i=1}^r|D^{\alpha}(\theta_k\sigma^i)(x)| \\
    &\leq C\sum_{k=1}^N\sum_{i=1}^r \sup_{|\alpha|\leq n}\sup_{x\in M}|D^{\alpha}(\theta_k\sigma^i)(x)|=C\nnorm{\sigma}{n}'
\end{align*}
where we have used that the open cover of $M$ is finite. Thus, we have $\nnorm{\sigma}{n}\leq C\nnorm{\sigma}{n}'$ for some constant $C>0$. This shows that the identity mapping $\mathrm{Id}:(\Gamma(V),(\nnorm{-}{n})_{n\in \N}')\to (\Gamma(V),(\nnorm{-}{n})_{n\in \N})$ is continuous and thus we already know that both gradings induce the same topology by the open mapping theorem (cf.~Theorem \ref{Thm:OpenMapping}), quietly assuming of course that both gradings induce a complete topology.

We wish, however, to show equivalence on the level of gradings. In order to do this we will prove that $\nnorm{\sigma}{n}'\leq C\nnorm{\sigma}{n}$ using induction. In the case that $n=0$ we have
\begin{equation*}
    \nnorm{\sigma}{0}'=\sum_{k=1}^N\sum_{i=1}^r\sup_{x\in M}|\theta_k\sigma^i|\leq \sum_{k=1}^N\sum_{i=1}^r \sup_{x\in U_k}|\sigma^i|\leq C \sum_{k=1}^N \sup_{x\in U_k}\sum_{i=1}^r|\sigma^i|
\end{equation*}
where we have used that all summands are positive and the open cover is finite to pull the sum into the supremum.
Now on the compact sets $\bar U_k$ we may relate the euclidean norm with the norm induced by our choice of scalar products on $V$ and $T^*M$ by constants $C_k>0$ (cf.~Lemma \ref{Lemma:ContFamOfNormUnifEquiv}) such that for all $x\in U_k$
\begin{equation*}
    \sum_{i=1}^r|\sigma^i(x)|\leq r \sqrt{\sum_{i=1}^r |\sigma^i(x)|^2}\leq C_k|\sigma(x)|.
\end{equation*}
Note that surely $|\sigma^i(x)|\leq \sqrt{\sum_{i=1}^r |\sigma^i(x)|^2}$ for $i=1,\ldots,r$.
Since the open cover is finite, there is some $C>0$ such that
\begin{equation*}
    \sum_{k=1}^N \sup_{x\in U_k}\sum_{i=1}^r |\sigma^i|\leq \sum_{k=1}^N\sup_{x\in U_k}C_k|\sigma|\leq C \sup_{x\in M}|\sigma|= C\nnorm{\sigma}{0}.
\end{equation*}
We continue now with the case $n\geq1$ and once more employ
\begin{equation*}
    \sum_{|\alpha|=j}\sum_{i=1}^rD^{\alpha}\theta_k\sigma^i\d x^{\alpha}\tens e_i = \nabla^j\theta_k\sigma + l.o.t.
\end{equation*}
where the lower order terms are with respect to $D^\alpha$ not $\nabla$. We will refer to the left hand side of this equation as $L_{j,k}$ and equivalently $R_{j,k}$ for the right hand side. As before we choose a constant $C>0$ relating the norm $|-|$ induced by the choices of scalar products on $V$ and $T^*M$ to the euclidean norm with respect to $\d x^{\alpha}\tens e_i$ on $\Bar{U}_k$. In particular, there is a constant $C>0$ such that
\begin{equation*}
    |L_{j,k}|\geq C \sqrt{\sum_{i=1}^r\sum_{|\alpha|=j}|D^{\alpha}\theta_k\sigma^i|^2}.
\end{equation*}
Now if we apply the supremum over $M$ and use that all summands are positive and all sums are finite we get
\begin{align*}
    \sum_{i=1}^r\sum_{|\alpha|=j}\sup_{x\in M}|(D^{\alpha}\theta_k\sigma^i)(x)|\leq& C\sup_{x\in M}  \sum_{i=1}^r\sum_{|\alpha|=j} |(D^{\alpha}\theta_k\sigma^i)(x)|\\
    \leq &C\sup_{x\in M}\sqrt{\sum_{i=1}^r\sum_{|\alpha|=j}|(D^{\alpha}\theta_k\sigma^i)(x)|^2}
\end{align*}
for some constants $C>0$.
Now we can simply observe that
\begin{equation*}
    \nnorm{\sigma}{n}'\leq C \sum_{j=0}^n\sum_{k=1}^N \sup_{x\in M}|L_{j,k}(x)|.
\end{equation*}
For the right hand side we can simply observe $|R_{j,k}|\leq |\nabla^j\theta_k \sigma|+|l.o.t.|$. By estimating coefficients from above and adding (positive) terms if necessary we surely may estimate the lower order terms with respect to $D$ by
\begin{equation*}
    |l.o.t.|\leq C \sum_{i=1}^r\sum_{k=1}^N\sum_{|\alpha|<j}\sup_{x\in M}|(D^{\alpha}\theta_k\sigma^i)(x)|=C\nnorm{\sigma}{j-1}'\leq C\nnorm{\sigma}{j-1}
\end{equation*}
where in the last step we use the induction hypothesis. Now we may also estimate the other term using the product rule 
\begin{equation}
    \label{est_tens1}
    |\nabla^j\theta_k\sigma|\leq C( |\nabla^j \sigma| + |(\nabla \theta_k)\tens (\nabla^{j-1}\sigma)|+|(\nabla^2 \theta_k)\tens (\nabla^{j-2}\sigma)|+...)
\end{equation}
However, by the definition of $|-|$ we have
\begin{equation}
    \label{est_tens2}
    |(\nabla^l \theta_k)\tens (\nabla^{j-l}\sigma)|=|\nabla^l \theta_k||\nabla^{j-l}\sigma|\leq C|\nabla^{j-l}\sigma|\leq C\sup_{x\in M}|\nabla^{j-l}\sigma|
\end{equation}
for $0\leq l\leq j$ where we used that $\theta_k$ has compact support. So we get
\begin{equation*}
    |R_{j,k}|\leq |\nabla^j\sigma|+C\nnorm{\sigma}{j-1}
\end{equation*}
where we have absorbed the additional terms in equation \eqref{est_tens1} into the $\nnorm{\sigma}{j-1}$ term using equation \eqref{est_tens2}.
Thus using that $\nnorm{\sigma}{j-1}\leq \nnorm{\sigma}{n}$ for all $j\leq n+1$ in particular for all $j\leq n$, using that the open cover of $M$ is finite, and simply applying the definition of $\nnorm{-}{n}$ we get that 
\begin{equation*}
    \nnorm{\sigma}{n}'\leq C \sum_{j=0}^n\sum_{k=1}^N \sup_{x\in M}|L_{j,k}(x)|=C \sum_{j=0}^n\sum_{k=1}^N \sup_{x\in M}|R_{j,k}(x)|\leq C\nnorm{\sigma}{n}
\end{equation*}
finishing the proof. 
\end{proof}
\end{appendices}

\singlespacing\small
\bibliographystyle{alphaurl}
\bibliography{Quellen}

\end{document}